\newtheorem{theorem}{Theorem}[section]
\newtheorem{proposition}[theorem]{Proposition}
\newtheorem{lemma}[theorem]{Lemma}
\theoremstyle{definition}
\newtheorem{remark}[theorem]{Remark}
\newcommand{\ep}{\epsilon}
\newcommand{\ga}{\gamma}
\newcommand{\R}{\mathbb{R}}
\newcommand{\Sn}{\mathbb{S}}
\newcommand{\N}{\mathbb{N}}
\newcommand{\mca}{\mathcal{A}}
\newcommand{\mcp}{\mathcal{P}}
\newcommand{\mcq}{\mathcal{Q}}
\newcommand{\tsg}{\normalfont \textsf{g}}
\newcommand{\tix}{\tilde{x}}
\newcommand{\tith}{\tilde{\theta}}
\newcommand{\wtb}{\widetilde{B}}
\renewcommand{\(}{\left(}
\renewcommand{\)}{\right)}
\numberwithin{equation}{section}
\begin{document}
\title[Sign-changing solutions for critical Hamiltonian systems]%
{Sign-changing solutions \\ for critical Hamiltonian systems in $\R^N$}

\author{Yuxia Guo, Seunghyeok Kim, Angela Pistoia, Shusen Yan}

\address[Yuxia Guo]{Department of Mathematical Science, Tsinghua University, Beijing 100084, P. R. China}
\email{yguo@tsinghua.edu.cn}

\address[Seunghyeok Kim]{Department of Mathematics and Research Institute for Natural Sciences, College of Natural Sciences, Hanyang University, 222 Wangsimni-ro Seongdong-gu, Seoul 04763, Republic of Korea}
\email{shkim0401@hanyang.ac.kr shkim0401@gmail.com}

\address[Angela Pistoia]{Dipartimento di Scienze di Base e Applicate per l'Ingegneria, Sapienza Universit\`a di Roma, Via Antonio Scarpa 10, 00161 Roma, Italy}
\email{angela.pistoia@uniroma1.it}

\address[Shusen Yan]{School of Mathematics and Statistics, Central China Normal University, Wuhan 430079, P. R. China}
\email{syan@ccnu.edu.cn}

\subjclass[2020]{Primary: 35B33, Secondary: 35B40, 35B44, 35J47}

\keywords{Hamiltonian elliptic systems, critical Sobolev exponent, critical hyperbola,   sign-changing solutions, infinitely many solutions, multi-bubbles}

\date{\today}

\begin{abstract}
We build infinitely many geometrically distinct non-radial sign-changing solutions for  the Hamiltonian-type elliptic systems
$$ -\Delta u =|v|^{p-1}v\  \hbox{in}\ \R^N,\
-\Delta v =|u|^{q-1}u\  \hbox{in}\ \R^N,$$
where the  exponents  $(p,q)$  satisfy $p,q>1$ and belong to the critical hyperbola
$$\frac1{p+1}+\frac1{q+1} =\frac {N-2}N.$$
To establish this result, we introduce several new ideas and strategies
that are both robust and potentially applicable to other critical problems lacking the Kelvin invariance.
\end{abstract}

\maketitle

\section{Introduction}
In this paper, we investigate the existence of infinitely many geometrically distinct non-radial sign-changing solutions for the following critical Hamiltonian system:
\begin{equation}\label{2}
\begin{cases}
-\Delta u =|v|^{p-1}v\;\;\; \text{in } \R^N,\\
-\Delta v =|u|^{q-1}u\;\;\; \text{in } \R^N,\\
(u,v)\in \dot{W}^{2,\frac{p+1}{p}}(\R^N) \times \dot{W}^{2,\frac{q+1}{q}}(\R^N).
\end{cases}
\end{equation}
Here, $N \ge 3$ and the pair $(p,q)$ satisfies the criticality condition
\begin{equation}\label{cri}
\frac1{p+1}+\frac1{q+1} =\frac {N-2}N.
\end{equation}
Also, we can always assume $\frac{2}{N-2} < p \le \frac{N+2}{N-2} \le q$ without loss of generality, because otherwise we can interchange the roles of $p$ and $q$.
Given $m \in \N$ and $r \ge 1$, $\dot{W}^{m,r}(\R^N)$ denotes the homogeneous Sobolev space, and two pairs of functions $(u_1,v_1)$ and $(u_2,v_2)$ are said to be geometrically distinct if $(u_1,v_1)$ cannot be obtained from $(u_2,v_2)$ through a combination of translations and dilations.

Critical system \eqref{2}--\eqref{cri} is realized as the Euler-Lagrange equation of the Sobolev embedding $\dot{W}^{2,\frac{p+1}{p}}(\R^N) \hookrightarrow L^{q+1}(\R^N)$.
By employing the concentration-compactness principle, Lions \cite{Li} found a positive ground state $(U,V)$ to system \eqref{2}--\eqref{cri} for all $N \ge 3$ and $p \in (\frac{2}{N-2},\frac{N+2}{N-2}]$.
Alvino, Lions \& Trombetti \cite{ALT} (see also \cite[Corollary I.2]{Li}) showed that a positive ground state is radially symmetric and decreasing in the radial variable, after a suitable translation.
It is worth mentioning that Chen, Li \& Ou \cite{CLO} (see also \cite[Section 4]{CL}) established the same symmetry result for any positive solutions to \eqref{2}--\eqref{cri}.
Then, Wang \cite{Wa} and Hulshof \& van der Vorst \cite{HV2} proved that the ground state is positive and unique up to translation and scaling.
In \cite{FKP}, Frank, Kim \& Pistoia deduced the non-degeneracy of each ground state. We refer to Section \ref{sec:pre} for a more precise description.

The existence of infinitely many geometrically distinct sign-changing solutions to critical system \eqref{2}--\eqref{cri} has remained a widely open problem for decades.
To the best of our knowledge, there is only one known result regarding the existence and multiplicity of sign-changing solutions in the literature, due to Clapp \& Salda\~na \cite{CS}.
They proved that \eqref{2}--\eqref{cri} admits at least $\lfloor \frac{N}{4} \rfloor$ geometrically distinct non-radial sign-changing solutions for all $N \ge 4$, by a careful analysis of the energy minimizing sequence on a certain symmetric class.

\medskip
Let us recall what is known in the case of the single equation
\begin{equation}\label{criti}
-\Delta u =|u|^{\frac{4}{N-2}}u\;\;\; \text{in } \R^N,
\end{equation}
which can be obtained from \eqref{2} by putting $p=q=\frac{N+2}{N-2}$ and $u=v$. It is well-known that all the positive solutions of
\eqref{criti}, often called bubbles, are given by
\begin{equation}\label{bub-cri}
U_{\xi,\delta}(y) := \delta^{-{\frac{N-2}{2}}} U\(\frac{y-\xi}{\delta}\),\quad (\delta,\xi) \in (0,+\infty) \times \R^N,
\end{equation}
where
\begin{equation}\label{bub-cri2}
U(y) := \frac{\alpha_N}{(1+|y|^2)^{\frac{N-2}{2}}},\quad \alpha_N := (N(N-2))^{\frac{N-2}{4}}.
\end{equation}
The first existence result concerning sign-changing solutions of \eqref{criti} is traced back to Ding \cite{ding}, who examined the problem in a variational viewpoint.
The invariance of \eqref{criti} under translations and dilations shows the lack of compactness of the energy functional naturally associated to \eqref{criti}.
However, Ding used the conformal invariance of the problem and looked for solutions that are invariant under the action of groups of conformal transformations of $\R^N$ whose orbits have positive dimension.
In such a way, he could restore the compactness and establish the existence of infinitely many sign-changing solutions for the single equation \eqref{criti}.
Successively,    sign-changing   solutions to \eqref{criti} have been built by del Pino, Musso, Pacard \& Pistoia in \cite{DMPP,DMPP2} via the gluing method.
The solutions obtained are invariant under the Kelvin transform   and can be visualized as a superposition of the radial bubble $U=U_{0,1}$ in \eqref{bub-cri2}
with a large number of negative bubbles of the form \eqref{bub-cri} with small scaling parameters $\delta$  whose peaks are arranged along  special submanifolds of $\R^N$.
Different type of solutions (still Kelvin invariant) have also been built by Medina \& Musso \cite{MM} and Medina, Musso \& Wei \cite{MMW}.

If $p = 1$, system \eqref{2}--\eqref{cri} reduces to the biharmonic equation
\begin{equation}\label{criti bi}
(-\Delta)^2 u =|u|^{\frac{8}{N-4}}u\;\;\; \text{in } \R^N,
\end{equation}
and similar construction to \cite{DMPP} was carried out by Guo, Li \& Wei in \cite{GLW}. In this paper, we treat the case $1 < p \le \frac{N+2}{N-2}$.

\medskip
A natural question  is if it is possible to construct sign-changing solutions to critical system \eqref{2}--\eqref{cri} in the spirit  of  the paper  \cite{DMPP}.
In this paper, we give an affirmative answer to this problem for $N \ge 6$, thereby finding infinitely many geometrically distinct non-radial sign-changing solutions to \eqref{2}--\eqref{cri}.

For a precise formulation of our result, we introduce the building blocks used in the construction:
Let $(U_{0,1},V_{0,1})$ be the unique radially symmetric positive ground state of critical system \eqref{2}--\eqref{cri} satisfying $\max_{y\in \R^N} U_{0,1}(y)=U_{0,1}(0)=1$.
In the rest of the paper, we will often regard it as a function in $r = |y|$. By the scaling and translation invariance of \eqref{2}--\eqref{cri}, if
\begin{equation}\label{6}
(U_{x,\mu}(y),V_{x,\mu}(y)) := \(\mu^{\frac{N}{q+1}} U_{0,1} (\mu(y-x)), \; \mu^{\frac{N}{p+1}} V_{0,1}(\mu(y-x))\) \end{equation}
for $y \in \R^N$, then $\{(U_{x,\mu},V_{x,\mu}): (x,\mu) \in \R^N \times (0,+\infty)\}$ is precisely the set of the ground states of \eqref{2}--\eqref{cri}.
Each pair $(U_{x,\mu},V_{x,\mu})$ will be called a bubble for the system.

\medskip
The following theorem is the main result of this paper.
\begin{theorem}\label{th1-25-2}
Suppose that \eqref{cri}  holds. Assume that $N \ge 6$ and $p\in (\frac N{N-2}, \frac{N+2}{N-2}]$.
Then there exists a positive integer $k_0$, such that for all $k\ge k_0$, system \eqref{2} possesses a solution $(u_k, v_k)$ of the form
\begin{equation}\label{eq:ukvkform}
(u_k, v_k) \approx  (U_{0,\mu_0},V_{0,\mu_0}) - \bigg(\sum_{j=1}^k U_{x_j,\mu},\sum_{j=1}^k V_{x_j,\mu}\bigg).
\end{equation}
Here, $\mu_0,\, r > 0$ are close to some constant independent of $k$,
\[x_j = \(r\cos\(\tfrac{2(j-1)\pi}{k}\), r\sin\(\tfrac{2(j-1)\pi}{k}\),{\bf{0}}\) \in \R^2 \times \R^{N-2}, \quad j=1,2,\ldots,k,\]
and $\mu>0$ depends on $k$ (more precisely, $\mu \simeq k^{\frac{(p+1)(N-2)}N}$) and $\mu\to +\infty$ as $k\to +\infty$.
\end{theorem}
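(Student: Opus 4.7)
The plan is to implement a Lyapunov--Schmidt reduction on the ansatz
\[
\mathcal{W} := W_0 - \sum_{j=1}^k W_j, \qquad W_0 := (U_{0,\mu_0}, V_{0,\mu_0}), \quad W_j := (U_{x_j,\mu}, V_{x_j,\mu}),
\]
and look for a true solution of \eqref{2} of the form $\mathcal{W}+(\phi,\psi)$. To kill the continuous non-compactness, I would work in the subspace of pairs invariant under the dihedral group generated by rotation of angle $2\pi/k$ in the $(y_1,y_2)$-plane and reflection across that plane, together with $O(N-2)$ acting on the last $N-2$ coordinates. Under these symmetries the $k$ negative bubbles are rigidly tied together, so only three continuous parameters $(\mu_0,\mu,r)$ remain free.

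For the linear theory, the system is linearised at $\mathcal{W}=(\bar u,\bar v)$ as
\[
\mathcal{L}(\phi,\psi) := \bigl(-\Delta\phi - p|\bar v|^{p-1}\psi,\ -\Delta\psi - q|\bar u|^{q-1}\phi\bigr).
\]
The non-degeneracy theorem of Frank--Kim--Pistoia \cite{FKP} identifies the kernel of $\mathcal{L}$ at a single bubble as the $N{+}1$ derivatives coming from translations and scaling; inside the symmetric class only three of these survive, corresponding to $(\mu_0,\mu,r)$. I would then prove invertibility of $\mathcal{L}$ modulo those three directions in weighted $L^\infty$ norms with different weights on $\phi$ and $\psi$ reflecting the Hamiltonian scales $N/(q+1)$ and $N/(p+1)$ in the normalisation \eqref{6}. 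A contraction mapping argument on the projected nonlinear problem produces $(\phi,\psi)=(\phi_{\mu_0,\mu,r},\psi_{\mu_0,\mu,r})$ whose norm is dominated by the residual error $\mathcal{W}$ makes in \eqref{2}, which itself is controlled by the pairwise bubble interactions.

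Third, I would reduce to the three-parameter variational problem $I_k(\mu_0,\mu,r)$ defined as the energy of $\mathcal{W}+(\phi,\psi)$, and expand around $(k+1)$ times the ground-state energy. The leading-order correction splits into an attractive positive/negative bubble interaction and a repulsive negative/negative one; using the asymptotic $U_{0,1}(y),V_{0,1}(y)\sim c|y|^{-(N-2)}$ (which holds precisely because $p>N/(N-2)$, the threshold at which the cross-convolutions become finite), these two corrections scale respectively as $k\,\Phi_1(\mu_0,\mu)\,r^{-(N-2)}$ and $k\sum_{j\geq 2}\bigl(\mu r|1-e^{2\pi i(j-1)/k}|\bigr)^{-(N-2)}\sim k(k/(\mu r))^{N-2}$. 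Balancing the two forces $\mu r\sim k$, and minimising in $\mu_0$ then yields $\mu\simeq k^{(p+1)(N-2)/N}$ as announced, together with a non-degenerate critical point of the reduced function. Any such critical point corresponds, via the reduction, to a genuine solution of \eqref{2}.

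The main obstacle is the absence of the Kelvin invariance. In the single-equation case \eqref{criti} the Kelvin transform identifies the two scales $\mu_0$ and $\mu^{-1}$, turning the reduction into a one-parameter problem and making the linear a priori estimates uniform via a scalar Green-function blow-up analysis as in \cite{DMPP}. Here no such identification is available: the reduction genuinely lives on three independent scales, and the weighted estimates for $\mathcal{L}$ must remain uniform while $\mu_0$ stays bounded and $\mu\to\infty$. In the Hamiltonian setting $\mathcal{L}$ is moreover a non-self-adjoint $2{\times}2$ system coupling $\phi$ and $\psi$ at different orders, so neither the scalar blow-up of \cite{DMPP} nor the dualisation to a scalar biharmonic equation used in \cite{GLW} for $p=1$ adapts directly. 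Producing sharp weighted bounds for $\mathcal{L}^{-1}$ that are uniform in $(\mu_0,\mu,r)$ and that separate the roles of $\phi$ and $\psi$ is, I expect, the core technical step, and is where the new ideas advertised in the abstract must enter.
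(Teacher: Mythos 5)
Your overall framework --- symmetric Lyapunov--Schmidt with a large free positive bubble $(U_{0,\mu_0},V_{0,\mu_0})$ and $k$ negative bubbles along a regular $k$-gon, three free parameters, and the balance $\mu r\sim k$ --- is exactly the paper's. But there are two concrete gaps that would sink the argument as you outline it.

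\textbf{The naive ansatz fails for most of the range of $p$.}
You take the approximation $\mathcal W = W_0 - \sum_j W_j$ with $W_j=(U_{x_j,\mu},V_{x_j,\mu})$ in both components. The paper notes explicitly that for the Hamiltonian system this naive choice gives an error that is too large unless $p$ is close to $\frac{N+2}{N-2}$ (roughly $p>\frac{N+1}{N-2}$, cf.\ \cite[Remark 1.3]{GLP}); the theorem claims the full range $p\in(\frac N{N-2},\frac{N+2}{N-2}]$. The cure the authors use is a \emph{nonlinear projection}: keep $V_*=V_{0,\mu_0}-\sum_j V_j$, but replace the first component $\sum_j U_j$ by the exact solution $U$ of $-\Delta U=\bigl(\sum_j V_j\bigr)^p$. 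This absorbs the worst part of the error $(\sum_j V_j)^p-\sum_j V_j^p$ into the ansatz itself, which is precisely what makes the residual $\|l\|_{**}$ small enough (Lemma~\ref{lem2.5}) down to $p>\frac N{N-2}$. Without that modification the residual is not of order $\mu^{-\frac{N}{2(q+1)}-\sigma}$ and the contraction argument for $(\phi,\psi)$ does not close.

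\textbf{The reduced critical point cannot be non-degenerate, so you need an extra structural fact.}
You say the reduced energy has ``a non-degenerate critical point''. That is impossible here: the system is scaling invariant, and this invariance survives the symmetrisation --- $(u,v)\mapsto(t^{\frac N{q+1}}u(t\cdot),\,t^{\frac N{p+1}}v(t\cdot))$ acts inside the symmetric class and moves $(\mu_0,r,\mu)\mapsto(t\mu_0,t^{-1}r,t\mu)$. So the leading term of the three-parameter reduced energy has a one-parameter continuum of critical points, not an isolated one, and a generic perturbation (the error from $(\phi,\psi)$) can destroy all of them. The paper resolves this by proving that the reduction output $(\omega_1,\omega_2)$ is \emph{itself} equivariant under the scaling (Proposition~\ref{proposition4-3}, eq.~\eqref{10-12-3}), so that the full reduced energy $K$ inherits the invariance $K(\mu_0,r,\lambda)=K(r\mu_0,1,r\lambda)$ and one can pass to a genuine two-parameter variational problem. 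Simply ``minimising in $\mu_0$'' glosses over this; the fact that the correction term respects the continuous symmetry is a necessary step, not a bonus.

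Finally, you correctly flag the invertibility of the linearised operator in the symmetric class as the hard analytic core and explain why the $p=q$ and $p=1$ devices do not carry over, but you leave it entirely open. The paper's answer is to build and estimate the Green's function for the formal \emph{dual} operator $L^*$ (Appendix~\ref{sec:Green}), exploiting that its kernel in the symmetric class is exactly $\operatorname{span}\{(Z_0,Y_0)\}$, and to feed the resulting pointwise bounds into the blow-up/maximum-principle scheme of Proposition~\ref{l20-2-4} with weights tuned to the two Hamiltonian decay rates. You would need some such replacement of the scalar Green-function analysis of \cite{DMPP} to make the linear step go through.
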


As a matter of fact, a slight modification of our argument extends the above existence theorem to the case $N = 5$ and $p \in [\frac{35}{18},\frac73)$, as explained in Remarks \ref{Re1-25-3}--\ref{Re11-25-3}, \ref{Re4-25-3}, \ref{Re2-25-3} and \ref{Re3-25-3}.

As previously mentioned, when $p=q=\frac{N+2}{N-2}$ and $u=v$, system \eqref{2} reduces to the second-order equation \eqref{criti} with a critical nonlinearity, which is well-defined for $N\ge 3$.
In contrast, when $p=1$, system \eqref{2} becomes the fourth-order equation \eqref{criti bi} with a critical nonlinearity, which needs $N\ge 5$.
This indicates that, in lower dimensions, some restriction on $p$ is necessary; specifically that $p > \frac{2}{N-2}$ in light of \eqref{cri}.

Several aspects still require refinement. First, in the case $N=5$, our result does not cover the full range of
$p$, although we believe that such a restriction is purely technical.
For the lower dimensional cases $N=3$ and $N=4$, the existence result holds for $p$ close to $\frac{N+2}{N-2}$, but fails for $p$ close to 1.
It seems that more sophisticated estimates are required to find the exact range of $p$ such that the existence result holds.
However, to keep the paper concise, we do not pursue this generality here.
Finally, for $p\in (1,\frac{N}{N-2}]$, the estimates are quite different due to the different decay rate at infinity for $V_{x,\mu}(y)$ (see Lemma~\ref{lem:HV}), though the same procedure for the case $p>\frac N{N-2}$ could be carried out. We will briefly discuss this case in Section~\ref{sec:plow}.

\medskip
Our result is inspired by the work of del Pino, Musso, Pacard, \& Pistoia in \cite{DMPP} where   solutions to the single equation  \eqref{criti} were built.
To achieve the goal, they employed the Ljapunov-Schmidt reduction method using the number $k$ of bubbles as the parameter, the strategy introduced by Wei \& Yan in \cite{WY}.
However, applying the argument presented in \cite{DMPP} to our context introduces significant difficulties, making our construction considerably more challenging.
Now let us outline these difficulties and depict how to overcome them.

Problem \eqref{2} is invariant under scaling. In other words, if $(u, v)$ is a solution to \eqref{2}, then $ (t^{\frac N{q+1}} u(t y), t^{\frac N{p+1}} v(t y)    )$ is also a solution for all $t>0$.
This indicates that, without an additional condition, the parameter $\mu_0$ cannot be uniquely determined when constructing solutions of the form given in \eqref{eq:ukvkform}.

In the special case where $p=q$ and $u=v$ in \eqref{2}, the reduced single equation \eqref{criti} is invariant under the  Kelvin transformation.
By taking advantage of this invariance, del Pino,  Musso,  Pacard, \&  Pistoia built  solutions of  the form
\[u\approx U_{0,\mu_0} - \sum_{j=1}^k U_{x_j,\mu},\]
where the bubbles $U_{0,\mu_0}$ and $U_{x_j,\mu}$ are defined in \eqref{bub-cri}--\eqref{bub-cri2}.
They worked in a subspace $X$ of the Beppo-Levi space $D^{1,2}(\R^N)$ such that any element in $X$ is invariant under the Kelvin transformation in addition to suitable rotations and reflections.
It is easy to see that if $U_{x, \mu} \in X$, then it must hold that
\[|x|^2+\mu^2=1.\]
As a result, working in the space $X$ automatically yields
\[\mu_0=1,\quad |x_j|^2 +\mu^2=1,\]
and so $\mu_0$ is determined. More importantly, the linear operator
\[
L\omega := -\Delta \omega - \frac{N+2}{N-2} U_{0, 1}^{\frac 4{N-2}} \omega,\quad \omega\in X,
\]
is invertible in $X$. The invertibility of the linearized operator $L$ in $X$ makes the Ljapunov-Schmidt reduction argument workable and one can reduce the problem to just slightly adjusting the scaling parameter $\mu$ as a function of the number of peaks $k \gg 1$.
However, unlike what has been done for the single equation, we cannot construct Kelvin invariant solutions, since system \eqref{2}--\eqref{cri} is not invariant under the Kelvin transformation unless $p = 1$ or $\frac{N+2}{N-2}$!
This lack of the Kelvin symmetry forces us to work in a subspace $X'$ of the Sobolev space $\dot{W}^{2,\frac{p+1}{p}}(\R^N) \times \dot{W}^{2,\frac{q+1}{q}}(\R^N)$
that possesses only rotational and reflectional symmetries, thereby making the construction significantly challenging.

Our novelty in this paper is to leave $\mu_0$ as a free parameter for approximate solutions, instead of just taking
$ (U_{0,1},V_{0,1})$ as in the case of the single equation.
The introduction of a new free parameter is not an innocent matter since the linearized operator will not be invertible anymore in the space $X'$.
Therefore, the first challenge in employing the reduction argument is establishing the invertibility of the linear operator for the bubble $(U_{0,\mu_0},V_{0,\mu_0})$.
This is why we need to develop new ideas to study the linear theory.
We will examine the existence and qualitative behavior of the Green's function of a specific linear operator (the operator $L^*$ in \eqref{1-25-10n2}) in $X'$, which are new and can be also useful in different contexts.

In addition to introducing the free parameter $\mu_0$, another notable feature is that our approximate solutions are not merely given by the right-hand side of \eqref{eq:ukvkform}.
In fact, such a naive choice seems to work only for $p$  close to $\frac{N+2}{N-2}$ (for example, for $p>\frac{N+1}{N-2}$ as commented in \cite[Remark 1.3]{GLP}).
While we will choose the $v$-part of approximate solutions by $V_{0,\mu_0} - \sum_{j=1}^k V_{x_j,\mu}$,
 the $u$-part will be defined by a differential equation \eqref{Udef}.
 This modification of the approximation for the $u$-part makes
 the estimates much more sophisticated. See Section~\ref{sec:exp1}.
The latter choice is reminiscent of the nonlinear projection introduced by Kim \& Pistoia in their study of slightly subcritical Lane-Emden system in bounded domains \cite{KP}, and the construction of periodic positive solutions in \cite{GWY}.
It is interesting to note that they employed this idea exclusively for the case $p \in (1,\frac{N}{N-2})$.
In contrast, we find that it is effective for all cases $p \in (1,\frac{N+2}{N-2}]$ under our setting.

The introduction of the free parameter $\mu_0$ leads to a further issue in the study of the reduced problem.
If we consider a solution of the form \eqref{eq:ukvkform}, we can take any $\mu_0>0$ by appropriately adjusting $\mu$ and $r=|x_j|$.
This observation implies that the critical points of the main-order term in the expansion of the reduced energy functional cannot be isolated, and the perturbations of this functional may fail to preserve the existence of critical points.
To overcome this difficulty, we need to check the correction term $(\omega_1,\omega_2)$ obtained from the reduction procedure (see Proposition~\ref{proposition4-3}) still keeps the invariance under the scaling.

As one can see in the proof of Theorem~\ref{th1-25-2}, our additional ideas for developing functional analytic tools to investigate the linearized operator, constructing approximate solutions,
and selecting optimal norms that effectively capture the behavior of error terms are robust.
We believe that our approach will enable the discovery of positive solutions to the critical Hamiltonian-type system for \textbf{all} $p > 1$, or more broadly, possibly sign-changing solutions to other critical problems lacking the Kelvin invariance.

\medskip
It is natural to ask if it is possible to build solutions to system \eqref{2}--\eqref{cri} which concentrate along different configuration as it has been done for the single equation \eqref{criti}.
For example, we expect that one can perform the construction of Medina, Musso \& Wei in \cite{MMW} for the system by adapting our approach here.
We remind that they built Kelvin invariant solutions which are the superposition  of the bubble  $U_{0,1}$ in \eqref{bub-cri2}
with  a large number of negative scaled copies of  bubbles $U_{\xi_i,\delta}$ and  $U_{\eta_j,\epsilon}$ defined in \eqref{bub-cri} whose peaks $\xi_i$ and $\eta_j$ are vertices of   regular polygons lying in two orthogonal  planes of $\R^N$.

There have been a few works on the existence of positive solutions of the form $(u_k, v_k) \approx (\sum_{j=1}^k U_{x_j,\mu},\sum_{j=1}^k V_{x_j,\mu})$
for the critical Hamiltonian system with potentials, namely, system \eqref{2}--\eqref{cri} where the nonlinearities $(|v|^{p-1}v,|u|^{q-1}u)$ are replaced with $(K_1(x) |v|^{p-1}v, K_2(x) |u|^{q-1}u)$.
Refer to e.g. \cite{GLP, CH}, which addressed the case $p > \frac{N+1}{N-2}$.
For such $p$, the approximate solution can simply be taken as
$(\sum_{j=1}^k U_{x_j,\mu},\sum_{j=1}^k V_{x_j,\mu})$, and more coarse norms can be employed.
Moreover, the existence of solutions for this problem largely depend on a suitable behavior of the potentials,
whereas our result arises solely from a delicate balance between a positive bubble with free parameter $\mu_0$ and negative bubbles.

Let us make a final remark on
the Lane-Emden conjecture, which states that if $p,q > 0$ and $(p,q)$ is subcritical, i.e., $\frac1{p+1}+\frac1{q+1} > \frac {N-2}N$, then the Lane-Emden system
\begin{equation}\label{22}
\begin{cases}
-\Delta u =|v|^{p-1}v\;\;\; \text{in } \R^N,\\
-\Delta v =|u|^{q-1}u\;\;\; \text{in } \R^N
\end{cases}
\end{equation}
has no positive classical solutions. The lack of invariance of system \eqref{22} under the Kelvin transform is a key reason why the full Lane-Emden conjecture remains a long-standing open problem. Refer to e.g. \cite{So}.
Our observation of the existence of infinitely many geometrically distinct crown-shaped solutions to the critical Lane-Emden system \eqref{2}--\eqref{cri}, despite the absence of Kelvin invariance, might shed light on advancing the study of the Lane-Emden conjecture.

\medskip \noindent \textbf{Organization of the paper.}
In Section \ref{sec:pre}, we provide properties of bubbles for system \eqref{2}--\eqref{cri} needed in the paper.

From Section \ref{sec:exp1} to \ref{sec:exist}, we will prove Theorem~\ref{th1-25-2}.

In Section \ref{sec:exp1}, we build approximate solutions and derive the expansion of their energy.

In Section \ref{sec:lin}, we develop the linear theory, where we utilize the Green's function for the linear operator $L^*$ in \eqref{1-25-10n2}, whose properties are extensively studied in Appendix \ref{sec:Green}.
The carefully chosen weighted norms (see \eqref{eq:*-norm}--\eqref{eq:**-norm}) allows us to handle all $p \in (1,\frac{N+2}{N-2}]$.

In Section \ref{sec:red}, we conduct the error estimate and the Ljapunov-Schmidt reduction procedure to find the correction term $(\omega_1,\omega_2)$. We also make an important observation that $(\omega_1,\omega_2)$ is invariant under the scaling.

In Section \ref{sec:exist}, we complete the proof of Theorem~\ref{th1-25-2} by analyzing the reduced energy functional $K$ defined in \eqref{eq:redene}.
It is characterized as a perturbation of a function (see \eqref{n30-31-12}) with non-isolated critical points.
By exploiting the scaling property \eqref{Kinv} of $K$, we demonstrate that $K$ continues to have non-isolated critical points.

In Section \ref{sec:plow}, we explain necessary modifications for the case $p \in (1,\frac N{N-2})$.

In Appendix \ref{sec:tech}, we collect several technical estimates that are used throughout the paper.

\medskip \noindent \textbf{Notation.} In the following we agree that $C,\, c > 0$ are universal constants independent of $k \in \N$ that may vary from line to line and even in the same line.
Also, we say that $a \simeq b$ if $C^{-1}b \le a \le Cb$ holds for some universal constant $C \ge 1$ independent of associated parameters of $a,\, b$.

\section{preliminaries}\label{sec:pre}
We present properties of the bubbles $(U_{x,\mu},V_{x,\mu})$ in \eqref{6}, assuming that $N \ge 3$ and $(p,q)$ satisfies $p \in (\frac{2}{N-2},\frac{N+2}{N-2}]$ and the criticality condition \eqref{cri}.

\begin{lemma}[Hulshof and Van der Vorst \cite{HV2}]\label{lem:HV}
The pair $(U_{0,1},V_{0,1})$ is unique, radially symmetric, and decreasing in the radial variable $r$.
Also, there exist numbers $a_{N,p},\, b_{N,p} > 0$ depending only on $N$ and $p$ such that
\begin{equation}\label{eq:HV}
\begin{cases}
\lim\limits_{r \to +\infty} r^{N-2} U_{0,1}(r) = a_{N,p} &\text{if } p \in (\frac{N}{N-2}, \frac{N+2}{N-2}],\\
\lim\limits_{r \to +\infty} \dfrac{r^{N-2}}{\log r} U_{0,1}(r) = a_{N,p} &\text{if } p = \frac{N}{N-2},\\
\lim\limits_{r \to +\infty} r^{p(N-2)-2} U_{0,1}(r) = a_{N,p} &\text{if } p \in (\frac{2}{N-2}, \frac{N}{N-2})
\end{cases}
\quad \text{and} \quad
\lim_{r \to +\infty} r^{N-2} V_{0,1}(r) = b_{N,p}.
\end{equation}
\end{lemma}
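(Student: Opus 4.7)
The plan is to reduce the three assertions to the analysis of the radial ODE together with previously cited symmetry and uniqueness results. For the first assertion I would not reprove anything: radial symmetry about some point (taken to be the origin after translation) and strict monotonicity in the radial variable follow from the moving-planes argument of Chen, Li \& Ou cited in the introduction, while uniqueness under the normalization $U_{0,1}(0)=1$ is the content of Wang and Hulshof \& van der Vorst, also cited above. Having reduced the problem to radial functions, I would record the once-integrated form
\begin{equation*}
-r^{N-1}U_{0,1}'(r)=\int_0^r s^{N-1}V_{0,1}(s)^p\,ds,\qquad -r^{N-1}V_{0,1}'(r)=\int_0^r s^{N-1}U_{0,1}(s)^q\,ds,
\end{equation*}
which will drive all the decay estimates.

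Next I would settle the decay of $V_{0,1}$. Classical elliptic regularity gives that $U_{0,1}$ and $V_{0,1}$ are smooth, bounded and vanish at infinity, so that the Green representations $U_{0,1}=c_N\,|y|^{2-N}\ast V_{0,1}^p$ and $V_{0,1}=c_N\,|y|^{2-N}\ast U_{0,1}^q$ hold. A short bootstrap on these representations produces an initial polynomial decay: if $V_{0,1}(y)=O(|y|^{-\alpha})$ then the $U$-integral delivers $U_{0,1}(y)=O(|y|^{-\min(N-2,\,p\alpha-2)})$ and symmetrically for the other relation, so alternating a few iterations pushes $U_{0,1}$ to decay faster than $|y|^{-N/q}$, thanks to the critical relation \eqref{cri} together with $q\ge\frac{N+2}{N-2}>\frac{N}{N-2}$. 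Consequently $\int_0^\infty s^{N-1}U_{0,1}(s)^q\,ds<\infty$, and integrating the $V$-equation once more gives
\begin{equation*}
\lim_{r\to\infty}r^{N-2}V_{0,1}(r)=b_{N,p}:=\frac{1}{N-2}\int_0^\infty s^{N-1}U_{0,1}(s)^q\,ds,
\end{equation*}
which is the third limit of \eqref{eq:HV}.

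For $U_{0,1}$ I would plug the sharp asymptotics $V_{0,1}(s)^p\sim b_{N,p}^p\,s^{-p(N-2)}$ back into the first integrated equation and inspect the growth of the right-hand side as $r\to\infty$. The integrand has order $s^{N-1-p(N-2)}$, so three regimes appear. When $p>\tfrac{N}{N-2}$ the improper integral converges and $-r^{N-1}U_{0,1}'(r)$ tends to a positive limit, forcing $U_{0,1}(r)\sim a_{N,p}r^{-(N-2)}$ with $a_{N,p}=\tfrac{1}{N-2}\int_0^\infty s^{N-1}V_{0,1}(s)^p\,ds$. At the borderline $p=\tfrac{N}{N-2}$ the same integral diverges logarithmically, producing $U_{0,1}(r)\sim a_{N,p}(\log r)r^{-(N-2)}$. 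When $\tfrac{2}{N-2}<p<\tfrac{N}{N-2}$ the partial integral grows like $r^{N-p(N-2)}$, so $-U_{0,1}'(r)\sim c\,r^{1-p(N-2)}$ and integrating from $r$ to $+\infty$ yields $U_{0,1}(r)\sim a_{N,p}r^{-(p(N-2)-2)}$. The main obstacle I anticipate is closing the first bootstrap $U_{0,1}^q\in L^1$ without circularity: one has to start from the very weak a priori decay coming from boundedness and radial monotonicity, and then iterate the convolution estimates using the criticality of $(p,q)$ to ensure that each step strictly improves the exponent until integrability is reached. Once that preliminary step is in hand, the ODE analysis above delivers all three asymptotics in a routine fashion.
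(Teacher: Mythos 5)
The lemma you are proving is cited by the paper without proof: it is attributed to Hulshof--van der Vorst and Wang (with the symmetry part coming from Alvino--Lions--Trombetti and Chen--Li--Ou). So there is no internal proof to compare against; what matters is whether your sketch would actually close, and it has a genuine gap in the bootstrap step.

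The difficulty is that the a priori decay you would start from is exactly a fixed point of the iteration you describe. Since $U_{0,1}\in \dot W^{2,(p+1)/p}\hookrightarrow L^{q+1}$ and $V_{0,1}\in \dot W^{2,(q+1)/q}\hookrightarrow L^{p+1}$, and both are radial and decreasing, the standard Strauss-type bound gives $U_{0,1}(r)\le Cr^{-N/(q+1)}$ and $V_{0,1}(r)\le Cr^{-N/(p+1)}$. Now run one step of your convolution bootstrap: $V_{0,1}^p=O(r^{-pN/(p+1)})$, and the criticality relation \eqref{cri} yields $\tfrac{pN}{p+1}-2=\tfrac{N}{q+1}$, which is strictly less than $N-2$ for $q>\tfrac{2}{N-2}$; hence the convolution returns $U_{0,1}=O(r^{-N/(q+1)})$, exactly the rate you started with. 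Symmetrically $\tfrac{qN}{q+1}-2=\tfrac{N}{p+1}<N-2$, so the $V$-step is stationary as well. The linearized iteration map $\alpha\mapsto pq\alpha-2(q+1)$ has $\alpha^*=\tfrac{N}{p+1}$ as its (repelling) fixed point precisely because of \eqref{cri}, so ``alternating a few iterations'' does not strictly improve the exponent and never reaches decay faster than $r^{-N/q}$; the claimed $L^1$-integrability of $U_{0,1}^q$ does not follow from this argument. Your plan identifies that getting started is delicate, but the real obstacle is that the iteration does not move, not that it is hard to initialize.

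To close that step one needs genuinely more input. One option is to exploit that the Sobolev bound actually gives $r^{N/(s)}f(r)\to 0$ (not merely $O(1)$), feed a "little-$o$" gain through the iteration, and upgrade it to a power gain; this is delicate and must be done carefully. Hulshof and van der Vorst themselves avoid the bootstrap entirely: they pass to the Emden--Fowler variables, transform the radial ODE system into an autonomous planar (or higher-dimensional) system, and read off the asymptotics from the stable/unstable manifolds of the relevant equilibrium, which simultaneously produces all three regimes in \eqref{eq:HV}, including the logarithmic borderline $p=\tfrac{N}{N-2}$. Once the sharp decay of $V_{0,1}$ is in hand, the remainder of your argument — the identity $b_{N,p}=\tfrac{1}{N-2}\int_0^\infty s^{N-1}U_{0,1}(s)^q\,ds$, and the three regimes for $U_{0,1}$ obtained by integrating the radial ODE using the known $V_{0,1}^p$ asymptotics — is correct and routine.
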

\noindent The uniqueness assertion in the previous lemma was independently proved by Wang \cite{Wa}. Also, the decay estimate as $r \to +\infty$ can be improved as follows:
\begin{lemma}[Kim and Moon \cite{KM}]
There exists a constant $C > 0$ depending only on $N$ and $p$ such that
\begin{equation}\label{V10est}
\left|V_{0,1}(r) - \frac{b_{N,p}}{r^{N-2}}\right| \le \frac{C}{r^N}.
\end{equation}
Besides,
\begin{equation}\label{U10est}
\begin{cases}
\displaystyle \left|U_{0,1}(r) - \frac{a_{N,p}}{r^{N-2}}\right| \le \frac{C}{r^{N-2+\kappa_0}} &\text{if } p \in (\frac{N}{N-2}, \frac{N+2}{N-2}], \\
\displaystyle \left|U_{0,1}(r) - \frac{a_{N,p} \log r}{r^{N-2}}\right| \le \frac{C}{r^{N-2}} &\text{if } p = \frac{N}{N-2}, \\
\displaystyle \left|U_{0,1}(r) - \frac{a_{N,p}}{r^{p(N-2)-2}}\right| \le \frac{C}{r^{p(N-2)-2+\kappa_1}} &\text{if } p \in (\frac{2}{N-2}, \frac{N}{N-2}),
\end{cases}
\end{equation}
where $\kappa_0 := p(N-2)-N > 0$ and $\kappa_1$ is any number in $(0, \min\{N-p(N-2),2(p+1)\})$.
\end{lemma}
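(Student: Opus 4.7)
The plan is to use the Newton-potential representation of $U_{0,1}$ and $V_{0,1}$, together with the Newton shell theorem, to reduce the claimed estimates to a one-dimensional tail analysis. From $-\Delta V_{0,1}=U_{0,1}^q$ and $-\Delta U_{0,1}=V_{0,1}^p$ and the radial symmetry given by Lemma~\ref{lem:HV}, the shell-theorem identity $\int_{|z|=\rho}|y-z|^{-(N-2)}dS_z = |\Sn^{N-1}|\rho^{N-1}\max(r,\rho)^{-(N-2)}$ yields
\begin{equation*}
V_{0,1}(r) = \frac{1}{N-2}\left[\frac{1}{r^{N-2}}\int_0^r U_{0,1}^q(\rho)\rho^{N-1}d\rho + \int_r^\infty U_{0,1}^q(\rho)\rho\,d\rho\right],
\end{equation*}
and the analogous formula for $U_{0,1}$ with $V_{0,1}^p$ in place of $U_{0,1}^q$. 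All the asserted bounds then amount to asymptotic analyses of these explicit one-dimensional integrals using the leading-order decay supplied by \eqref{eq:HV}.

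For \eqref{V10est}, the criticality condition \eqref{cri} together with $p\le(N+2)/(N-2)$ forces $q(N-2)\ge N+2$, so $\int_0^\infty U_{0,1}^q(\rho)\rho^{N-1}d\rho$ converges and, by the definition of $b_{N,p}$, equals $(N-2)b_{N,p}$. Subtracting $b_{N,p}/r^{N-2}$ from $V_{0,1}(r)$ reduces the problem to controlling the two tails
\[
\frac{1}{r^{N-2}}\int_r^\infty U_{0,1}^q\rho^{N-1}d\rho \quad \text{and} \quad \int_r^\infty U_{0,1}^q\rho\,d\rho,
\]
each of which is $O(r^{-N})$ by $U_{0,1}^q(\rho)\le C\rho^{-q(N-2)}$ combined with $q(N-2)\ge N+2$.

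The three sub-cases of \eqref{U10est} reflect the integrability of $V_{0,1}^p\rho^{N-1}\simeq b_{N,p}^p\rho^{N-1-p(N-2)}$ at infinity. When $p\in(N/(N-2),(N+2)/(N-2)]$ the global integral converges to $(N-2)a_{N,p}$, and the same tail argument (now with decay rate $p(N-2)>N$) produces the error $O(r^{2-p(N-2)})=O(r^{-(N-2+\kappa_0)})$. At the threshold $p=N/(N-2)$ the inner integral develops a logarithmic divergence $\int_0^r V_{0,1}^p\rho^{N-1}d\rho = b_{N,p}^p\log r + O(1)$, which, divided by $r^{N-2}$, yields the leading term $a_{N,p}\log r/r^{N-2}$ with $O(r^{-(N-2)})$ remainder. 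For $p\in(2/(N-2),N/(N-2))$ the inner integral actually grows like $r^{N-p(N-2)}$, and a bootstrap via \eqref{V10est} is required: I split
\[
V_{0,1}^p(\rho) = b_{N,p}^p\rho^{-p(N-2)}\chi_{\{\rho\ge 1\}} + W(\rho), \qquad |W(\rho)|\le C\rho^{-p(N-2)-2}\ \text{for}\ \rho\ge 1,
\]
with $W$ bounded near the origin. The reference term integrates explicitly (integrability at $\rho=0$ being guaranteed exactly by $p<N/(N-2)$) and combines the contributions of the inner and outer integrals into the leading expression $a_{N,p}/r^{p(N-2)-2}$ with $a_{N,p}=b_{N,p}^p/[(N-p(N-2))(p(N-2)-2)]$, while the $W$-contribution produces an error of order $r^{-(N-2)}+r^{-p(N-2)}$. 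Since $p>1$, the dominant piece is $r^{-(N-2)}=r^{-(p(N-2)-2+(N-p(N-2)))}$, matching \eqref{U10est} for any $\kappa_1<N-p(N-2)$; this is the binding constraint because $N-p(N-2)<2(p+1)$ whenever $p>1$.

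The main technical obstacle is the bootstrap step for the sub-case $p<N/(N-2)$: one must exploit the sharp $O(r^{-N})$ refinement of $V_{0,1}$ just proved in \eqref{V10est} in order to justify the expansion $V_{0,1}^p(\rho) = b_{N,p}^p\rho^{-p(N-2)} + O(\rho^{-p(N-2)-2})$ at infinity, and then split the integration interval carefully to reconcile the blow-up of the pure-power reference at $\rho=0$ with the boundedness of $V_{0,1}^p$ there. The other cases are reasonably direct computations starting from the Newton-potential representation.
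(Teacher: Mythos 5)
Since the paper simply cites Kim--Moon \cite{KM} for this lemma rather than proving it, there is no internal proof to compare against; you have supplied a self-contained argument, which is a legitimate thing to do. Your Newton-potential/shell-theorem strategy is sound, the representation
$V_{0,1}(r) = \frac{1}{N-2}\bigl[r^{-(N-2)}\int_0^r U_{0,1}^q\rho^{N-1}d\rho + \int_r^\infty U_{0,1}^q\rho\,d\rho\bigr]$
(and its analogue for $U_{0,1}$) is correct, and the three-case discussion of $U_{0,1}$ via the improved bound on $V_{0,1}$ is exactly the right way to organise the bootstrap. The computation $a_{N,p} = b_{N,p}^p/[(N-p(N-2))(p(N-2)-2)]$ in the third regime is also consistent.

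There are, however, two genuine gaps. First, in the argument for \eqref{V10est} you bound the tails using $U_{0,1}^q(\rho)\le C\rho^{-q(N-2)}$; by Lemma~\ref{lem:HV} this is only valid when $p>\frac{N}{N-2}$. For $p\le\frac{N}{N-2}$ the ground state $U_{0,1}$ decays more slowly, namely like $\rho^{-(p(N-2)-2)}$ (with a logarithm at $p=\frac{N}{N-2}$), so you must instead estimate $U_{0,1}^q(\rho)\le C\rho^{-q(p(N-2)-2)}$. The conclusion $O(r^{-N})$ is still correct because the criticality condition \eqref{cri} gives the identity $p(N-2)-2=\frac{N(p+1)}{q+1}$, hence $q(p(N-2)-2)=\frac{qN(p+1)}{q+1}$, and one readily checks $\frac{qN(p+1)}{q+1}\ge N+2 \iff (1-\frac{1}{q+1})(p+1)\ge\frac{N+2}{N} \iff \bigl(\frac{2}{N}+\frac{1}{p+1}\bigr)(p+1)\ge\frac{N+2}{N} \iff p\ge 0$, which always holds. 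But as written your justification is simply wrong for that range of $p$ and needs to be replaced by this computation (plus the minor extra step to absorb the logarithm when $p=\frac{N}{N-2}$).

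Second, in the third sub-case of \eqref{U10est} your analysis of the $W$-contribution yields $O(r^{-(N-2)}+r^{-p(N-2)})$, which for $p>1$ gives exactly $\kappa_1<N-p(N-2)=\min\{N-p(N-2),2(p+1)\}$, as you note. But the lemma is stated for all $p\in(\frac{2}{N-2},\frac{N}{N-2})$, including $p<1$ when $N\ge 6$, and there your argument only produces $\kappa_1<2$, strictly weaker than the asserted $\kappa_1<\min\{N-p(N-2),2(p+1)\}$ (both of which exceed $2$ when $p<1$). To recover the sharper constant $2(p+1)$ for $p<1$ one would have to do better than the single Taylor expansion $V_{0,1}^p = b_{N,p}^p\rho^{-p(N-2)}+O(\rho^{-p(N-2)-2})$; your sketch does not indicate how. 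Since the paper's main theorem only invokes the lemma with $p>1$ this is not a defect for the application at hand, but it means your proposal does not prove the lemma as stated.
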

\noindent Very recently, the above decay estimate was further improved in \cite{GHPY}.

\medskip
Furthermore, we will rely on the non-degeneracy of the bubbles.
\begin{lemma}[Frank, Kim and Pistoia \cite{FKP}]\label{lemma:FKP}
For $x = (x_1,\ldots,x_N) \in \R^N$ and $\mu > 0$, let
\[\(Y_{x,\mu}^0,Z_{x,\mu}^0\) := \(\frac{\partial U_{x,\mu}}{\partial \mu}, \frac{\partial V_{x,\mu}}{\partial \mu}\)
\quad \text{and} \quad
\(Y_{x,\mu}^h,Z_{x,\mu}^h\) := \(\frac{\partial U_{x,\mu}}{\partial x_h}, \frac{\partial V_{x,\mu}}{\partial x_h}\)\]
for $h = 1,\ldots,N$. Then the space of solutions of the linearized system
\[\begin{cases}
-\Delta Y = pV_{x,\mu}^{p-1} Z \;\;\; \textup{in}\; \R^N,\\
-\Delta Z = qU_{x,\mu}^{q-1} Y \;\;\; \textup{in}\; \R^N,\\
(Y,Z) \in \dot{W}^{2,\frac{p+1}{p}}(\R^N) \times \dot{W}^{2,\frac{q+1}{q}}(\R^N)
\end{cases}\]
is spanned by
\[\left\{\(Y_{x,\mu}^0,Z_{x,\mu}^0\), \(Y_{x,\mu}^1,Z_{x,\mu}^1\), \ldots, \(Y_{x,\mu}^N,Z_{x,\mu}^N\)\right\}.\]
\end{lemma}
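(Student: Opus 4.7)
The plan is a two-step argument: first verify that the $N+1$ explicit pairs lie in the kernel of the linearized operator (the inclusion $\supseteq$), then show that the kernel has dimension at most $N+1$ via a spherical harmonic decomposition and a mode-by-mode ODE analysis (the inclusion $\subseteq$).

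For $\supseteq$, the translation invariance of \eqref{2}--\eqref{cri} implies that $\partial_{x_h}(U_{x,\mu}, V_{x,\mu})$ solves the linearized system for each $h = 1, \ldots, N$. Similarly, the scaling invariance built into the family \eqref{6} yields $\partial_\mu (U_{x,\mu}, V_{x,\mu})$ as a further solution, and the weighted Sobolev integrability of all these derivatives follows from Lemma~\ref{lem:HV} together with \eqref{V10est}--\eqref{U10est}. Linear independence of the $N+1$ pairs is clear because $(Y_{x,\mu}^0, Z_{x,\mu}^0)$ is radial about $x$ whereas each $(Y_{x,\mu}^h, Z_{x,\mu}^h)$ has the angular profile of a degree-one spherical harmonic.

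For $\subseteq$, by translation and scaling I may assume $x = 0$, $\mu = 1$, and write $U := U_{0,1}$, $V := V_{0,1}$. Decompose any element $(Y,Z)$ of the kernel into spherical harmonics,
\[ Y(r\theta) = \sum_{k \ge 0} \sum_{i} Y_{k,i}(r) Q_{k,i}(\theta), \qquad Z(r\theta) = \sum_{k \ge 0} \sum_{i} Z_{k,i}(r) Q_{k,i}(\theta), \]
with $\{Q_{k,i}\}$ an $L^2(\Sn^{N-1})$-orthonormal basis of degree-$k$ spherical harmonics. The linearized system then separates into the coupled radial ODE system
\[ \begin{cases} -Y_{k,i}'' - \tfrac{N-1}{r} Y_{k,i}' + \tfrac{k(k+N-2)}{r^2} Y_{k,i} = p V^{p-1} Z_{k,i}, \\ -Z_{k,i}'' - \tfrac{N-1}{r} Z_{k,i}' + \tfrac{k(k+N-2)}{r^2} Z_{k,i} = q U^{q-1} Y_{k,i}, \end{cases} \]
subject to the integrability constraint inherited from $\dot{W}^{2,\frac{p+1}{p}}(\R^N) \times \dot{W}^{2,\frac{q+1}{q}}(\R^N)$.

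Finally, I would analyze each spherical mode separately. For $k = 0$, the dilation generator $(Y_{0,1}^0, Z_{0,1}^0)$ produces one explicit integrable solution; a Frobenius expansion at $r = 0$ combined with ODE asymptotics at $r = +\infty$ (using the sharp decay \eqref{V10est}--\eqref{U10est}) rule out any second linearly independent integrable solution. For $k = 1$, the translation generators $(Y_{0,1}^h, Z_{0,1}^h)$ supply exactly one solution per angular index $i$, filling the $N$-dimensional space of degree-one spherical harmonics, and the same endpoint analysis excludes any additional solution. The main obstacle is $k \ge 2$, where no explicit solution is available: here I would argue that the larger centrifugal potential $k(k+N-2)/r^2$ makes the associated quadratic form coercive relative to the coupling through $U^{q-1}, V^{p-1}$, so that a Pohozaev-type identity obtained by testing the two ODEs against judicious multipliers of $Y_{k,i}$ and $Z_{k,i}$ (e.g., of the form $r Y_{k,i}' + \alpha Y_{k,i}$ with $\alpha$ chosen to match the scaling weights $N/(p+1)$ and $N/(q+1)$) forces $Y_{k,i} \equiv Z_{k,i} \equiv 0$. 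The delicate point is verifying the vanishing of the boundary terms at $0$ and $+\infty$ produced by integration by parts; this relies on the precise local Frobenius expansion of integrable solutions at the origin and on the sharp decay of $U, V$ from Lemma~\ref{lem:HV}.
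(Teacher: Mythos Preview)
The paper does not give its own proof of this lemma; it is quoted from \cite{FKP} as a known result, so there is nothing in the paper to compare your argument against.

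Assessed on its own merits, your $\supseteq$ direction and the spherical-harmonic reduction are correct, but the $\subseteq$ direction has a real gap. First, for each angular mode $k$ the coupled system of two second-order ODEs has a four-dimensional local solution space, not two; your phrasing (``rule out any second linearly independent integrable solution'') suggests you may be undercounting, and the actual argument must match the regularity conditions at $r=0$ against the decay conditions at $r=+\infty$ across this four-dimensional space. Second, and more importantly, the $k\ge 2$ case is where the substance lies, and your Pohozaev-type sketch is too vague to be convincing. For the scalar equation \eqref{criti} one can use eigenvalue monotonicity in $k$ because the linearized operator is self-adjoint; here the linearized operator $(Y,Z)\mapsto(-\Delta Y - pV^{p-1}Z,\,-\Delta Z - qU^{q-1}Y)$ is not self-adjoint (unless $p=q$), and the associated second variation $2\int\nabla Y\cdot\nabla Z - p\int V^{p-1}Z^2 - q\int U^{q-1}Y^2$ is strongly indefinite, so neither eigenvalue comparison nor a single-multiplier identity is readily available. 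Overcoming this non-self-adjoint structure is exactly what makes \cite{FKP} a standalone paper, and your outline does not indicate how you would do it.
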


\section{The energy expansion for $p \in (\frac N{N-2},\frac{N+2}{N-2}]$}\label{sec:exp1}
In Sections \ref{sec:exp1}--\ref{sec:exist}, we assume that $N \ge 6$, $p \in (\frac N{N-2},\frac{N+2}{N-2}]$, and \eqref{cri}, unless otherwise stated.
In Remarks \ref{Re1-25-3}--\ref{Re11-25-3}, \ref{Re4-25-3}, \ref{Re2-25-3} and \ref{Re3-25-3}, we also discuss the case $N = 5$.

\medskip
Given numbers $\mu_{00},\, r_0,\, \lambda_0 > 1$ to be determined in Section \ref{sec:exist}, we define the configuration space of parameters
\begin{equation}\label{mcp}
\mcp := (\mu_{00}^{-1},\mu_{00}) \times (r_0^{-1},r_0) \times (\lambda_0^{-1},\lambda_0).
\end{equation}
For any $(\mu_0,r,\lambda) \in \mcp$ and $k \in \N$, we set
\[x_j := \(r\cos\(\tfrac{2(j-1)\pi}{k}\), r\sin\(\tfrac{2(j-1)\pi}{k}\),{\bf{0}}\) \in \R^2 \times \R^{N-2}, \quad j=1,2,\ldots,k,\]
and $\mu := \lambda k^{\frac{(p+1)(N-2)}{N}}$. Let also
\begin{equation}\label{Vdef}
(U_j, V_j) := (U_{x_j,\mu}, V_{x_j,\mu}), \quad V := \sum_{j=1}^k V_j,
\end{equation}
and $U$ be the unique solution of
\begin{equation}\label{Udef}
-\Delta U = V^p \quad \text{in } \R^N, \quad U \in \dot{W}^{2,\frac{p+1}{p}}(\R^N).
\end{equation}
Our approximate solution to system \eqref{2} is
\begin{equation}\label{U*V*}
\begin{aligned}
(U_*[\mu_0,r,\mu], V_*[\mu_0,r,\mu]) &:= (U_*,V_*) \\
&:= (U_{0, \mu_0}, V_{0, \mu_0})-(U, V).
\end{aligned}
\end{equation}

\medskip
The main result of this section is an expansion of the energy of $(U_*,V_*)$. The energy functional corresponding to \eqref{2} is
\begin{equation}\label{a1}
I(u,v) := \int_{\R^N} \nabla u \cdot \nabla v -\frac{1}{p+1} \int_{\R^N} |v|^{p+1} -\frac{1}{q+1} \int_{\R^N} |u|^{q+1}.
\end{equation}
\begin{proposition}\label{prop:Iexpan}
Assume that $N \ge 6$. For $k \in \N$ large enough, we have
\[I(U_*, V_*) = (k+1)A + k \left[-\frac{B_1k^{N-2}}{r^{N-2}\mu^{N-2}}
+ \frac{B_2 U_{0,\mu_0}(r)}{\mu^{\frac{N}{q+1}}} + O\(\frac{1}{\mu^{\frac{N}{q+1}+\sigma}}\)\right]\]
uniformly in $\mcp$, where $A := I(U_{0,\mu_0}, V_{0,\mu_0}) = I(U_{0,1}, V_{0,1})$, $B_1$ and $B_2$ are positive constants depending only on $N$ and $p$, and $\sigma>0$ is a sufficiently small number.
\end{proposition}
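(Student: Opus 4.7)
\medskip
My plan is to decompose $I(U_*,V_*)$ into a self-energy piece $I(U_{0,\mu_0},V_{0,\mu_0})=A$ plus a piece $I(U,V)$ for the sum of the $k$ negative bubbles plus cross-terms encoding their interaction with the central bubble, then expand each piece via the defining PDEs and the sharp decay estimates \eqref{V10est}--\eqref{U10est}. The first reduction is to exploit that the gradient pairing is bilinear: integration by parts together with $-\Delta U_{0,\mu_0}=V_{0,\mu_0}^{q}$ wait let me say $-\Delta U_{0,\mu_0}=V_{0,\mu_0}^{p}$ and $-\Delta V_{0,\mu_0}=U_{0,\mu_0}^{q}$, together with $-\Delta U=V^{p}$ and $-\Delta V=\sum_{j=1}^{k}U_{j}^{q}$, lets me rewrite
\[
\int_{\R^{N}}\nabla U_{*}\cdot\nabla V_{*}=\int V_{0,\mu_{0}}^{\,p+1}-\int V_{0,\mu_{0}}V^{p}-\sum_{j=1}^{k}\int U_{0,\mu_{0}}U_{j}^{\,q}+\int V^{\,p+1},
\]
so the whole energy is reduced to a sum of \emph{nonlinear} integrals that I can attack with pointwise bubble asymptotics.

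\medskip
The self-energy $I(U_{0,\mu_0},V_{0,\mu_0})$ equals $A$ and is scale-invariant, giving the term $(k+1)A$ once I separately show $I(U,V)=kA+O(k\mu^{2-N})$ up to the leading interaction correction. Since $U$ is defined by \eqref{Udef} rather than as $\sum U_{j}$, I will write $U=\sum_{j=1}^{k}U_{j}+\psi$ where $\psi$ solves $-\Delta\psi=V^{p}-\sum_{j=1}^{k}V_{j}^{\,p}$; the right-hand side is the $p$-homogeneous cross-interaction among well-separated bubbles, so by the Green's representation and the decay of $V_{j}$ from \eqref{V10est} I can control $\psi$ in $L^{q+1}$ and pointwise by an error of order $\mu^{-N/(q+1)-\sigma}$ per bubble; this correction, together with the elementary inequality
\[
\bigl||a-b|^{s+1}-a^{s+1}+(s+1)a^{s}b-(s+1)ab^{s}+b^{s+1}\bigr|\le C\min(a,b)^{\frac{s+1}{2}}\max(a,b)^{\frac{s+1}{2}},
\]
applied with $(s,a,b)=(p,V_{0,\mu_{0}},V)$ and $(q,U_{0,\mu_{0}},U)$, reduces the nonlinear integrals to computable sums of two-bubble interactions.

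\medskip
Two families of interactions then dominate. First, each pair $(U_{0,\mu_0},V_{0,\mu_0})$ vs.\ $(U_j,V_j)$ contributes the terms
\[
\int U_{0,\mu_{0}}U_{j}^{\,q}\approx U_{0,\mu_{0}}(x_{j})\int U_{j}^{\,q}\,dy=\frac{U_{0,\mu_{0}}(r)}{\mu^{N/(q+1)}}\int_{\R^{N}}U_{0,1}^{\,q},
\]
and a symmetric one from $\int V_{0,\mu_0}V_j^{\,p}$; after summing on $j$ these combine to yield the $kB_{2}U_{0,\mu_{0}}(r)/\mu^{N/(q+1)}$ contribution. Second, the inter-bubble interactions $\int U_{i}U_{j}^{\,q}$ for $i\ne j$ use \eqref{U10est} to give $\sim a_{N,p}\mu^{-(N-2)}|x_{i}-x_{j}|^{-(N-2)}$; summing the leading nearest-neighbor contributions $\sum_{i\ne j}|x_{i}-x_{j}|^{2-N}\simeq k^{N-2}r^{2-N}$ produces the interaction constant $B_{1}$ and the leading $-B_{1}k^{N-1}/(r^{N-2}\mu^{N-2})$ term. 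The sign follows from the minus sign in $(U_{*},V_{*})=(U_{0,\mu_0},V_{0,\mu_0})-(U,V)$ propagating through the cross-term $-\int V_{0,\mu_0}V^{p}-\sum\int U_{0,\mu_0}U_{j}^{\,q}$ after the energy identity at ground states is used to collect constants.

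\medskip
The main obstacle will be showing that every remainder is uniformly $O(\mu^{-N/(q+1)-\sigma})$, which is a delicate matter in dimensions close to $6$ and when $p$ is close to the lower bound $N/(N-2)$: the decay of $U_{0,1}$ in \eqref{U10est} is only $r^{2-N}$, the same rate as $V_{0,1}$, so the two-bubble interactions have comparable sizes and one must keep the next-order $\kappa_0$ correction in order to separate $B_{1}$ from higher-order tails. Likewise, the correction $\psi$ arising from replacing $\sum_{j}V_{j}^{\,p}$ by $V^{p}$ must be estimated carefully because $p$ can be just above $1$ so $V^{p}-\sum V_{j}^{\,p}$ is not pointwise small on the scale of a single bubble. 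I would handle both by integrating in annuli partitioned by nearest-bubble regions, in each region approximating $V$ by the closest $V_{j}$ plus a tail controlled by \eqref{V10est}, and then summing and exploiting the constraint $\mu\simeq k^{(p+1)(N-2)/N}$ to absorb logarithmic or borderline losses into the exponent $\sigma>0$.
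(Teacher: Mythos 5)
Your overall plan agrees with the paper's: decompose $I(U_*,V_*)$ into the self-energy $A$, the multi-bubble energy $I(U,V)$, and cross-terms; reduce to nonlinear integrals via $-\Delta U_{0,\mu_0}=V_{0,\mu_0}^{p}$, $-\Delta V_{0,\mu_0}=U_{0,\mu_0}^{q}$, $-\Delta U=V^{p}$, $-\Delta V=\sum_j U_j^{q}$; and isolate the central-versus-outer cross-terms (giving $B_2$) and the inter-bubble terms (giving $B_1$) through pointwise bubble asymptotics. This is the same skeleton as the proof of \eqref{eq:IUV} in the paper.

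However there is a genuine gap in your treatment of $\psi = U - \sum_{j}U_j$ (the paper's $\varphi$). You claim to control $\psi$ ``pointwise by an error of order $\mu^{-N/(q+1)-\sigma}$ per bubble'' and later say its effect can be ``absorbed into the exponent $\sigma>0$.'' This is false: near $x_1$ one has, by Lemma~\ref{nnl2-19-1},
\[
\varphi(y) = \frac{B_{11}k^{N-2}}{r^{N-2}\mu^{N/(p+1)}}\, w\bigl(\mu(y-x_1)\bigr) + O\Bigl(\tfrac{k^{N-2}}{\mu^{N/(p+1)+\sigma}}\Bigr),
\]
and since $k\simeq\mu^{\tau}$ with $(N-2)\tau = N/(p+1)$, the prefactor $k^{N-2}/\mu^{N/(p+1)}$ is $O(1)$, not small. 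Consequently $\int_{S}U_1^{q}\varphi$ is of order $k^{N-2}/(r^{N-2}\mu^{N-2}) \simeq \mu^{-N/(q+1)}$, i.e.\ of \emph{exactly} the same order as the naive inter-bubble term $\sum_{j\ge 2}\int_{S}U_1^{q}U_j$, and it contributes to the \emph{leading} constant, not the error. This is why the paper obtains
\[
B_1 = \frac{\wtb_{11}}{p+1}\Bigl(a_{N,p}\int_{\R^N}U_{0,1}^{q} + p\,b_{N,p}\int_{\R^N}U_{0,1}^{q}\,w\Bigr),
\]
where the second term comes from $w$, the profile of $\varphi$. With your estimate, you would only produce the first term and hence get the wrong $B_1$. (Its sign is still positive, so the existence argument downstream would not collapse, but the stated Proposition would be false as written.) This $O(1)$ size of $\varphi$ at the bubble cores is precisely the reason the paper uses the nonlinear projection \eqref{Udef} rather than $\sum_j U_j$ for the $u$-component, and it cannot be sidestepped by a more careful annulus decomposition, since it is a leading-order phenomenon, not a boundary/tail loss.

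A secondary imprecision: when claiming that $\int V_{0,\mu_0}V_j^{p}$ and $\int U_{0,\mu_0}U_j^{q}$ ``combine'' into $B_2$, note that for $p<q$ the $V$-term scales as $\mu^{-N/(p+1)}$, which is strictly smaller than $\mu^{-N/(q+1)}$, so it is actually absorbed into the error; only at $p=\frac{N+2}{N-2}$ (when $p=q$) does it contribute to $B_2$. This does not change the structure of the expansion but is needed to identify $B_2$ correctly.
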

To prove Proposition~\ref{prop:Iexpan}, we need to analyze the function
\[\varphi(y) := U(y) - \sum_{j=1}^k U_j(y), \quad y \in \R^N.\]
For this aim, we will make use of the following auxiliary function and quantities:
\begin{itemize}
\item[-] Let $w$ be the solution of
\begin{equation}\label{4-29-7}
\begin{cases}
-\Delta w= V_{0,1}^{p-1} &\text{in } \R^N,\\
w(x)\to 0 &\text{as } |x|\to +\infty.
\end{cases}
\end{equation}
Indeed, equation \eqref{4-29-7} has a unique solution since $(p-1)(N-2)>2$. The function $w$ is positive in $\R^N$.
\item[-] According to \cite[Appendix A]{MM}, there exists a constant $\wtb_{11} > 0$ depending only on $N$ such that
\begin{equation}\label{xjx1sum}
\sum_{j=2}^k \frac{1}{|x_j-x_1|^{N-2}}
= \frac{\wtb_{11} k^{N-2}}{r^{N-2}} + k^{N-2} \times \begin{cases}
O(k^{-2}) &\text{if } N \ge 6,\\
O(k^{-1}\log k) &\text{if } N = 5.
\end{cases}
\end{equation}
\end{itemize}
By symmetry, it suffices to consider the function $\varphi(y)$ for $y \in \Omega_1$, where
\begin{equation}\label{Omegaj}
\Omega_j := \left\{y=(y',y'')\in \R^{2}\times \R^{N-2}: \Big\langle \frac{y'}{|y'|},\frac{x'_j}{|x_j|} \Big\rangle \ge \cos\frac{\pi}{k}\right\}, \quad j=1,2,\ldots,k.
\end{equation}
Let $B_{11} := pb_{N,p}\wtb_{11} > 0$, where the number $b_{N,p}$ is defined in \eqref{eq:HV}.
\begin{lemma}\label{nnl2-19-1}
Assume that $N \ge 5$. For $k \in \N$ large enough, we have
\begin{equation}\label{nn1-19-1}
\varphi(y) = \frac{B_{11}k^{N-2}}{r^{N-2} \mu^{\frac{N}{p+1}}} w(\mu(y-x_1)) + O\(\frac{k^{N-2}}{\mu^{\frac{N}{p+1}+\sigma}}\), \quad y \in \Omega_1
\end{equation}
uniformly in $\mcp$, where $r = |x_1|$ and $\sigma>0$ is a sufficiently small number.
\end{lemma}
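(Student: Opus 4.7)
The natural starting point is the PDE satisfied by $\varphi$. Since $-\Delta U_j = V_j^p$ for each $j$ and $-\Delta U = V^p$ with $V = \sum_{j=1}^k V_j$, subtracting gives
\[
-\Delta \varphi \;=\; V^p - \sum_{j=1}^{k} V_j^p \;=\; \bigg(\sum_{j=1}^{k} V_j\bigg)^{\!p} - \sum_{j=1}^{k} V_j^p \quad \text{in } \R^N.
\]
On $\Omega_1$, the bubble $V_1$ dominates the remaining bubbles, since $|y-x_j| \gtrsim |x_1-x_j|$ for every $j\ne 1$. I would perform the Taylor expansion
\[
\bigg(\sum_{j=1}^{k} V_j\bigg)^{\!p} - \sum_{j=1}^{k} V_j^p \;=\; p\, V_1^{p-1}\sum_{j=2}^{k} V_j + R(y),
\]
where the remainder $R$ collects the quadratic-type contributions (dominated by $V_1^{p-2}(\sum_{j\ge 2}V_j)^2$ when $p\ge 2$, with the analogous Hölder-type bound when $p\in(\tfrac{N}{N-2},2)$) together with $-\sum_{j\ge 2} V_j^p$ and cross-terms that are concentrated away from the peak $x_1$.

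The next step is to identify the leading order. Using the sharp decay \eqref{V10est}, for $j\ne 1$ one has
\[
V_j(y) \;=\; \frac{b_{N,p}\,\mu^{\frac{N}{p+1}-(N-2)}}{|y-x_j|^{N-2}} + O\!\left(\frac{\mu^{\frac{N}{p+1}-N}}{|y-x_j|^{N}}\right),
\]
and in the critical region $|y-x_1|\ll \min_{j\ge 2}|x_1-x_j|$ one can freeze $|y-x_j|\simeq |x_1-x_j|$ up to a small relative error. Combined with the summation formula \eqref{xjx1sum}, this yields
\[
\sum_{j=2}^k V_j(y) \;=\; \frac{b_{N,p}\wtb_{11}\, k^{N-2}}{r^{N-2}}\,\mu^{\frac{N}{p+1}-(N-2)} + (\text{lower order}).
\]
Rescaling via $z=\mu(y-x_1)$ and using $V_1^{p-1}(y)=\mu^{(p-1)N/(p+1)}V_{0,1}^{p-1}(z)$ together with the algebraic identity $\tfrac{pN}{p+1}-(N-2)=2-\tfrac{N}{p+1}$, the principal part of $-\Delta\varphi$ becomes
\[
\frac{B_{11}\, k^{N-2}}{r^{N-2}}\,\mu^{2-\frac{N}{p+1}}\, V_{0,1}^{p-1}(\mu(y-x_1)) + (\text{error}).
\]
This matches exactly $-\Delta\bar\varphi$, where
\[
\bar\varphi(y) \;:=\; \frac{B_{11}\,k^{N-2}}{r^{N-2}\,\mu^{N/(p+1)}}\,w\!\big(\mu(y-x_1)\big),
\]
thanks to \eqref{4-29-7}, and this singles out $\bar\varphi$ as the correct ansatz for $\varphi$ on $\Omega_1$.

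The final step is to control the remainder $\psi:=\varphi-\bar\varphi$. Symmetrizing the ansatz over all sectors $\Omega_1,\ldots,\Omega_k$ (so that the comparison function is defined globally), $\psi$ solves a Poisson equation $-\Delta\psi = G$ whose right-hand side is the collection of all error terms identified above. I would bound $\psi$ pointwise through the Newton potential representation $\psi = c_N\int G(z)|y-z|^{-(N-2)}\,dz$, splitting $\R^N$ into annuli adapted to the two natural scales (the fast scale $1/\mu$ around each $x_j$ and the slow scale $r/k$ between peaks), and handle each annulus by combining \eqref{V10est}, the decay rate of $w$ dictated by $(p-1)(N-2)\in(2,N-2]$, and iterated use of \eqref{xjx1sum}. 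The output is a bound $\|\psi\|_{L^\infty(\Omega_1)} \le C k^{N-2}\mu^{-N/(p+1)-\sigma}$ for some $\sigma>0$ uniform in $\mcp$, which gives \eqref{nn1-19-1}.

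The main obstacle I anticipate is the bookkeeping of the nonlinear Taylor remainder $R$ when $p<2$: the pointwise identity $(a+b)^p-a^p-pa^{p-1}b=O(a^{p-2}b^2)$ no longer holds, and one must use a Hölder-type inequality that produces a weaker but still integrable error. A second delicate point is the transition regime where $|y-x_1|$ is comparable to the inter-peak separation $r/k$: here neither the linearization around $V_1$ nor the Newton-potential far-field approximation is cleanly dominant, and a careful interpolation between them is needed to extract the gain $\mu^{-\sigma}$.
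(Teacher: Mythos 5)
Your plan is correct and is essentially the proof the paper gives: the paper also starts from the Newton potential for $\varphi$, linearizes $(\sum_j V_j)^p-\sum_j V_j^p$ around $V_1$ near $x_1$, extracts the constant $b_{N,p}\wtb_{11}k^{N-2}/r^{N-2}$ from \eqref{V10est} and \eqref{xjx1sum}, and identifies the rescaled $w$ from \eqref{4-29-7}, handling the region $\Omega_1\setminus B_{\mu^{-\kappa}}(x_1)$ and the other sectors $\Omega_i$, $i\ge 2$, by exactly the kind of interpolation between the scales $1/\mu$ and $r/k$ you anticipate. The only cosmetic difference is that you subtract an ansatz $\bar\varphi$ and then convolve the residual, whereas the paper convolves first and splits inside the integral; the intermediate estimates (including the H\"older-type replacement of the quadratic Taylor remainder for $p<2$ via \eqref{eq:ele}) are the same.
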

\begin{proof}
Let $\tau := \frac{N}{(p+1)(N-2)} \in (0,1)$ so that $k \simeq \mu^{\tau}$, and $\kappa \in (\tau,1)$ be a fixed number slightly greater than $\tau$. The representation formula for $\varphi$ yields
\begin{equation}\label{nn2-19-1}
\varphi(y) = \int_{\R^N}\frac{\ga_N}{|y-z|^{N-2}} \left[\bigg(\sum_{j=1}^k V_j\bigg)^p- \sum_{j=1}^k V_j^p \right](z)\, dz > 0, \quad y \in \R^N,
\end{equation}
where $\ga_N := \frac{1}{(N-2)|\Sn^{N-1}|}$. We will establish \eqref{nn1-19-1} by analyzing the right-hand side of \eqref{nn2-19-1}.
Throughout the proof, $\delta > 0$ and $\sigma > 0$ denote sufficiently small numbers.

\medskip
It holds that
\begin{equation}\label{Vj}
\begin{aligned}
\sum_{j=2}^k V_j(y) &\le \frac{C}{\mu^{\frac{N}{q+1}}} \sum_{j=2}^k \frac{1}{|y-x_j|^{N-2}} \\
&\le \frac{C}{\mu^{\frac{N}{q+1}}} \sum_{j=2}^k \frac{1}{|x_j-x_1|^{N-2}} \le \frac{Ck^{N-2}}{\mu^{\frac{N}{q+1}}} \simeq C \mu^{\frac{N}{p+1}-\frac{N}{q+1}}, \quad y \in \Omega_1,
\end{aligned}
\end{equation}
which implies
\begin{equation}\label{Vj2}
\sum_{j=2}^k V_j \le CV_1 \quad \text{in } B_{\mu^{-\kappa}}(x_1)
\end{equation}
for some large $C > 0$. Using \eqref{Vj2} and the elementary inequality
\begin{equation}\label{eq:ele}
|1-t|^p = 1-pt+O(t^2)=1-pt+O(t^\alpha) \quad \text{for all } 0\le t\le c,
\end{equation}
where $1<\alpha\le 2$ and $c > 0$ are fixed numbers, we estimate
\begin{align}
&\quad \int_{B_{\mu^{-\kappa}}(x_1)} \frac{\ga_N}{|y-z|^{N-2}} \left[\bigg(\sum_{j=1}^k V_j\bigg)^p- \sum_{j=1}^k V_j^p \right](z)\, dz \nonumber \\
&= p\int_{B_{\mu^{-\kappa}}(x_1)} \frac{\ga_N}{|y-z|^{N-2}} \bigg(V_1^{p-1}\sum_{j=2}^k V_j\bigg)(z)\, dz
- \int_{B_{\mu^{-\kappa}}(x_1)}\frac{\ga_N}{|y-z|^{N-2}} \bigg(\sum_{j=2}^k V_j^p\bigg)(z)\, dz \nonumber \\
&\ + O\(\int_{B_{\mu^{-\kappa}}(x_1)}\frac{1}{|y-z|^{N-2}} \Bigg[V_1^{p-1-\delta}\bigg(\sum_{j=2}^k V_j\bigg)^{1+\delta}\Bigg](z)\, dz\). \label{1-29-7}
\end{align}
By \eqref{V10est} and \eqref{xjx1sum},
\begin{align}
&\quad p\int_{B_{\mu^{-\kappa}}(x_1)} \frac{\ga_N}{|y-z|^{N-2}} \bigg(V_1^{p-1}\sum_{j=2}^k V_j\bigg)(z)\, dz \nonumber \\
&= p\left[\mu^{\frac{N}{p+1}} \sum_{j=2}^k \frac{b_{N,p}}{(\mu|x_j-x_1|)^{N-2}} + O\(\frac{k^{N-2}}{\mu^{\frac{N}{q+1}+\sigma}}\)\right] \int_{B_{\mu^{-\kappa}}(x_1)}\frac{\ga_N}{|y-z|^{N-2}} V_1^{p-1}(z)\, dz \nonumber \\
&= \left[\frac{pb_{N,p}\wtb_{11} k^{N-2}}{r^{N-2}\mu^{\frac{N}{q+1}}} + O\(\frac{k^{N-2}}{\mu^{\frac{N}{q+1}+\sigma}}\)\right] \int_{B_{\mu^{-\kappa}}(x_1)}\frac{\ga_N}{|y-z|^{N-2}} V_1^{p-1}(z)\, dz. \label{2-29-7}
\end{align}
In addition,
\begin{equation}\label{3-29-7}
\begin{aligned}
&\quad \int_{B_{\mu^{-\kappa}}(x_1)}\frac{\ga_N}{|y-z|^{N-2}} V_1^{p-1}(z)\, dz \\
&= \frac1{\mu^{2- \frac{(p-1)N}{p+1} }} \int_{B_{\mu^{1-\kappa}}(0)}\frac{\ga_N}{ |z-\mu(y-x_1)|^{N-2}} V_{0,1}^{p-1}(z)\,dz\\
&= \frac{w(\mu(y-x_1))}{\mu^{2- \frac{(p-1)N}{p+1} }} - \frac1{\mu^{2- \frac{(p-1)N}{p+1} }}  \int_{\R^N\setminus B_{\mu^{1-\kappa}}(0)}\frac{\ga_N}{ |z-\mu(y-x_1)|^{N-2}} V_{0,1}^{p-1}(z)\,dz.
\end{aligned}
\end{equation}
If $\mu|y-x_1|\le \frac{1}{2}\mu^{1-\kappa}$, then
\begin{equation}\label{5-29-7}
\int_{\R^N\setminus B_{\mu^{1-\kappa}}(0)}\frac{\ga_N}{ |z-\mu(y-x_1)|^{N-2}} V_{0,1}^{p-1}(z)\,dz
\le C\int_{\R^N\setminus B_{\mu^{1-\kappa}}(0)}\frac 1{|z|^{p(N-2)}} \le \frac{C}{\mu^{\sigma}}.
\end{equation}
If $\mu|y-x_1|> \frac{1}{2}\mu^{1-\kappa}$, then
\begin{equation}\label{6-29-7}
\int_{\R^N\setminus B_{\mu^{1-\kappa}}(0)}\frac{1}{ |z-\mu(y-x_1)|^{N-2}} V_{0,1}^{p-1}(z)\,dz
\le \frac{C}{(\mu |y-x_1|)^{(p-1)(N-2)-2}} \le \frac{C}{\mu^{\sigma}}.
\end{equation}
It is worth noting that the condition $p(N-2) > N$ is crucially used in \eqref{5-29-7}--\eqref{6-29-7}. From \eqref{2-29-7}--\eqref{6-29-7} and the identity $\frac{N}{q+1}+2 = \frac{pN}{p+1}$, we see
\begin{multline}\label{7-29-7}
p\int_{B_{\mu^{-\kappa}}(x_1)} \frac{\ga_N}{|y-z|^{N-2}} \bigg(V_1^{p-1}\sum_{j=2}^k V_j\bigg)(z)\, dz \\
= \frac{B_{11}k^{N-2}}{r^{N-2}\mu^{\frac{N}{p+1}}} w(\mu(y-x_1)) + O\(\frac{k^{N-2}}{\mu^{\frac{N}{p+1}+\sigma}}\).
\end{multline}
Applying \eqref{Vj}, we also compute
\begin{align}
&\quad \int_{B_{\mu^{-\kappa}}(x_1)}\frac{1}{|y-z|^{N-2}} \Bigg[V_1^{p-1-\delta}\bigg(\sum_{j=2}^k V_j\bigg)^{1+\delta}\Bigg](z)\, dz \nonumber \\
&\le C\left[\sum_{j=2}^k \frac{\mu^{\frac{N}{p+1}}} {(\mu |x_j-x_1|)^{N-2}}\right]^{1+\delta} \int_{B_{\mu^{-\kappa}}(x_1)}\frac{1}{|y-z|^{N-2}} V_1^{p-1-\delta}(z)\, dz \label{1-30-7} \\
&\le C\left[\sum_{j=2}^k \frac1{(\mu |x_j-x_1|)^{N-2}}\right]^{1+\delta} \int_{\R^N}\frac{\mu^{\frac{pN}{p+1}}}{|y-z|^{N-2}} V_{0,1}^{p-1-\delta}(\mu(z-x_1))\, dz \nonumber \\
&\le C \mu^{\frac{pN}{p+1}-2} \left[\sum_{j=2}^k \frac{1} {(\mu |x_j-x_1|)^{N-2}}\right]^{1+\delta} \le \frac{Ck^{N-2}}{\mu^{\frac{N}{p+1}+\sigma}} \nonumber
\end{align}
and
\begin{align}
\int_{B_{\mu^{-\kappa}}(x_1)}\frac{\ga_N}{|y-z|^{N-2}} \bigg(\sum_{j=2}^k V_j^p\bigg)(z)\, dz &\le \int_{B_{\mu^{-\kappa}}(x_1)}\frac{C}{|y-z|^{N-2}} \bigg(V_1^{p-1-\delta}\sum_{j=2}^k V_j^{1+\delta}\bigg)(z)\, dz \nonumber \\
&\le C \mu^{\frac{pN}{p+1}-2} \sum_{j=2}^k \frac{1} {(\mu |x_j-x_1|)^{(1+\delta)(N-2)}} \label{8-29-7} \\
&\le \frac{Ck^{N-2}}{\mu^{\frac{N}{p+1}+\sigma}}. \nonumber
\end{align}
We infer from \eqref{1-29-7} and \eqref{7-29-7}--\eqref{8-29-7} that
\begin{multline}\label{2-30-7}
\int_{B_{\mu^{-\kappa}}(x_1)} \frac{\ga_N}{|y-z|^{N-2}} \left[\bigg(\sum_{j=1}^k V_j\bigg)^p- \sum_{j=1}^k V_j^p \right](z)\, dz \\
= \frac{B_{11}k^{N-2}}{r^{N-2}\mu^{\frac{N}{p+1}}} w(\mu(y-x_1)) + O\(\frac{k^{N-2}}{\mu^{\frac{N}{p+1}+\sigma}}\).
\end{multline}

On the other hand, given any $1< \tith \le N-2$, we have
\begin{equation}\label{e4}
\begin{aligned}
\sum_{j=2}^k\frac{1}{|y-x_j|^{N-2}} &\le \frac{1}{|y-x_1|^{N-2-\tith}}\sum_{j=2}^k\frac{1}{|y-x_j|^{\tith}} \\
&\le \frac{C}{|y-x_1|^{N-2-\tith}}\sum_{j=2}^k\frac{1}{|x_j-x_1|^{\tith}} \le
\frac{Ck^{ \tith }}{|y-x_1|^{N-2-\tith}}
\end{aligned}
\end{equation}
for any $y\in \Omega_1$. It follows that
\begin{equation}\label{1-29-9}
\begin{aligned}
&\quad \int_{\Omega_1\setminus B_{\mu^{-\kappa}}(x_1)}\frac{1}{|y-z|^{N-2}} \bigg(\sum_{j=2}^k V_j\bigg)^p(z) dz \\
&\le \frac{C}{\mu^{\frac{pN}{q+1}}}\int_{\Omega_1\setminus B_{\mu^{-\kappa}}(x_1)}\frac{1}{|y-z|^{N-2}} \(\frac{k^{1+\theta}}{|z-x_1|^{N-3-\theta}}\)^p dz \\
&\le \frac{Ck^{(1+\theta)p}}{\mu^{\frac{pN}{q+1}}} \mu^{\kappa(p(N-3-\theta)-2)} \le \frac{k^{N-2}}{\mu^{\frac{N}{p+1}+\sigma}}
\end{aligned}
\end{equation}
provided $\theta > 0$ so small that $p(N-3-\theta) > 2$, where the last inequality follows from $p(N-2) > N$.
By employing \eqref{1-29-9} and proceeding as in \eqref{5-29-7}--\eqref{6-29-7}, we deduce
\begin{equation}\label{3-30-7}
\begin{aligned}
0 &< \int_{\Omega_1\setminus B_{\mu^{-\kappa}}(x_1)} \frac{\ga_N}{|y-z|^{N-2}} \left[\bigg(\sum_{j=1}^k V_j\bigg)^p- \sum_{j=1}^k V_j^p \right](z)\, dz \\
&\le \int_{\Omega_1\setminus B_{\mu^{-\kappa}}(x_1)} \frac{\ga_N}{|y-z|^{N-2}} \left[\bigg(\sum_{j=1}^k V_j\bigg)^p- V_1^p \right](z)\, dz\\
&\le C\int_{\Omega_1\setminus B_{\mu^{-\kappa}}(x_1)}\frac{1}{|y-z|^{N-2}} \left[V_1^{p-1}\sum_{j=2}^k V_j + \bigg(\sum_{j=2}^k V_j\bigg)^p\right](z)\, dz \le \frac{Ck^{N-2}}{\mu^{\frac{N}{p+1}+\sigma}}.
\end{aligned}
\end{equation}

Putting \eqref{2-30-7} and \eqref{3-30-7} together, we arrive at
\begin{multline}\label{4-30-7}
\int_{\Omega_1} \frac{\ga_N}{|y-z|^{N-2}} \left[\bigg(\sum_{j=1}^k V_j\bigg)^p- \sum_{j=1}^k V_j^p \right](z)\, dz \\
= \frac{B_{11}k^{N-2}}{r^{N-2}\mu^{\frac{N}{p+1}}} w(\mu(y-x_1)) + O\(\frac{k^{N-2}}{\mu^{\frac{N}{p+1}+\sigma}}\).
\end{multline}

\medskip
We next derive
\begin{equation}\label{5-30-7}
\sum_{i=2}^k \int_{\Omega_i} \frac{\ga_N}{|y-z|^{N-2}} \left[\bigg(\sum_{j=1}^k V_j\bigg)^p- \sum_{j=1}^k V_j^p \right](z)\, dz = O\(\frac{k^{N-2}}{\mu^{\frac{N}{p+1}+\sigma}}\).
\end{equation}
Owing to \eqref{Vj} and Lemma~\ref{l5-30-7} below, the left-hand side of \eqref{5-30-7} is bounded by
\begin{align*}
&\quad \sum_{i=2}^k \int_{\Omega_i} \frac{C}{|y-z|^{N-2}} \left[V_i^{p-1} \sum_{j \ne i} V_j + \bigg(\sum_{j \ne i} V_j\bigg)^p\right](z)\, dz \\
&\begin{medsize}
\displaystyle \le \frac{C}{\mu^{\frac{pN}{q+1}}} \sum_{i=2}^k \int_{\Omega_i} \frac{1}{|y-z|^{N-2}} \left[\frac{1}{|z-x_i|^{(p-1)(N-2)}} \sum_{j \ne i} \frac{1}{|z-x_j|^{N-2}} + \(\sum_{j \ne i} \frac{1}{|z-x_j|^{N-2}}\)^p\right] dz
\end{medsize} \\
&= \frac{Ck^{p(N-2)-2}}{\mu^{\frac{pN}{q+1}}} \sum_{i=2}^k \int_{\Omega_i} \frac{1}{|ky-z|^{N-2}} \\
&\hspace{75pt} \times \left[\frac{1}{|z-kx_i|^{(p-1)(N-2)}} \sum_{j \ne i} \frac{1}{|z-kx_j|^{N-2}} + \(\sum_{j \ne i} \frac{1}{|z-kx_j|^{N-2}}\)^p\right] dz \\
&\le \dfrac{Ck^{p(N-2)-2}}{\mu^{\frac{pN}{q+1}}} = O\(\frac{k^{N-2}}{\mu^{\frac{N}{p+1}+\sigma}}\).
\end{align*}

\medskip
Now, \eqref{nn1-19-1} is a direct consequence of \eqref{4-30-7} and \eqref{5-30-7}.
\end{proof}

For future use, we set
\begin{equation}\label{eq:S}
S := B_{\frac{\pi}{2}r_0^{-1}k^{-1}}(x_1) = \left\{y \in \R^N: |y-x_1| < \frac{\pi}{2}r_0^{-1}k^{-1}\right\} \subset \Omega_1.
\end{equation}
Then $U_1 \ge c$ in $S$ for some small $c > 0$. By using \eqref{nn1-19-1} and arguing as in \eqref{Vj}, we also observe that $\sum_{j=2}^k U_j + \varphi \le C$ in $S$. In particular, $U \le CU_1$ in $S$.
\begin{lemma}\label{l1-28-12}
Assume that $N \ge 6$. For $k \in \N$ large enough, we have
\begin{equation}\label{1-28-12}
I(U, V)= k\left[A - \frac{B_1k^{N-2}}{r^{N-2}\mu^{N-2}} + O\(\frac{1}{\mu^{\frac{N}{q+1}+\sigma}}\)\right]
\end{equation}
uniformly in $\mcp$, where $A=I(U_{0,1}, V_{0,1})$, $B_1 > 0$ is a constant depending only on $N$ and $p$, and $\sigma>0$ is a sufficiently small number.
\end{lemma}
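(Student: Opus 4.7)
The plan is to exploit the defining equation $-\Delta U=V^p$ to perform an integration by parts that eliminates the gradient term in $I$. Since $V\ge 0$ (so also $U\ge 0$ by the maximum principle applied to $-\Delta U=V^p$ with $U\to 0$ at infinity), we get
\[
I(U,V)=\int_{\R^N}\nabla U\cdot\nabla V-\tfrac{1}{p+1}\!\int_{\R^N}V^{p+1}-\tfrac{1}{q+1}\!\int_{\R^N}U^{q+1}=\tfrac{p}{p+1}\!\int_{\R^N} V^{p+1}-\tfrac{1}{q+1}\!\int_{\R^N} U^{q+1}.
\]
The configuration is invariant under the $k$-fold rotation fixing the $(y_1,y_2)$-plane together with reflections, so $V$ and $U$ (the latter via uniqueness for \eqref{Udef}) share this symmetry. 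Hence $\int_{\R^N}V^{p+1}=k\int_{\Omega_1}V^{p+1}$ and $\int_{\R^N}U^{q+1}=k\int_{\Omega_1}U^{q+1}$, reducing everything to the single sector $\Omega_1$.

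On $\Omega_1$ the bubbles $V_1$ and $U_1$ dominate, so I would write $V=V_1+W$ with $W:=\sum_{j\ge 2}V_j$, and $U=U_1+R$ with $R:=\sum_{j\ge 2}U_j+\varphi$. Using the elementary expansion \eqref{eq:ele} in the regime where $W/V_1$ and $R/U_1$ are small,
\[
V^{p+1}=V_1^{p+1}+(p+1)V_1^{p}W+O\!\bigl(V_1^{p-1-\delta}W^{1+\delta}+W^{p+1}\bigr),
\]
and analogously for $U^{q+1}$. The leading pieces $\int_{\Omega_1}V_1^{p+1}$ and $\int_{\Omega_1}U_1^{q+1}$ match $\int V_{0,1}^{p+1}$ and $\int U_{0,1}^{q+1}$ up to a tail supported where $V_1,U_1$ are exponentially small on the scale $1/\mu$, and combine into $k\bigl(\tfrac{p}{p+1}\int V_{0,1}^{p+1}-\tfrac{1}{q+1}\int U_{0,1}^{q+1}\bigr)=kA$. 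This is the $kA$ term in \eqref{1-28-12}.

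For the first-order correction one computes the cross terms. Using the far-field decay $V_j(y)\simeq b_{N,p}\mu^{-N/(q+1)}|x_1-x_j|^{-(N-2)}$ on $\Omega_1$ (and the analogous estimate $U_j(y)\simeq a_{N,p}\mu^{-N/(p+1)}|x_1-x_j|^{-(N-2)}$), together with the change of variables $z=\mu(y-x_1)$ and the summation formula \eqref{xjx1sum}, the three relevant pieces become
\[
p\!\int_{\Omega_1}\!V_1^p W\simeq\tfrac{p\,b_{N,p}\wtb_{11}\,k^{N-2}\!\int V_{0,1}^p}{r^{N-2}\mu^{N-2}},\qquad \int_{\Omega_1}\!U_1^q\sum_{j\ge 2}\!U_j\simeq\tfrac{a_{N,p}\wtb_{11}\,k^{N-2}\!\int U_{0,1}^q}{r^{N-2}\mu^{N-2}},
\]
and, after substituting the expansion of $\varphi$ from Lemma \ref{nnl2-19-1} and rescaling,
\[
\int_{\Omega_1}U_1^q\,\varphi\simeq\tfrac{B_{11}\,k^{N-2}\!\int U_{0,1}^q\, w}{r^{N-2}\mu^{N-2}}=\tfrac{p\,b_{N,p}\wtb_{11}\,k^{N-2}\!\int U_{0,1}^q w}{r^{N-2}\mu^{N-2}}.
\]
The key algebraic step is then a double integration by parts using $-\Delta V_{0,1}=U_{0,1}^q$ and $-\Delta w=V_{0,1}^{p-1}$:
\[
\int U_{0,1}^q w=\int(-\Delta V_{0,1})\,w=\int V_{0,1}(-\Delta w)=\int V_{0,1}^p.
\]
This identity makes the two $p\,b_{N,p}\int V_{0,1}^p$ contributions cancel, leaving a net coefficient $-a_{N,p}\wtb_{11}\int U_{0,1}^q$ in front of $k^{N-2}/(r^{N-2}\mu^{N-2})$. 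Multiplying by the outer $k$ from the symmetric decomposition yields the claimed $-kB_1k^{N-2}/(r^{N-2}\mu^{N-2})$ with $B_1:=a_{N,p}\wtb_{11}\int U_{0,1}^q>0$. Note the role of $\varphi$ here: without using the nonlinear correction $U$ (i.e., just replacing $U$ by $\sum U_j$) the cancellation would not occur, and one could not control the sign of the leading interaction in a robust way.

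The main obstacle is the error analysis, which must be pushed to order $O(\mu^{-N/(q+1)-\sigma})$ relative to the unit scale (i.e., $k\mu^{-N/(q+1)-\sigma}$ globally). I would split $\Omega_1$ into $B_{\mu^{-\kappa}}(x_1)$ and $\Omega_1\setminus B_{\mu^{-\kappa}}(x_1)$ (as in the proof of Lemma \ref{nnl2-19-1}) and bound each term as follows: (i) the tail $\int_{\R^N\setminus\Omega_1}V_1^{p+1}$ and analog for $U_1$ are bounded by integrals of $V_{0,1}^{p+1}$ on $\{|z|\gtrsim r/k\}$, which is $O(\mu^{-\infty})$ since $\kappa_0>0$ when $p(N-2)>N$; (ii) the quadratic-in-$W$ remainder $\int V_1^{p-1-\delta}W^{1+\delta}$, estimated exactly as in \eqref{1-30-7}, gives $O(k^{N-2}/\mu^{N-2+\sigma})$ after multiplying by $k$; (iii) the $W^{p+1}$ tail and the quadratic $R$-remainder use \eqref{e4} and the hypothesis $p(N-2)>N$ as in \eqref{1-29-9}; (iv) the error from Lemma \ref{nnl2-19-1}'s remainder on $\varphi$ contributes $O(k^{N-2}/\mu^{N/(p+1)+\sigma})\cdot\int U_1^q=O(k^{N-2}/\mu^{N-2+\sigma})$ after the scaling, which is exactly $O(\mu^{-N/(q+1)-\sigma})$. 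Collecting these bounds and invoking $N\ge 6$ to guarantee the convergence of the geometric sums uniformly in $\mcp$ yields \eqref{1-28-12}.
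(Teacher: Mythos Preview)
Your route genuinely differs from the paper's. You expand $\int V^{p+1}$ directly around $V_1$ and discover a cancellation via $\int U_{0,1}^q w=\int V_{0,1}^p$, landing on $B_1=a_{N,p}\wtb_{11}\int U_{0,1}^q$. The paper instead performs a \emph{second} integration by parts, using $-\Delta V=\sum_j U_j^q$ to rewrite $\int V^{p+1}=\int U\sum_j U_j^q$, and then expands only on the $U$-side; its $B_1=\tfrac{\wtb_{11}}{p+1}\bigl(a_{N,p}\int U_{0,1}^q+pb_{N,p}\int U_{0,1}^q w\bigr)$ is a sum of two positive pieces and, using $a_{N,p}=\ga_N\int V_{0,1}^p$, $b_{N,p}=\ga_N\int U_{0,1}^q$, coincides with yours. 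What the paper's rewriting buys is a much cleaner error analysis: on $S$ the $U$-side perturbation $R=\sum_{j\ge2}U_j+\varphi$ is \emph{uniformly bounded} (because $\sum_{j\ge2}U_j\lesssim k^{N-2}\mu^{-N/(p+1)}=O(1)$ and $\varphi=O(1)$ by Lemma~\ref{nnl2-19-1}), so the quadratic remainder $\int_S U_1^{q-\delta}R^{1+\delta}\le C\int_S U_1^{q-\delta}$ is immediate. In your expansion the analogous perturbation $W=\sum_{j\ge2}V_j$ grows like $\mu^{N/(p+1)-N/(q+1)}$, and $\int_S V_1^{p-1}W^2$ is only $O(\mu^{-N/(q+1)-\sigma})$ after a genuine computation using $p>N/(N-2)$ (it is exactly critical at $p=N/(N-2)$); your appeal to \eqref{1-30-7} is off, since that estimate carries the Newtonian kernel $|y-z|^{-(N-2)}$ and does not bound the bare integral.

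Two further points. Your tail claim ``$O(\mu^{-\infty})$'' is incorrect: bubbles decay only polynomially, and the relevant tails are power-law small (still harmless). More importantly, both approaches must control $\int_{\Omega_1\setminus S}U^{q+1}$; this is the most delicate error and occupies the bulk of the paper's proof (case split $p(N-3)\gtrless N$, Lemma~\ref{lemma:U}, and the sign of $\tsg_1$ in \eqref{6-25-3}), yet it is essentially absent from your outline. Your strategy is sound and your identification of $B_1$ is correct, but the error analysis as written has real gaps.
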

\begin{proof}
Throughout the proof, $\delta,\, \sigma,\, \theta \in (0,1)$ denote sufficiently small numbers.

\medskip
By symmetry, it holds that
\begin{align*}
I(U, V) &= \frac1{p+1} \int_{\R^N} \nabla U \cdot \nabla V-\frac1{p+1} \int_{\R^N}V^{p+1}\\
&\ + \(1-\frac1{p+1}\) \int_{\R^N} \nabla U \cdot \nabla V - \frac1{q+1}\int_{\R^N}U^{q+1}\\
&= \(1-\frac1{p+1}\) \int_{\R^N} U \sum_{j=1}^k U_j^q -\frac1{q+1}\int_{\R^N}U^{q+1}.
\end{align*}
We compute
\begin{align*}
\int_{\R^N} U \sum_{j=1}^k U_j^q &= k \int_{\Omega_1} U \sum_{j=1}^k U_j^q \\
&= k \left[\int_{S} \bigg(\sum_{j=1}^k U_j +\varphi\bigg)\sum_{j=1}^k U_j^q + \int_{\Omega_1 \setminus S} U\sum_{j=1}^k U_j^q\right] \\
&= k \left[\int_{S} U_1^{q+1}+ \int_{S} U_1^q \bigg(\sum_{j=2}^k U_j+\varphi\bigg) + \int_{S} U\sum_{j=2}^k U_j^q + \int_{\Omega_1 \setminus S} U\sum_{j=1}^k U_j^q\right]
\end{align*}
and
\begin{multline*}
\int_{\R^N}U^{q+1} = k \left[\int_{S} U_1^{q+1} + (q+1)\int_{S} U_1^q \bigg(\sum_{j=2}^k U_j+\varphi\bigg) \right.\\
\left. + O\(\int_{S} U_1^{q-\delta} \bigg(\sum_{j=2}^k U_j + \varphi\bigg)^{1+\delta}\) + \int_{\Omega \setminus S} U^{q+1}\right].
\end{multline*}
Hence, using \eqref{cri}, we obtain
\begin{equation}\label{eq:IUV}
\begin{aligned}
I(U, V) &= k \left[\frac{2}{N} \int_{S} U_1^{q+1} -\frac 1{p+1}\int_{S} U_1^q \bigg(\sum_{j=2}^k U_j+\varphi\bigg) \right. \\
&\qquad + \(1-\frac1{p+1}\) \int_{S} U\sum_{j=2}^k U_j^q + O\(\int_{S} U_1^{q-\delta} \bigg(\sum_{j=2}^k U_j + \varphi\bigg)^{1+\delta}\) \\
&\qquad \left. + \(1-\frac1{p+1}\) \int_{\Omega_1 \setminus S} U\sum_{j=1}^k U_j^q - \frac1{q+1} \int_{\Omega_1 \setminus S} U^{q+1}\right].
\end{aligned}
\end{equation}

We will estimate each term on the right-hand side of \eqref{eq:IUV}. By \eqref{6} and \eqref{eq:HV},
\begin{equation}\label{eq:IUV1}
\int_{S} U_1^{q+1} = \int_{\R^N} U_{0,1}^{q+1} + O\(( k^{-1}\mu)^{-(q+1)(N-2)-N}\) = \int_{\R^N} U_{0,1}^{q+1} + O\(\mu^{-\frac{N}{q+1}-\sigma}\).
\end{equation}
Moreover, \eqref{U10est} and \eqref{xjx1sum} show
\begin{align*}
\int_{S} U_1^q \sum_{j=2}^k U_j &= \int_{\R^N} U_{0,1}^q \sum_{j=2}^k \frac{a_{N,p}}{(\mu|x_j-x_1|)^{N-2}} + O\(\mu^{-\frac{N}{q+1}-\sigma}\) \\
&= \(a_{N,p}\wtb_{11}\int_{\R^N} U_{0,1}^q\) \frac{k^{N-2}}{r^{N-2}\mu^{N-2}} + O\(\mu^{-\frac{N}{q+1}-\sigma}\),
\end{align*}
while Lemma~\ref{nnl2-19-1} yields
\begin{align*}
\int_{S} U_1^q \varphi &= \int_{S} U_1^q \left[\frac{B_{11}k^{N-2}}{r^{N-2}\mu^{\frac{N}{p+1}}} w(\mu(y-x_1)) + O\(\frac{k^{N-2}}{\mu^{\frac{N}{p+1}+\sigma}}\)\right] \\
&= \(B_{11} \int_{\R^N} U_{0,1}^q w\) \frac{k^{N-2}}{r^{N-2}\mu^{N-2}} + O\(\mu^{-\frac{N}{q+1}-\sigma}\).
\end{align*}
Therefore,
\begin{equation}\label{eq:IUV2}
\int_{S} U_1^q \bigg(\sum_{j=2}^k U_j+\varphi\bigg)= (p+1)B_1 \frac{k^{N-2}}{r^{N-2}\mu^{N-2}} + O\(\mu^{-\frac{N}{q+1}-\sigma}\),
\end{equation}
where, in view of $w>0$,
\begin{align*}
B_1 &:= \frac{\wtb_{11}}{p+1} \(a_{N,p}\int_{\R^N} U_{0,1}^q + pb_{N,p} \int_{\R^N} U_{0,1}^q w\)  > 0.
\end{align*}
It is also easy to see that
\begin{equation}\label{eq:IUV3}
\int_{S} U\sum_{j=2}^k U_j^q \le C\int_{S} U_1 \cdot \mu^{-\frac{qN}{p+1}}k^{q(N-2)} \le \frac {C\mu^{-\frac{qN}{p+1}}k^{q(N-2)}}{k^2 \mu^{ \frac N{p+1} } } =O\(\mu^{-\frac{N}{q+1}-\sigma}\).
\end{equation}
On the other hand, the pointwise estimate of $U$ in Lemma~\ref{lemma:U} implies
\begin{equation}\label{eq:IUV41}
\begin{aligned}
\int_{\Omega_1 \setminus S} U^{q+1} &\le C\mu^N \int_{\Omega_1 \setminus S} \left[\sum_{j=1}^k \frac{1}{|\mu(y-x_j)|^{N-2}}\right]^{q+1} dy \\
&\ + \frac{C}{\mu^{pN}} \int_{\Omega_1 \setminus S} \left[\sum_{j=1}^k \frac{k^{p(N-2)-2}}{(1+k|y-x_j|)^{\min\{p(N-3-\theta)-2, N-2\}}}\right]^{q+1}dy.
\end{aligned}
\end{equation}
By \eqref{e4}, the first integral on the right-hand side of \eqref{eq:IUV41} is bounded by
\begin{multline*}
C\int_{\Omega_1 \setminus S} \frac{\mu^N dy}{|\mu(y-x_1)|^{(N-2)(q+1)}} + C\int_{ \Omega_1\setminus S} \frac{\mu^{N-(N-2)(q+1)} k^{(1+\theta)(q+1)}}{|y-x_1|^{(N-3-\theta)(q+1)}}dy \\
\le C\mu^{-\frac{N}{N-2} \cdot \frac{N}{p+1}} = O\(\mu^{-\frac{N}{q+1}-\sigma}\).
\end{multline*}
If $p(N-3)>N$, the second integral on the right-hand side of \eqref{eq:IUV41} is bounded by
\begin{multline*}
\frac{ C k^{(p(N-2)-N)(q+1)} }{\mu^{(p+1)N- (N-2)(q+1)}   }   \int_{\Omega_1 \setminus S} \mu^N\left[\sum_{j=1}^k \frac{1}{|\mu(y-x_j)|^{N-2}}\right]^{q+1}dy\\
= C \mu^{-(p-\frac{N}{N-2})N   }\int_{\Omega_1 \setminus S} \mu^N\left[\sum_{j=1}^k \frac{1}{|\mu(y-x_j)|^{N-2}}\right]^{q+1}dy
\le  C\mu^{-\frac{N}{N-2} \cdot \frac{N}{p+1}},
\end{multline*}
since $\frac{(p+1)N}{q+1} = (p+1)(N-2)-N$.
We next estimate the term on the right-hand side of \eqref{eq:IUV41} for the case $p(N-3)\le N$. If $N \ge 7$, then
\begin{equation}\label{1-25-3}
[p(N-3)-3](q+1)= \frac{[p(N-3)-3](p+1)N }{ pN - 2p -2 }>N
\end{equation}
holds for every $p>\frac N{N-2}$. Hence, the second  term on the right-hand side of \eqref{eq:IUV41} is bounded by
\begin{align*}
&\quad \frac{ C k^{(p(N-2)-2)(q+1)} }{\mu^{pN} k^N  }  \Bigl[ \int_{\R^N} \frac{1}{(1+|y|)^{[p(N-3-\theta)-2](q+1)}} +
\int_{\R^N} \frac{1}{(1+|y|)^{[p(N-3-2\theta)-3](q+1)}}\Bigr] dy
 \\
&\le \frac{ C k^{(p(N-2)-2)(q+1)} }{\mu^{pN} k^N  } = C\(\frac{k}{\mu}\)^{pN} = O( \mu^{ -\frac{pN}{N-2}\frac{N}{q+1} }).
\end{align*}
If $N = 5, 6$, then \eqref{1-25-3} does not hold for all $p>\frac N{N-2}$. If \eqref{1-25-3} is true, then the above argument continues to work.
Let us assume that $[p(N-3)-3](q+1)\le N$. For $R_0>1$ large such that $|x_i| \le \frac {R_0}2$ for all $i = 1,\ldots,k$, it holds
\begin{equation}\label{2-25-3}
\begin{aligned}
&\quad \frac{1}{\mu^{pN}} \int_{(\Omega_1 \setminus S)\cap \{|y|\ge R_0\}} \left[\sum_{j=1}^k \frac{k^{p(N-2)-2}}{(1+k|y-x_j|)^{p(N-3-\theta)-2}}\right]^{q+1}dy\\
& \le \frac{C}{\mu^{pN}}\int_{(\Omega_1 \setminus S)\cap \{|y|\ge R_0\}}
\left[k \frac{k^{p(N-2)-2}}{(k|y|)^{p(N-3-\theta)-2}}\right]^{q+1}dy
\le \frac{Ck ^{(1+p+p\theta)(q+1)}}{\mu^{pN}},
\end{aligned}
\end{equation}
since $[p(N-3)-2](q+1)>N$ for all $N \ge 5$ and $p>\frac N{N-2}$. Moreover,
\begin{align}
&\quad \frac{1}{\mu^{pN}} \int_{(\Omega_1 \setminus S)\cap \{|y|\le R_0\}} \left[\sum_{j=1}^k \frac{k^{p(N-2)-2}}{(1+k|y-x_j|)^{p(N-3-\theta)-2}}\right]^{q+1}dy \nonumber \\
&\le \frac{C}{\mu^{pN}}\int_{(\Omega_1 \setminus S)\cap \{|y|\le R_0\}}
\left[\frac{k^{[p(N-2)-2](q+1)}}{(k|y-x_1|)^{[p(N-3-\theta)-2](q+1)}}+
\frac{k^{[p(N-2)-2](q+1)}}{(k|y-x_1|)^{[p(N-3-\theta)-3-\theta](q+1)}}\right]dy
\nonumber \\
&\le C\left[{\frac{k ^{(p+1)(q+1)(1+\theta)}}{\mu^{pN}} + \(\frac{k}{\mu}\)^{pN}}\right], \label{3-25-3}
\end{align}
thanks to the assumption $[p(N-3)-3](q+1)\le N$.  Now, it is enough to verify
\[
\frac{k ^{(p+1)(q+1)(1+\theta)}}{\mu^{pN}}\le \frac{C}{\mu^{\frac N{q+1}+\sigma}},
\]
which holds if
\begin{equation}\label{6-25-3}
\tsg_1(p) := pN-\frac{N(q+1)}{N-2}-\frac N{q+1}>0.
\end{equation}
It is easy to check that $\tsg_1'(p)>0$. Given $N = 6$, direct computations show
$\tsg_1(\frac32)>0$, which gives the validity of \eqref{6-25-3} for all $p\in (\frac32, 2]$.
Thus, we have proved that
\begin{equation}\label{eq:IUV4}
0 < \int_{\Omega_1 \setminus S} U\sum_{j=1}^k U_j^q \le \int_{\Omega_1 \setminus S} U^{q+1} = O\(\mu^{-\frac{N}{q+1}-\sigma}\)
\end{equation}
for $N \ge 6$ and all $p \in (\frac{N}{N-2},\frac{N+2}{N-2}]$. Finally, by recalling that $\sum_{j=2}^k U_j + \varphi \le C$ in $S$, we find
\begin{equation}\label{eq:IUV5}
\int_{S} U_1^{q-\delta} \bigg(\sum_{j=2}^k U_j + \varphi\bigg)^{1+\delta} \le C\int_{S} U_1^{q-\delta} = O\(\mu^{-\frac{(1+\delta)N}{q+1}}\).
\end{equation}
Plugging \eqref{eq:IUV1}, \eqref{eq:IUV2}, \eqref{eq:IUV3}, \eqref{eq:IUV4}, and \eqref{eq:IUV5} into \eqref{eq:IUV}, we establish \eqref{1-28-12}.
\end{proof}

\begin{remark}\label{Re1-25-3}
Unfortunately, when $N=5$, we have $\tsg_1(\frac53)<0$, which shows that
\eqref{6-25-3} does not always hold for $p\in (\frac53,\frac73]$.
However, since $\tsg_1(\frac{16}9)>0$, \eqref{6-25-3} remains valid for all $p \in [\frac{16}9,\frac73]$.
\end{remark}

\begin{proof}[Proof of Proposition~\ref{prop:Iexpan}]
As before, $\delta,\, \sigma,\, \theta \in (0,1)$ denote sufficiently small numbers.

\medskip
We have
\begin{align*}
&\quad \int_{\R^N} \nabla U_* \cdot \nabla V_*\\
&= \int_{\R^N} \nabla U_{0,\mu_0} \cdot \nabla V_{0,\mu_0}+\int_{\R^N} \nabla U \cdot \nabla V-\int_{\R^N} \nabla U_{0,\mu_0} \cdot \nabla V- \int_{\R^N} \nabla V_{0,\mu_0} \cdot \nabla U\\
&= \int_{\R^N} \nabla U_{0,\mu_0} \cdot \nabla V_{0,\mu_0}+\int_{\R^N} \nabla U \cdot \nabla V
-\int_{\R^N} V^p_{0,\mu_0} V- \int_{\R^N} U^q_{0,\mu_0}  U.
\end{align*}
Hence, we can write
\begin{equation}\label{1-29-12}
\begin{aligned}
I(U_*, V_*) &= I(U_{0, \mu_0}, V_{0, \mu_0}) +I(U, V)
-\int_{\R^N} V^p_{0,\mu_0} V- \int_{\R^N} U^q_{0,\mu_0}  U\\
&\ -\frac1{p+1}\int_{\R^N} \(|V_{0,\mu_0}- V|^{p+1}-|V|^{p+1}-|V_{0,\mu_0}|^{p+1}\)
\\
&\ -\frac1{q+1}\int_{\R^N} \(|U_{0,\mu_0}- U|^{q+1}-|U|^{q+1}-|U_{0,\mu_0}|^{q+1}\)\\
&= I(U_{0, \mu_0}, V_{0, \mu_0})+I(U, V) + J_1 + J_2,
\end{aligned}
\end{equation}
where
\begin{equation}\label{J1J2}
\begin{cases}
\displaystyle J_1 := -k \int_{\Omega_1} V^p_{0,\mu_0} V-\frac k{p+1}\int_{\Omega_1} \(|V_{0,\mu_0}- V|^{p+1}-|V|^{p+1}-|V_{0,\mu_0}|^{p+1}\), \\
\displaystyle J_2 := -k \int_{\Omega_1} U^q_{0,\mu_0}  U-\frac k{q+1}\int_{\Omega_1} \(|U_{0,\mu_0}- U|^{q+1}-|U|^{q+1}-|U_{0,\mu_0}|^{q+1}\).
\end{cases}
\end{equation}

\medskip
We first examine the term $J_1$. Since \eqref{Vj} implies that $V \le CV_1$ in $S$ for some large $C > 0$, it holds that
\begin{equation}\label{e6}
\int_{S} V^{p-\delta} V_{0,\mu_0}^{1+\delta} \le C\int_{S} V_1^{p-\delta} = O\(\mu^{-\frac{(1+\delta)N}{q+1}}\).
\end{equation}
By applying \eqref{eq:ele} and \eqref{e6}, we get
\begin{equation}\label{e2}
\begin{aligned}
&\quad \int_{S} \(|V_{0,\mu_0}-V|^{p+1}-|V|^{p+1}-|V_{0,\mu_0}|^{p+1}\)\\
&= -(p+1) \int_{S} V^{p} V_{0,\mu_0} + O\(\int_{S} V^{p-\delta} V_{0,\mu_0}^{1+\delta} + \int_{S} V_{0,\mu_0}^{p+1}\)\\
&= -(p+1) \int_{S} V^{p} V_{0,\mu_0} + O\(\mu^{-\frac{(1+\delta)N}{q+1}} + \mu^{- \frac{N^2}{(p+1)(N-2)}}\).
\end{aligned}
\end{equation}
On the other hand,
\begin{equation}\label{e3}
\begin{aligned}
&\quad \int_{\Omega_1\setminus S} \(|V_{0,\mu_0}- V|^{p+1}-|V|^{p+1}-|V_{0,\mu_0}|^{p+1}\)\\
&=-(p+1)\int_{\Omega_1\setminus S} V_{0,\mu_0}^{p} V + O\(\int_{\Omega_1\setminus S} V_{0,\mu_0}^{p-1} V^{2}+\int_{\Omega_1\setminus S} V^{p+1} \).
\end{aligned}
\end{equation}
Combining \eqref{J1J2} and \eqref{e2}--\eqref{e3}, we arrive at
\begin{equation}\label{1-30-12}
\begin{aligned}
J_1 &= k\left[\int_{S} V^{p} V_{0,\mu_0}-\int_{S} V_{0,\mu_0}^{p} V \right. \\
&\qquad \left. + O\(\int_{\Omega_1\setminus S} V_{0,\mu_0}^{p-1} V^{2} + \int_{\Omega_1\setminus S} V^{p+1} \) + O\(\mu^{-\frac{N}{q+1}-\sigma}\)\right].
\end{aligned}
\end{equation}

Using \eqref{e4}, we compute
\begin{align*}
\int_{S} V_{0,\mu_0}^{p}V \le C \int_{S} V &\le C \sum_{j=1}^{k} \int_{S} \frac{\mu^{\frac{N}{p+1}}}{(\mu|y-x_j|)^{N-2}} dy \\
&\le \frac{C}{\mu^{\frac{N}{q+1}}} \int_{S}\(\frac{1}{|y-x_1|^{N-2}}+ \frac{k^{1+\theta}}{|y-x_1|^{N-3-\theta}}\) dy \\
&\le \frac{C}{\mu^{\frac{N}{q+1}}k^2} = O\(\mu^{-\frac{N}{q+1}-\sigma}\).
\end{align*}
Moreover,
\begin{align}
\int_{\Omega_1\setminus S} V_{0,\mu_0}^{p-1} V^{2} &\le C\int_{\Omega_1\setminus S}\frac1{ (1+|y|)^{(p-1)(N-2)}} \(\sum_{j=1}^{k}\frac{1}{\mu^{\frac{N}{q+1}} |y-x_j|^{N-2}}\)^{2} dy \label{e5} \\
&\le \frac C{\mu^{\frac{2N}{q+1}}} \int_{\Omega_1\setminus S}
\frac1{ (1+|y|)^{(p-1)(N-2)}}
\( \frac1{|y-x_1|^{2(N-2)}} + \frac{k^{2(1+\theta)}}{|y-x_1|^{2(N-3-\theta)}} \) dy. \nonumber
\end{align}
From the inequalities
\[
(p-1)(N-2)+2(N-3)>2+2(N-3)>N
\]
valid for $N \ge 5$ and all $p>\frac N{N-2}$, we know that the last integral in \eqref{e5} is bounded by
\begin{equation}\label{e51}
C\mu^{-\frac{2N}{q+1}} \(k^{N-4}+k^{2(1+\theta)+N-6-2\theta}\) \le
C\mu^{-\frac{2N}{q+1}}\mu^{\frac{(N-4)N}{(p+1)(N-2)}}= O\(\mu^{-\frac{N}{q+1}-\sigma}\),
\end{equation}
where the equality follows from
\begin{align*}
&\quad \frac{N}{q+1}-\frac{(N-4)N}{(p+1)(N-2)}=\frac1{p+1} \left[(p+1)(N-2) -\frac{(2N-6)N}{N-2}
 \right]\\
&> \frac1{p+1} \left[\frac{(N-2)(2N-2)}{N-2} -\frac{(2N-6)N}{N-2}
 \right]=\frac1{p+1}\frac4{N-2}>0.
\end{align*}
Invoking \eqref{e5} and \eqref{e51}, we obtain
\[\int_{\Omega_1\setminus S} V_{0,\mu_0}^{p-1} V^{2} = O\(\mu^{-\frac{N}{q+1}-\sigma}\).\]
We also have
\[\int_{\Omega_1\setminus S} V^{p+1}= O\(\mu^{-\frac{(p+1)N}{q+1}}k^{  (p+1)(N-2)-N}\)  = O\(\mu^{-\frac{N}{q+1}-\sigma}\),\]
since
\[\frac{pN}{q+1}-\frac{N[(p+1)(N-2)-N]  }{(p+1)(N-2)} = \frac{N}{q+1}\(p-\frac{N}{N-2}\) > 0.\]

So we have proved that
\begin{equation}\label{1-31-12}
J_1= k\left[\int_{S} V^{p} V_{0,\mu_0} + O\(\mu^{-\(\frac{N}{q+1}+\sigma\)}\)\right].
\end{equation}

\medskip
We next estimate $J_2$. We recall that $U_1\ge c>0$, $0< \varphi\le C$ and
$
\sum_{j=2}^kU_j\le C
$
in $S$. So, as in \eqref{1-30-12}, we can prove that
\begin{align*}
J_2 &= k\left[\int_{S} U^{q} U_{0,\mu_0}- \int_{S} U_{0,\mu_0}^{q} U \right. \\
&\qquad \left. + O\(\int_{\Omega_1\setminus S} U_{0,\mu_0}^{p-1} U^{2} + \int_{\Omega_1\setminus S} U^{q+1} \) + O\(\mu^{-\frac{(1+\delta)N}{q+1}}\)\right].
\end{align*}

In view of $0< \varphi\le C$ in $S$, it is easy to obtain
\[
\int_{S} U_{0,\mu_0}^{q}U\le \int_{S} \bigg(\sum_{j=1}^k U_j +\varphi\bigg)=
O\(\mu^{-\frac{N}{q+1}-\sigma}\).
\]
Also, \eqref{eq:IUV4} gives
\[
\int_{\Omega_1 \setminus S} U^{q+1} = O\(\mu^{-\frac{N}{q+1}-\sigma}\).
\]
As in \eqref{e5} and \eqref{e51}, we can derive
\begin{equation}\label{4-8-3}
\int_{\Omega_1\setminus S} U_{0,\mu_0}^{q-1}\left[ \sum_{j=1}^k \frac{\mu^{\frac{N}{q+1}}}{(1+\mu|y-x_j|)^{N-2}}  \right]^2= O\(\mu^{-\frac{N}{q+1}-\sigma}\).
\end{equation}

Next, we estimate
\begin{equation}\label{3-8-3}
\int_{\Omega_1\setminus S} U_{0,\mu_0}^{q-1}\left[  \frac{1}{\mu^{\frac{pN}{q+1}}} \sum_{j=1}^k \frac{k^{p(N-2)-2}}{(1+k|y-x_j|)^{\min\{p(N-3-\theta)-2, N-2\}}} \right]^2.
\end{equation}
If $p(N-3)>N$, the integral in \eqref{3-8-3} is bounded by
\begin{align*}
&\quad \mu^{- \frac{2pN}{q+1  }   }k^{2(p(N-2)-N)    }\int_{\Omega_1\setminus S} U_{0,\mu_0}^{q-1}\(  \sum_{j=1}^k \frac{1}{|y-x_j|^{N-2}} \)^2\\
\le & C \frac{ k^{2(p(N-2)-N)    } }{ \mu^{ \frac{2pN}{q+1  } -\frac{2N}{p+1}  }  }
\int_{\Omega_1\setminus S} U_{0,\mu_0}^{q-1}\left[  \sum_{j=1}^k \frac{\mu^{\frac{N}{q+1}}}{(\mu|y-x_j|)^{N-2}} \right]^2=O\(\mu^{-\frac{N}{q+1}-\sigma}\),
\end{align*}
where we used $p(N-2)-N = \frac{pN}{q+1}-\frac{N}{p+1}$ and \eqref{4-8-3}.
If $p(N-3)\le N$, then similar to \eqref{2-25-3} and \eqref{3-25-3}, the integral in \eqref{3-8-3} is bounded by
\begin{align*}
&\quad \frac{ C k^{2(p(N-2)-2)} }{\mu^{\frac{2pN}{q+1}}}\left[k^{-N} + k^{-2(p(N-3)-2)+2p\theta}\(1+k^{2(1+\theta)}\)\right] \\
&\le C\left[\mu^{-\frac{2pN}{q+1}-\frac{N}{(p+1)(N-2)}[N-2(p(N-2)-2)]} + \mu^{-\frac{2pN}{q+1}+\frac{2N}{N-2}(1+\theta)}\right] = O\(\mu^{ -\frac{N}{q+1}-\sigma }\),
\end{align*}
since
\begin{align*}
&\quad \frac{2pN}{q+1} +\frac{N}{ (p+1)(N-2)}   [N -  2(p(N-2)-2) ]\\
&=\frac{N}{q+1} \Bigl[2p+ \frac{q-2p-1}{p+1}\frac{N}{N-2}  \Bigr]\\
&=\frac{N}{q+1} \Bigl[2p-\frac{2N}{N-2}+ \frac{q+1}{p+1}\frac{N}{N-2}  \Bigr]
>\frac{N}{q+1}
\end{align*}
for any $N \ge 5$ and $p>\frac N{N-2}$, while the function
\begin{equation}\label{61-25-3}
\tsg_2(p) := \frac{2pN}{q+1}-\frac{2N}{N-2} - \frac{N}{q+1}
\end{equation}
is increasing in $p$ and $\tsg_2(\frac{N}{N-2}) > 0$ for all $N \ge 6$. Therefore, using Lemma~\ref{lemma:U}, together with the above estimates, we have proved that
\[\int_{\Omega_1\setminus S} U_{0,\mu_0}^{q-1} U^{2} = O\(\mu^{-\frac{N}{q+1}-\sigma}\).\]

As a consequence,
\begin{equation}\label{10-31-12}
J_2 = k\left[\int_{S} U^{q} U_{0,\mu_0} + O\(\mu^{-\(\frac{N}{q+1}+\sigma\)}\)\right].
\end{equation}

\medskip
Putting \eqref{1-29-12}, \eqref{1-31-12} and \eqref{10-31-12} together, we obtain
\[I(U_*, V_*)= I(U_{0, \mu_0}, V_{0, \mu_0})+I(U, V) +k\left[\int_{S} U^{q} U_{0,\mu_0}+\int_{S} V^{p} V_{0,\mu_0} + O\(\mu^{-\frac{N}{q+1}-\sigma}\)\right].\]
Furthermore, by employing \eqref{Vj}, we can easily verify
\[\int_{S} V^{p} V_{0,\mu_0} = \int_{S} V_1^{p} V_{0,\mu_0} + O\(\mu^{\frac{N}{p+1}-\frac{N}{q+1}} \int_{S} V_1^{p-1}\)
= \frac {V_{0,\mu_0}(r)}{\mu^{\frac N{p+1}}} \int_{\R^N} V_{0,1}^p + O\(\mu^{-\frac{N}{q+1}-\sigma}\),\]
and
\[\int_{S} U^{q} U_{0,\mu_0} = \frac{U_{0,\mu_0}(r)}{\mu^{\frac N{q+1}}} \int_{\R^N} U_{0,1}^q + O\(\mu^{-\frac{N}{q+1}-\sigma}\).\]
Since $U_{0,\mu_0} = V_{0,\mu_0}$ for $p = \frac{N+2}{N-2}$ by \cite[Lemma 2.7]{So} and $p \le q$, it follows that
\[I(U_*, V_*) = I(U_{0, \mu_0}, V_{0, \mu_0}) + I(U, V) +  k\left[\frac{B_2 U_{0,\mu_0}(r)}{\mu^{\frac N{q+1}}} + O\(\mu^{-\(\frac{N}{q+1}+\sigma\)}\)\right],\]
where $B_2 := \int_{\R^N} U_{0,1}^q$ for $p \in (\frac{N}{N-2},\frac{N+2}{N-2})$ and $B_2 := \int_{\R^N} U_{0,1}^q + \int_{\R^N} V_{0,1}^p$ for $p = \frac{N+2}{N-2}$.
This together with Lemma~\ref{l1-28-12} gives the desired result.
\end{proof}

\begin{remark}\label{Re11-25-3}
When $N=5$, we have that $\tsg_2(\frac53)<0$, so \eqref{61-25-3} does not always hold for $p\in (\frac53,\frac73]$.
Instead, since $\tsg_2(\frac{17}9)>0$, \eqref{61-25-3} remains valid for all $p \in [\frac{17}9,\frac73]$.
By this fact and Remark~\ref{Re1-25-3}, Proposition~\ref{prop:Iexpan} is true for $N = 5$ and $p \in [\frac{17}9,\frac73]$.
\end{remark}

\section{The invertibility of the linear operator for $p \in (\frac N{N-2},\frac{N+2}{N-2}]$}\label{sec:lin}
We will look for solutions to \eqref{2} having the form $(U_*+\omega_1, V_* +\omega_2)$, where $(U_*, V_*) = (U_{0, \mu_0}, V_{0, \mu_0})-(U, V)$; refer to \eqref{Vdef}--\eqref{U*V*}.
For this aim, we need to discuss the invertibility of the linear operator
\begin{equation}\label{3-2}
\begin{aligned}
L_k(\omega_1,\omega_2) &:= (L_{1,k}(\omega_1,\omega_2),\, L_{2,k}(\omega_1,\omega_2))\\
&:= \(-\Delta \omega_1 -p  |V_*|^{p-1} \omega_2,\,  -\Delta \omega_2 -q |U_*|^{q-1} \omega_1\)
\end{aligned}
\end{equation}
on a suitable function space.

\medskip
Henceforth, we will introduce several function spaces that will be used throughout the paper.
Given $h=1,\ldots,N$ and $j=1,\ldots,k$, let $\Phi_j$ be a rotation operator defined as
\begin{equation}\label{Aj}
\Phi_j(\rho\cos\phi, \rho\sin\phi, y'') := \(\rho\cos\(\phi+\tfrac{2(j-1)\pi}k\), \rho\sin\(\phi+\tfrac{2(j-1)\pi}k\), y''\)
\end{equation}
for $\rho > 0$, $\phi \in [0,2\pi)$, $y'' \in \R^{N-2}$, and $\Psi_h$ a reflection operator defined as
\begin{equation}\label{Bi}
\Psi_hy := (y_1,\ldots, y_{h-1}, -y_h, y_{h+1}, \ldots, y_N)
\end{equation}
for $y \in \R^N$. Then we set
\begin{align}
\mathbf{L}_s &= \left\{(u,v): (u,v) \;\text{is  Lebesgue measurable  on }\;\R^N, \right. \nonumber \\
&\qquad (u,v)(\Phi_j(\rho\cos\phi, \rho\sin\phi, y'')) = (u,v)(\rho\cos\phi, \rho\sin\phi, y'') \text{ for } j = 1,\ldots,k, \nonumber \\
&\qquad \left. (u,v)(\Psi_hy) = (u,v)(y) \text{ for } h = 2,\ldots,N\right\}. \label{Hs}
\end{align}

In addition, fixing $\tau = \frac{N}{(p+1)(N-2)} \in (0,1)$ so that $k \simeq \mu^{\tau}$, we define weighted $L^{\infty}(\R^N)$-norms:
\begin{equation}\label{eq:*-norm}
\begin{cases}
\displaystyle \|u\|_{*,1} := \sup_{y\in\R^N} \left[\sum_{j=1}^{k} \frac{\mu^{\frac{N}{q+1}}}{(1+\mu
|y-x_j|)^{\frac{N}{q+1}+\tau}}\right]^{-1}|u(y)|,\\
\displaystyle \|v\|_{*,2} := \sup_{y\in \R^N} \left[\sum_{j=1}^{k} \frac{\mu^{\frac{N}{p+1}}}{(1+\mu
|y-x_j|)^{\frac{N}{p+1}+\tau}}\right]^{-1}|v(y)|,
\end{cases}
\end{equation}
\begin{equation}\label{eq:**-norm}
\begin{cases}
\displaystyle \|f\|_{**,1} := \sup_{y\in \R^N} \left[\sum_{j=1}^{k} \frac{\mu^{\frac{N}{q+1}+2}}{(1+\mu
|y-x_j|)^{\frac{N}{q+1}+2+\tau}}\right]^{-1}|f(y)|,\\
\displaystyle \|g\|_{**,2} := \sup_{y\in \R^N} \left[\sum_{j=1}^{k} \frac{\mu^{\frac{N}{p+1}+2}}{(1+\mu
|y-x_j|)^{\frac{N}{p+1}+2+\tau}}\right]^{-1}|g(y)|,
\end{cases}
\end{equation}
and
\[\|(u,v)\|_* := \|u\|_{*,1}+\|v\|_{*,2} \quad \text{and} \quad \|(f,g)\|_{**} := \|f\|_{**,1}+\|g\|_{**,2}.\]
We also set Banach spaces
\[
\mathbf{X} := \left\{(u,v) \in \mathbf{L}_s\cap [C( \R^N)\times C(\R^N)] : \; \|(u,v)\|_*<+\infty\right\}
\]
and
\[
\mathbf{Y} := \left\{(f,g) \in \mathbf{L}_s\cap [C( \R^N)\times C(\R^N)]: \; \|(f,g)\|_{**}<+\infty  \right\}.
\]
Finally, denoting
\begin{equation}\label{Y012}
Y_{0} := \frac{\partial U_{0,\mu_0}}{\partial \mu_0},
\quad Y_{j,1} := \frac{\partial U_{x_j,\mu}}{\partial r} = \frac{\partial U_j}{\partial r},
\quad Y_{j,2} := \frac{\partial U_{x_j,\mu}}{\partial \mu},
\end{equation}
and
\begin{equation}\label{Z012}
Z_{0} := \frac{\partial V_{0,\mu_0}}{\partial \mu_0},
\quad Z_{j,1} := \frac{\partial V_{x_j,\mu}}{\partial r} = \frac{\partial V_j}{\partial r},
\quad Z_{j,2} := \frac{\partial V_{x_j,\mu}}{\partial \mu}
\end{equation}
for $j = 1,\ldots,k$, we set
\begin{align*}
\mathbf{E} := \Bigg\{(u,v)\in\mathbf{X}:\; & p\int_{\R^N} v V_{0,\mu_0}^{p-1}Z_0 + q\int_{\R^N} uU_{0,\mu_0}^{q-1}Y_0 = 0,\\
&\left. p\int_{\R^N} v \sum_{j=1}^k V_j^{p-1}Z_{j,l} + q\int_{\R^N} u \sum_{j=1}^k U_j^{q-1}Z_{j,l} = 0, \; l=1,2\right\}
\end{align*}
and
\begin{align*}
\mathbf{F} := \Bigg\{(f,g)\in\mathbf{Y}: \;& \int_{\R^N} f Z_0 + \int_{\R^N} g Y_0= 0,\\
&\left. \int_{\R^N} f \sum_{j=1}^k Z_{j,l} + \int_{\R^N} g \sum_{j=1}^k Y_{j,l} = 0, \; l=1,2\right\}.
\end{align*}

\medskip
We are now ready to present the main result of this section.
\begin{proposition}\label{l20-2-4}
Assume that $N \ge 6$. Suppose that
\begin{multline}\label{1-28-1}
L_k(u,v) = (f, g) \\
+ \(c_0 pV_{0,\mu_0}^{p-1}Z_0 + \sum_{l=1}^2 c_l p\sum_{j=1}^k V_j^{p-1}Z_{j,l},\,
c_0 qU_{0,\mu_0}^{q-1}Y_0 + \sum_{l=1}^2 c_l q\sum_{j=1}^k U_j^{q-1}Y_{j,l}\)
\end{multline}
for some $(u,v) \in \mathbf{E}\cap \left[\dot{W}^{2,\frac{p+1}{p}}(\R^N) \times \dot{W}^{2,\frac{q+1}{q}}(\R^N)\right]$,  $(f,g) \in \mathbf{Y}$, and $(c_0,c_1,c_2) \in \R^3$.
Then there exists a constant $C>0$ depending only on $N$ and $p$ such that
\[
\|(u, v)\|_*\le C \|(f, g)\|_{**}
\]
provided $k \in \N$ large.
\end{proposition}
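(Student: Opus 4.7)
The plan is to argue by contradiction, in the spirit of the Wei-Yan linear theory. Suppose the estimate fails: there exist sequences $k_n \to +\infty$, parameters in $\mcp$, pairs $(u_n, v_n) \in \mathbf{E}$ with $\|(u_n, v_n)\|_* = 1$, data $(f_n, g_n) \in \mathbf{Y}$ with $\|(f_n, g_n)\|_{**} \to 0$, and multipliers $(c_{0,n}, c_{1,n}, c_{2,n})$ satisfying \eqref{1-28-1}. Dropping the index $n$, the proof proceeds in three steps.

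\medskip
\textbf{Step 1 (multipliers).} Test \eqref{1-28-1} against $(Y_0, Z_0)$ and against each $(\sum_{j} Y_{j,l}, \sum_{j} Z_{j,l})$, $l = 1, 2$. The orthogonality built into $\mathbf{E}$ annihilates the contribution of $(u, v)$ on the left-hand side modulo cross terms, while Lemma~\ref{lemma:FKP} produces uniformly positive diagonal coefficients on the right. The interaction between the central bubble and the peripheral ring, and between distinct peripheral bubbles, is of lower order thanks to the weights defining $\|\cdot\|_{*}$. One deduces
\[
|c_0| + |c_1| + |c_2| \le C \|(f, g)\|_{**} + o(1).
\]

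\medskip
\textbf{Step 2 (pointwise bound).} Inverting $-\Delta$ in \eqref{3-2} yields
\[
u(y) = \gamma_N \int_{\R^N} \frac{(p|V_*|^{p-1} v + \widetilde f)(z)}{|y - z|^{N-2}}\, dz, \qquad v(y) = \gamma_N \int_{\R^N} \frac{(q|U_*|^{q-1} u + \widetilde g)(z)}{|y - z|^{N-2}}\, dz,
\]
where $\widetilde f, \widetilde g$ absorb $(f, g)$ and the multiplier terms from Step~1. The weight exponent $\tau = \frac{N}{(p+1)(N-2)}$ in \eqref{eq:*-norm}--\eqref{eq:**-norm} is calibrated so that the convolution estimates collected in Appendix~\ref{sec:tech} give, for any fixed $R > 0$ and $y$ satisfying $\mu|y - x_j| \ge R$ for all $j$ and $\mu_0 |y| \ge R$,
\[
\biggl[\sum_{j=1}^{k} \frac{\mu^{\frac{N}{q+1}}}{(1+\mu |y-x_j|)^{\frac{N}{q+1}+\tau}}\biggr]^{-1} |u(y)| \le \bigl(\theta(R) + o(1)\bigr) \|(u, v)\|_* + C \|(f, g)\|_{**} + C \sum_l |c_l|,
\]
with $\theta(R) \to 0$ as $R \to +\infty$, and the analogous bound for $v$. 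Restricted to $y$ far from all bubble centers this closes, forcing $\|(u, v)\|_*$ to be carried entirely by a finite union of bubble neighborhoods. The restriction $p > \frac{N}{N-2}$ enters precisely to make these convolution bounds contractive at infinity.

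\medskip
\textbf{Step 3 (blow-up and conclusion).} By Step~2, after passing to a subsequence, some neighborhood $\{\mu|y - x_{j_0}| \le R\}$ (or $\{\mu_0|y| \le R\}$) carries a uniformly positive portion of $\|(u, v)\|_*$. Rescaling at such a center, the families
\[
(\widetilde u_j, \widetilde v_j)(y) := \Bigl(\mu^{-\frac{N}{q+1}} u, \mu^{-\frac{N}{p+1}} v\Bigr)\bigl(\mu^{-1} y + x_j\bigr), \qquad (\widetilde u_0, \widetilde v_0)(y) := (u(y), v(y)),
\]
are locally uniformly bounded and, by elliptic regularity applied to the representation formula, converge locally smoothly to a bounded pair solving the linearized Hamiltonian system at $(U_{0,1}, V_{0,1})$ (at an $x_j$) or at $(U_{0,\mu_0^\infty}, V_{0,\mu_0^\infty})$ (at the origin). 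Lemma~\ref{lemma:FKP} classifies such a limit as a linear combination of $(Y_{x,\mu}^h, Z_{x,\mu}^h)$, $h = 0, 1, \dots, N$. The reflectional and rotational symmetries encoded in $\mathbf{L}_s$ eliminate the translational components transverse to the radial direction, and passing the orthogonality conditions from $\mathbf{E}$ to the limit eliminates the remaining radial and scaling modes. Hence each blow-up limit vanishes, contradicting the positive mass supplied by Step~2.

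\medskip
\textbf{Main obstacle.} The truly new difficulty is the blow-up at the origin. Because \eqref{2}--\eqref{cri} is not Kelvin-invariant for $p \ne 1, \frac{N+2}{N-2}$ and because $\mu_0$ is treated as a free parameter, the limit linearization genuinely has a nontrivial scaling kernel $(Y_0, Z_0)$ in the symmetric space, which cannot be ignored as in the single-equation construction of \cite{DMPP}: the extra orthogonality condition in $\mathbf{E}$ pairing $(u, v)$ with $(Y_0, Z_0)$ compensates exactly this mode. Quantifying this step requires sharp pointwise bounds on the Green's function for the limit operator $L^*$ from \eqref{1-25-10n2}, furnished by Appendix~\ref{sec:Green}, together with a delicate calibration of the weights in \eqref{eq:*-norm}--\eqref{eq:**-norm}: they must decay quickly enough for the convolution with $|V_*|^{p-1}, |U_*|^{q-1}$ to contract at infinity, yet remain compatible with the Green's function decay of $L^*$ near the origin.
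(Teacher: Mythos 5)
Your contradiction framework is the one the paper follows, and Step 1 is on the right track (the paper's Lemma~\ref{l1-18-2} gives $|c_0| + \mu|c_1| + \mu^{-1}|c_2| \le C(\mu^{-\sigma}\|(u,v)\|_* + \|(f,g)\|_{**})$, which differs from your claim only by $\mu$-dependent normalizations). The gap is in Step~2, which carries the whole weight of the argument. If you invert $-\Delta$ and leave $p|V_*|^{p-1}v$ inside the integrand, the contribution of $pV_{0,\mu_0}^{p-1}v$ does not supply the claimed smallness factor $\theta(R)+o(1)$: for $y$ in a fixed compact annulus containing the ring $\{|y|\approx r\}$ but away from the concentration balls $B_{R\mu^{-1}}(x_j)$, the kernel $V_{0,\mu_0}^{p-1}(z)$ is of order $1$ on the bulk of the weight's mass, and
\[
\int_{\R^N}\frac{V_{0,\mu_0}^{p-1}(z)}{|y-z|^{N-2}}\sum_{j=1}^k\frac{\mu^{\frac{N}{p+1}}}{(1+\mu|z-x_j|)^{\frac{N}{p+1}+\tau}}\,dz
\]
is comparable to the target weight $\sum_j \mu^{\frac N{q+1}}(1+\mu|y-x_j|)^{-\frac N{q+1}-\tau}$ with a constant independent of $R$. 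The paper therefore does \emph{not} invert $-\Delta$ in $B_{R_0}(0)$; it inverts the operator $L$ linearized at the central bubble, using the representation formulas \eqref{10-27-1}--\eqref{11-27-1} in terms of the four Green's functions $G_{1,k},\ldots,G_{4,k}$ of the dual $L^*$ built in Appendix~\ref{sec:Green}, whose decay \eqref{G1k}--\eqref{G4k2} makes Lemmas~\ref{l10-23-2}--\ref{l21-23-2} applicable. After this substitution only the error $p(|V_*|^{p-1}-V_{0,\mu_0}^{p-1})v$ (and the analogous $U$-error) appears in the convolution, and that term is genuinely small. Outside $B_{R_0}(0)$ one reverts to the fundamental solution of $-\Delta$, because there $V_{0,\mu_0}^{p-1}$ and $U_{0,\mu_0}^{q-1}$ decay and can safely be kept as part of the error. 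You do acknowledge in your final paragraph that the $L^*$ Green's function is needed, but you never actually replace the $-\Delta$ inversion in Step~2; with the formula you wrote, the contraction estimate fails on a fixed neighborhood of the peripheral ring and the argument does not close.

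A secondary remark: once Step~2 is done with the $L^*$ kernel, one obtains that the maximum of the weighted ratio in \eqref{eq:functuvk} can only be achieved in $\cup_{j}\overline{B_{M\mu^{-1}}(x_j)}$; there is no residual concentration scenario at $\{\mu_0|y|\le R\}$ as you allow in Step~3. The central mode $(Y_0,Z_0)$ is dealt with entirely inside the construction of the kernel (the orthogonality condition \eqref{1-27-1} in Appendix~\ref{sec:Green}), not by a separate blow-up at the origin.
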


To establish Proposition~\ref{l20-2-4}, we need a few preliminary lemmas.
\begin{lemma}\label{l1-18-2}
Assume that $N \ge 6$. The numbers $c_0$, $c_1$ and $c_2$ in \eqref{1-28-1} satisfy
\[
|c_0| + \mu|c_1| + \mu^{-1}|c_2| \le C\( \frac1{\mu^\sigma} \|(u, v)\|_*+ \|(f, g)\|_{**} \),
\]
where $\sigma>0$ is a fixed constant.
\end{lemma}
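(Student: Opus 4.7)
The plan is to derive three scalar identities from \eqref{1-28-1} by testing it against carefully chosen pairs of approximate kernel functions, and then to invert the resulting $3\times 3$ linear system for $(c_0,c_1,c_2)$. Concretely, I would multiply the first component of \eqref{1-28-1} by $Z_0$, $\sum_{j=1}^{k}Z_{j,1}$, and $\sum_{j=1}^{k}Z_{j,2}$, and the second component by $Y_0$, $\sum_{j=1}^{k}Y_{j,1}$, and $\sum_{j=1}^{k}Y_{j,2}$, and sum the resulting integrals. On the left-hand side, two integrations by parts using $-\Delta Y_0=pV_{0,\mu_0}^{p-1}Z_0$, $-\Delta Z_0=qU_{0,\mu_0}^{q-1}Y_0$ together with the analogous identities for $(Y_{j,l},Z_{j,l})$ from Lemma~\ref{lemma:FKP} convert every appearance of $u,v$ into moments against $V_{0,\mu_0}^{p-1}Z_0,\,U_{0,\mu_0}^{q-1}Y_0$ and, respectively, their peripheral counterparts summed over $j$. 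The three orthogonality conditions defining $\mathbf{E}$ annihilate precisely these moments, so the surviving part of the left-hand side is the small discrepancy $-p\int(|V_*|^{p-1}-V_{0,\mu_0}^{p-1})\,v Z_0 - q\int(|U_*|^{q-1}-U_{0,\mu_0}^{q-1})\,u Y_0$ and its analogues for $l=1,2$.

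The next step is to estimate each ingredient. Applying \eqref{eq:ele} to $V_*=V_{0,\mu_0}-V$ and $U_*=U_{0,\mu_0}-U$, splitting the domain into $\bigcup_{j}B_{\mu^{-\kappa}}(x_j)$ and its complement, and combining the weighted pointwise bounds from $\|\cdot\|_*$ with the crown-configuration estimate \eqref{xjx1sum} and the Hardy-type inequality \eqref{e4}, the left-hand side error is $O(\mu^{-\sigma}\|(u,v)\|_*)$ for some $\sigma>0$. The source contributions from the right-hand side, namely $\int fZ_0+\int gY_0$ and the two peripheral analogues, are controlled by $\|(f,g)\|_{**}$, $Ck\mu\,\|(f,g)\|_{**}$, and $Ck\mu^{-1}\,\|(f,g)\|_{**}$ respectively, via direct integration against the weights in \eqref{eq:**-norm}. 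All the remaining terms assemble into a linear system $A\mathbf{c}=\mathbf{R}$ whose coefficients are cross-pairings of the kernel functions. Rescaling to the profile $(U_{0,1},V_{0,1})$ and invoking a Pohozaev-type identity (together with the non-degeneracy of Lemma~\ref{lemma:FKP} for the scaling and translation modes) yields the diagonal behaviour $A_{00}=c_{00}+o(1)$, $A_{11}=c_{11}k\mu^{2}+o(k\mu^{2})$, $A_{22}=c_{22}k\mu^{-2}+o(k\mu^{-2})$ with $c_{00},c_{11},c_{22}>0$.

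Rescaling the unknowns by setting $\widetilde{\mathbf{c}}:=(c_0,\mu c_1,\mu^{-1}c_2)^{\top}$ turns the system into a uniformly invertible perturbation of a diagonal matrix, and inversion delivers the claimed bound on $|c_0|+\mu|c_1|+\mu^{-1}|c_2|$. The main obstacle I anticipate is the off-diagonal analysis: the cross-terms between the central bubble $(U_{0,\mu_0},V_{0,\mu_0})$ and a peripheral bubble at $x_j$ have a leading Taylor contribution that must be shown to vanish, thanks to the odd symmetry of $\partial_{i}V_{0,1}$ and $\partial_{i}U_{0,1}$ around each peak; only the second-order Taylor term survives, and it is this term which, after summing over $j$ with $k\simeq\mu^{\tau}$, produces the $\mu^{-\sigma}$ gain needed to beat the diagonal scales. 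The peripheral-peripheral interactions ($j\ne j'$) require a parallel summation using $|x_j-x_{j'}|\gtrsim k^{-1}$ and $\mu\simeq k^{(p+1)(N-2)/N}$, and this is precisely where the standing assumption $p>\frac{N}{N-2}$ enters, just as it does in the energy expansion of Section~\ref{sec:exp1}.
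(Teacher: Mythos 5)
Your blueprint matches the paper's proof in all essentials: you multiply \eqref{1-28-1} against $(Z_0,Y_0)$ and the two families of peripheral kernel elements, integrate by parts using Lemma~\ref{lemma:FKP} so that the orthogonality conditions of $\mathbf{E}$ reduce the left side to discrepancy integrals, bound the source terms via the $\|\cdot\|_{**}$ weights, and invert an almost-diagonal $3\times 3$ system after the rescaling $(c_0,\mu c_1,\mu^{-1}c_2)$. The only cosmetic difference is that you test against $\sum_j Z_{j,l},\sum_j Y_{j,l}$ while the paper tests against the single $Z_{1,m},Y_{1,m}$ (equivalent by the rotational symmetry built into $\mathbf{L}_s$, which just moves a factor $k$ from both sides), and your appeals to a Pohozaev identity and to an odd-symmetry Taylor cancellation for the central--peripheral cross-terms are not actually needed --- the paper gets by with the cruder decay bounds $o(\mu^{-\alpha(l)})$ for those cross-pairings, and the separation $|x_j-x_1|\gtrsim k^{-1}$ already suffices.
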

\begin{proof}
The numbers $c_0$, $c_1$ and $c_2$ in \eqref{1-28-1} are determined by
\begin{align}
&\quad c_0 \int_{\R^N} \(p V_{0,\mu_0}^{p-1}Z^2_0 + qU_{0,\mu_0}^{q-1}Y_0^2\)
+ \sum_{l=1}^2 c_l \int_{\R^N} \(p\sum_{j=1}^k V_j^{p-1}Z_{j,l}Z_0 + q\sum_{j=1}^k U_j^{q-1}Y_{j,l}Y_0\) \nonumber \\
&= \int_{\R^N} \(Z_0 L_{1,k}(u,v) +Y_0 L_{2,k}(u,v)\) - \int_{\R^N} \(Z_0 f +Y_0 g\) \label{1-18-2}
\end{align}
and
\begin{equation}\label{2-18-2}
\begin{aligned}
&\quad c_0 \int_{\R^N} \(p V_{0,\mu_0}^{p-1}Z_0 Z_{1,m} + qU_{0,\mu_0}^{q-1}Y_0 Y_{1,m}\) \\
&\quad + \sum_{l=1}^2 c_l \int_{\R^N} \(p\sum_{j=1}^k V_j^{p-1}Z_{j,l}Z_{1,m} + q\sum_{j=1}^k U_j^{q-1}Y_{j,l}Y_{1,m}\)\\
&= \int_{\R^N} \(Z_{1,m} L_{1,k}(u,v) +Y_{1,m} L_{2,k}(u,v)\) - \int_{\R^N} \(Z_{1,m} f +Y_{1,m} g\), \quad m = 1,2.
\end{aligned}
\end{equation}
We will estimate each integral in \eqref{1-18-2} and \eqref{2-18-2}.

\medskip
First of all, since $3 < \frac{N}{q+1}+2+\tau \le \frac{N}{p+1}+2+\tau < N$ for $N \ge 5$ and $p \in (\frac{N}{N-2},\frac{N+2}{N-2}]$, we have
\begin{equation}\label{3-18-2}
\begin{aligned}
&\quad \left|\int_{\R^N} \(Z_{0} f +Y_{0} g\)\right| \\
&\le C\sum_{j=1}^k \left[\|f\|_{**,1} \int_{\R^N} \frac{|Z_0(y)| \mu^{\frac{N}{q+1}+2}}{(1+\mu|y-x_j|)^{\frac{N}{q+1}+2+\tau}} dy \right. \\
&\left. \hspace{175pt} + \|g\|_{**,2}\int_{\R^N} \frac{|Y_0(y)| \mu^{\frac{N}{p+1}+2}}{(1+\mu |y-x_j|)^{\frac{N}{p+1}+2+\tau}} dy\right] \\
&\le \frac{C}{\mu^{\tau}} \sum_{j=1}^k \left[\|f\|_{**,1} \int_{\R^N} \frac{|Z_0(y)|}{|y-x_j|^{\frac{N}{q+1}+2+\tau}} dy + \|g\|_{**,2} \int_{\R^N} \frac{|Y_0(y)|}{|y-x_j|^{\frac{N}{p+1}+2+\tau}} dy \right] \\
&\le \frac{Ck}{\mu^\tau}\|(f, g)\|_{**}\le C\|(f, g)\|_{**}
\end{aligned}
\end{equation}
and
\begin{equation}\label{31-18-2}
\begin{aligned}
&\quad \left|\int_{\R^N} \(Z_{1,2} f +Y_{1,2} g\)\right| \\
&\le \frac{C}{\mu} \sum_{j=1}^k \left[\|f\|_{**,1} \int_{\R^N} \frac{\mu^{N}}{(1+|z-\mu(x_j-x_1)|)^{\frac{N}{q+1}+2+\tau}} \frac{1}{(1+|z|)^{N-2}} dz \right. \\
&\left. \hspace{55pt} + \|g\|_{**,2} \int_{\R^N} \frac{\mu^{N}}{(1+|z-\mu(x_j-x_1)|)^{\frac{N}{p+1}+2+\tau}} \frac{1}{(1+|z|)^{N-2}} dz\right] \\
&\le \frac{C}{\mu} \|(f, g)\|_{**} \left[1+ \sum_{j=2}^k\(\frac{1}{|\mu(x_j-x_1)|^{\frac{N}{q+1}+\tau}} + \frac{1}{|\mu(x_j-x_1)|^{\frac{N}{p+1}+\tau}}\)\right] \\
&\le \frac{C}{\mu} \|(f, g)\|_{**},
\end{aligned}
\end{equation}
where we use \eqref{cri} and set $z = \mu(y-x_1)$ to get the first inequality.
Analogously, we also have
\begin{equation}\label{4-18-2}
\left|\int_{\R^N} \(Z_{1,1} f +Y_{1,1} g\)\right| \le C\mu \|(f, g)\|_{**}.
\end{equation}

\medskip
Second, we calculate
\begin{align*}
&\quad \left|\int_{\R^N} \(Z_0 L_{1,k}(u,v) +Y_0 L_{2,k}(u,v)\)\right| \\
& \le \left|\int_{\R^N} p Z_0\(|V_*|^{p-1}-V_{0,\mu_0}^{p-1}\)v\right|
+ \left|\int_{\R^N} q Y_0\(|U_*|^{q-1}-U_{0,\mu_0}^{q-1}\)u\right|=: J_3+J_4.
\end{align*}

We estimate $J_3$. Because $p\le 2$ for $N \ge 6$, we get
\begin{equation}\label{6-18-2}
\begin{aligned}
J_3 &\le C\|v\|_{*,2}\int_{\R^N}|Z_0(y)| V^{p-1}(y)
\sum_{j=1}^{k}\frac{\mu^{\frac{N}{p+1}}}{(1+\mu|y-x_j|)^{\frac{N}{p+1}+\tau}} dy.
\end{aligned}
\end{equation}
Using Lemma~\ref{l10-18-2}, we deduce
\begin{align*}
&\quad \int_{\R^N\setminus \cup_{i=1}^k B_{\mu^{-1+\sigma'}}(x_i)}|Z_0(y)| V^{p-1}(y)\sum_{j=1}^{k}\frac{\mu^{\frac{N}{p+1}}}{(1+\mu
|y-x_j|)^{\frac{N}{p+1}+\tau}} dy \\
&\le \frac{C}{\mu^{\sigma}}\int_{\R^N\setminus \cup_{i=1}^k B_{\mu^{-1+\sigma'}}(x_i)}|Z_0(y)| \left[\sum_{j=1}^{k}\frac{\mu^{\frac{N}{p+1}}}{(1+\mu
|y-x_j|)^{\frac{N}{p+1}+\tau}}\right]^p dy \\
&\le \frac{C}{\mu^{\sigma}}\int_{\R^N\setminus \cup_{i=1}^k B_{\mu^{-1+\sigma'}}(x_i)}|Z_0(y)| \sum_{j=1}^{k}\frac{\mu^{\frac{N}{q+1}+2}}{(1+\mu
|y-x_j|)^{\frac{N}{q+1}+2+\tau}} dy \\
&\le \frac{C}{\mu^{\sigma+\tau}}\int_{\R^N\setminus \cup_{i=1}^k B_{\mu^{-1+\sigma'}}(x_i)}|Z_0(y)| \sum_{j=1}^{k}\frac{1}{|y-x_j|^{\frac{N}{q+1}+2+\tau}} dy \le \frac{C}{\mu^{\sigma}}
\end{align*}
for $\sigma,\, \sigma' >0$ small. Furthermore, for each $i = 1,\ldots,k$,
\begin{align*}
&\quad \int_{B_{\mu^{-1+\sigma'}}(x_i)}|Z_0(y)| V^{p-1}(y)\sum_{j=1}^{k}\frac{\mu^{\frac{N}{p+1}}}{(1+\mu
|y-x_j|)^{\frac{N}{p+1}+\tau}} dy \\
&\le C\int_{B_{\mu^{-1+\sigma'}}(x_i)}|Z_0(y)| \left[\sum_{j=1}^{k}\frac{\mu^{\frac{N}{p+1}}}{(1+\mu
|y-x_j|)^{\frac{N}{p+1}+\tau}}\right]^p dy \\
&\le  C\mu^{\frac{pN}{p+1}}\int_{ B_{\mu^{-1+\sigma'}}(x_i)} \left[\frac{1}{(1+\mu|y-x_i|)^{\frac{pN}{p+1}+p\tau}}+\frac{1}{(1+\mu|y-x_i|)^{\frac{pN}{p+1}}}\right] dy \\
&\le \frac{C}{\mu^{\frac {N(1-\sigma')}{p+1}}  }.
\end{align*}
Thus,
\begin{equation}\label{10-18-2}
\int_{\R^N}|Z_0(y)| V^{p-1}(y)\sum_{j=1}^{k}\frac{\mu^{\frac{N}{p+1}}}{(1+\mu
|y-x_j|)^{\frac{N}{p+1}+\tau}} dy \le \frac{C}{\mu^{\sigma}}.
\end{equation}

We now proceed to estimate $J_4$. Using Lemma~\ref{l1-23-4}, we can prove that
\begin{align*}
&\quad \int_{\R^N\setminus \cup_{i=1}^k B_{\mu^{-1+\sigma'}}(x_i)} |Y_0(y)| U^{q-1}(y) \sum_{j=1}^{k}\frac{\mu^{\frac{N}{q+1}}}{(1+\mu
|y-x_j|)^{\frac{N}{q+1}+\tau}} dy \\
&\le \frac{C}{\mu^{\sigma}} \int_{\R^N\setminus \cup_{i=1}^k B_{\mu^{-1+\sigma'}}(x_i)}|Y_0(y)| \left[\sum_{j=1}^{k}\frac{\mu^{\frac{N}{q+1}}}{(1+\mu
|y-x_j|)^{\frac{N}{q+1}+\tau}}\right]^q dy \\
&\le \frac{C}{\mu^{\sigma}}\int_{\R^N\setminus \cup_{i=1}^k B_{\mu^{-1+\sigma'}}(x_i)}|Y_0(y)| \sum_{j=1}^{k}\frac{\mu^{\frac{N}{p+1}+2}}{(1+\mu
|y-x_j|)^{\frac{N}{p+1}+2+\tau}} dy \le \frac{C}{\mu^{\sigma}}
\end{align*}
for $\sigma,\, \sigma' >0$ small. Moreover, by applying $U \le C\sum_{j=1}^k U_j$ in $\cup_{i=1}^k B_{\mu^{-1+\sigma'}}(x_i)$, we can show that
\begin{align*}
&\quad \int_{B_{\mu^{-1+\sigma'}}(x_i)}|Y_0(y)| U^{q-1}(y) \sum_{j=1}^{k}\frac{\mu^{\frac{N}{q+1}}}{(1+\mu
|y-x_j|)^{\frac{N}{q+1}+\tau}} dy \\
&\le C\int_{B_{\mu^{-1+\sigma'}}(x_i)}|Y_0(y) | U_i^{q-1}(y) \sum_{j=1}^{k}\frac{\mu^{\frac{N}{q+1}}}{(1+\mu
|y-x_j|)^{\frac{N}{q+1}+\tau}} dy \\
&\le \frac{C}{\mu^{\frac {N(1-\sigma')}{q+1}}  }.
\end{align*}
So we have proved that
\begin{equation}\label{11-18-2}
\int_{\R^N}|Y_0(y)| U^{q-1}(y)
\sum_{j=1}^{k}\frac{\mu^{\frac{N}{q+1}}}{(1+\mu
|y-x_j|)^{\frac{N}{q+1}+\tau}} dy \le \frac{C}{\mu^{\sigma}}.
\end{equation}
If $q\le 2$, then we infer from \eqref{11-18-2} that
\begin{equation}\label{nn6-18-2}
\begin{aligned}
J_4
&\le C\|u\|_{*,1}\int_{\R^N}|Y_0(y)| U^{p-1}(y)
\sum_{j=1}^{k}\frac{\mu^{\frac{N}{q+1}}}{(1+\mu|y-x_j|)^{\frac{N}{q+1}+\tau}} dy
\le \frac{C}{\mu^{\sigma}}\|u\|_{*,1}.
\end{aligned}
\end{equation}
If $q>2$, a similar argument to that of $J_3$ yields the desired estimate.

Combining \eqref{6-18-2}--\eqref{10-18-2} and \eqref{nn6-18-2}, we obtain a number $\sigma>0$ such that
\begin{equation}\label{12-18-2}
\left|\int_{\R^N} \left[Z_0 L_{1,k}(u,v) +Y_0 L_{2,k}(u,v)\right]\right| \le \frac {C}{\mu^\sigma} \|(u, v)\|_*.
\end{equation}

\medskip
Third, for $m = 1,2$, we evaluate
\begin{align*}
&\quad \left| \int_{\R^N} \left[ Z_{1,m} L_{1,k}(u,v) + Y_{1,m} L_{2,k}(u,v)\right]\right|\\
&\le \left|\int_{\R^N} \left[p\(V_1^{p-1}- V^{p-1}\)Z_{1,m}v + q\(U_1^{q-1}-U^{q-1}\)Y_{1,m}u \right]\right|\\
&\ + \left|\int_{\R^N} \left[p\(|V_*|^{p-1}- V^{p-1}\)Z_{1,m}v + q\(|U_*|^{q-1}-U^{q-1}\)Y_{1,m}u \right]\right|\\
&=: J_{5,m} + J_{6,m}.
\end{align*}
It is easy to check that
\begin{equation}\label{11-19-3}
\begin{aligned}
&\int_{\R^N} \left[\bigg(\sum_{j=2}^k V_j(y)\bigg)^{p-1}  V_1(y) \sum_{j=1}^{k}\frac{\mu^{\frac{N}{p+1}}}{(1+\mu
|y-x_j|)^{\frac{N}{p+1}+\tau}} dy \right]
\\
&\ + \int_{\R^N}  \left[\bigg(\sum_{j=2}^k U_j(y)\bigg)^{q-1}  U_1(y) \sum_{j=1}^{k}\frac{\mu^{\frac{N}{q+1}}}{(1+\mu
|y-x_j|)^{\frac{N}{q+1}+\tau}} dy \right]
\le \frac {C }{\mu^{\sigma}}
\end{aligned}
\end{equation}
and
\begin{equation}\label{12-19-3}
\begin{aligned}
& \int_{\R^N} \left[\sum_{j=2}^k \(V_j V_1^{p-1}\)(y) \sum_{j=1}^{k}\frac{\mu^{\frac{N}{p+1}}}{(1+\mu
|y-x_j|)^{\frac{N}{p+1}+\tau}} dy \right]
\\
&\ + \int_{\R^N}  \left[\sum_{j=2}^k \(U_j U_1^{q-1}\)(y) \sum_{j=1}^{k}\frac{\mu^{\frac{N}{q+1}}}{(1+\mu
|y-x_j|)^{\frac{N}{q+1}+\tau}}dy \right]
\le \frac {C }{\mu^{\sigma}}.
\end{aligned}
\end{equation}
We see that \eqref{11-19-3} and \eqref{12-19-3} imply
\[
J_{5,m} \le \frac {C}{\mu^{\alpha(m)+\sigma}}\|(u,v)\|_*,
\]
where $\alpha(1) := -1$ and $\alpha(2) := 1$. Similarly, we can derive
\[J_{6,m} \le \frac {C }{\mu^{\alpha(m)+\sigma}}\|(u,v)\|_*.\]
So we have proved
\begin{equation}\label{2-23-2}
\left| \int_{\R^N} \left[Z_{1,m} L_{1,k}(u,v) +Y_{1,m} L_{2,k}(u,v)\right]\right| \le \frac {C}{\mu^{\alpha(m)+\sigma}}\|(u,v)\|_*.
\end{equation}

\medskip
Finally, we observe
\begin{equation}\label{3-23-2}
\int_{\R^N} \(p V_{0,\mu_0}^{p-1}Z^2_0+qU_{0,\mu_0}^{q-1}Y_0^2\) >0,
\end{equation}
\begin{equation}\label{4-23-2}
\int_{\R^N} \(p\sum_{j=1}^k V_j^{p-1}Z_{j,l}Z_0 + q\sum_{j=1}^k U_j^{q-1}Y_{j,l}Y_0\)
= o\(\frac 1{\mu^{\alpha(l)}}\),
\end{equation}
\begin{equation}\label{5-23-2}
\int_{\R^N} \(p\sum_{j=1}^k V_j^{p-1}Z_{j,l}Z_{1,m} + q\sum_{j=1}^k U_j^{q-1}Y_{j,l}Y_{1,m}\)
= \frac 1{\mu^{\alpha(l)+\alpha(m)}}\delta_{lm} (a_l+o(1)),
\end{equation}
and
\begin{equation}\label{6-23-2}
\begin{aligned}
\int_{\R^N} \(p V_{0,\mu_0}^{p-1}Z_0  Z_{1,m} + qU_{0,\mu_0}^{q-1}Y_0  Y_{1,m}\) = o\(\frac 1{\mu^{\alpha(m)}}\),
\end{aligned}
\end{equation}
where $l,m=1,2$, $\delta_{lm}=1$ if $l=m$, $\delta_{lm}=0$ if $l\ne m$, and $a_l>0$.

\medskip
Using  \eqref{3-23-2}--\eqref{6-23-2}, \eqref{3-18-2}--\eqref{4-18-2},
\eqref{12-18-2} and \eqref{2-23-2},
we can solve \eqref{1-18-2}--\eqref{2-18-2} to obtain the desired estimates
for $c_0$, $c_1$, and $c_2$.
\end{proof}

\begin{lemma}\label{l10-23-2}
Assume that $N \ge 5$. For $x \in \R^N$, we have
\begin{equation}\label{1-24-2}
\int_{\R^N} \frac{\left|(c_0 V_{0,\mu_0}^{p-1}Z_0)(y)\right|}{|y-x|^{N-2}} dy
\le C\(\frac1{\mu^\sigma}\|(u, v)\|_* + \|(f, g)\|_{**}\) \sum_{j=1}^{k}\frac{\mu^{\frac{N}{q+1}}}{(1+\mu |x-x_j|)^{\frac{N}{q+1}+\tau}},
\end{equation}
\begin{equation}\label{5-24-2}
\int_{\R^N} \frac{\left|(c_0 U_{0,\mu_0}^{q-1}Y_0)(y)\right|}{|y-x|^{N-2}} dy
\le C\(\frac1{\mu^\sigma}\|(u, v)\|_* + \|(f, g)\|_{**}\) \sum_{j=1}^{k}\frac{\mu^{\frac{N}{q+1}}}{(1+\mu |x-x_j|)^{\frac{N}{q+1}+\tau}},
\end{equation}
and
\begin{equation}\label{2-24-2}
\begin{medsize}
\displaystyle \int_{\R^N} \frac1{|y-x|^{N-2}} \Bigg|\Bigg(c_h \sum_{j=1}^k V_j^{p-1}Z_j\Bigg)(y)\Bigg| dy
\le C\(\frac1{\mu^\sigma}\|(u, v)\|_* + \|(f, g)\|_{**}\) \sum_{j=1}^{k}\frac{\mu^{\frac{N}{q+1}}}{(1+\mu |x-x_j|)^{\frac{N}{q+1}+\tau}}.
\end{medsize}
\end{equation}
Moreover, if $N \ge 7$, then for $x \in \R^N$,
\begin{equation}\label{6-24-2}
\begin{medsize}
\displaystyle \int_{\R^N} \frac1{|y-x|^{N-4}} \Bigg|\Bigg(c_h \sum_{j=1}^k U_j^{q-1}Y_j\Bigg)(y)\Bigg| dy \\
\le C\(\frac1{\mu^\sigma}\|(u, v)\|_* + \|(f, g)\|_{**}\) \sum_{j=1}^{k}\frac{\mu^{\frac{N}{q+1}}}{(1+\mu |x-x_j|)^{\frac{N}{q+1}+\tau}}.
\end{medsize}
\end{equation}
Here, $h = 1, 2$, and $\sigma>0$ is a fixed constant.

Let $R>1$ be a fixed large number. If $N = 5, 6$, then \eqref{6-24-2} holds for $x \in B_R(0)$.
\end{lemma}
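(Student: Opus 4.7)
The plan splits into a scalar step and a geometric step. The scalar step uses Lemma~\ref{l1-18-2} to absorb the coefficients into the constant
\[
\Lambda := \mu^{-\sigma}\|(u,v)\|_* + \|(f,g)\|_{**},
\]
giving $|c_0|\le C\Lambda$, $|c_1|\le C\mu^{-1}\Lambda$ and $|c_2|\le C\mu\,\Lambda$. It then remains, in each of \eqref{1-24-2}--\eqref{6-24-2}, to estimate a Newtonian-potential-type integral pointwise in $x$ and to compare it with the weighted sum on the right-hand side.

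For \eqref{1-24-2} and \eqref{5-24-2}, the densities $V_{0,\mu_0}^{p-1}|Z_0|$ and $U_{0,\mu_0}^{q-1}|Y_0|$ are unit-scale and concentrated near the origin. Differentiating the explicit forms of $U_{0,\mu_0},V_{0,\mu_0}$ in $\mu_0$ and applying \eqref{V10est}--\eqref{U10est} gives $|Y_0|,|Z_0|\lesssim(1+|y|)^{-(N-2)}$, while $p(N-2)>N$ (from $p>\frac{N}{N-2}$) makes the densities integrable at infinity. Splitting the integration domain at $|y|\simeq|x|/2$ then yields
\[
\int_{\R^N}\frac{V_{0,\mu_0}^{p-1}(y)|Z_0(y)|}{|y-x|^{N-2}}\,dy + \int_{\R^N}\frac{U_{0,\mu_0}^{q-1}(y)|Y_0(y)|}{|y-x|^{N-2}}\,dy \le \frac{C}{(1+|x|)^{N-2}}.
\]
The statement reduces to the pointwise comparison $(1+|x|)^{-(N-2)}\le C\sum_{j=1}^{k}\mu^{N/(q+1)}(1+\mu|x-x_j|)^{-(N/(q+1)+\tau)}$: for bounded $x$ the right-hand side is at least of order $k\mu^{-\tau}\simeq 1$, whereas for $|x|\to\infty$ it decays like $|x|^{-(N/(q+1)+\tau)}$, slower than $|x|^{-(N-2)}$ since $N/(q+1)+\tau<N-2$ (using $q+1\ge 2N/(N-2)$ and $\tau$ small).

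For \eqref{2-24-2} I rescale each summand by $z=\mu(y-x_j)$. Combining the pointwise bound $V_j^{p-1}|Z_{j,h}|\le C\mu^{pN/(p+1)-\alpha(h)}(1+\mu|y-x_j|)^{-p(N-2)}$ with the criticality identity $\frac{pN}{p+1}-2=\frac{N}{q+1}$, the rescaled inner integral becomes $\int|z-w|^{-(N-2)}(1+|z|)^{-p(N-2)}\,dz$ at $w=\mu(x-x_j)$, and is controlled by $C(1+|w|)^{-(N-2)}$ (the standard Riesz-potential bound). Multiplying by $|c_h|\le C\mu^{\alpha(h)}\Lambda$ cancels the $\alpha(h)$-powers exactly, and summation over $j$ gives \eqref{2-24-2}.

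The final estimate \eqref{6-24-2} follows the same scheme with the less singular kernel $|y-x|^{-(N-4)}$ and density $U_j^{q-1}|Y_{j,h}|\le C\mu^{qN/(q+1)-\alpha(h)}(1+\mu|y-x_j|)^{-q(N-2)}$. After rescaling the inner integral is bounded by $C(1+\mu|x-x_j|)^{-(N-4)}$ thanks to $q(N-2)>N$, and multiplying by $|c_h|$ leaves the prefactor $\mu^{qN/(q+1)-4}$. Comparison with the target $\mu^{N/(q+1)}$ yields the ratio
\[
\mu^{(q-1)N/(q+1)-4}\,(1+\mu|x-x_j|)^{N/(q+1)+\tau-(N-4)}.
\]
The scalar factor is a \emph{strictly} negative power of $\mu$ because $p>1$ on the critical hyperbola is equivalent to $q<(N+4)/(N-4)$, i.e.\ $(q-1)N/(q+1)<4$. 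The polynomial factor is bounded provided $N/(q+1)+\tau\le N-4$, which holds strictly for $N\ge 7$ (using $q+1\ge 2N/(N-2)$) but may fail for $N=5,6$. In those low dimensions, the assumed bound $x\in B_R(0)$ forces $|x-x_j|$ to stay bounded, so $(1+\mu|x-x_j|)^{N/(q+1)+\tau-(N-4)}$ is controlled by a power of $\mu$; combining it with $\mu^{(q-1)N/(q+1)-4}$ gives a total exponent $\tau-N/(q+1)<0$, which closes the estimate. This dimensional dichotomy is the main technical point; everything else is bookkeeping.
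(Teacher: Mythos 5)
Your proof is correct and matches the paper's argument in all essential respects: both use Lemma~\ref{l1-18-2} to bound $|c_0|\le C\Lambda$ and $|c_h|\le C\mu^{\alpha(h)}\Lambda$; both replace $|Z_0|,|Y_0|$ and $|Z_{j,h}|$ by $CV_{0,\mu_0}$, $CU_{0,\mu_0}$ and $C\mu^{-\alpha(h)}V_j$; both invoke the Riesz-potential bound with $p(N-2)>N$ (resp.\ $q(N-2)>N$) to produce $(1+\mu|x-x_j|)^{-(N-2)}$ (resp.\ $(1+\mu|x-x_j|)^{-(N-4)}$); and both compare against the weight $\sum_j \mu^{\frac{N}{q+1}}(1+\mu|x-x_j|)^{-(\frac{N}{q+1}+\tau)}$ via $\frac{N}{q+1}+\tau< N-2$, via $\frac{N}{q+1}+\tau\le N-4$ when $N\ge 7$, and via $\tau\le\frac{N}{q+1}$ together with $x\in B_R(0)$ when $N=5,6$. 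One small inaccuracy: for $N=7$ with $p=q=\frac{N+2}{N-2}$ one has equality $\frac{N}{q+1}+\tau=N-4$, so the inequality is not strict as you state; but only $\le$ is needed, so the conclusion is unaffected.
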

\begin{proof}
By Lemma~\ref{l1-18-2} and $p(N-2)>N$, we obtain
\begin{align*}
\int_{\R^N} \frac{\left|(c_0 V_{0,\mu_0}^{p-1}Z_0)(y)\right|}{|y-x|^{N-2}} dy
&\le C\(\frac1{\mu^\sigma}\|(u, v)\|_* + \|(f, g)\|_{**}\) \int_{\R^N} \frac1{|y-x|^{N-2}} V_{0,\mu_0}^{p}(y)\,dy\\
&\le C\(\frac1{\mu^\sigma}\|(u, v)\|_* + \|(f, g)\|_{**}\) \frac1{(1+|x|)^{N-2}}\\
&\le C\(\frac1{\mu^\sigma}\|(u, v)\|_* + \|(f, g)\|_{**}\) \sum_{j=1}^{k}\frac{\mu^{\frac{N}{q+1}}}{(1+\mu |x-x_j|)^{\frac{N}{q+1}+\tau}},
\end{align*}
since $\frac{N}{q+1}+\tau < N-2$ implies
\[
\frac1k \frac1{(1+|x|)^{N-2}} \simeq
\frac C{\mu^{\tau}} \frac1{(1+|x|)^{N-2}} \le \frac{C\mu^{\frac{N}{q+1}}}{(1+\mu
|x-x_j|)^{\frac{N}{q+1}+\tau}}
\]
for all $j = 1,\ldots,k$. This proves \eqref{1-24-2}. Inequality \eqref{5-24-2} can be verified in an analogous manner.

On the other hand, by employing $|Z_j| \le \frac {C}{\mu^{\alpha(h)}}V_j$, Lemma~\ref{l1-18-2}, $p(N-2)>N$, and $\frac{N}{q+1}+\tau < N-2$, we find
\begin{align*}
&\quad \int_{\R^N} \frac1{|y-x|^{N-2}} \Bigg|\Bigg(c_h \sum_{j=1}^k V_j^{p-1}Z_j\Bigg)(y)\Bigg| dy
\\
&\le C\(\frac1{\mu^\sigma}\|(u, v)\|_* + \|(f, g)\|_{**}\)
\int_{\R^N} \frac1{|y-x|^{N-2}}\sum_{j=1}^k V_j^{p}(y) \,dy\\
&\le C\(\frac1{\mu^\sigma}\|(u, v)\|_* + \|(f, g)\|_{**}\)
\sum_{j=1}^{k}\frac{\mu^{\frac{pN}{p+1}-2}}{(1+\mu |x-x_j|)^{N-2}} \\
&\le C\(\frac1{\mu^\sigma}\|(u, v)\|_* + \|(f, g)\|_{**}\) \sum_{j=1}^{k}\frac{\mu^{\frac{N}{q+1}}}{(1+\mu|x-x_j|)^{\frac{N}{q+1}+\tau}}.
\end{align*}
Thus, \eqref{2-24-2} follows. If $N \ge 7$, then $\frac{N}{q+1}+\tau \le N-4$ for all $p \in (\frac{N}{N-2},\frac{N+2}{N-2}]$, so we see
\begin{align*}
&\quad \int_{\R^N} \frac1{|y-x|^{N-4}} \Bigg|\Bigg(c_h \sum_{j=1}^k U_j^{q-1}Y_j\Bigg)(y)\Bigg| dy
\\
&\le C\(\frac1{\mu^\sigma}\|(u, v)\|_* + \|(f, g)\|_{**}\)
\sum_{j=1}^{k}\frac{\mu^{\frac{qN}{q+1}-4}}{(1+\mu |x-x_j|)^{N-4}} \\
&\le C\(\frac1{\mu^\sigma}\|(u, v)\|_* + \|(f, g)\|_{**}\) \sum_{j=1}^{k}\frac{\mu^{\frac{N}{q+1}}}{(1+\mu|x-x_j|)^{\frac{N}{q+1}+\tau}},
\end{align*}
where we also used $\frac{qN}{q+1}-4 = \frac{N}{p+1}-2 \le \frac{N}{q+1}$ for the last inequality. Hence, \eqref{6-24-2} holds for all $x \in \R^N$.
If $N = 5$ or $6$, then $\frac{N}{q+1}+\tau \ge N-4$ for all $p \in (\frac{N}{N-2},\frac{N+2}{N-2}]$. In this case,
\begin{multline*}
\frac{\mu^{\frac{qN}{q+1}-4}}{(1+\mu |x-x_j|)^{N-4}} \le \frac{\mu^{\frac{N}{q+1}}}{(1+\mu|x-x_j|)^{\frac{N}{q+1}+\tau}} \quad \text{for } |x| \le R \\
\Leftrightarrow \; \mu^{\frac{N}{q+1}+\tau-(N-4)} \le C\mu^{\frac{N}{q+1}-\frac{N}{p+1}+2}
\Leftrightarrow \; \tau \le \frac{N}{q+1},
\end{multline*}
and the last inequality is always true. Hence, \eqref{6-24-2} holds for all $x \in B_R(0)$.
\end{proof}

\begin{lemma}\label{l20-23-2}
Assume that $N \ge 5$. For $x \in \R^N$, we have
\[\int_{\R^N}\frac{|f(y)|}{|y-x|^{N-2}} dy
\le C\|f\|_{**,1} \sum_{j=1}^{k}\frac{\mu^{\frac{N}{q+1}}}{(1+\mu
|x-x_j|)^{\frac{N}{q+1}+\tau}}\]
and
\[\int_{\R^N}\frac{|g(y)|}{|y-x|^{N-2}} dy
\le C\|g\|_{**,2} \sum_{j=1}^{k}\frac{\mu^{\frac{N}{p+1}}}{(1+\mu
|x-x_j|)^{\frac{N}{p+1}+\tau}}.\]
\end{lemma}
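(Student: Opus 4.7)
The plan is to reduce the estimate to a single elementary convolution bound applied term by term, one per bubble in the defining sum of the $\|\cdot\|_{**,1}$ norm.

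First, I would unpack the definition \eqref{eq:**-norm} to obtain the pointwise bound
\[
|f(y)| \le \|f\|_{**,1} \sum_{j=1}^{k} \frac{\mu^{\frac{N}{q+1}+2}}{(1+\mu|y-x_j|)^{\frac{N}{q+1}+2+\tau}},
\]
so that the Newton-potential integral splits as
\[
\int_{\R^N}\frac{|f(y)|}{|y-x|^{N-2}}\, dy \le \|f\|_{**,1} \sum_{j=1}^{k} \int_{\R^N} \frac{1}{|y-x|^{N-2}}\cdot \frac{\mu^{\frac{N}{q+1}+2}}{(1+\mu|y-x_j|)^{\frac{N}{q+1}+2+\tau}}\, dy.
\]
For each $j$, I would perform the change of variables $z=\mu(y-x_j)$, writing $a_j:=\mu(x-x_j)$, to recast the $j$-th term as
\[
\mu^{\frac{N}{q+1}} \int_{\R^N} \frac{1}{|z-a_j|^{N-2}} \cdot \frac{1}{(1+|z|)^{\alpha}}\, dz, \qquad \alpha:=\tfrac{N}{q+1}+2+\tau.
\]

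Next, I would verify that the exponent $\alpha$ lies in the admissible range $(2,N)$. This was in fact already noted in the proof of Lemma~\ref{l1-18-2}: for $N\ge 5$ and $p\in(\tfrac{N}{N-2},\tfrac{N+2}{N-2}]$ one has $3 < \tfrac{N}{q+1}+2+\tau \le \tfrac{N}{p+1}+2+\tau < N$. I can therefore invoke the standard Newton-potential convolution estimate
\[
\int_{\R^N} \frac{dz}{|z-a|^{N-2}(1+|z|)^{\alpha}} \le \frac{C}{(1+|a|)^{\alpha-2}}\qquad (2<\alpha<N),
\]
which is a routine calculation (splitting into $|z|\le |a|/2$, $|z-a|\le |a|/2$, and the complement) and is the kind of bound already used implicitly throughout Section \ref{sec:exp1} and presumably recorded in Appendix \ref{sec:tech}. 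Applying it with $a=a_j$ yields
\[
\int_{\R^N} \frac{1}{|y-x|^{N-2}}\cdot \frac{\mu^{\frac{N}{q+1}+2}}{(1+\mu|y-x_j|)^{\frac{N}{q+1}+2+\tau}}\, dy \le \frac{C\mu^{\frac{N}{q+1}}}{(1+\mu|x-x_j|)^{\frac{N}{q+1}+\tau}}.
\]
Summing over $j=1,\dots,k$ gives the first inequality of the lemma.

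For the second inequality, I would repeat the argument verbatim, only with $q+1$ replaced by $p+1$ throughout; the exponent $\tfrac{N}{p+1}+2+\tau$ still lies strictly between $2$ and $N$ by the same bounds, so the convolution estimate applies identically. There is essentially no obstacle here: the lemma is a packaging of the defining pointwise estimate of the $\|\cdot\|_{**}$ norm against a Riesz-type convolution bound. The only point that needs verification is the range of the exponent $\alpha$, and that check has already been carried out earlier in the text.
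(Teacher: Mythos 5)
Your proof is correct and follows essentially the same route as the paper: apply the defining pointwise bound from the $\|\cdot\|_{**}$-norm, integrate term by term, and invoke the standard Riesz-potential convolution estimate with the exponent $\alpha=\tfrac{N}{q+1}+2+\tau$ (resp.\ $\tfrac{N}{p+1}+2+\tau$) lying in $(2,N)$. The paper simply writes the two-line chain of inequalities without spelling out the convolution lemma or the exponent check; your version makes those ingredients explicit, but the argument is the same.
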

\begin{proof}
The first inequality is true because
\begin{align*}
\int_{\R^N}\frac{|f(y)|}{|y-x|^{N-2}} dy
&\le \|f\|_{**,1} \int_{\R^N}\frac1{|y-x|^{N-2}} \sum_{j=1}^{k}\frac{\mu^{\frac{N}{q+1}+2}}{(1+\mu |y-x_j|)^{\frac{N}{q+1}+2+\tau}} dy \\
&\le C\|f\|_{**,1} \sum_{j=1}^{k}\frac{\mu^{\frac{N}{q+1}}}{(1+\mu |x-x_j|)^{\frac{N}{q+1}+\tau}} dy.
\end{align*}
The second inequality can be verified  in a similar way.
\end{proof}

\begin{lemma}\label{l21-23-2}
Assume that $N \ge 6$. Let $R>1$ be a fixed large number. For $|x|\le R$,
\begin{equation}\label{eq:f4}
\int_{\R^N} \frac{|f(y)|}{|y-x|^{N-4}} dy \le C \|f\|_{**,1}\sum_{j=1}^{k}\frac{\mu^{\frac{N}{p+1}}}{(1+\mu |x-x_j|)^{\frac{N}{p+1}+\tau}}
\end{equation}
and
\begin{equation}\label{eq:g4}
\int_{\R^N} \frac{|g(y)|}{|y-x|^{N-4}} dy \le C \|g\|_{**,2}\sum_{j=1}^{k}\frac{\mu^{\frac{N}{q+1}}}{(1+\mu |x-x_j|)^{\frac{N}{q+1}+\tau}}.
\end{equation}
\end{lemma}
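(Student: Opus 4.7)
The plan is to proceed in parallel with the proof of Lemma~\ref{l20-23-2}, exploiting the definition of the $**$-norms and reducing to a standard convolution-type estimate after rescaling. To prove \eqref{eq:f4}, I first unpack $\|f\|_{**,1}$:
\[
|f(y)| \le \|f\|_{**,1} \sum_{j=1}^k \frac{\mu^{\frac{N}{q+1}+2}}{(1+\mu|y-x_j|)^{\frac{N}{q+1}+2+\tau}},
\]
so by linearity it suffices to bound each integral
\[
I_j(x) := \int_{\R^N}\frac{\mu^{\frac{N}{q+1}+2}}{|y-x|^{N-4}(1+\mu|y-x_j|)^{\frac{N}{q+1}+2+\tau}}\,dy.
\]
Rescaling via $z=\mu(y-x_j)$ converts this to $I_j(x)=\mu^{\frac{N}{q+1}-2}\int_{\R^N}\frac{dz}{|z-\mu(x-x_j)|^{N-4}(1+|z|)^{\frac{N}{q+1}+2+\tau}}$.

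The next step invokes a standard convolution-type bound of the form $\int_{\R^N}|z-a|^{-\alpha}(1+|z|)^{-\beta}\,dz \le C(1+|a|)^{N-\alpha-\beta}$, valid for $\alpha\in(0,N)$ and $\beta\in(N-\alpha,N)$. With $\alpha=N-4$ and $\beta=\tfrac{N}{q+1}+2+\tau$, the crucial hypothesis $\alpha+\beta>N$ reduces to $\tfrac{N}{q+1}+\tau>2$. Using $\tfrac{N}{q+1}=N-2-\tfrac{N}{p+1}$ and $\tau=\tfrac{N}{(p+1)(N-2)}$, a short computation gives $\tfrac{N}{q+1}+\tau = N-2-\tfrac{N(N-3)}{(p+1)(N-2)}$, which exceeds $2$ precisely when $N\ge 6$ and $p\in(\tfrac{N}{N-2},\tfrac{N+2}{N-2}]$. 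One also checks $\beta<N$ in this range. Hence $I_j(x)\le C\mu^{\frac{N}{q+1}-2}(1+\mu|x-x_j|)^{-(\frac{N}{q+1}+\tau-2)}$.

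I then compare this with the target RHS. Setting $\gamma := \tfrac{2N}{p+1}-N+4$, one checks $\gamma>0$ throughout the admissible range of $p$, and the ratio of my upper bound to the target in \eqref{eq:f4} equals $\mu^{-\gamma}(1+\mu|x-x_j|)^{\gamma}$. Since $|x|\le R$ and $|x_j|\le r_0$, the distance $|x-x_j|$ is bounded by a fixed constant, so $(1+\mu|x-x_j|)^{\gamma}\le C\mu^{\gamma}$ and the ratio is bounded. The estimate \eqref{eq:g4} is obtained by the identical argument with the roles of $p$ and $q$ interchanged: one needs $|g(y)|\le \|g\|_{**,2}\sum_j \mu^{\frac{N}{p+1}+2}(1+\mu|y-x_j|)^{-(\frac{N}{p+1}+2+\tau)}$, and the parallel integrability condition is $\tfrac{N}{p+1}+\tau>2$, equivalently $p+1<\tfrac{N(N-1)}{2(N-2)}$, which again holds for $N\ge 6$ throughout the admissible range; the analogous comparison factor is $\mu^{-\eta}(1+\mu|x-x_j|)^{\eta}$ with $\eta=N-\tfrac{2N}{p+1}>0$, absorbed in the same way.

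The main obstacle is entirely bookkeeping: tracking the four exponents $\tfrac{N}{p+1}$, $\tfrac{N}{q+1}$, $\tau$, and the factor of $2$ arising from the kernel $|y-x|^{-(N-4)}$ rather than $|y-x|^{-(N-2)}$, and confirming the two integrability conditions that together pin down the dimensional restriction $N\ge 6$. The hypothesis $|x|\le R$ is essential in the last step: without a bound on $|x-x_j|$, the factor $(1+\mu|x-x_j|)^{\gamma}$ could not be absorbed by $\mu^{-\gamma}$, and the estimate would fail for the far-field regime.
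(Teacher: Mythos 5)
Your proof is correct and takes essentially the same route as the paper: unpack the $**$-norm, apply the standard convolution bound for the kernel $|y-x|^{-(N-4)}$ (with the same integrability checks $4<\tfrac{N}{q+1}+2+\tau\le\tfrac{N}{p+1}+2+\tau<N$ that pin down $N\ge 6$), and then use $|x|\le R$, together with $\tfrac{N}{p+1}-2\le\tfrac{N}{q+1}$ (equivalently your $\gamma,\eta>0$), to absorb the mismatch in exponents. The paper writes out \eqref{eq:g4} and leaves \eqref{eq:f4} to symmetry; you do the opposite, but the two arguments are mirror images.
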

\begin{proof}
We focus only on \eqref{eq:g4}, since \eqref{eq:f4} can be handled analogously. We have
\begin{align*}
\int_{\R^N} \frac{|g(y)|}{|y-x|^{N-4}} dy
&\le C\|g\|_{**,2} \sum_{j=1}^{k}\frac{\mu^{\frac{N}{p+1}-2}}{(1+\mu |x-x_j|)^{\frac{N}{p+1}-2+\tau}} \\
&\le C\|g\|_{**,2} \sum_{j=1}^{k}\frac{\mu^{\frac{N}{q+1}}}{(1+\mu |x-x_j|)^{\frac{N}{q+1}+\tau}},
\end{align*}
since $4 < \frac{N}{q+1}+2+\tau \le \frac{N}{p+1}+2+\tau < N$ for $N \ge 6$ and $p \in (\frac{N}{N-2},\frac{N+2}{N-2}]$, and
\[
\frac{\mu^{\frac{N}{p+1}-2}}{(1+\mu
|x-x_j|)^{\frac{N}{p+1}-2+\tau}}\le C \frac{\mu^{\frac{N}{q+1}}}{(1+\mu
|x-x_j|)^{\frac{N}{q+1}+\tau}}
\]
for $|x|\le R$ and each $j = 1,\ldots,k$. Here we have used $\frac{N}{p+1}-2 \le \frac{N}{q+1}$.
\end{proof}

\begin{proof}[Proof of Proposition~\ref{l20-2-4}]
To the contrary, suppose that there exist sequences $\{(u_k, v_k)\}_{k \in \N} \subset \mathbf{E}$, $\{(f_k,g_k)\}_{k \in \N} \subset \mathbf{Y}$
and $\{(c_{0,k}, c_{1,k})\}_{k \in \N} \subset \R^2$ satisfying \eqref{1-28-1},
$\|(u_k, v_k)\|_{*}=1$ for all $k \in \N$ and $\|(f_k,g_k)\|_{**}\to 0$ as $k \to +\infty$. We write
\[
L_k (u_k, v_k) = L(u_k, v_k)- \(p \(|V_*|^{p-1}-V_{0,\mu_0}^{p-1}\)v_k, q\(|U_*|^{q-1}-U_{0,\mu_0}^{q-1}\)u_k\),
\]
where
\[L(u,v) := \(-\Delta u - p V_{0,\mu_0}^{p-1} v, -\Delta v - q U_{0,\mu_0}^{q-1} u\), \quad (u,v) \in \mathbf{E}.\]

Choose a large number $R_0 > 1$ such that $|x_i| \le \frac {R_0}2$ for all $i = 1,\ldots,k$.
It follows from \eqref{G1k}--\eqref{G2k2}, \eqref{G3k1}--\eqref{G4k2}, \eqref{10-27-1}--\eqref{11-27-1}, and Lemmas~\ref{l10-23-2}--\ref{l21-23-2} that for $x \in B_{R_0}(0)$,
\begin{align}
|u_k(x)| &\le C\int_{\R^N} \frac1{|y-x|^{N-2}} \(| |V_*|^{p-1}-V_{0,\mu_0}^{p-1}||v_k|+|f_k|\)(y)\,dy \nonumber \\
&\ + C\int_{\R^N} \frac1{|y-x|^{N-2}} \left|\(c_0p V_{0,\mu_0}^{p-1}Z_0+ \sum_{h=1}^2c_h p\sum_{j=1}^k V_j^{p-1}Z_j\)(y)\right| dy \nonumber \\
&\ +C\int_{\R^N} \frac1{|y-x|^{N-4}} \(| |U_*|^{q-1}-U_{0,\mu_0}^{q-1}||u_k|+|g_k|\)(y)\,dy \nonumber \\
&\ +C\int_{\R^N} \frac1{|y-x|^{N-2}} \left|\(c_0 qU_{0,\mu_0}^{q-1}Y_0\)(y)\right| dy \label{1-31-1} \\
&\ +C\int_{\R^N} \frac1{|y-x|^{N-4}} \left|\(\sum_{h=1}^2c_h q\sum_{j=1}^k U_j^{q-1}Y_j\)(y)\right| dy \nonumber \\
&\le C\left[\int_{\R^N} \frac{\(V^{p-1}|v_k|\)(y)}{|y-x|^{N-2}} dy + \int_{\R^N} \frac{\(U^{q-1} |u_k|\)(y)}{|y-x|^{N-4}} dy + \int_{\R^N} \frac{\(U_{0,\mu_0}^{q-2}U |u_k|\)(y)}{|y-x|^{N-4}} dy\right] \nonumber \\
&\ + C\(\frac1{\mu^\sigma}\|(u_k,v_k)\|_* + \|(f_k,g_k)\|_{**}\) \sum_{j=1}^{k}\frac{\mu^{\frac{N}{q+1}}}{(1+\mu|x-x_j|)^{\frac{N}{q+1}+\tau}} \nonumber
\end{align}
and
\begin{align}
|v_k(x)| &\le C\left[\int_{\R^N} \frac{\(U^{q-1} |u_k|\)(y)}{|y-x|^{N-2}} dy + \int_{\R^N} \frac{\(U_{0,\mu_0}^{q-2}U |u_k|\)(y)}{|y-x|^{N-2}} dy + \int_{\R^N} \frac{\(V^{p-1}|v_k|\)(y)}{|y-x|^{N-4}} dy\right] \nonumber \\
&\ + C\(\frac1{\mu^\sigma}\|(u_k,v_k)\|_* + \|(f_k,g_k)\|_{**}\) \sum_{j=1}^{k}\frac{\mu^{\frac{N}{p+1}}}{(1+\mu|x-x_j|)^{\frac{N}{p+1}+\tau}}. \label{1-31-13}
\end{align}
In \eqref{1-31-1}--\eqref{1-31-13}, the integrals involving the function $U_{0,\mu_0}^{q-2}U |u_k|$ can be substituted with $0$ provided $q \le 2$, which happens for all $N \ge 9$ and $p \in (\frac{N}{N-2},\frac{N+2}{N-2}]$.

Furthermore, by appealing to the fundamental solution of the Laplace equation in $\R^N$, we obtain that for $x \in \R^N \setminus B_{R_0}(0)$,
\begin{equation}\label{1-31-14}
\begin{aligned}
|u_k(x)| &\le C\int_{\R^N} \frac1{|y-x|^{N-2}}\(V^{p-1}|v_k|+|f_k|\)(y)\, dy + C\int_{\R^N} \frac{\(V_{0,\mu_0}^{p-1}|v_k|\)(y)}{|y-x|^{N-2}} dy \\
&\ + C\int_{\R^N} \frac1{|y-x|^{N-2}} \left|\(c_0p V_{0,\mu_0}^{p-1}Z_0+ \sum_{h=1}^2c_h p\sum_{j=1}^k V_j^{p-1}Z_j\)(y)\right| dy \\
&\le C\left[\int_{\R^N} \frac{\(V^{p-1}|v_k|\)(y)}{|y-x|^{N-2}} dy + \int_{\R^N} \frac{\(V_{0,\mu_0}^{p-1}|v_k|\)(y)}{|y-x|^{N-2}} dy\right] \\
&\ + C\(\frac1{\mu^\sigma}\|(u_k,v_k)\|_* + \|(f_k,g_k)\|_{**}\) \sum_{j=1}^{k}\frac{\mu^{\frac{N}{q+1}}}{(1+\mu|x-x_j|)^{\frac{N}{q+1}+\tau}}
\end{aligned}
\end{equation}
and
\begin{equation}\label{1-31-15}
\begin{aligned}
|v_k(x)| &\le C\left[\int_{\R^N} \frac{\(U^{q-1}|u_k|\)(y)}{|y-x|^{N-2}} dy + \int_{\R^N} \frac{\(U_{0,\mu_0}^{q-1}|u_k|\)(y)}{|y-x|^{N-2}} dy\right] \\
&\ + C\(\frac1{\mu^\sigma}\|(u_k,v_k)\|_* + \|(f_k,g_k)\|_{**}\) \sum_{j=1}^{k}\frac{\mu^{\frac{N}{p+1}}}{(1+\mu|x-x_j|)^{\frac{N}{p+1}+\tau}}.
\end{aligned}
\end{equation}
To reach a contradiction, we will prove that the function
\begin{equation}\label{eq:functuvk}
\left[\sum_{j=1}^{k}\frac{\mu^{\frac{N}{q+1}}}{(1+\mu |x-x_j|)^{\frac{N}{q+1}+\tau}}\right]^{-1}|u_k(x)| +
\left[\sum_{j=1}^{k}\frac{\mu^{\frac{N}{p+1}}}{(1+\mu |x-x_j|)^{\frac{N}{p+1}+\tau}}\right]^{-1}|v_k(x)|
\end{equation}
can achieve its maximum only in $\cup_{i=1}^k \overline{B_{M\mu^{-1}}(x_i)}$ provided $M>1$ large enough.
For this purpose, we now estimate the integrals in the parentheses on the rightmost sides of \eqref{1-31-1}--\eqref{1-31-13} for $x \in B_{R_0}(0) \setminus \cup_{i=1}^k B_{M\mu^{-1}}(x_i)$ and \eqref{1-31-14}--\eqref{1-31-15} for $x \in \R^N \setminus B_{R_0}(0)$.

\medskip
We begin by considering the integrals in \eqref{1-31-1} for $x \in B_{R_0}(0) \setminus \cup_{i=1}^k B_{M\mu^{-1}}(x_i)$.

First, we note that
\begin{equation}\label{12-1-2}
\(V^{p-1}|v_k|\)(y) \le C \|v_k\|_{*,2} \left[\sum_{i=1}^{k}\frac{\mu^{\frac{N}{p+1}}}{(1+\mu
|y-x_i|)^{N-2}} \right]^{p-1}\sum_{j=1}^{k}\frac{\mu^{\frac{N}{p+1}}}{(1+\mu |y-x_j|)^{\frac{N}{p+1}+\tau}}
\end{equation}
for $y \in \R^N$. Since $N-2>\frac{N}{p+1}+\tau$, we see
\begin{multline}\label{n1-12-1-2}
\int_{\R^N \setminus \cup_{j=1}^k B_{M\mu^{-1}}(x_j)} \frac1{|y-x|^{N-2}} \left[\sum_{i=1}^{k}\frac{1}{(1+\mu|y-x_i|)^{N-2}}\right]^{p-1} \sum_{j=1}^{k}\frac{dy}{(1+\mu|y-x_j|)^{\frac{N}{p+1}+\tau}} \\
\le \frac {C}{M^{\sigma}}\int_{\R^N \setminus \cup_{j=1}^k B_{M\mu^{-1}}(x_j)} \frac1{|y-x|^{N-2}} \left[\sum_{j=1}^{k}\frac{1}{(1+\mu|y-x_j|)^{\frac{N}{p+1}+\tau}}\right]^p dy
\end{multline}
for some $\sigma > 0$. From \eqref{12-1-2}, \eqref{n1-12-1-2} and Lemma~\ref{l10-18-2}, we get
\begin{align*}
&\quad \int_{\R^N \setminus \cup_{j=1}^k B_{M\mu^{-1}}(x_j)} \frac{\(V^{p-1}|v_k|\)(y)}{|y-x|^{N-2}} dy \\
&\le \frac {C}{M^{\sigma}} \|v_k\|_{*,2} \int_{\R^N \setminus \cup_{j=1}^k B_{M\mu^{-1}}(x_j)} \frac1{|y-x|^{N-2}} \sum_{j=1}^k \frac{\mu^{\frac{pN}{p+1}}}{ (1+\mu |y-x_j| )^{  \frac{N}{q+1}+2+\tau } } dy \\
&\le \frac {C}{M^{\sigma}} \|v_k\|_{*,2} \sum_{j=1}^k \frac{ \mu^{\frac{N}{q+1}} }{ (1+\mu |x-x_j| )^{  \frac{N}{q+1}+\tau } }.
\end{align*}
Moreover, for $y\in B_{M\mu^{-1}}(x_i)$ with $i = 1,\ldots,k$, it holds that $V \le CV_i$ and
\[
\sum_{j=1}^{k}\frac{1}{(1+\mu|y-x_j|)^{\frac{N}{p+1}+\tau}} \le C \le \frac{C}{(1+\mu|y-x_i|)^{\frac{N}{p+1}+\tau}}.
\]
Thus,
\begin{align*}
&\quad \int_{B_{M\mu^{-1}}(x_i)} \frac{\(V^{p-1}|v_k|\)(y)}{|y-x|^{N-2}} dy \\
&\le C\|v_k\|_{*,2} \int_{  B_{M\mu^{-1}}(x_i)} \frac1{|y-x|^{N-2}} \frac{\mu^{\frac{pN}{p+1}}}{(1+\mu|y-x_i|)^{(N-2)(p-1)+\frac{N}{p+1}+\tau}} dy\\
&\le C\|v_k\|_{*,2} \frac{\mu^{\frac{N}{q+1}}}{(1+\mu|x-x_i|)^{\min\{(N-2)(p-1)+\frac{N}{p+1}+\tau-2,N-2-\theta\}}} \\
&\le \frac{C}{M^{\sigma}} \|v_k\|_{*,2} \frac{\mu^{\frac{N}{q+1}}}{(1+\mu|x-x_i|)^{\frac{N}{q+1}+\tau}}
\end{align*}
for any $\theta \in (0,1)$ small. So it is true that for $x \in B_{R_0}(0) \setminus \cup_{i=1}^k B_{M\mu^{-1}}(x_i)$,
\begin{equation}\label{14-1-2}
\int_{\R^N} \frac{\(V^{p-1}|v_k|\)(y)}{|y-x|^{N-2}} dy
\le \frac{C}{M^{\sigma}} \|v_k\|_{*,2} \sum_{j=1}^{k}\frac{ \mu^{\frac{N}{q+1}} }{ (1+\mu |x-x_j| )^{ \frac{N}{q+1}+\tau }}.
\end{equation}

Second, by applying Lemma~\ref{lemma:U} and arguing as in the proof of Lemma~\ref{l21-23-2}, we can show that for $x \in B_{R_0}(0) \setminus \cup_{i=1}^k B_{M\mu^{-1}}(x_i)$,
\begin{equation}\label{15-1-2}
\int_{\R^N} \frac{\(U^{q-1} |u_k|\)(y)}{|y-x|^{N-4}} dy
\le \frac{C}{M^{\sigma}} \|u_k\|_{*,1} \sum_{j=1}^k \frac{ \mu^{\frac{N}{q+1}} }{ (1+\mu |x-x_j| )^{ \frac{N}{q+1}+\tau }}.
\end{equation}

Lastly, we assume that $q > 2$, which can happen only if $N \le 8$. It holds that for $x \in B_{R_0}(0) \setminus \cup_{i=1}^k B_{M\mu^{-1}}(x_i)$,
\begin{equation}\label{16-1-21}
\begin{aligned}
&\quad \int_{\R^N} \frac{\(U_{0,\mu_0}^{q-2}U |u_k|\)(y)}{|y-x|^{N-4}} dy \\
&\le C \|u_k\|_{*,1} \int_{\R^N \setminus B_{2R_0}(0)} \frac1{|y-x|^{N-4}} \frac1{|y|^{(N-2)(q-2)}} \frac{(k\mu^{\frac{N}{q+1}})^2}{(\mu|y|)^{N-2+\frac{N}{q+1}+\tau}} dy \\
&\ + \frac{C}{M^{\sigma}} \|u_k\|_{*,1} \int_{B_{2R_0}(0) \setminus \cup_{j=1}^k B_{M\mu^{-1}}(x_j)} \frac1{|y-x|^{N-4}} \left[\sum_{j=1}^{k}\frac{\mu^{\frac{N}{q+1}}}{(1+\mu
|x-x_j|)^{\frac{N}{q+1}+\tau}}\right]^2 dy \\
&\ + C \|u_k\|_{*,1} \sum_{j=1}^k \int_{B_{M\mu^{-1}}(x_j)} \frac1{|y-x|^{N-4}} \frac{\mu^{\frac{2N}{q+1}}}{(1+\mu
|x-x_j|)^{N-2+\frac{N}{q+1}+\tau}} dy.
\end{aligned}
\end{equation}
The first term on the right-hand side of \eqref{16-1-21} is bounded by
\[C\|u_k\|_{*,1} \mu^{-(N-3)\tau} \le \frac{C}{\mu^{\sigma}}\|u_k\|_{*,1} \le \frac{C}{\mu^{\sigma}} \|u_k\|_{*,1} \sum_{j=1}^k \frac{ \mu^{\frac{N}{q+1}} }{ (1+\mu |x-x_j| )^{ \frac{N}{q+1}+\tau }}.\]
Since $2 < \frac{N}{q+1}+\tau < \frac{N}{2}$ for $N \ge 6$, the second term is bounded by
\begin{equation}\label{17-1-21}
\begin{aligned}
&\quad \frac{C}{M^{\sigma}} \|u_k\|_{*,1} \int_{B_{2R_0}(0) \setminus \cup_{j=1}^k B_{M\mu^{-1}}(x_j)} \frac1{|y-x|^{N-4}} \cdot k\sum_{j=1}^{k}\frac{\mu^{\frac{2N}{q+1}} }{(1+\mu
|x-x_j|)^{2(\frac{N}{q+1}+\tau)}} dy \\
&\le \frac{C}{M^{\sigma}} \|u_k\|_{*,1} \sum_{j=1}^{k}\frac{k\mu^{\frac{2N}{q+1}-4}}{(1+\mu
|x-x_j|)^{2(\frac{N}{q+1}+\tau-2)}} \\
&\le \frac{C}{M^{\sigma}} \|u_k\|_{*,1} \sum_{j=1}^k \frac{ \mu^{\frac{N}{q+1}} }{ (1+\mu |x-x_j| )^{ \frac{N}{q+1}+\tau }}.
\end{aligned}
\end{equation}
Besides, the third term is bounded by
\[C \|u_k\|_{*,1} \sum_{j=1}^k \frac{\mu^{\frac{2N}{q+1}-4}}{(1+\mu|x-x_j|)^{N-4}} \le \frac{C}{M^{\sigma}} \|u_k\|_{*,1} \sum_{j=1}^k \frac{ \mu^{\frac{N}{q+1}} }{ (1+\mu |x-x_j| )^{ \frac{N}{q+1}+\tau }}.\]
We have shown that for $x \in B_{R_0}(0) \setminus \cup_{i=1}^k B_{M\mu^{-1}}(x_i)$,
\begin{equation}\label{16-1-2}
\int_{\R^N} \frac{\(U_{0,\mu_0}^{q-2}U |u_k|\)(y)}{|y-x|^{N-4}} dy
\le \frac{C}{M^{\sigma}} \|u_k\|_{*,1} \sum_{j=1}^k \frac{ \mu^{\frac{N}{q+1}} }{ (1+\mu |x-x_j| )^{ \frac{N}{q+1}+\tau }}, \quad q > 2.
\end{equation}

Combining \eqref{14-1-2}, \eqref{15-1-2}, and \eqref{16-1-2}, we see that for $x \in B_{R_0}(0) \setminus \cup_{i=1}^k B_{M\mu^{-1}}(x_i)$,
\begin{equation}\label{20-1-6-21}
\left[\sum_{j=1}^{k}\frac{\mu^{\frac{N}{q+1}}}{(1+\mu
|x-x_j|)^{\frac{N}{q+1}+\tau}}\right]^{-1}|u_k(x)|
\le \frac{C}{M^{\sigma}}\|(u_k, v_k)\|_{*} + C\|(f_k, g_k)\|_{**}.
\end{equation}

Similarly, by applying \eqref{1-31-13}, we can prove that for $x \in B_{R_0}(0) \setminus \cup_{i=1}^k B_{M\mu^{-1}}(x_i)$,
\begin{equation}\label{n20-1-6-21}
\left[\sum_{j=1}^{k}\frac{\mu^{\frac{N}{p+1}}}{(1+\mu
|x-x_j|)^{\frac{N}{p+1}+\tau}}\right]^{-1}|v_k(x)| \le \frac{C}{M^{\sigma}}\|(u_k, v_k)\|_{*} + C\|(f_k, g_k)\|_{**}.
\end{equation}

\medskip
We next consider the integrals in \eqref{1-31-14}. The above computations imply that \eqref{14-1-2} remains valid for $x \in \R^N \setminus B_{R_0}(0)$. We also have
\begin{align*}
&\quad \int_{\R^N} \frac{(V_{0,\mu_0}^{p-1}|v_k|)(y)}{|y-x|^{N-2}} dy \\
&\le C \|v_k\|_{*,2} \int_{\R^N} \frac1{|y-x|^{N-2}} \frac{1}{(1+|y|)^{(N-2)(p-1)}} \sum_{j=1}^{k}\frac{\mu^{\frac{N}{p+1}}}{(1+\mu |y-x_j|)^{\frac{N}{p+1}+\tau}} dy \\
&\le C \|v_k\|_{*,2} \frac{1}{|x|^{\min\{(N-2)(p-1)+\frac{N}{p+1}+\tau-2,N-2-\theta\}}} \\
&\le C \|v_k\|_{*,2} \frac1{|x|^\sigma} \cdot \frac{1}{|x|^{\frac{N}{q+1}+\tau}}
\le C \|v_k\|_{*,2} \frac1{|x|^\sigma} \sum_{j=1}^{k} \frac{\mu^{\frac{N}{q+1}}}{(1+\mu |x-x_j|)^{\frac{N}{q+1}+\tau}}
\end{align*}
for some $\sigma>0$. Hence, for $x \in \R^N \setminus B_{R_0}(0)$,
\begin{equation}\label{21-1-6-21}
\begin{aligned}
\left[\sum_{j=1}^{k}\frac{\mu^{\frac{N}{q+1}}}{(1+\mu |x-x_j|)^{\frac{N}{q+1}+\tau}}\right]^{-1}|u_k(x)|
\le C\(\frac1{|x|^\sigma}+\frac1{M^{\sigma}}\)\|(u_k, v_k)\|_{*} + C\|(f_k, g_k)\|_{**}.
\end{aligned}
\end{equation}

Analogously, by employing \eqref{1-31-15}, we can prove that for $x \in \R^N \setminus B_{R_0}(0)$,
\begin{equation}\label{n21-1-6-21}
\left[\sum_{j=1}^{k}\frac{\mu^{\frac{N}{p+1}}}{(1+\mu
|x-x_j|)^{\frac{N}{p+1}+\tau}}\right]^{-1}|v_k(x)|
\le C\(\frac1{|x|^\sigma}+\frac1{M^{\sigma}}\)\|(u_k, v_k)\|_{*} + C\|(f_k, g_k)\|_{**}.
\end{equation}

\medskip
Consequently, we observe from \eqref{20-1-6-21}--\eqref{n20-1-6-21} and \eqref{21-1-6-21}--\eqref{n21-1-6-21} that the maximum of the function in \eqref{eq:functuvk} can only be achieved in $\cup_{i=1}^k \overline{B_{M\mu^{-1}}(x_i)}$ provided $M>1$ large.
Now, a standard argument based on Lemma~\ref{lemma:FKP} and $(u_k, v_k)\in  \mathbf{E}$ tells us that $\|(u_k, v_k)\|_*\to 0$ as $k \to +\infty$.
This is a contradiction.
\end{proof}

For $(f,g)\in \mathbf{Y}$,  we define the operator $\mathbf{P}$ as follows:
\begin{multline}\label{eq:mathbfP}
\mathbf{P} (f,g) := (f, g) \\
+ \(c_0 pV_{0,\mu_0}^{p-1}Z_0 + \sum_{l=1}^2 c_l p\sum_{j=1}^k V_j^{p-1}Z_{j,l},\,
c_0 qU_{0,\mu_0}^{q-1}Y_0 + \sum_{l=1}^2 c_l q\sum_{j=1}^k U_j^{q-1}Y_{j,l}\),
\end{multline}
where $c_0, c_1, c_2 \in \R$ are chosen to be $\mathbf{P} (f,g) \in \mathbf{F}$.
Following the proof of Lemma~\ref{l1-18-2} and employing Lemma~\ref{l10-18-4}, $p(N-2) \ge \frac{pN}{p+1}+\tau$ and $q(N-2) \ge \frac{qN}{q+1}+\tau$, we can check that $\| \mathbf{P} (f,g)\|_{**}\le C \|(f,g)\|_{**}$.

By Lemma~\ref{l20-23-2}, $(-\Delta)^{-1} $ is a bounded linear operator from $\mathbf Y$ to $\mathbf X$.
Also, if $(f, g)\in \mathbf F$, then $(-\Delta)^{-1}(f, g) := ((-\Delta)^{-1}f, (-\Delta)^{-1}g) \in \mathbf E$.
On the other hand, if $(u, v)\in \mathbf X$, then $(p |V_*|^{p-1} v, \,q |U_*|^{q-1} u) \in \mathbf Y$, for Lemmas~\ref{l10-18-2} and \ref{l1-23-4} imply
\[
\left\|\(p |V_*|^{p-1} v, \,q |U_*|^{q-1} u\)\right\|_{**} \le C \|(u, v)\|_*.
\]
As a result, if we define the operator
\begin{equation}\label{10-24-2}
T(u, v) := (-\Delta)^{-1} \mathbf{P}\(p |V_*|^{p-1} v, \,q |U_*|^{q-1} u\),
\quad (u, v)\in \mathbf E,
\end{equation}
then $T$ is a bounded linear operator from $\mathbf E$ to itself.

\begin{proposition}\label{proposition3-5}
Let $N \ge 5$ and $I$ be the identity operator on $\mathbf E$. Then $I-T$ is a bijective bounded linear operator from $\mathbf E$ to itself.
Furthermore, there exists a constant $C>0$ independent of $k \in \N$ large, such that if $(u, v)\in \mathbf E$ satisfies
\begin{equation}\label{30-24-2}
(u, v)= (I-T)^{-1} (-\Delta)^{-1} (f,g)
\end{equation}
for some $(f, g)\in \mathbf Y$ with $(-\Delta)^{-1} (f,g) \in \mathbf E$, then
\[\|(u, v)\|_*\le C \|(f, g)\|_{**}.\]
\end{proposition}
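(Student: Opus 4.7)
The plan is to deduce both bijectivity and the norm estimate from the a priori bound of Proposition~\ref{l20-2-4} via the Fredholm alternative on $\mathbf{E}$. The crucial observation is the following reduction: whenever $(u,v) \in \mathbf{E}$ satisfies $(I-T)(u,v) = (-\Delta)^{-1}(f,g)$, applying $-\Delta$ to both sides and invoking the definition \eqref{eq:mathbfP} of $\mathbf{P}$ yields
\[
L_k(u,v) = (f,g) + [\mathbf{P}-I]\bigl(p|V_*|^{p-1}v,\; q|U_*|^{q-1}u\bigr),
\]
which is precisely an equation of the form \eqref{1-28-1} with right-hand side $(f,g)$ and some scalars $(c_0,c_1,c_2)\in\R^3$ determined by $\mathbf{P}$. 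The missing Sobolev regularity $(u,v) \in \dot{W}^{2,\frac{p+1}{p}}(\R^N) \times \dot{W}^{2,\frac{q+1}{q}}(\R^N)$ can be extracted from the representation formula for $(-\Delta)^{-1}$ applied to the $\mathbf{Y}$-bounded right-hand side, together with standard interior elliptic theory.

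Taking $(f,g)=(0,0)$ in the reduction, every $(u,v) \in \ker(I-T)$ satisfies \eqref{1-28-1} with vanishing source, so Proposition~\ref{l20-2-4} forces $\|(u,v)\|_*\le C\cdot 0 = 0$, and $I-T$ is injective. For surjectivity, I would establish that $T: \mathbf{E}\to \mathbf{E}$ is compact. Given a bounded sequence $\{(u_n,v_n)\}\subset\mathbf{E}$, the multiplication $M(u,v) := (p|V_*|^{p-1}v,\, q|U_*|^{q-1}u)$ is bounded from $\mathbf{X}$ to $\mathbf{Y}$ by Lemmas~\ref{l10-18-2} and \ref{l1-23-4}, and $\mathbf{P}$ preserves $\mathbf{Y}$-boundedness (as noted after \eqref{eq:mathbfP}). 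After applying the Newton potential, the pointwise convolution bound in Lemma~\ref{l20-23-2} translates the $\mathbf{Y}$-bound on $\mathbf{P}M(u_n,v_n)$ into the uniform weighted decay required for the $\|\cdot\|_*$ norm, so the tails of $T(u_n,v_n)$ outside a large ball are uniformly small. On any compact set, standard interior elliptic estimates combined with Arzel\`a--Ascoli extract a locally uniformly convergent subsequence. A diagonal extraction, together with the closedness of the orthogonality conditions defining $\mathbf{E}$, then produces a subsequence converging in $\mathbf{E}$. The Fredholm alternative on the Banach space $\mathbf{E}$ upgrades injectivity to bijectivity of $I-T$, and $(I-T)^{-1}$ is bounded.

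The quantitative estimate follows at once: given $(u,v) = (I-T)^{-1}(-\Delta)^{-1}(f,g)$, the reduction above places $(u,v)$ in the admissible class of Proposition~\ref{l20-2-4} with source $(f,g)$, so that proposition delivers $\|(u,v)\|_*\le C\|(f,g)\|_{**}$ with $C$ independent of $k$. The main obstacle in this plan is verifying the compactness of $T$: the a priori bound already reflects the tailor-made weights, but compactness requires a genuine two-scale argument that balances interior Hölder regularity from the Newton potential against the uniform decay at infinity supplied by $\|\cdot\|_{**}$. Once that is secured, everything else is a formal consequence of Proposition~\ref{l20-2-4}.
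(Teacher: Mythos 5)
Your argument is essentially the same as the paper's: use Proposition~\ref{l20-2-4} with zero source to deduce injectivity of $I-T$, observe that $T$ is compact (here the paper simply points to the decay $|U_*|+|V_*|\to 0$ at infinity that underlies your multiplication-plus-Newton-potential sketch), invoke the Fredholm alternative for bijectivity, and then note that \eqref{30-24-2} is exactly an instance of \eqref{1-28-1} so that Proposition~\ref{l20-2-4} gives the norm bound. You supply somewhat more detail for the reduction to \eqref{1-28-1} and for the compactness step than the paper does, but the route is identical.
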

\begin{proof}
Because $|U_*(y)|+|V_*(y)|\to 0$ as $|y|\to +\infty$, one can prove that $T$ is compact.
Also, Proposition~\ref{l20-2-4} implies that $I-T$ is injective on $\mathbf E$. Hence the Fredholm alternative gives that $I-T$ is bijective on $\mathbf E$.

If \eqref{30-24-2} holds, then
\[-\Delta(u,v)-\mathbf P \(p |V_*|^{p-1} v,\, q|U_*|^{q-1} u\) = (f, g).\]
By Proposition~\ref{l20-2-4}, we have that $\|(u, v)\|_*\le C \|(f, g)\|_{**}$.
\end{proof}

\begin{remark}\label{Re4-25-3}
We assert that Proposition \ref{l20-2-4} holds for $N = 5$ and all $p \in [\frac{35}{18},\frac73)$.

To deduce Lemma~\ref{l1-18-2}, we must deal with $J_3$ for $p \in (2,\frac73]$. In view of the inequality $||a-b|^{p-1}- a^{p-1}|\le C(a^{p-2}b+b^{p-1})$ for $a,\, b > 0$ and \eqref{10-18-2}, it suffices to verify
\[\int_{\R^N} (|Z_0|V_{0,\mu_0}^{p-2}V)(y)\sum_{j=1}^{k}\frac{\mu^{\frac{N}{p+1}}}{(1+\mu
|y-x_j|)^{\frac{N}{p+1}+\tau}} dy \le \frac{C}{\mu^{\sigma}},\]
which can be done as in the derivation of \eqref{10-18-2}.
To estimate $J_4$, we employ Lemma~\ref{l1-23-4} as before, which needs the condition $p \in [\frac{35}{18},\frac73]$.

Lemmas~\ref{l10-23-2} and \ref{l20-23-2} continue to hold without any modification.
On the other hand, Lemma~\ref{l21-23-2} has to be changed as follows:
Note that $\frac{5}{q+1}+2+\tau \le 4 \le \frac{5}{p+1}+2+\tau < 5$, with the equalities holding if and only if $p = q = \frac73$.
Hence, if $p \in (\frac53,\frac73)$, then \eqref{eq:g4} remains true, whereas \eqref{eq:f4} must be replaced by
\begin{equation}\label{eq:f45}
\int_{B_{2R_0}(0)} \frac{|f(y)|}{|y-x|^{5-4}} dy + \int_{\R^N \setminus B_{2R_0}(0)} \frac{|f(y)|}{|y-x|^{5-2}} dy \le C \|g\|_{**,2}\sum_{j=1}^{k}\frac{\mu^{\frac{5}{p+1}}}{(1+\mu |x-x_j|)^{\frac{5}{p+1}+\tau}}.
\end{equation}
To prove this, we proceed as in the proof of Lemma~\ref{l21-23-2}. Then, we can see that the left-hand side of \eqref{eq:f45} is bounded by $C\mu^{\frac{5}{q+1}-2}k\mu^{4-(\frac{5}{q+1}+2+\tau)}+C = C$, which is again bounded by its right-hand side for $|x| \le R_0$.
If $p = \frac73$, then the left-hand side of \eqref{eq:f45} is bounded by $C\log\mu+C$, which cannot be bounded by its right-hand side.

Finally, to establish Proposition \ref{l20-2-4}, we must pay attention to the following parts:

\medskip \noindent - If $p \in (2,\frac73)$, we need to include an additional term $\int_{\R^N} \frac1{|y-x|^{N-2}} \(V_{0,\mu_0}^{p-2}V|v_k|\)(y) dy$ on the rightmost side of \eqref{1-31-1}.
It can be treated as in \eqref{16-1-2}, but the corresponding part to \eqref{17-1-21} must be substituted with
\begin{align*}
&\quad \frac{C}{M^{\sigma}} \|v_k\|_{*,2} \int_{B_{2R_0}(0) \setminus \cup_{j=1}^k B_{M\mu^{-1}}(x_j)} \frac1{|y-x|^{5-2}} \left[\sum_{j=1}^{k}\frac{\mu^{\frac{5}{p+1}}}{(1+\mu
|x-x_j|)^{\frac{5}{p+1}+\tau}}\right]^2 dy \\
&\le \frac{C}{M^{\sigma}} \|v_k\|_{*,2} \int_{B_{2R_0}(0) \setminus \cup_{j=1}^k B_{M\mu^{-1}}(x_j)} \frac1{|y-x|^{5-2}} \sum_{j=1}^{k}\frac{\mu^{\frac{2 \cdot 5}{p+1}} }{(1+\mu
|x-x_j|)^{\frac{2 \cdot 5}{p+1}+\tau}} dy \\
&\le \frac{C}{M^{\sigma}} \|v_k\|_{*,2} \sum_{j=1}^{k}\frac{\mu^{\frac{2 \cdot 5}{p+1}-2}}{(1+\mu
|x-x_j|)^{\frac{2 \cdot 5}{p+1}+\tau-2}}
\le \frac{C}{M^{\sigma}} \|v_k\|_{*,2} \sum_{j=1}^k \frac{ \mu^{\frac{5}{q+1}} }{ (1+\mu |x-x_j| )^{ \frac{5}{q+1}+\tau }},
\end{align*}
where we employed H\"older's inequality and $\sum_{j=1}^k (1+\mu |y-x_j|)^{-\tau} \le C$ for $y \in \R^N$ for the first inequality, $2 < \frac{2 \cdot 5}{p+1}+\tau < 5$ for the second inequality, and $p>2$ for the third inequality.

\medskip \noindent - If $p \in (\frac53,\frac73)$, then $2(\frac{5}{q+1}+\tau)<4$. Thus, the leftmost side in \eqref{17-1-21} is bounded by
\[\frac{C}{M^{\sigma}} \|u_k\|_{*,1} \mu^{-\tau}
\le \frac{C}{M^{\sigma}} \|u_k\|_{*,1} \\
\le \frac{C}{M^{\sigma}} \|u_k\|_{*,1} \sum_{j=1}^k \frac{ \mu^{\frac{5}{q+1}} }{ (1+\mu |x-x_j| )^{ \frac{5}{q+1}+\tau }}.\]

\medskip \noindent The remainder of the proof follow similarly and are omitted.
\end{remark}

\section{Ljapunov-Schmidt reduction for $p \in (\frac N{N-2},\frac{N+2}{N-2}]$}\label{sec:red}
A direct computation shows that the function $(U_*+\omega_1, V_*+\omega_2)$ solves \eqref{2} if and only if
\[L_k(\omega_1,\omega_2) =l +N(\omega_1,\omega_2),\]
where $L_k$ is the linear operator defined in \eqref{3-2},
\begin{equation}\label{3-3}
l := (l_1,l_2) = \( \Delta U_*+ |V_*|^{p-1}V_*,\,  \Delta V_*+ |U_*|^{q-1}U_* \) \in \mathbf Y,
\end{equation}
and
\begin{equation}\label{3-4}
N(\omega_1,\omega_2) := (N_1(\omega_2),N_2(\omega_1))
\end{equation}
with
\begin{equation}\label{3-5}
\begin{aligned}
\begin{cases}
\displaystyle N_1(\omega_2) := |V_*+\omega_2|^{p-1}( V_*+\omega_2) -|V_*|^{p-1} V_*-p|V_*|^{p-1}\omega_2, \\
\displaystyle N_2(\omega_1) := |U_*+\omega_1|^{q-1}( U_*+\omega_1) -|U_*|^{q-1} U_*-q|U_*|^{q-1}\omega_1.
\end{cases}
\end{aligned}
\end{equation}

We estimate the quantities $\| l\|_{**} $ and $\|N(\omega_1,\omega_2)\|_{**}$. We know
\[
l_1=\Delta U_*+ |V_*|^{p-1}V_*=-V_{0,\mu_0}^p+V^p+|V_*|^{p-1}V_*
\]
and
\[
l_2=\Delta V_*+ |U_*|^{q-1}U_* = -U_{0,\mu_0}^q +\sum_{j=1}^k U_j^q+ |U_*|^{q-1}U_*.
\]

\begin{lemma}\label{lem2.5}
Assume that $N \ge 6$. Then there is a constant $\sigma>0$ such that
\[\|(l_{1},l_{2})\|_{**} \le C \mu^{-\frac{N}{2(q+1)}-\sigma}.\]
\end{lemma}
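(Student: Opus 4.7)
The plan is to estimate $l_1(y)$ and $l_2(y)$ pointwise, region by region, and then match the resulting bounds against the weights defining $\|\cdot\|_{**,1}$ and $\|\cdot\|_{**,2}$. Since $(U_*, V_*)$, and hence $(l_1,l_2)$, belongs to the symmetric class $\mathbf{L}_s$, it suffices to work on $\Omega_1$. I would further split $\Omega_1$ into the near-bubble region $\Omega_1 \cap B_{\mu^{-1+\sigma'}}(x_1)$ for some small $\sigma'>0$ and its complement $\Omega_1 \setminus B_{\mu^{-1+\sigma'}}(x_1)$.

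For $l_1 = |V_*|^{p-1}V_* + V^p - V_{0,\mu_0}^p$ with $V_* = V_{0,\mu_0}-V$, in the inner region we have $V\simeq V_1 \gg V_{0,\mu_0}$, so $V_*<0$ and, via \eqref{eq:ele},
\[
|V_*|^{p-1}V_* = -(V-V_{0,\mu_0})^p = -V^p + pV^{p-1}V_{0,\mu_0} + O\bigl(V^{p-\alpha}V_{0,\mu_0}^{\alpha}\bigr)
\]
for a suitable $\alpha \in (1,\min\{p,2\}]$. Hence $l_1 = pV^{p-1}V_{0,\mu_0} - V_{0,\mu_0}^p + \text{l.o.t.}$, which I would bound against the $**,1$-weight using $V_{0,\mu_0}(x_1)=O(1)$ and the profile of $V_1^{p-1}$. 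On the outer region, the bubbles are small, and the same elementary inequality (with sign corrections handling the case $V>V_{0,\mu_0}$ separately) reduces $l_1$ to sums of quantities of the form $V_{0,\mu_0}^{p-1}V$, $V_{0,\mu_0}V^{p-1}$ and $V^p-\sum_j V_j^p$; the last is handled by the cross-term expansions already present in \eqref{1-29-7}--\eqref{8-29-7}, while the first two can be bounded by $\sum_j V_j$ weighted by $V_{0,\mu_0}$ or its power.

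The estimate for $l_2 = |U_*|^{q-1}U_* + \sum_j U_j^q - U_{0,\mu_0}^q$ is analogous, but now exploits the finer structure $U = \sum_j U_j + \varphi$ provided by Lemma~\ref{nnl2-19-1}. In the inner region, $U_1$ dominates $U_{0,\mu_0}+\sum_{j\ne 1}U_j+\varphi$, so the signed expansion yields
\[
l_2 = qU_1^{q-1}U_{0,\mu_0} - U_{0,\mu_0}^q - qU_1^{q-1}\sum_{j\ne 1}U_j - qU_1^{q-1}\varphi + \text{l.o.t.},
\]
each summand being controlled via \eqref{U10est}, \eqref{xjx1sum} and the explicit form of $\varphi$ in \eqref{nn1-19-1}. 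Outside the inner ball, one uses Lemma~\ref{lemma:U} and the sup-norm bound on $\varphi$ to reduce $l_2$ to the same kind of cross and interaction terms as in $l_1$.

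The main obstacle will be to pin down the precise decay rate $\mu^{-N/(2(q+1))-\sigma}$. This exponent reflects a balance between two competing contributions: the interaction term $V_{0,\mu_0}V^{p-1}$ (and its $U$-analogue) loses weight by a factor $\mu^{-N/(p+1)}$, whereas the non-decaying term $V_{0,\mu_0}^p$ is only $O(1)$ but is located away from the $x_j$'s and thus picks up many powers of $(\mu|y-x_j|)^{-1}$ from the denominator in $\|\cdot\|_{**,1}$. Criticality \eqref{cri} combined with $p\le q$ forces the strict inequality $1/(p+1) > 1/(2(q+1))$, which yields the margin $\sigma>0$. For $l_2$, the correction $\varphi$ contributes a term of size $k^{N-2}\mu^{-N/(p+1)}\simeq \mu^{(N-2)\tau-N/(p+1)}$ in the inner region, and the hypotheses $N\ge 6$ and $p>N/(N-2)$ are precisely what is needed to keep this below $\mu^{-N/(2(q+1))}$ after matching against the weight. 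Reconciling these exponents consistently across the two regions is the delicate step.
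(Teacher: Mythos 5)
Your high-level strategy (pointwise estimates of $l_1,\, l_2$ per $\Omega_j$, matched against the $**$-weights) is indeed the paper's, but the proposal has a structural problem and a substantive gap.

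The structural problem is the splitting scale. The paper splits $\Omega_1$ at $S = B_{\frac{\pi}{2}r_0^{-1}k^{-1}}(x_1)$, whose radius is $\simeq k^{-1} \simeq \mu^{-\tau}$ with $\tau = \frac{N}{(p+1)(N-2)} \in (0,1)$ — much larger than your $\mu^{-1+\sigma'}$ for small $\sigma'$. This choice is not cosmetic: inside $S$ one has $V_1 \ge c > 0$ together with $V_{0,\mu_0},\, \sum_{j\ge 2}V_j,\, \sum_{j\ge 2}U_j,\, \varphi$ all bounded, which is what justifies the crude bounds $|l_1|\le CV_1^{p-1}$, $|l_2|\le CU_1^{q-1}$; outside $S$ the inequality $\mu|y-x_1|\ge c\mu^{1-\tau}$ supplies the decay factor $\mu^{(1-\tau)(\cdot)}$ that makes the matching work. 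With your split, the annulus $\{\mu^{-1+\sigma'}\le|y-x_1|\le ck^{-1}\}$ lands in your outer region, and there the assertion "the bubbles are small" is false: at the outer edge $V_1 \simeq \mu^{\frac{2N}{p+1}-(N-2)}\gg 1$ whenever $p<\frac{N+2}{N-2}$. If you nonetheless run the outer-region estimate $|l_1|\le CV_{0,\mu_0}^{p-1}V$ down to $|y-x_1|=\mu^{-1+\sigma'}$, the achievable $t$ degrades from the paper's $\frac{(p-1)N}{p+1}+(1-\tau)\big(\frac{N}{p+1}-2-\tau\big)$ to roughly $\frac{(p-1)N}{p+1}$, which drops \emph{below} $\frac{N}{2(q+1)}$ for $p$ near $\frac{N}{N-2}$ once $N\ge 8$. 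So the decomposition at $k^{-1}$ is not a convenience — it is where the exponent balance becomes tight.

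The substantive gap: the verification that the achievable $t$ exceeds $\frac{N}{2(q+1)}$ in each regime is the whole content of the lemma, and your proposal omits it. You write that criticality and $p\le q$ "force the strict inequality," but this is exactly what must be checked, and it is not short: the paper reduces $l_1$ in $S$ to the positivity of $2\tilde p^3-(3N+2)\tilde p^2-(N^2-12N+16)\tilde p+2(N^3-N^2-6N+12)$ for $\tilde p = p(N-2)\in(N,N+2]$; $l_1$ in $\Omega_1\setminus S$ to monotonicity of an auxiliary function $\tsg_3$; $l_2$ in $S$ to $2\tilde q^2-(N+4)\tilde q-N(N-4)>0$ for $\tilde q = q(N-2)\ge N+2$; and $l_2$ in $\Omega_1\setminus S$ to a four-step argument involving Lemma~\ref{lemma:U}, the pointwise bound $U\le CU_{0,\mu_0}$ on $(\Omega_1\setminus S)\cap B_R$, and three further functions $\tsg_4,\tsg_5,\tsg_6$. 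You acknowledge this is "the delicate step," which is accurate, but a proof cannot defer it. Finally, the term $V^p-\sum_j V_j^p$ you list among the pieces of $l_1$ is not present: since $-\Delta U=V^p$, one has exactly $l_1 = -V_{0,\mu_0}^p+V^p+|V_*|^{p-1}V_*$ with no reference to individual $V_j$'s; the discrepancy $V^p-\sum_j V_j^p$ enters only through $\varphi$ and hence through $l_2$, and conflating the two suggests the nonlinear ansatz for the $u$-part has been mistaken for a simple sum of bubbles.
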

\begin{proof}
Let $\Omega_1 \subset \R^N$ be the set defined in \eqref{Omegaj}. By symmetry, it is sufficient to estimate $l_1$ and $l_2$ in $\Omega_1$. Let $S = B_{\frac{\pi}{2}r_0^{-1}k^{-1}}(x_1) \subset \Omega_1$.

\medskip \noindent \textbf{Estimation of $l_1$ in $S$.}
It holds that $\sum_{j=2}^k V_j \le C\mu^{\frac{N}{p+1}-\frac{N}{q+1}} \le CV_1$ in $S$. Recalling
that $k\simeq \mu^{\tau}$ with $\tau = \frac{N}{(p+1)(N-2)} \in (0,1)$, we also find that $V_1 \ge c$ in $S$. Therefore,
\[|l_1| \le C + \left||V_*|^{p-1}V_*+V^{p}\right| \le CV_1^{p-1} \le C\frac{\mu^{\frac{(p-1)N}{p+1}}}{(1+\mu|y-x_1|)^{(p-1)(N-2)}},\quad y\in S.\]
We shall show that there exists $t > \frac{N}{2(q+1)}$ such that
\[\frac{\mu^{\frac{(p-1)N}{p+1}}}{(1+\mu|y-x_1|)^{(p-1)(N-2)}} \le \frac{C}{\mu^t}
\frac{\mu^{\frac{N}{q+1}+2}}{(1+\mu|y-x_1|)^{\frac{N}{q+1}+2+\tau}}, \quad y \in S,\]
which is equivalent to
\begin{equation}\label{eq:error1}
(1+\mu|y-x_1|)^{\frac{N}{q+1}+2+\tau-(p-1)(N-2)} \le C \mu^{\frac{N}{q+1}+2-\frac{(p-1)N}{p+1}-t}, \quad y \in S.
\end{equation}

If $\frac{N}{q+1}+2+\tau-(p-1)(N-2)\le 0$, then the left-hand side of \eqref{eq:error1} is bounded. So we can take
\[
t=\frac{N}{q+1}+2-\frac{(p-1)N}{p+1}=\frac N{p+1}>\frac N{2(q+1)}.
\]

If $\frac{N}{q+1}+2+\tau-(p-1)(N-2)> 0$, then \eqref{eq:error1} is valid provided
\[\mu^{(1-\tau)\left[\frac{pN}{p+1}+\tau-(p-1)(N-2)\right]} \le C \mu^{\frac{N}{p+1}-t}.\]
So we can take
\[t = \frac{N}{p+1} - (1-\tau)\left[\frac{pN}{p+1}+\tau-(p-1)(N-2)\right].\]
Let  $\tilde{p} := p(N-2) \in (N,N+2]$. Then $t > \frac{N}{2(q+1)}$ is equivalent to
\[\begin{medsize}
\displaystyle \frac{N}{\frac{ \tilde{p} }{N-2}+1}-\(1-\frac{N}{\tilde p +N-2}\) \left[\frac{\frac{ \tilde pN }{N-2}}{\frac{ \tilde{p} }{N-2}+1}+\frac{N}{\tilde p +N-2}-\tilde p +(N-2)\right]
-\frac12 \left[ N-2 - \frac{N}{\frac{ \tilde{p} }{N-2}+1} \right]>0.
\end{medsize}\]
Multiplying the both sides of the above inequality by $2(\tilde p +N-2)^2$, we
are led to
\begin{equation}\label{eq:error2}
2\tilde{p}^3 - (3N+2)\tilde{p}^2 - (N^2-12N+16)\tilde{p} + 2(N^3-N^2-6N+12) > 0.
\end{equation}
Write $\tilde p = N +h$ with $h\in (0, 2]$. Then \eqref{eq:error2} is reduced to
\[
(8- h   )N^2+  (3h^2+8h-28  )N+ 2(h-2)^2(h+3) >0,
\]
which is true for all $N \ge 5$ and $h\in (0, 2]$. 

\medskip \noindent \textbf{Estimation of $l_1$ in $\Omega_1\setminus S$.}
For $p\in (1, 2]$, we have the following equality
\[
||a-b|^{p-1} (a-b)- a^p + b^p|\le Cb^{p-1} a, \quad a,\, b > 0.
\]
Since $\frac{N+2}{N-2} \le 2$ for $N \ge 6$, it holds
\begin{equation}\label{30-25-3}
|l_1|\le CV_{0,\mu_0}^{p-1} V =CV_{0,\mu_0}^{p-1}\sum_{j=1}^k V_j \quad \text{in } \Omega_1\setminus S
\end{equation}
for all $N \ge 6$ and $p \in (\frac{N}{N-2},\frac{N+2}{N-2}]$. We determine $t>0$ such that
\[
V_{0,\mu_0}^{p-1}(y) \frac{\mu^{\frac{N}{p+1}}}{(1+\mu|y-x_j|)^{N-2}}\le \frac{C}{\mu^t}
\frac{\mu^{\frac{N}{q+1}+2}}{(1+\mu|y-x_j|)^{\frac{N}{q+1}+2+\tau}}, \quad y \in \Omega_1\setminus S,
\]
which is equivalent to
\begin{equation}\label{1-13-2}
V_{0,\mu_0}^{p-1}(y) (1+\mu|y-x_j|)^{-\frac{N}{p+1}+2+\tau}\le C
\mu^{N-\frac{2N}{p+1}-t}, \quad y \in \Omega_1\setminus S.
\end{equation}

We first study the case
$
\frac{N}{p+1}-2-\tau\ge 0.
$
In $\Omega_1\setminus S$, it holds
\[\mu|y-x_j|\ge c\mu k^{-1}\simeq \mu^{1-\tau}\]
for some $c > 0$. Thus, \eqref{1-13-2} will follow if
\[
\mu^{(1-\tau)(-\frac{N}{p+1}+2+\tau)}\le
C\mu^{N-\frac{2N}{p+1}-t}.
\]
So we can choose
\[t =N-\frac{2N}{p+1}+(1-\tau)\(\frac{N}{p+1}-2-\tau\).\]
We will show $t>\frac{N}{2(q+1)}$ for all $N \ge 6$ and $p \in (\frac{N}{N-2},\frac{N+2}{N-2}]$. We define
\[\begin{medsize}
\displaystyle \tsg_3(p) :=N-\frac{2N}{p+1}+\left[1-\frac{N}{(p+1)(N-2)}\right] \left[\frac{N}{p+1}-2-\frac{N}{(p+1)(N-2)}\right]-\frac{N}{2(q+1)}.
\end{medsize}\]
We have
\begin{align*}
\begin{medsize}
\displaystyle \tsg_3'(p)
\end{medsize}
&\begin{medsize}
\displaystyle = \frac{2N}{(p+1)^2}-\frac{N}{2(p+1)^2} +\frac{N}{(p+1)^2(N-2)}\left[\frac{N}{p+1}-2-\frac{N}{(p+1)(N-2)}\right]
\end{medsize} \\
&\begin{medsize}
\displaystyle \ -\left[1-\frac{N}{(p+1)(N-2)}\right]\frac{N(N-3)}{(p+1)^2(N-2)} \end{medsize} \\
&\begin{medsize}
\displaystyle = \frac{N [(N-2)(N-4)p+5N^2-18N+8]}{2(p+1)^3(N-2)^2} > 0.
\end{medsize}
\end{align*}
Thus, we only need to prove that $\tsg_3(\frac N{N-2})>0$. We rewrite $\tsg_3(p)$ as
\[\begin{medsize}
\displaystyle \tsg_3(p)=\frac{(p-1)N}{p+1} -\left[1-\frac{N}{(p+1)(N-2)}\right]\left[2+\frac{N}{(p+1)(N-2)}\right] +\frac{N}{q+1}\left[ \frac{N}{(p+1)(N-2)}-
\frac12\right].
\end{medsize}\]
It follows that
\begin{align*}
\begin{medsize}
\displaystyle \tsg_3(\tfrac N{N-2})
\end{medsize}
&\begin{medsize}
\displaystyle = \frac{N}{N-1} -\frac{N-2}{2N-2}\(2+\frac{N}{2N-2}\) +\frac{N}{q+1} \cdot \frac{1}{2N-2}
\end{medsize} \\
&\begin{medsize}
\displaystyle = \frac{1}{2(N-1)} \left[4+\frac{N}{q+1}-\frac{N(N-2)}{2(N-1)}\right]
\end{medsize} \\
&\begin{medsize}
\displaystyle = \frac{1}{2(N-1)} \left[N+2-\frac{N(N-2)}{N-1}\right] = \frac{3N-2}{2(N-1)^2} > 0.
\end{medsize}
\end{align*}

Next, we consider the case $\frac{N}{p+1}-2-\tau< 0.$
Inequality \eqref{1-13-2} will follow if
\begin{equation}\label{1-18-3}
\frac{1}{ (1+|y|)^{(p-1)(N-2)}  } (\mu|y-x_j|)^{-\frac{N}{p+1}+2+\tau}\le
C\mu^{N-\frac{2N}{p+1}-t}, \quad y \in \Omega_1\setminus S.
\end{equation}
It is easy to see that \eqref{1-18-3} is equivalent to
\begin{equation}\label{2-18-3}
\frac{|y-x_j|^{-\frac{N}{p+1}+2+\tau}}{ (1+|y|)^{(p-1)(N-2)}  } \le
C\mu^{\frac{N}{q+1}-t-\tau}, \quad y \in \Omega_1\setminus S.
\end{equation}
Because
\[
(p-1)(N-2) +\frac N{p+1} -2 -\tau \ge \frac N{p+1}  -\tau>0,
\]
the left-hand side of \eqref{2-18-3} is bounded and we can take $t=\frac{N}{q+1}-\tau$, which is greater than $\frac {N}{2(q+1)}$ for all $N \ge 6$ and $p\in (\frac N{N-2}, \frac{N+2}{N-2}]$.

\medskip \noindent \textbf{Estimation of $l_2$ in $S$.}
It holds that $U_1\ge c>0$ and $\sum_{j=2}^k  U_j + \varphi\le C$ in $S$. So we have
\[||U_*|^{q-1}U_*+ U_{1}^q| \le C U_1^{q-1} \( U_{0,\mu_0} +\sum_{j=2}^k  U_j\) \le C U_1^{q-1}\]
and
\[
\sum_{j=2}^k  U^q_j\le C U_1^{q-1} \sum_{j=2}^k  U_j \le C U_1^{q-1}.
\]
Hence,
\[
|l_2|\le CU_1^{q-1} \quad \text{in } S.
\]
Now we determine $t>0$ such that
\[
U_1^{q-1}(y) \le \frac C{\mu^t}\frac{\mu^{\frac{N}{p+1}+2}}{(1+\mu
|y-x_{1}|)^{\frac{N}{p+1}+2+\tau}}, \quad y \in S.
\]
This is equivalent to
\begin{equation}\label{10-9-3}
(1+\mu|y-x_{1}|)^{\frac{N}{p+1}+2+\tau-(q-1)(N-2)}\le
C\mu^{\frac{N}{p+1}+2-\frac{(q-1)N}{q+1}-t}, \quad y \in S.
\end{equation}

If $\frac{N}{p+1}+2+\tau-(q-1)(N-2)\le 0$, then the left-hand side of \eqref{10-9-3} is bounded. In this case, we can take
\[
t=\frac{N}{p+1}+2-\frac{(q-1)N}{q+1}=\frac{N}{q+1}>\frac{N}{2(q+1)}.
\]

We deal with the case $\frac{N}{p+1}+2+\tau-(q-1)(N-2)> 0$. In $S$, we have $\mu
|y-x_{1}|\le C\mu^{1-\tau}$. So, \eqref{10-9-3} holds if
\[
\mu
^{(1-\tau) [\frac{N}{p+1}+2+\tau-(q-1)(N-2)]}\le
C\mu^{\frac{N}{p+1}+2-\frac{(q-1)N}{q+1}-t},
\]
and we can choose
\[
t =\frac{N}{q+1}-(1-\tau) \left[\frac{qN}{q+1}+\tau-(q-1)(N-2)\right].
\]
Let us prove that  $t>\frac{N}{2(q+1)}$. Since $q\ge \frac{N+2}{N-2}$, it holds that
$\tilde q := q(N-2)\ge N+2$. Hence it is enough to verify
\begin{equation}\label{eq:error3}
\frac12\frac{N(N-2)}{\tilde q+N-2}-\frac{N}{\tilde q +N-2}\(\frac{\tilde q N}{
\tilde q+N-2}+1-\frac{N}{\tilde q +N-2}-\tilde q+N-2\)>0,
\end{equation}
which is equivalent to
\[
2\tilde q^2-(N+4)\tilde q-N(N-4)>0.
\]
Let $\tilde q = N+ h$ with $h\ge 2$. We need to check
\[
2(N+h)^2-(N+4)(N+h)-N(N-4) = h(3N+2h-4) >0,
\]
which is clearly true for $h \ge 2$. Thus, \eqref{eq:error3} holds.

\medskip \noindent \textbf{Estimation of $l_2$ in $\Omega_1\setminus S$.}
We present this part in four steps.

\medskip \noindent \textbf{Step~1.} We first show that there exists $t > \frac{N}{2(q+1)}$ such that
\[
U_{0,\mu_0}^{q-1}\sum_{j=1}^k U_j
\le
\frac{C}{\mu^t}\sum_{j=1}^k
\frac{\mu^{\frac{N}{p+1}+2}}{(1+\mu|y-x_j|)^{\frac{N}{p+1}+2+\tau}} \quad \text{in } \Omega_1\setminus S.
\]
It suffices to prove that for each $j = 1,\ldots,k$,
\[\frac{1}{(1+|y|)^{(q-1)(N-2)}} \frac{\mu^{\frac{N}{q+1}}}{(\mu|y-x_j|)^{N-2}} \le \frac{C}{\mu^t} \frac{\mu^{\frac{N}{p+1}+2}}{(\mu|y-x_j|)^{\frac{N}{p+1}+2+\tau}}, \quad y \in \Omega_1\setminus S,\]
which can be rewritten as
\begin{equation}\label{eq:l2o11}
\frac{1}{(1+|y|)^{(q-1)(N-2)}} (\mu|y-x_j|)^{-\frac{N}{q+1}+2+\tau} \le C\mu^{N-\frac{2N}{q+1}-t}, \quad y \in \Omega_1\setminus S.
\end{equation}

If $\frac{N}{q+1}-2-\tau \ge 0$, then \eqref{eq:l2o11} will be true provided
\[\mu^{(1-\tau)(-\frac{N}{q+1}+2+\tau)} \le C\mu^{N-\frac{2N}{q+1}-t}.\]
So we can choose
\[t =N-\frac{2N}{q+1}+(1-\tau)\(\frac{N}{q+1}-2-\tau\).\]
Moreover, it holds that
\begin{align*}
t > \frac{N}{2(q+1)} \; &\Leftrightarrow \; \frac{(q-1)N}{q+1}+\frac{N}{(q+1)(N-2)}\(\frac{N(N-1)}{(q+1)(N-2)}-3\) > \frac{N}{2(q+1)} \\
&\Leftrightarrow \; \tsg_4(q) := q(N-2)+\frac{N(N-1)}{(q+1)(N-2)} > \frac{3N}{2}.
\end{align*}
The last inequality is true for $q \ge \frac{N+2}{N-2}$, because the function $\tsg_4(q)$ is increasing and so
\begin{equation}\label{eq:l2o12}
\tsg_4(q) \ge \tsg_4(\tfrac{N+2}{N-2}) = \frac{3(N+1)}{2} > \frac{3N}{2}.
\end{equation}

If $\frac{N}{q+1}-2-\tau < 0$, then
\begin{align*}
(q-1)(N-2)+\frac{N}{q+1}-2-\tau > 0 \; &\Leftrightarrow \; (q-1)(N-2)+\frac{N(N-1)}{(q+1)(N-2)}-3 > 0 \\
&\Leftrightarrow \; \tsg_4(q) > N+1,
\end{align*}
and the last inequality is clearly true because of \eqref{eq:l2o12}. Thus, \eqref{eq:l2o11} is valid with the choice
\[t = N-\frac{2N}{q+1}+\(\frac{N}{q+1}-2-\tau\) = \frac{N}{p+1}-\tau,\]
and $t \ge \frac{N}{q+1}-\tau > \frac{N}{2(q+1)}$ for all $N \ge 6$ and $p\in (\frac N{N-2}, \frac{N+2}{N-2}]$.

\medskip \noindent \textbf{Step~2.} We claim that $U\le C U_{0,\mu_0}$ in $(\Omega_1\setminus S)\cap \{|y|\le R\}$ for any large $R > 0$.

In fact, if $p\in (\frac N{N-3}, \frac{N+2}{N-2}]$, then it follows from Lemma~\ref{lemma:U} that $U\le C\displaystyle\sum_{j=1}^k U_j$ in $\R^N$, since the inequality
\[\frac{1}{\mu^{\frac{pN}{q+1}}}  \frac{k^{p(N-2)-2}}{(1+k|y-x_i|)^{N-2}} \le \frac{C\mu^{\frac{N}{q+1}}}{(1+\mu|y-x_i|)^{N-2}}, \quad y \in \R^N\]
is true for all $p > \frac{N}{N-2}$. Furthermore,
\[
\sum_{j=1}^k U_j\le \frac{C}{\mu^{\frac N{p+1}}}\sum_{j=1}^k \frac{1}{|y-x_j|^{N-2}}\le \frac{Ck^{N-2}}{\mu^{\frac N{p+1}}}\le C
\le C U_{0,\mu_0}.
\]

If $p\in (\frac{N}{N-2}, \frac N{N-3}],$ then $p(N-3)-2 > 1$ for $N\ge 5$ and so we have
\[
\frac{1}{\mu^{\frac{pN}{q+1}}} \sum_{i=1}^k \frac{k^{p(N-2)-2}}{(1+k|y-x_i|)^{p(N-3-\theta)-2}}\le  \frac{Ck^{p(N-2)-2}}{\mu^{\frac{pN}{q+1}}}
\le \frac{C}{\mu^{\frac{pN}{q+1}-p(N-2)\tau +2\tau}} \le
C \le CU_{0,\mu_0}.
\]
Here, we used
\begin{align*}
&\quad \frac{pN}{q+1}-p(N-2)\tau +2\tau=\frac{pN}{q+1}-\frac{pN}{p+1} +2\tau\\
&= \frac{(p-1)N}{q+1}-2(1-\tau)=\frac{N}{q+1}\( p-1-\frac2{N-2}\)>0
\end{align*}
to deduce the third inequality. By Lemma~\ref{lemma:U}, it holds that $U\le C U_{0,\mu_0}$.

\medskip \noindent \textbf{Step~3.} We estimate $l_2$ in $(\Omega_1\setminus S)\cap \{|y|\le R\}$.

By Step~2, we see that
\[
\left||U_*|^{q-1}U_*- U_{0,\mu_0}^q\right|\le C U_{0,\mu_0}^{q-1}U \quad \text{in}\; (\Omega_1\setminus S)\cap \{|y|\le R\}.
\]
As a result, in view of
\[
U_j\le C U_{0,\mu_0} \quad \text{in } \Omega_1\setminus S,
\]
we see
\begin{equation}\label{1-23-4}
|l_2| \le \left||U_*|^{q-1}U_*- U_{0,\mu_0}^q\right| + \sum_{j=1}^k U^q_j \le C U_{0,\mu_0}^{q-1}U +\sum_{j=1}^k U^q_j \le C U_{0,\mu_0}^{q-1}U  \quad \text{in } \Omega_1\setminus S.
\end{equation}
So the result follows from Step~1.

\medskip \noindent \textbf{Step~4.} Let $R_0 > 1$ be a large number satisfying $|x_i| \le \frac {R_0}2$ for all $i = 1,\ldots,k$. We estimate $l_2$ in $(\Omega_1\setminus S)\cap \{|y|> R_0\}$. By \eqref{1-23-4}, we have
\[|l_2| \le CU_{0,\mu_0}^{q-1} U + C U^q.
\]
So from Step~1, we only need to estimate $U^q$.

From Lemma~\ref{lemma:U}, we find  that for $|y|>R_0$,
\[
U(y)\le \frac{C k}{\mu^{\frac N{p+1}}|y|^{N-2}} + \frac{C k^{1+p(1+\theta)}}{\mu^{\frac  {pN}{q+1}}|y|^{p(N-3-\theta)-2}}.
\]

We now prove that there is $t>\frac{N}{2(q+1)}$, such that
\begin{equation}\label{1-9-6}
\frac{C k^q}{\mu^{\frac {qN}{p+1}}|y|^{q(N-2)}}\le \frac{C}{\mu^t|y|^{\frac{N}{p+1}+2+\tau}} \simeq \frac{C}{\mu^t}\sum_{j=1}^k
\frac{\mu^{\frac{N}{p+1}+2}}{(1+\mu|y-x_j|)^{\frac{N}{p+1}+2+\tau}}, \quad |y|>R_0,
\end{equation}
and
\begin{equation}\label{2-9-6}
\frac{C k^{q[1+p(1+\theta)]}}{\mu^{\frac  {qpN}{q+1}}|y|^{q[p(N-3-\theta)-2]}}\le \frac{C}{\mu^t}\sum_{j=1}^k
\frac{\mu^{\frac{N}{p+1}+2}}{(1+\mu|y-x_j|)^{\frac{N}{p+1}+2+\tau}},\quad |y|>R_0.
\end{equation}

To prove \eqref{1-9-6}, we first observe that
\[
q(N-2)\ge N+2>\frac{N}{p+1}+2+\tau=2+\frac{N(N-1)}{(p+1)(N-2)}.
\]
Therefore, we can take
\[
t=\frac {qN}{p+1}-q\tau = \frac{N-3}{N-2} \cdot \frac {qN}{p+1} >\frac{N}{2(q+1)},
\]
which is true since for $N\ge 5$,
\[
\frac{N-3}{N-2} \cdot \frac {qN}{p+1} >\frac{qN}{2(p+1)} >\frac{N}{2(q+1)}.
\]

Now, we prove \eqref{2-9-6}. We first note that
\begin{equation}\label{2-9-6a}
q[p(N-3)-2] \ge \frac{[p(N-3)-2] (N+2)}  {N-2} >\frac{N}{p+1}+2+\tau=2+\frac{N(N-1)}{(p+1)(N-2)}
\end{equation}
for $N \ge 6$. To achieve the second inequality in \eqref{2-9-6a}, we make use of the fact that the function
\[
\tsg_5(p) := \frac{[p(N-3)-2] (N+2)}  {N-2}-\frac{N(N-1)}{(p+1)(N-2)}-2
\]
is increasing. Then, for $p\in (\frac N{N-2},\frac{N+2}{N-2}]$,
\[
\tsg_5(p)>\tsg_5(\tfrac N{N-2})=\frac{N+2}{N-2} \left[\frac{N(N-3)}{N-2}-2 \right]-\frac N2-2 =
\frac{N^2(N-6)}{2(N-2)^2} \ge 0
\]
for $N\ge 6$. So we can take
\[
t=\frac{pqN}{q+1} -\tau q[ 1+p  (1+\theta)].
\]
Let us prove that $t>\frac{N}{2(q+1)}$ for some small $\theta > 0$. The relation
\[\frac{pqN}{q+1} -\tau q(1+p) = \frac{pqN}{q+1} - \frac{qN}{N-2} > \frac{N}{N-2} \cdot \frac{qN}{q+1} - \frac{qN}{N-2} > \frac{N}{2(q+1)}\]
holds provided
\[\tsg_6(q) := 2q^2 - 2(N-1)q +N-2 < 0, \quad q \in [\tfrac{N+2}{N-2},\tfrac{N^2+2N-4}{(N-2)^2}].\]
If $N \ge 7$, then $\frac{N^2+2N-4}{(N-2)^2} < \frac{N-1}{2}$, so it follows that
\[\tsg_6(q) \le \tsg_6(\tfrac{N+2}{N-2}) = -\frac{N^3+2N^2-28N+8}{(N-2)^2} < 0.\]
If $N = 6$, then $\tsg_6(q) = 2(q^2-5q+2) < 0$ for $q \in [2,\frac{11}{4}]$ as desired.
\end{proof}

\begin{remark}\label{Re2-25-3}
If $N=5$, Lemma~\ref{lem2.5} remains valid for all $p\in (\frac{16}{9},\frac73]$.
To verify this, it suffices to examine the estimates of $l_1$ and $l_2$ in $\Omega_1\setminus S$.

\medskip \noindent \textbf{Estimation of $l_1$ in $\Omega_1\setminus S$.}
If $p \le 2$, then the argument in the previous proof continues to hold provided $\frac{N}{q+1}-\tau > \frac{N}{2(q+1)}$, which is true for all $p > \frac{16}{9}$.

Assume that $p \in (2,\frac73]$. On estimating $l_1$ in $\Omega_1\setminus S$, if
$V_{0,\mu_0}\ge V$, then \eqref{30-25-3} holds and we can proceed as before.
If $V_{0,\mu_0}\le V$, then the condition $p > 2$ yields
\[
|l_1|\le CV_{0,\mu} V^{p-1}\le CV_{0,\mu} \mu^{ \frac{5(p-1)}{p+1} }
k^{p-2} \sum_{j=1}^k  \frac1{(1+\mu|y-x_j|)^{3(p-1)}     }.
\]
Consider the inequality
\begin{equation}\label{10-25-3}
V_{0,\mu}(y)
 \frac{\mu^{ \frac{5(p-1)}{p+1} + (p-2)\tau}}{(1+\mu|y-x_j|)^{ 3(p-1)}     }\le
 \frac{C}{\mu^t}
\frac{\mu^{\frac{5}{q+1}+2}}{(1+\mu|y-x_j|)^{\frac{5}{q+1}+2+\tau}}, \quad y \in \Omega_1 \setminus S.
\end{equation}
It is easy to see that \eqref{10-25-3} is equivalent to
\begin{equation}\label{101-25-3}
\frac{|y-x_j|^{\frac{5p}{p+1}+\tau-3(p-1)}}{(1+|y|)^{3}}
\le C\mu^{(p-1)(\frac{5}{q+1}-\tau)-t}, \quad y \in \Omega_1 \setminus S,
\end{equation}
and $0 \le \frac{5p}{p+1}+\tau-3(p-1) = \frac{(3p+2)(7-3p)}{3(p+1)} < 3$ for $p \in (2,\frac73]$.
Hence the left-hand side of \eqref{101-25-3} is bounded and we can take
\[t = (p-1)\(\frac{5}{q+1}-\tau\) = (p-1)\left[3-\frac{20}{3(p+1)}\right],\]
which is greater than $\frac{5}{2(q+1)}$ for $p \in (2,\frac73]$.

\medskip \noindent \textbf{Estimation of $l_2$ in $\Omega_1\setminus S$.}
For $p \in (\frac{16}9,\frac73]$ and correspondingly $q \in [\frac73,\frac{19}6)$, we see that $\frac{N}{q+1}-\tau > \frac{N}{2(q+1)}$ and $\tsg_6(q) = 2q^2-8q+3 < 0$.
Also, the inequality $q[p(N-3)-2] >\frac{N}{p+1}+2+\tau$ (cf. \eqref{2-9-6a}) is reduced to $6p^3+12p^2-39p+5 > 0$, which turns out to be true. With these inequalities, we can proceed as before.
\end{remark}

\begin{lemma}\label{lemma4-1}
Assume that $N \ge 6$ so that $p \le 2$. It holds that
\[\|N(\omega_1,\omega_2)\|_{**}\le C  \|(\omega_1,\omega_2)\|_*^{p }.\]
\end{lemma}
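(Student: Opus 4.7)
The plan is to exploit the fact that $N\ge 6$ forces $p\le\tfrac{N+2}{N-2}\le 2$, which gives access to the elementary pointwise inequality
\[
\bigl| |a+b|^{p-1}(a+b) - |a|^{p-1}a - p|a|^{p-1}b \bigr| \le C|b|^p,\qquad a,b\in\R,
\]
valid whenever $1<p\le 2$. First I would apply this with $a=V_*(y)$, $b=\omega_2(y)$, obtaining $|N_1(\omega_2)(y)|\le C|\omega_2(y)|^p$ pointwise, and hence
\[
|N_1(\omega_2)(y)|\le C\|\omega_2\|_{*,2}^p \left[\sum_{j=1}^k \frac{\mu^{\frac{N}{p+1}}}{(1+\mu|y-x_j|)^{\frac{N}{p+1}+\tau}}\right]^{p}.
\]
The criticality identity $\tfrac{pN}{p+1}=\tfrac{N}{q+1}+2$ together with the concentration estimate $\sum_{i}(1+\mu|y-x_i|)^{-\tau}\le C$ (already exploited in Lemma~\ref{l10-18-2}) converts this $p$-th power of the $*$-weight into a single copy of the $**$-weight, yielding $\|N_1(\omega_2)\|_{**,1}\le C\|\omega_2\|_{*,2}^p$.

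For the second component $N_2(\omega_1)$, I would split according to whether $q\le 2$ or $q>2$. If $q\le 2$, the same elementary inequality (with $p$ replaced by $q$) gives $|N_2(\omega_1)|\le C|\omega_1|^q$; since the reduction will be carried out for $\|\omega_1\|_{*,1}$ small, I factor $|\omega_1|^q=|\omega_1|^p\,|\omega_1|^{q-p}$ and absorb $\|\omega_1\|_{*,1}^{q-p}$ into the constant, after which the companion identity $\tfrac{qN}{q+1}=\tfrac{N}{p+1}+2$ plays exactly the same role as above. If $q>2$, I instead use
\[
|N_2(\omega_1)|\le C\bigl(|U_*|^{q-2}|\omega_1|^2+|\omega_1|^q\bigr).
\]
The pure-power term is handled as in the case $q\le 2$, and the genuinely new object is the cross term $|U_*|^{q-2}|\omega_1|^2$, which I would estimate by writing $|\omega_1|^2=|\omega_1|^p\,|\omega_1|^{2-p}$, bounding $|U_*|$ pointwise by the bubble-sum weight via Lemma~\ref{lemma:U}, and then checking that $|U_*|^{q-2}$ times the extra factor $|\omega_1|^{2-p}$ produces, after absorption of $\|\omega_1\|_{*,1}^{2-p}$, exactly one copy of the $**$-weight.

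The main obstacle is precisely this cross-term analysis when $q>2$ (relevant in low dimensions such as $N=6,7,8$): the exponent $q-2>0$ couples the bubble profile $U_*$ to the weight carried by $\omega_1$, and making sure the matching of exponents is compatible with the $\frac{N}{p+1}+2+\tau$ decay rate required by $\|\cdot\|_{**,2}$ forces one to split $\R^N$ into the neighborhoods $\cup_i B_{M\mu^{-1}}(x_i)$ (where $|U_*|\simeq \mu^{N/(q+1)}$) and their complement (where $U_*$ decays polynomially), and then to invoke the same sub-additivity and convolution estimates that powered the linear theory of Section~\ref{sec:lin}. Once both components have been handled, adding the two bounds gives the claim $\|N(\omega_1,\omega_2)\|_{**}\le C\|(\omega_1,\omega_2)\|_*^p$.
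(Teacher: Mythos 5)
Your treatment of $N_1(\omega_2)$ and of $N_2(\omega_1)$ when $q\le 2$ matches the paper: one uses $|N_1|\le C|\omega_2|^p$, $|N_2|\le C|\omega_1|^q$ pointwise, and Lemma~\ref{l10-18-2} together with the identities $\frac{pN}{p+1}=\frac{N}{q+1}+2$, $\frac{qN}{q+1}=\frac{N}{p+1}+2$ turns the $p$-th (resp.\ $q$-th) power of the $*$-weight into the $**$-weight. The factoring $|\omega_1|^q=|\omega_1|^p|\omega_1|^{q-p}$ is only cosmetically different from the paper's step $\|\omega_1\|_{*,1}^q\le\|\omega_1\|_{*,1}^p$ (valid when $\|\omega_1\|_{*,1}\le1$, as it is in the reduction).

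For the cross term $|U_*|^{q-2}\omega_1^2$ when $q>2$, however, your plan diverges from the paper, and the outline has two soft spots. First, you cannot simply ``bound $|U_*|$ pointwise by the bubble-sum weight via Lemma~\ref{lemma:U}'': $U_*=U_{0,\mu_0}-U$, Lemma~\ref{lemma:U} controls only $U$, and the fixed bubble $U_{0,\mu_0}$ does not carry a $\mu^{N/(q+1)}$ factor near the points $x_j$, so its contribution has to be treated separately rather than absorbed into the bubble-sum. Second, no convolution estimates are involved here; the lemma is a pure pointwise bound, unlike the linear theory. The paper instead writes $|U_*|^{q-2}\le C\big(U_{0,\mu_0}^{q-2}+U^{q-2}\big)$, bounds $U\le CW_1:=C\sum_j\mu^{N/(q+1)}(1+\mu|y-x_j|)^{-N/(q+1)-\tau}$ by Lemma~\ref{l1-23-4} (so $U^{q-2}W_1^2\le CW_1^q$, then Lemma~\ref{l10-18-2}), and handles $U_{0,\mu_0}^{q-2}W_1^2$ by a single Young's inequality with conjugate exponents $\frac{q}{q-2}$ and $\frac{q}{2}$, producing $(1+|y|)^{-q(N-2)}+W_1^q$, each dominated by the $**$-weight via Lemma~\ref{l10-18-4} and Lemma~\ref{l10-18-2} respectively. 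Your ball/complement splitting of $\R^N$ would likely succeed (the paper uses exactly that device in the invertibility proof), but it reintroduces the case analysis the pointwise argument avoids and still needs the Young-type balancing of exponents; as written it is a sketch, and the precise inequalities that make the exponents line up would need to be filled in.
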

\begin{proof}
In light of \eqref{3-5}, we have
$$
|N_1(\omega_2)|\le C|\omega_2|^{p}.
$$
Using Lemma~\ref{l10-18-2}, we find
\begin{align*}
|N_1(\omega_2)(y)| &\le C \|\omega_2\|_{*,2}^p \left[  \sum_{j=1}^k\frac{\mu^{\frac{N}{p+1}}}{(1+\mu|y-x_j|)
^{\frac{N}{p+1}+\tau}}\right]^{p} \\
&\le C \|\omega_2\|_{*,2}^p \sum_{j=1}^k \frac{ \mu^{\frac{pN}{p+1}}  }{ (1+\mu |y-x_j|)^{  \frac{N}{q+1}+2+\tau } }, \quad y \in \R^N.
\end{align*}
Since $\frac{pN}{p+1} = \frac{N}{q+1}+2$, we arrive at
$$
\|N_1(\omega_2)\|_{**,1} \le C \|\omega_2\|_{*,2}^{  p  }.
$$

In a similar manner, we have $|N_2(\omega_1)|\le C|\omega_1|^{q}$ for $q \le 2$ and $|N_2(\omega_1)|\le C(|\omega_1|^{q} + |U_*|^{q-2}\omega_1^2)$ for $q > 2$.
In addition, $\||\omega_1|^q\|_{**,2} \le C \|\omega_1\|_{*,1}^{ q  }$, and for $q > 2$,
\begin{align*}
&\quad |U_*(y)|^{q-2}\omega_1^2(y) \\
&\le C \|\omega_1\|_{*,1}^2 \left[\frac{1}{(1+|y|)^{(q-2)(N-2)}} + U^{q-2}(y)\right] \left[\sum_{j=1}^k\frac{\mu^{\frac{N}{q+1}}}{(1+\mu|y-x_j|)^{\frac{N}{q+1}+\tau}}\right]^{2} \\
&\le C \|\omega_1\|_{*,1}^2 \left[\frac{1}{(1+|y|)^{q(N-2)}} + \(\sum_{j=1}^k\frac{\mu^{\frac{N}{q+1}}}{(1+\mu|y-x_j|)^{\frac{N}{q+1}+\tau}}\)^{q}\right] \\
&\le C \|\omega_1\|_{*,1}^2 \sum_{j=1}^k \frac{ \mu^{\frac{qN}{q+1}}  }{ (1+\mu |y-x_j|)^{  \frac{N}{p+1}+2+\tau } }, \quad y \in \R^N,
\end{align*}
where we used Young's inequality, Lemma~\ref{l1-23-4} and $N-2 > \frac{N}{q+1}+\tau$ to get the second inequality, and employed Lemma~\ref{l10-18-4} for the third inequality.
Consequently, we obtain that $\|N_2(\omega_1)\|_{**,2} \le C \|\omega_1\|_{*,1}^{\min\{q,2\}} \le C \|\omega_1\|_{*,1}^{p}$.

By \eqref{3-4}, we conclude that
\[
\|N(\omega_1,\omega_2)\|_{**}\le C  \|(\omega_1,\omega_2)\|_*^{p }. \qedhere
\]
\end{proof}

\begin{remark}\label{Re3-25-3}
When $N = 5$, it holds that
\[\|N(\omega_1,\omega_2)\|_{**}\le C  \|(\omega_1,\omega_2)\|_*^{\min\{p,2\}}.\]
If $p \le 2$, the previous argument carries over without requiring any changes.
If $p \in (2,\frac73]$, then $|N_1(\omega_2)|\le C(|\omega_2|^{p} + |V_*|^{p-2}\omega_2^2)$.
By employing the inequality $3 > \frac{5}{p+1}+\tau$ and Lemma~\ref{l10-18-4}, one can derive $\||V_*|^{p-2}\omega_2^2\|_{**,1} \le C\|\omega_2\|_{*,2}^2$.
\end{remark}

Now, we take into account the problem
\begin{equation}\label{4-1}
(\omega_1,\omega_2) - T(\omega_1,\omega_2) = (-\Delta)^{-1}(\mathbf{P} l) +(-\Delta)^{-1}(\mathbf{P} N(\omega_1,\omega_2)),
\end{equation}
where $T$ and $\mathbf{P}$ are the operators defined in \eqref{10-24-2} and \eqref{eq:mathbfP}, respectively.

\begin{proposition}\label{proposition4-3}
Given $(\mu_0,r,\mu) \in \mcp$ (see \eqref{mcp}) and sufficiently large $k \in \N$, equation \eqref{4-1} admits a unique solution $(\omega_1,\omega_2) = (\omega_1[\mu_0,r,\mu],\omega_2[\mu_0,r,\mu]) \in \mathbf E$ such that
$$
\| (\omega_1,\omega_2) \|_{*} \le  C
\mu^{ -\frac {N}{2(q+1)}-\sigma},
$$
where $C > 0$ is a constant and $\sigma > 0$ is a small number depending only on $N$, $p$ and $\mcp$. Moreover, the map $(\mu_0,r,\lambda) \in \mcp \mapsto \|(\omega_1[\mu_0,r,\mu],\omega_2[\mu_0,r,\mu])\|_*$ is $C^1$,  and
\begin{equation}\label{10-12-3}
\begin{cases}
\displaystyle \omega_1\left[t\mu_0, t^{-1}r, t\mu\right](y) = t^{\frac{N}{q+1}} \omega_1[\mu_0,r,\mu](ty),\\
\displaystyle \omega_2\left[t\mu_0, t^{-1}r, t\mu\right](y) = t^{\frac{N}{p+1}} \omega_2[\mu_0,r,\mu](ty)
\end{cases}
\end{equation}
for $y \in \R^N$.
\end{proposition}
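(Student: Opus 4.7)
The plan is to rewrite equation~\eqref{4-1} as a fixed point problem on a small ball in $\mathbf{E}$ and apply the Banach contraction principle. Since $I-T$ is a bijective bounded linear operator on $\mathbf{E}$ by Proposition~\ref{proposition3-5}, \eqref{4-1} is equivalent to the fixed point equation
\[
(\omega_1,\omega_2)=\mathcal{A}(\omega_1,\omega_2):=(I-T)^{-1}(-\Delta)^{-1}\bigl(\mathbf{P} l+\mathbf{P} N(\omega_1,\omega_2)\bigr).
\]
Combining Proposition~\ref{proposition3-5} with the bound $\|\mathbf{P}(f,g)\|_{**}\le C\|(f,g)\|_{**}$ (noted after \eqref{eq:mathbfP}), Lemma~\ref{lem2.5}, and Lemma~\ref{lemma4-1}, I get
\[
\|\mathcal{A}(\omega_1,\omega_2)\|_{*}\le C\bigl(\|l\|_{**}+\|N(\omega_1,\omega_2)\|_{**}\bigr)\le C\bigl(\mu^{-\frac{N}{2(q+1)}-\sigma}+\|(\omega_1,\omega_2)\|_*^p\bigr).
\]

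Next, I introduce the closed ball
\[
\mathcal{B}:=\Bigl\{(\omega_1,\omega_2)\in\mathbf{E}:\|(\omega_1,\omega_2)\|_{*}\le R\,\mu^{-\frac{N}{2(q+1)}-\sigma}\Bigr\}
\]
with $R>0$ large. Since $p>1$, the previous inequality shows $\mathcal{A}(\mathcal{B})\subset\mathcal{B}$ for $k$ (and hence $\mu$) large. For the contraction estimate I will establish a Lipschitz version of Lemma~\ref{lemma4-1}, namely
\[
\|N(\omega)-N(\omega')\|_{**}\le C\bigl(\|\omega\|_{*}+\|\omega'\|_{*}\bigr)^{p-1}\|\omega-\omega'\|_{*},
\]
deduced from $|N_i(\eta)-N_i(\eta')|\le C(|\eta|+|\eta'|+|U_*|+|V_*|)^{p-1}|\eta-\eta'|$ together with the same weighted-norm manipulations used in the proof of Lemma~\ref{lemma4-1}. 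This yields $\|\mathcal{A}(\omega)-\mathcal{A}(\omega')\|_{*}\le CR^{p-1}\mu^{-(p-1)(\frac{N}{2(q+1)}+\sigma)}\|\omega-\omega'\|_{*}$, a genuine contraction for $k$ large, and the Banach fixed point theorem delivers the unique solution $(\omega_1,\omega_2)=(\omega_1[\mu_0,r,\mu],\omega_2[\mu_0,r,\mu])\in\mathcal{B}$ with the required bound.

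The $C^1$-dependence on $(\mu_0,r,\lambda)\in\mcp$ will follow from the implicit function theorem applied to $F(\mu_0,r,\lambda,\omega):=\omega-\mathcal{A}(\omega;\mu_0,r,\lambda)$. Each building block---$(U_*,V_*)$ via \eqref{Vdef}--\eqref{U*V*}, the error $l$ via \eqref{3-3}, the nonlinearity $N$ via \eqref{3-5}, the kernel elements from \eqref{Y012}--\eqref{Z012}, and the projection $\mathbf{P}$ from \eqref{eq:mathbfP}---depends smoothly on the parameters, and $\partial_\omega F=I-D\mathcal{A}$ is invertible by the contraction bound above.

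Finally, the scaling identity \eqref{10-12-3} will be obtained from the invariance of system~\eqref{2} under $(u,v)\mapsto(t^{N/(q+1)}u(t\,\cdot),t^{N/(p+1)}v(t\,\cdot))$. By \eqref{6} this rescaling sends $(U_{x_j,\mu},V_{x_j,\mu})$ to $(U_{t^{-1}x_j,t\mu},V_{t^{-1}x_j,t\mu})$ and $(U_{0,\mu_0},V_{0,\mu_0})$ to $(U_{0,t\mu_0},V_{0,t\mu_0})$, so the approximate solution, the linear operator $L_k$, the kernel directions spanning the range of $I-\mathbf{P}$, and the weighted norms $\|\cdot\|_{*}$, $\|\cdot\|_{**}$ are all covariant. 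Hence, if $(\omega_1,\omega_2)$ is the fixed point for $(\mu_0,r,\mu)$, then $(t^{N/(q+1)}\omega_1(t\,\cdot),t^{N/(p+1)}\omega_2(t\,\cdot))$ solves the rescaled fixed point equation at $(t\mu_0,t^{-1}r,t\mu)$; uniqueness inside the correspondingly rescaled ball $\mathcal{B}$ forces \eqref{10-12-3}. The main delicate point---and the most error-prone step---will be bookkeeping the rescaling of the weights in $\|\cdot\|_{*}$ and $\|\cdot\|_{**}$ (whose centers $x_j$ and scale $\mu$ both transform) and of the orthogonality conditions defining $\mathbf{E}$, so that uniqueness is legitimately invoked at the rescaled parameters.
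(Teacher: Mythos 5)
Your proposal is correct and follows essentially the same route as the paper: the contraction mapping argument on a small ball in $\mathbf E$ via the invertibility of $I-T$ from Proposition~\ref{proposition3-5} together with Lemmas~\ref{lem2.5} and~\ref{lemma4-1}, the implicit function theorem for $C^1$-dependence, and the covariance/uniqueness argument for the scaling identity \eqref{10-12-3}. The minor imprecision in your Lipschitz bound (the extra $|U_*|^{q-2}$ term when $q>2$ produces a constant $\sim(\|\omega\|_*+\|\omega'\|_*)\|\omega-\omega'\|_*$ rather than the exponent $p-1$ you wrote, but since $p-1<1$ that term is dominated on the small ball anyway) does not affect the contraction, and the paper itself omits these details.
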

\begin{proof}
Proposition~\ref{proposition3-5} shows that $I-T$ is invertible. Hence, \eqref{4-1} is equivalent to
\begin{equation}\label{4-9}
(\omega_1,\omega_2) = \mca(\omega_1,\omega_2) :=(I-T)^{-1} (-\Delta)^{-1} \left( \mathbf P l + \mathbf P N(\omega_1,\omega_2)\right),\;\;(\omega_1,\omega_2) \in \mathbf E.
\end{equation}
By Proposition~\ref{proposition3-5} again, we have
$$
\|\mca(\omega_1,\omega_2) \|_* \le C(\| l\|_{**} + \|N(\omega_1,\omega_2)\|_{**})
$$
and
$$
\|\mca(\omega_1,\omega_2) -\mca(\widetilde{\omega_1},\widetilde{\omega_2})\|_* \le C \|  N(\omega_1,\omega_2) -N(\widetilde{\omega_1},\widetilde{\omega_2}) \|_{**}.
$$
Using Lemmas~\ref{lem2.5} and \ref{lemma4-1}, we can prove that $A$ is a contraction mapping from the set
$$\left\{ (\omega_1,\omega_2) \in \mathbf E: \; \|(\omega_1,\omega_2)\|_* \le C_0\mu^{ -\frac {N}{2(q+1)}-\sigma  }\right\}$$
to itself, provided $C_0 > 0$ large enough. Hence the contraction mapping theorem implies that \eqref{4-9} has a unique solution $(\omega_1,\omega_2) \in \mathbf E$ satisfying
$$
\| (\omega_1,\omega_2) \|_{*}\le C \| l\|_{**} \le C_0 \mu^{ -\frac {N}{2(q+1)}-\sigma   }
$$
provided $k > 0$ large. Besides, it is standard to  show that $\|(\omega_1,\omega_2)\|_*$ is $C^1$ in $(\mu_0,r,\lambda)$ by employing the implicit function theorem.

Let $t > 0$. It is simple to check that
\[\begin{cases}
\displaystyle U_*\left[t\mu_0, t^{-1}r, t\mu\right](y) = t^{\frac{N}{q+1}} U_*[\mu_0,r,\mu](ty),\\
\displaystyle V_*\left[t\mu_0, t^{-1}r, t\mu\right](y) = t^{\frac{N}{p+1}} V_*[\mu_0,r,\mu](ty)
\end{cases}\]
for $y \in \R^N$. Also, if $(\omega_1[\mu_0,r,\mu],\omega_2[\mu_0,r,\mu])$ is a solution of \eqref{4-1} with parameters $(\mu_0,r,\mu)$,
then the pair $(t^{\frac{N}{q+1}} \omega_1[\mu_0,r,\mu](t\cdot), t^{\frac{N}{p+1}} \omega_2[\mu_0,r,\mu](t\cdot))$ will satisfy \eqref{4-1} with $(t\mu_0, t^{-1}r, t\mu)$. By the uniqueness of solutions for \eqref{4-1}, we see that
\eqref{10-12-3} holds.
\end{proof}

\section{Existence of solutions}\label{sec:exist}
Given $k \in \N$ large enough, we set a functional
\begin{equation}\label{eq:redene}
K(\mu_0,r,\lambda) := I(U_*[\mu_0,r,\mu]+\omega_1[\mu_0,r,\mu], V_*[\mu_0,r,\mu]+\omega_2[\mu_0,r,\mu]),
\end{equation}
where $(U_*,V_*)$ is defined by \eqref{Vdef}--\eqref{U*V*}, $(\omega_1,\omega_2)$ is built in Proposition~\ref{proposition4-3}, and $\mu = \lambda k^{\frac{(p+1)(N-2)}{N}}$.

By the standard reduction argument, if we prove the existence of a critical point for $K$, then we will obtain a solution for system \eqref{2} of the form $(u,v) = (U_*+\omega_1, V_*+\omega_2)$.

\begin{proof}[Proof of Theorem~\ref{th1-25-2}]
By exploiting \eqref{a1}, \eqref{4-1}, \eqref{3-3}--\eqref{3-5} and $(\omega_1,\omega_2) \in \mathbf E$, we see
\begin{align}
K(\mu_0,r,\lambda) - I( U_*, V_* )
&= O\(\int_{\R^N} |\omega_2|^{p+1} + \int_{\R^N} (|l_1|+|N_1(\omega_2)|)|\omega_2|\) \label{eq:diff}
\\
&\ + O\(\int_{\R^N} |U_*|^{q-2} \omega_1^3 + \int_{\R^N} |\omega_1|^{q+1} + \int_{\R^N} (|l_2|+|N_2(\omega_1)|)|\omega_1|\). \nonumber
\end{align}
Here, the integral $\int_{\R^N} |U_*|^{q-2} \omega_1^3$ can be replaced by $0$ whenever $q \le 2$, which happens for every $N \ge 9$ and $p \in (\frac{N}{N-2},\frac{N+2}{N-2}]$.
Furthermore, reasoning as in the proof of \cite[Proposition 3.1]{WY}, we discover
\begin{equation}\label{eq:diff2}
\int_{\R^N} |\omega_2|^{p+1} + \int_{\R^N} |U_*|^{q-2} \omega_1^3 + \int_{\R^N} |\omega_1|^{q+1} \le Ck^{1+\theta} \|(\omega_1, \omega_2)\|_*^{\min\{p+1,\, 3,\, q+1\}}
\end{equation}
for some $\theta > 0$ arbitrarily small, and
\begin{multline}\label{eq:diff3}
\int_{\R^N} (|l_1|+|N_1(\omega_2)|)|\omega_2| + \int_{\R^N} (|l_2|+|N_2(\omega_1)|)|\omega_1| \\
\le Ck(\|l\|_{**}+\|N(\omega_1,\omega_2)\|_{**})\|(\omega_1, \omega_2)\|_*.
\end{multline}

As an illustration, let us derive $\int_{\R^N} |\omega_1|^{q+1} \le Ck^{1+\theta}\|\omega_1\|_{*,1}^{q+1}$. It holds that
\begin{align*}
\int_{\R^N} |\omega_1|^{q+1} &\le \|\omega_1\|_{*,1}^{q+1} \int_{\R^N} \left[\sum_{j=1}^{k} \frac{\mu^{\frac{N}{q+1}}}{(1+\mu |y-x_j|)^{\frac{N}{q+1}+\tau}}\right]^{q+1} dy \\
&= \|\omega_1\|_{*,1}^{q+1} \int_{\R^N} \left[\sum_{j=1}^{k} \frac{1}{(1+|y-\mu x_j|)^{\frac{N}{q+1}+\tau}}\right]^{q+1} dy.
\end{align*}
As shown in \cite[Lemma B.1]{WY}, given any $\alpha,\, \beta > 0$, $0 < \sigma \le \min\{\alpha,\beta\}$, and $\tix_1,\, \tix_2 \in \R^N$ such that $\tix_1 \ne \tix_2$, there exists a constant $C > 0$ such that
\begin{equation}\label{eq:b1}
\begin{medsize}
\displaystyle \frac{1}{(1+|y-\tix_1|)^{\alpha}}\frac{1}{(1+|y-\tix_2|)^{\beta}} \le \frac{C}{|\tix_1-\tix_2|^{\sigma}} \left[\frac{1}{(1+|y-\tix_1|)^{\alpha+\beta-\sigma}}+ \frac{1}{(1+|y-\tix_2|)^{\alpha+\beta-\sigma}}\right]
\end{medsize}
\end{equation}
for all $y \in \R^N$. A closer inspection on its proof yields that \eqref{eq:b1} holds for all $y \in \R^N$ such that $|y-\tix_2| \ge \frac{1}{2}|\tix_1-\tix_2|$ provided $\alpha,\, \beta > 0$ and $0 < \sigma \le \beta$ only. Thus,
\begin{align*}
\sum_{j=2}^{k} \frac{1}{(1+|y-\mu x_j|)^{\frac{N}{q+1}+\tau}} &\le \sum_{j=2}^{k} \frac{1}{(1+|y-\mu x_1|)^{\frac{N}{q+1}}}\frac{1}{(1+|y-\mu x_j|)^{\tau}} \\
&\le C\, \frac{1}{(1+|y-\mu x_1|)^{\frac{N}{q+1}+\tau\theta}} \sum_{j=2}^{k} \frac{1}{|\mu x_1-\mu x_j|^{\tau(1-\theta)}} \\
&\le Ck^{\theta}\, \frac{1}{(1+|y-\mu x_1|)^{\frac{N}{q+1}+\tau\theta}}
\end{align*}
for all $y \in \Omega_1$ (so that $|y-\mu x_j| \ge \frac{1}{2}|\mu x_1-\mu x_j|$ for all $j = 2,\ldots,k$) and any $\theta \in (0,\tau)$ small enough. It follows that
\[\int_{\R^N} \left[\sum_{j=1}^{k} \frac{1}{(1+|y-\mu x_j|)^{\frac{N}{q+1}+\tau}}\right]^{q+1} dy \le Ck^{(q+1)\theta},\]
proving the assertion.

From \eqref{eq:diff}--\eqref{eq:diff3}, Lemmas~\ref{lem2.5} and \ref{lemma4-1}, and Proposition~\ref{proposition4-3}, we obtain the expansion
\begin{align*}
K(\mu_0,r,\lambda) &= I( U_*, V_* ) + O\(k\|(\omega_1, \omega_2)\|_*^2 + k(\|l\|_{**}+\|N(\omega_1,\omega_2)\|_{**})\|(\omega_1, \omega_2)\|_*\) \\
&= I( U_*, V_* ) + O\(k\mu^{-\frac N{q+1}-\sigma}\)
\end{align*}
for some $\sigma > 0$ small. Hence, by Proposition~\ref{prop:Iexpan}, we have that
\begin{equation}\label{61-24-2}
K(\mu_0,r,\lambda) = (k+1)A +\frac{k}{k^{\frac{(p+1)(N-2)}{q+1}}} \left[-\frac{B_1}{r^{N-2}\lambda^{N-2}} + \frac{B_2 U_{0,\mu_0}(r)}{\lambda^{\frac{N}{q+1}}}\right] + O\(\frac{k}{k^{\frac{(p+1)(N-2)}{q+1}+\sigma}}\).
\end{equation}

\medskip
We define
\begin{equation}\label{n30-31-12}
F(\mu_0,r,\lambda) := -\frac{B_1}{r^{N-2}\lambda^{N-2}} + \frac{B_2 U_{0,\mu_0}(r)}{\lambda^{\frac{N}{q+1}}}, \quad (\mu_0,r,\lambda) \in (0,+\infty)^3.
\end{equation}
Then, we see from \eqref{6} that $F(\mu_0,r,\lambda) = F(r\mu_0,1,r\lambda)$. We also set
\[F^*(M_0,\Lambda) := F(M_0,1,\Lambda), \quad (M_0,\Lambda) \in (0,+\infty)^2.\]
For any fixed $M_0 > 0$, the function $\Lambda \mapsto F^*(M_0,\Lambda)$ attains its maximum at $\Lambda(M_0)$ determined by
\begin{equation}\label{n33-31-12}
\frac{(N-2)B_1}{\Lambda^{N-2}} - \frac{N}{q+1}\frac{B_2U_{0,M_0}(1)}{\Lambda^{\frac{N}{q+1}}}=0.
\end{equation}
Solving \eqref{n33-31-12}, we find
\[\Lambda(M_0) = \left[\frac{(q+1)(N-2)B_1}{B_2 N U_{0,M_0}(1)}\right]^{\frac{p+1}{N}}.\]
Now, we have
\begin{align*}
F_1^*(M_0) &:= F^*(M_0,\Lambda(M_0))
= \frac N{(p+1)(N-2)}\frac{B_2 U_{0,M_0}(1)}{\Lambda(M_0)^{\frac{N}{q+1}}}\\
&= \frac {B_2 N}{(p+1)(N-2)} \left[\frac{B_2 N}{(q+1)(N-2)B_1}\right]^{\frac{p+1}{q+1}} \cdot \(M_0^{\frac N{q+1}} U_{0,1}(M_0)\)^{\frac{(p+1)(N-2)}{N}}
\end{align*}
for any $M_0 > 0$. Since $U_{0,1}(t) \simeq t^{-(N-2)}$ for $t > 0$ large, there exist a large number $M_{00} > 1$ and a small number $\ep_0 > 0$ such that
\[\max\{F_1^*(M_0): M_{00}^{-1} < M_0 < M_{00}\} \ge \max\left\{F_1^*(M_{00}), F_1^*(M_{00}^{-1})\right\} + \ep_0.\]
Thus one can find a large number $\Lambda_0 > 1$ and a small number $\ep_1 > 0$ such that
\[\max\{F^*(M_0,\Lambda): (M_0,\Lambda) \in \mcq\} \ge \max\{F^*(M_0,\Lambda): (M_0,\Lambda) \in \partial \mcq\} + \ep_1,\]
where $\mcq := (M_{00}^{-1},M_{00}) \times (\Lambda_0^{-1},\Lambda_0)$ is an open cube in $(0,+\infty)^2$.

By \eqref{61-24-2}, the map $K^*(M_0,\Lambda) := K(M_0,1,\Lambda)$ has a maximum point $(M_0^*,\Lambda^*) \in \mcq$ provided $k$ sufficiently large.
By slightly increasing the values of $M_{00}$ and $\Lambda_0$ if needed, we can take a small number $\delta_0 > 0$ such that
\[(M_0^*,\Lambda^*) \in \mcq_{\delta_0} := (\mu_{00}^{-1},\mu_{00}) \times (\lambda_0^{-1},\lambda_0) \subset \mcq,\]
where we set $(\mu_{00},\lambda_0) := ((1-\delta_0)M_{00},(1-\delta_0)\Lambda_0)$. Because \eqref{10-12-3} tells us that
\begin{equation}\label{Kinv}
K(\mu_0,r,\lambda) = K(r\mu_0,1,r\lambda) =  K^*(r\mu_0,r\lambda), \quad (\mu_0,r,\lambda) \in (0,+\infty)^3
\end{equation}
and $(r\mu_0,r\lambda) \in \mcq$ whenever $r \in (1-\delta_0,1+\delta_0)$ and $(\mu_0,\lambda) \in \mcq_{\delta_0}$, we have
\[K(M_0^*,1,\Lambda^*) \ge K(\mu_0,r,\lambda), \quad (\mu_0,r,\lambda) \in (\mu_{00}^{-1},\mu_{00}) \times (r_0^{-1},r_0) \times (\lambda_0^{-1},\lambda_0)\]
where $r_0 := 1+\delta_0$.
Therefore, $(M_0^*,1,\Lambda^*)$ is the maximum point of $K$, which is clearly a critical point of $K$.
\end{proof}
\begin{remark}
The maximum point $(M_0^*,1,\Lambda^*)$ of the function $K$ is neither strict nor isolated.
In view of \eqref{Kinv}, every member of the set $\{(r^{-1}M_0^*,r,r^{-1}\Lambda^*): r > 0\}$ is a maximum point of $K$.
\end{remark}

\section{The case $p \in (1,\frac N{N-2})$}\label{sec:plow}
In this section, we briefly describe how to handle when $N \ge 7$ and $p \in (1,\frac{N}{N-2})$.
We mainly point out the necessary changes.
\medskip

Firstly, we discuss the estimate for $\varphi= U -\sum_{j=1}^k U_j.$
Using the representation \eqref{nn2-19-1} and the condition $p(N-2)<N$,
one can derive the following estimate for  $\varphi$:
\begin{equation}\label{1-19-1}
\begin{aligned}
\varphi(y)
&=\frac1{\mu^{\frac {pN}{q+1}}}\int_{\R^N} \frac{\ga_N}{|y-z|^{N-2}} \left[\(\sum_{j=1}^k \frac{b_{N,p}}{|z-x_j|^{N-2}}\)^p - \sum_{j=1}^k \frac{b_{N,p}^p}{|z-x_j|^{p(N-2)}}\right] dz\\
&\ + O\Bigl(   \frac{k^{p(N-2)-2}}{\mu^{\frac{pN}{q+1}+\sigma}}\Bigr)
\\
&=\frac {k^{p(N-2)-2}}{r^{p(N-2)-2}\mu^{\frac {Np}{q+1}}}H(r^{-1} k y)+O\Bigl(   \frac{k^{p(N-2)-2}}{\mu^{\frac{pN}{q+1}+\sigma}}\Bigr),
\end{aligned}
\end{equation}
where
\begin{equation}\label{H}
H(y) := \int_{\R^N} \frac{\ga_N}{|y-z|^{N-2}} \left[\(\sum_{j=1}^k \frac{b_{N,p}}{|z-\tilde x_j|^{N-2}}\)^p - \sum_{j=1}^k \frac{b_{N,p}^p}{|z-\tilde x_j|^{p(N-2)}}\right] dz
\end{equation}
for $y \in \R^N$, $\tilde x_j= (k \cos\frac{2(j-1)\pi }k, k\sin\frac{2(j-1)\pi }k, 0)$, $\ga_N = \frac{1}{(N-2)|\Sn^{N-1}|}$, and $b_{N,p} > 0$ is the constant in \eqref{eq:HV}. It is not difficult to prove that following result.

\begin{lemma}
Assume that $N \ge 6$ and $p \in (1,\frac{N}{N-2})$. Then The function $H$ is well-defined in $\R^N$. Furthermore, the following properties hold: Let $\delta,\, \theta > 0$ be sufficiently small numbers.
\begin{itemize}
\item [(a)] For $y\in \cup_{j=1}^k B_\delta(\tilde x_j)$, it holds that
    \[
    0<H(y)\le C
    \]
for some constant $C > 0$ depending only on $N$ and $p$.
\item [(b)] For $y\in \R^N\setminus \cup_{j=1}^k B_\delta(\tilde x_j)$, it  holds that
\[|H(y)|\le C\sum_{j=1}^k \left[\frac 1{|y-\tilde x_j|^{(p-1)(N-2)+N-5-\theta}} + \frac 1{|y-\tilde x_j|^{p(N-3-\theta)-2}}\right].\]
\end{itemize}
\end{lemma}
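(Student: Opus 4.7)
The plan rests on the pointwise algebraic inequality
\[
0 \le \Bigl(\sum_{j=1}^k a_j\Bigr)^{\!p} - \sum_{j=1}^k a_j^p \le C\sum_{i\ne j} a_i^{p-1}a_j,
\]
valid for $p\in(1,2]$ and $a_j\ge 0$, which I denote by $(\ast)$. Since $N\ge 6$ forces $N/(N-2)\le 3/2$, the assumption $p\in(1,N/(N-2))$ places us inside $p\in(1,2)$. Plugging $a_l=b_{N,p}|z-\tilde x_l|^{-(N-2)}$, the lower bound of $(\ast)$ yields $H(y)>0$ wherever defined, while the upper bound reduces the quantitative analysis to
\[
\mathcal{I}_{ij}(y):=\int_{\R^N}\frac{dz}{|y-z|^{N-2}\,|z-\tilde x_i|^{(p-1)(N-2)}\,|z-\tilde x_j|^{N-2}},\quad i\ne j.
\]
Each $\mathcal{I}_{ij}(y)$ is finite: local integrability near $y,\tilde x_i,\tilde x_j$ follows from $N-2<N$ and $(p-1)(N-2)<2$, while integrability at infinity requires $(p+1)(N-2)>N$, which holds since $p>2/(N-2)$. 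This yields the well-definedness of $H$.

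For (b), when $y$ is separated by $\delta$ from every $\tilde x_l$, I would apply $(\ast)$ and estimate each $\mathcal{I}_{ij}(y)$ by a Lemma~B.1-type convolution inequality (as in \cite{WY} and used repeatedly throughout the present paper's appendix). This inequality concentrates the overall decay onto a single pole $\tilde x_j$ at the cost of a $\theta$-loss of integrability. Two canonical splittings arise: borrowing regularity near the $\tilde x_i$-pole produces the exponent $(p-1)(N-2)+N-5-\theta$, while borrowing from infinity produces $p(N-3-\theta)-2$. Summing over $i\ne j$ and using the discrete spacing $|\tilde x_i-\tilde x_j|\gtrsim\min\{|i-j|,\,k-|i-j|\}$ absorbs the $i$-sum into the constant and produces the stated bound.

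For (a), the crude bound $(\ast)$ is insufficient: as $y\to\tilde x_{j_0}$, the pair $(i,j_0)$ with $i\ne j_0$ forces the two $(N-2)$-poles at $y$ and $\tilde x_{j_0}$ to merge, making $\mathcal{I}_{i,j_0}(y)$ diverge like $|y-\tilde x_{j_0}|^{4-N}$. I would therefore replace $(\ast)$, on a neighbourhood of $\tilde x_{j_0}$ in the $z$-variable, by the sharper Taylor expansion
\[
\Bigl(\sum_l a_l\Bigr)^{\!p} - \sum_l a_l^p = a_{j_0}^p\bigl[(1+\delta_{j_0})^p-1\bigr] - \sum_{l\ne j_0}a_l^p,\quad \delta_{j_0}:=\sum_{l\ne j_0}\frac{a_l}{a_{j_0}}.
\]
Since $\delta_{j_0}$ is small near $\tilde x_{j_0}$, this majorises the bracket there by $C\,a_{j_0}^{p-1}\sum_{l\ne j_0}a_l$, whose residual singularity at $\tilde x_{j_0}$ is only of order $(p-1)(N-2)<2$. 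The resulting contribution to $H(y)$ therefore stays bounded as $y\to\tilde x_{j_0}$; the remaining regions (away from $\tilde x_{j_0}$ in $z$) are dispatched by $(\ast)$ exactly as in (b).

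The principal obstacle is precisely this near-singularity analysis in (a): the crude pointwise inequality $(\ast)$ overestimates the bracket near any $\tilde x_{l_0}$ by a factor $a_{l_0}^{2-p}\to\infty$, so one must exploit the cancellation encoded in the definition of $H$ via the Taylor expansion above. Once this is handled, the rest is standard exponent bookkeeping for Riesz convolutions against a discrete circular array of poles.
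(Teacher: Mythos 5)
The paper offers no proof of this lemma (it says only ``It is not difficult to prove that following result.''), so I assess your plan on its own terms and against the estimates the paper uses for analogous quantities.

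Your positivity claim and the Taylor-expansion idea in part (a) are sound, and that expansion is indeed the key ingredient. But the plan has a genuine gap in the way the crude inequality $(\ast)$ is deployed. Write $a_l = b_{N,p}|z-\tilde x_l|^{-(N-2)}$. You bound the bracket by $C\sum_{i\ne j}a_i^{p-1}a_j$ everywhere except in a $z$-neighbourhood of the one pole $\tilde x_{j_0}$ near $y$. For $z$ near any \emph{other} pole $\tilde x_{l_0}$, the dominant term produced by $(\ast)$ is $a_{l_0}\sum_{i\ne l_0}a_i^{p-1}$, which carries the full $|z-\tilde x_{l_0}|^{-(N-2)}$ singularity multiplied by the factor $\sum_{i\ne l_0}a_i^{p-1}\simeq\sum_{i\ne l_0}|\tilde x_i-\tilde x_{l_0}|^{-(p-1)(N-2)}$. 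For $p\le\frac{N-1}{N-2}$ — a substantial part of the admissible range $(1,\frac{N}{N-2})$ — one has $(p-1)(N-2)\le 1$, so this sum grows like $k^{1-(p-1)(N-2)}$ rather than being $O(1)$. Consequently, after convolving and summing, $\sum_{i\ne j}\mathcal{I}_{ij}(y)$ carries an unbounded $k$-factor and cannot produce the $k$-uniform constant $C$ required in both (a) and (b). The true bracket, however, has only the weaker singularity $a_{l_0}^{p-1}\sum_{m\ne l_0}a_m$ near $\tilde x_{l_0}$ (your own Taylor expansion, applied there!), with $\sum_{m\ne l_0}a_m$ bounded by a convergent sum because its exponent $N-2>1$. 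So $(\ast)$ is genuinely too coarse there, not merely an inconvenience. Also note that, because of this, your opening assertion ``Each $\mathcal{I}_{ij}(y)$ is finite \dots\ This yields the well-definedness of $H$'' is not a proof of well-definedness: each $\mathcal{I}_{ij}$ actually diverges as $y\to\tilde x_j$, and the combined sum is not uniformly controllable, so well-definedness also ultimately rests on the Taylor-expansion step.

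The repair is exactly the device the paper uses in its analogous estimates (\eqref{3-30-7}, \eqref{e4}, Lemma~\ref{l5-30-7}): decompose $\R^N=\cup_{l_0}\Omega_{l_0}$ and, on $\Omega_{l_0}$, replace $(\ast)$ by the region-adapted bound
\[
\Bigl(\sum_l a_l\Bigr)^p-\sum_l a_l^p \le C\Bigl(a_{l_0}^{p-1}\sum_{m\ne l_0}a_m + \bigl(\sum_{m\ne l_0}a_m\bigr)^p\Bigr),
\]
which is precisely what your Taylor expansion gives and is valid for every $l_0$ when $p\in(1,2]$. Combined with $\sum_{m\ne l_0}a_m\le C|z-\tilde x_{l_0}|^{-(N-3-\theta)}$ on $\Omega_{l_0}$ (the \eqref{e4}-type bound), this produces exactly the two exponents $(p-1)(N-2)+N-5-\theta$ and $p(N-3-\theta)-2$ after convolution, with no hidden $k$-dependence and no lower bound on $p$. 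In short: apply your Taylor expansion systematically near every $\tilde x_{l_0}$ in the $z$-variable, not just near the one closest to $y$, and discard $(\ast)$ entirely.
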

With  estimate \eqref{1-19-1}, we have the following energy expansion
for the case $p\in (1,\frac N{N-2})$.
\begin{proposition}
Assume that $N \ge 7$ and $p \in (1,\frac{N}{N-2})$. For $k \in \N$ large enough, we have
\begin{multline*}
I(U_*, V_*) = (k+1)A\\
+ k\left[-\frac{(B_3+B_4 H(\tilde x_{1})) k^{p(N-2)-2}}{r^{p(N-2)-2} \mu^{p(N-2)-2}} + \frac{B_2 U_{0,\mu_0}(r)}{\mu^{\frac{N}{q+1}}}
+ O\(\frac{k^{p(N-2)-2}}{\mu^{p(N-2)-2+\sigma}} + \frac{1}{\mu^{\frac{N}{q+1}+\sigma}}\)\right].
\end{multline*}
Here, $A = I(U_{0, \mu_0}, V_{0, \mu_0}) = I(U_{0,1}, V_{0,1})$, $B_1$, $B_2$, $B_3$ and $B_4$ are positive constants depending only on $N$ and $p$, and  $\sigma>0$ is a sufficiently small number.
\end{proposition}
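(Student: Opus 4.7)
My plan is to adapt the proof of Proposition~\ref{prop:Iexpan}, tracking carefully the effect of the slower decay $U_{0,1}(r) \simeq a_{N,p} r^{-(p(N-2)-2)}$ from Lemma~\ref{lem:HV} and of the new representation \eqref{1-19-1} for $\varphi$. I decompose $I(U_*, V_*)$ exactly as in \eqref{1-29-12}, namely $I(U_*, V_*) = I(U_{0, \mu_0}, V_{0, \mu_0}) + I(U, V) + J_1 + J_2$, with $J_1, J_2$ given by \eqref{J1J2}.

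The novelty lies in the analogue of Lemma~\ref{l1-28-12}. In the decomposition \eqref{eq:IUV}, the two leading terms are $\int_S U_1^q\sum_{j=2}^k U_j$ and $\int_S U_1^q \varphi$. Since $N \ge 7$ and $p>1$ give $p(N-2)-2 > 1$, the nearest-neighbour contributions dominate $\sum_{j=2}^k |x_j - x_1|^{-(p(N-2)-2)}$, producing an analogue of \eqref{xjx1sum} with exponent $p(N-2)-2$ in place of $N-2$. Using \eqref{U10est} and the change of variable $z = \mu(y-x_1)$ inside $S$, one then obtains
\[
\int_S U_1^q\sum_{j=2}^k U_j = B_3 \cdot \frac{k^{p(N-2)-2}}{r^{p(N-2)-2}\mu^{p(N-2)-2}} + \text{(lower order)}.
\]
For the $\varphi$ term, \eqref{1-19-1} combined with $r^{-1}ky = \tilde x_1 + O((r\mu)^{-1})$ for $y \in S$, continuity of $H$ at $\tilde x_1$, and the algebraic identity $(p+1)N/(q+1) = p(N-2)-2$ yields
\[
\int_S U_1^q \varphi = B_4\, H(\tilde x_1) \cdot \frac{k^{p(N-2)-2}}{r^{p(N-2)-2}\mu^{p(N-2)-2}} + \text{(lower order)},
\]
which assembles to the coefficient $B_3 + B_4 H(\tilde x_1)$ stated in the proposition.

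The terms $J_1, J_2$ are then handled as in \eqref{1-31-12} and \eqref{10-31-12}. Since $V_{0,1}$ satisfies \eqref{V10est} independently of whether $p$ is above or below $N/(N-2)$, the bilinear coupling $\int_S V^p V_{0,\mu_0}$ still produces the contribution $B_2 U_{0,\mu_0}(r)/\mu^{N/(q+1)}$ without change.

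The principal technical obstacle is controlling the remainders. Several estimates in Section~\ref{sec:exp1} exploited $p(N-2)>N$ crucially, most notably the decay argument \eqref{5-29-7}--\eqref{6-29-7}; in the present regime $p(N-2)<N$ that convolution is replaced directly by the global expression $H$ of \eqref{H}, and the off-diagonal remainder term in \eqref{1-19-1} must be used to absorb the analogues of terms like \eqref{2-30-7}. Moreover, the bounds on $\int_{\Omega_1\setminus S} U^{q+1}$ and $\int_{\Omega_1\setminus S} U_{0,\mu_0}^{q-1} U^2$ must be redone using the pointwise decay of $U$ from Lemma~\ref{lemma:U}, producing subcases analogous to \eqref{1-25-3}--\eqref{3-25-3} according to whether $p(N-3-\theta)-2$ exceeds the threshold dictated by the target error. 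The requirement that all these subcases yield an error $O(k^{p(N-2)-2}/\mu^{p(N-2)-2+\sigma} + \mu^{-N/(q+1)-\sigma})$ is precisely what forces the strengthening from $N \ge 6$ to $N \ge 7$.
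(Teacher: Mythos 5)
The paper gives no proof of this proposition; it is stated immediately after \eqref{1-19-1} and the lemma on $H$, with only the remark that one should compare with Proposition~\ref{prop:Iexpan}. Your sketch correctly identifies the intended adaptation of that proof: keep the decomposition \eqref{1-29-12}, replace the tail-sum estimate \eqref{xjx1sum} by its analogue with exponent $p(N-2)-2$ (which works since $p(N-2)-2>N-4\ge 3$ for $N\ge 7$ and $p>1$), replace Lemma~\ref{nnl2-19-1} by the formula \eqref{1-19-1}, and observe that $J_1,J_2$ still yield the contribution $B_2\,U_{0,\mu_0}(r)\,\mu^{-N/(q+1)}$.

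A few corrections are in order. Your claim that $r^{-1}ky=\tilde x_1+O\bigl((r\mu)^{-1}\bigr)$ for $y\in S$ is false: on all of $S$ the deviation is only $O(1)$. What saves the argument is that the weight $U_1^q$ concentrates at scale $\mu^{-1}$, on which the deviation is $O\bigl(k(r\mu)^{-1}\bigr)=O(\mu^{\tau-1})\to 0$, while the tail of $S$ is controlled by part~(b) of the lemma on $H$ together with the decay of $U_1^q$. Moreover, Lemma~\ref{lemma:U} is stated and proved only for $p\in\bigl(\tfrac{N}{N-2},\tfrac{N+2}{N-2}\bigr]$; its first summand encodes the decay $U_{0,1}(r)\simeq r^{-(N-2)}$, which fails for $p<\tfrac{N}{N-2}$. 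You need its analogue with $N-2$ replaced by $p(N-2)-2$, which follows from \eqref{1-19-1} and the lemma on $H$, and Lemma~\ref{lemma:U} cannot be cited as is. Finally, your closing sentence asserts but does not demonstrate that the remainder estimates force $N\ge 7$ rather than $N\ge 6$; this is the main unverified step in your sketch, and it is likewise left unverified in the paper.
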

\noindent The reader is advised to compare this with the  corresponding result in Proposition~\ref{prop:Iexpan} for the case $p \in (\frac N{N-2},\frac{N+2}{N-2}]$.

\medskip
With the above result established, we can proceed as in Sections \ref{sec:lin}--\ref{sec:exist} to build solutions $(u, v)\approx (U_*, V_*)$ defined in \eqref{U*V*}.
Especially, we can use the same $*$- and $**$-norms introduced in \eqref{eq:*-norm}--\eqref{eq:**-norm}.

\appendix
\section{Green's functions}\label{sec:Green}
Throughout this appendix, we assume that $N \ge 5$ and $(p,q)$ satisfies $p \in (1,\frac{N+2}{N-2}]$ and \eqref{cri}, which implies that $q \in [\frac{N+2}{N-2},\frac{N+4}{N-4})$.

\medskip
Given a function $w$, the rotation operator $\Phi_j$
 and the reflection operator $\Psi_h$ defined in \eqref{Aj} and \eqref{Bi}
 respectively, let
\[
\hat w(y) := \frac1 k\sum_{j=1}^k w(\Phi_j y)
\]
and
\begin{equation}\label{wstar}
w^*(y) := \frac1{2(N-1)}\sum_{h=2}^N \(\hat w(y)+ \hat w(\Psi_h y)\)
\end{equation}
for $y \in \R^N$. We call $w^*$ the symmetrization of $w$. If $(u,v)\in
C(\R^N)\times C(\R^N)$,  then $(u^*,v^*) \in \mathbf{L}_s$,
where  $\mathbf{L}_s$ is the function space in \eqref{Hs}.

For $\mu_0 \in (\mu_{00}^{-1},\mu_{00})$ where $\mu_{00} > 1$ is the number determined in Section \ref{sec:exist}, we introduce an operator
\begin{multline*}
L(u,v) := \(-\Delta u - p V_{0,\mu_0}^{p-1} v, -\Delta v  -q U_{0,\mu_0}^{q-1} u\),\\
(u,v) \in \mathbf{L}_s \cap \left[\dot{W}^{2,\frac{p+1}{p}}(\R^N) \times \dot{W}^{2,\frac{q+1}{q}}(\R^N)\right]
\end{multline*}
and its formal dual operator
\begin{multline}\label{1-25-10n2}
L^*(u, v) := \(-\Delta u - q U_{0,\mu_0}^{q-1} v, -\Delta v - p V_{0,\mu_0}^{p-1} u\),\\
(u,v) \in \mathbf{L}_s \cap \left[\dot{W}^{2,\frac{q+1}{q}}(\R^N) \times \dot{W}^{2,\frac{p+1}{p}}(\R^N)\right].
\end{multline}
By the non-degeneracy of $L$ depicted in Lemma~\ref{lemma:FKP},
\begin{equation}\label{kerL*}
\text{the kernel of $L^*$} = \text{span} \left\{(Z_0,Y_0)\right\}.
\end{equation}

\medskip
In this appendix, we will investigate the Green's function of $L^*$ tailored to our setting.
If $x \in \R^N \setminus \{0\}$, then the Dirac delta function $\delta_x$ is not invariant under the action of $\Phi_j$ and $\Psi_h$.
To retain symmetry under these actions, we define
\[
\delta_x^* := \frac1{2(N-1)k} \sum_{h=2}^N \sum_{j=1}^k \(\delta_{\Phi_j x} + \delta_{\Psi_h \Phi_j x}\)
\]
and consider the problems
\begin{equation}\label{2-25-10n}
L^* (u, v) =  (\delta_x^*, 0)+ c(x) \(q U_{0,\mu_0}^{q-1}Y_0, p V_{0,\mu_0}^{p-1}Z_0\) \quad \text{in } \R^N
\end{equation}
and
\begin{equation}\label{nn2-25-10n}
L^*(u, v) =  (0, \delta_x^*)+ \tilde{c}(x) \(q U_{0,\mu_0}^{q-1}Y_0,\;p V_{0,\mu_0}^{p-1}Z_0\) \quad \text{in } \R^N
\end{equation}
for each $x \in \R^N$, where $c, \tilde{c}: \R^N \to \R$ are functions of $x$.

We first deduce an existence result for \eqref{2-25-10n}. Henceforth, we will indicate when the universal constant $C > 0$ depends on $R$ by writing it with a subscript as $C_R$.
\begin{proposition}\label{p1-25-10n}
Given a fixed large number $R>1$, there exists a function $(G_{1,k}, G_{2,k})$ such that $(G_{1,k}(\cdot, x), G_{2,k}(\cdot, x)) \in \mathbf{L}_s$ solves \eqref{2-25-10n} for each $x \in B_R(0)$.
\end{proposition}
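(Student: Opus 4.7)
The plan is to build $(G_{1,k}, G_{2,k})$ in three steps: a symmetry reduction to a single-source Green's function problem, a Fredholm-alternative computation that determines the scalar $c(x)$, and an integral-equation fixed point that produces the Green's function itself.

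First, because $L^*$ commutes with every $\Phi_j$ and $\Psi_h$ (since $U_{0,\mu_0}, V_{0,\mu_0}, Y_0, Z_0$ are all radial), it suffices to construct, for any single $z \in \R^N$, a pair $(\tilde G_1(\cdot,z), \tilde G_2(\cdot,z))$ satisfying
\begin{equation*}
L^*(\tilde G_1,\tilde G_2) = (\delta_z, 0) + \hat c(|z|)\bigl(qU_{0,\mu_0}^{q-1}Y_0,\; pV_{0,\mu_0}^{p-1}Z_0\bigr) \quad \text{in } \R^N,
\end{equation*}
with $\hat c$ depending only on $|z|$, and then define $G_{i,k}(y,x) := \frac{1}{2(N-1)k}\sum_{j,h}[\tilde G_i(y,\Phi_j x) + \tilde G_i(y,\Psi_h\Phi_j x)]$. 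Because every point in the orbit of $x$ has modulus $|x|$ and $Y_0$ is radial, the resulting pair lies in $\mathbf L_s$ and solves \eqref{2-25-10n} with $c(x) = \hat c(|x|)$.

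Second, I would pin down $\hat c$ by the Fredholm alternative. Pairing the single-source equation with $(Y_0, Z_0)$, the unique generator of $\ker L$ surviving in the symmetry class (as follows from Lemma~\ref{lemma:FKP} since the translation modes are killed by $\Phi_j$ and $\Psi_h$), and integrating by parts using $L(Y_0, Z_0) = 0$, yields the solvability condition
\begin{equation*}
Y_0(z) + \hat c(|z|)\Bigl[q\!\int_{\R^N}\!U_{0,\mu_0}^{q-1}Y_0^2 + p\!\int_{\R^N}\!V_{0,\mu_0}^{p-1}Z_0^2\Bigr] = 0.
\end{equation*}
An additional integration by parts gives the identity $q\!\int U_{0,\mu_0}^{q-1}Y_0^2 = p\!\int V_{0,\mu_0}^{p-1}Z_0^2$, so the bracket equals a strictly positive constant $A$, and $\hat c(|z|) := -Y_0(z)/A$ is uniquely determined; the decay from Lemma~\ref{lem:HV}, applied to the $\mu_0$-derivatives of $U_{0,\mu_0}$ and $V_{0,\mu_0}$, legitimizes all boundary-at-infinity computations.

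Third, for existence I would recast the single-source equation as a coupled integral system via the Newton kernel $\Gamma(y) := \gamma_N |y|^{-(N-2)}$ of $-\Delta$ on $\R^N$. Eliminating $\tilde G_2$ produces a Fredholm equation $(I - \mathcal T)\tilde G_1 = \mathcal S_z$, where $\mathcal T$ is the iterated integral operator $\Gamma \ast (qU_{0,\mu_0}^{q-1}\, \Gamma \ast (pV_{0,\mu_0}^{p-1}\,\cdot))$ and $\mathcal S_z$ gathers the explicit contributions from $\Gamma(\cdot - z)$, $Y_0$, and $Z_0$. Working in a weighted $L^\infty$-space that reflects the expected profile ($|y-z|^{-(N-2)}$ near $z$ and $|y|^{-(N-2)}$ at infinity), $\mathcal T$ is compact by the same mechanism used for the operator $T$ in \eqref{10-24-2} of Section~\ref{sec:lin}; the decay $U_{0,\mu_0}, V_{0,\mu_0} \lesssim (1+|y|)^{-(N-2)}$ from Lemma~\ref{lem:HV} is the key ingredient. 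The Fredholm alternative combined with the orthogonality enforced in Step~2 then produces a solution, unique modulo the one-dimensional $\ker L^*$ from \eqref{kerL*} once the gauge $(u,v)\perp(Z_0,Y_0)$ is imposed; $\tilde G_2$ is recovered from $\tilde G_1$ via the second integral identity. The main obstacle is the design of the weighted norm: it must simultaneously absorb the non-radial $|y-z|^{-(N-2)}$ singularity of $\mathcal S_z$, yield compactness of $\mathcal T$, and give constants uniform as $z$ ranges over the compact orbit of any $x \in B_R(0)$ — a double-weighting construction in the spirit of \eqref{eq:*-norm}--\eqref{eq:**-norm} but now tailored to a single localized source rather than a multi-bubble configuration.
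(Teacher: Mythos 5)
Your proposal has a genuine gap in Steps 1--2, and it is the central one: the single-source problem you set out to solve is generically \emph{not solvable}, so the object you average does not exist.

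Concretely, you want $(\tilde G_1(\cdot,z),\tilde G_2(\cdot,z))$ decaying at infinity with
$L^*(\tilde G_1,\tilde G_2)=(\delta_z,0)+\hat c(|z|)\bigl(qU_{0,\mu_0}^{q-1}Y_0,\,pV_{0,\mu_0}^{p-1}Z_0\bigr)$.
You are not yet in the symmetry class $\mathbf L_s$ at this stage (the source $\delta_z$ for a single $z\neq 0$ is not symmetric), so the relevant kernel for the Fredholm alternative is the \emph{full} kernel of $L$, which by Lemma~\ref{lemma:FKP} is $(N+1)$-dimensional: the dilation pair $(Y_{0,\mu_0}^0,Z_{0,\mu_0}^0)=(Y_0,Z_0)$ \emph{and} the $N$ translation pairs $(Y_{0,\mu_0}^h,Z_{0,\mu_0}^h)$, $h=1,\dots,N$. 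Pairing the right-hand side with $(Y_{0,\mu_0}^h,Z_{0,\mu_0}^h)$ gives the additional necessary conditions
\[
Y_{0,\mu_0}^h(z)+\hat c(|z|)\Bigl[q\int_{\R^N}U_{0,\mu_0}^{q-1}Y_0 Y_{0,\mu_0}^h+p\int_{\R^N}V_{0,\mu_0}^{p-1}Z_0 Z_{0,\mu_0}^h\Bigr]=0,\qquad h=1,\dots,N.
\]
The bracket vanishes by parity (radial times odd), so these reduce to $Y_{0,\mu_0}^h(z)=\partial_{x_h}U_{x,\mu_0}\big|_{x=0}(z)=0$, which fails for generic $z\neq 0$. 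Hence no scalar $\hat c$ can restore solvability, your Step~2 determines only one of $N+1$ required constraints, and the operator $I-\mathcal T$ in Step~3 cannot be inverted on the chosen decaying space for the right-hand side $\mathcal S_z$. The formula $\hat c(|z|)=-Y_0(z)/A$ with $A=2q\int U_{0,\mu_0}^{q-1}Y_0^2>0$ is correct as far as the dilation constraint goes, but that is not the obstruction.

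This is precisely why the paper symmetrizes the \emph{source} first: it works throughout with $\delta_x^*$, which lives in $\mathbf L_s$, and within $\mathbf L_s$ the kernel collapses to $\mathrm{span}\{(Z_0,Y_0)\}$ by \eqref{kerL*} because the translation modes are filtered out. The paper's actual construction is also structurally different from your Step~3: rather than inverting a global integral operator, it iteratively peels off the singularity in a fixed ball $B_{2R}(0)$ via Dirichlet problems (gaining two powers of $|y-x|$ per step until the residual is bounded), multiplies by a cutoff, symmetrizes, and only then invokes the Fredholm alternative in $\mathbf L_s$ for a problem with bounded right-hand side. To salvage your route you would need to either (a) introduce $N$ extra Lagrange multipliers $c_h\bigl(qU_{0,\mu_0}^{q-1}Y_{0,\mu_0}^h,\,pV_{0,\mu_0}^{p-1}Z_{0,\mu_0}^h\bigr)$ in the single-source equation and then verify that they cancel after averaging over the orbit, or (b) allow the single-source Green's function to have linearly growing (non-decaying) components and show these cancel after symmetrization --- either of which adds substantial work that your proposal does not perform. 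Simply symmetrizing at the level of $\delta_x$ from the outset, as the paper does, is cleaner.
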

\begin{proof}
We set $u_1 = \frac{\ga_N}{|\cdot-x|^{N-2}}$ and $ v_1=0$, where $\ga_N > 0$ is the number appearing in \eqref{H}. Clearly,
\[\begin{cases}
-\Delta u_1- q U_{0, \mu_0}^{q-1} v_1=\delta_x & \text{in } B_{2R}(0),\\
-\Delta v_1- p V_{0, \mu_0}^{p-1} u_1=-p V_{0, \mu_0}^{p-1}\frac{\ga_N}{ |\cdot-x|^{N-2}} & \text{in } B_{2R}(0).
\end{cases}\]
Let $\bar v_2$ be the solution of
\[
\begin{cases}
-\Delta v= -p  V_{0, \mu_0}^{p-1}\frac{\ga_N}{|\cdot-x|^{N-2}} &\text{in } B_{2R}(0),\\
v=0 &\text{on}\; \partial B_{2R}(0).
\end{cases}
\]
Then,
\begin{align*}
|\bar v_2(y)| &\le \int_{B_{2R}(0)} G(y,z)p V_{0, \mu_0}^{p-1}(z)\frac{\ga_N}{ |z-x|^{N-2}}\, dz\\
&\le C \int_{B_{2R}(0)}\frac1{|y-z|^{N-2}}\frac1{|z-x|^{N-2}}\,dz \le \frac{C}{|y-x|^{N-4}}
\end{align*}
for $y \in B_{2R}(0)$, where $G(z,y)$ is the Green's function of $-\Delta$ in $B_{2R}(0)$ with zero Dirichlet boundary condition.
Moreover, $(u_2,v_2) := (u_1, v_1-\bar v_2)$ satisfies
\[\begin{cases}
-\Delta u_2- q U_{0, \mu_0}^{q-1} v_2=\delta_x+q U_{0, \mu_0}^{q-1}\bar v_2 & \text{in } B_{2R}(0),\\
-\Delta v_2- p V_{0, \mu_0}^{p-1} u_2=0 & \text{in } B_{2R}(0).
\end{cases}\]
Let $\bar u_3$ be the solution of
\[
\begin{cases}
-\Delta u= q U_{0, \mu_0}^{q-1}\bar v_2 & \text{in } B_{2R}(0),\\
u=0 & \text{on}\; \partial B_{2R}(0).
\end{cases}
\]
Then,
\[
|\bar u_3(y)|\le \begin{cases}
\displaystyle \frac{C}{|y-x|^{N-6}} &\text{if } N \ge 7,\\
\displaystyle \log\frac{C_R}{|y-x|} &\text{if } N = 6,\\
\displaystyle C_R &\text{if } N = 5
\end{cases}
\]
for $y \in B_{2R}(0)$, and $(u_3,v_3) := (u_2-\bar u_3, v_2)$ satisfies
\[\begin{cases}
-\Delta u_3- q U_{0, \mu_0}^{q-1} v_3=\delta_x& \text{in } B_{2R}(0),\\
-\Delta v_3- p V_{0, \mu_0}^{p-1} u_3= p V_{0, \mu_0}^{p-1}\bar u_3& \text{in } B_{2R}(0).
\end{cases}\]
We can continue this process to build $(\bar u_l, \bar v_l)$ for an integer $l \ge 4$, and find
\[|\bar u_l(y)| + |\bar v_l(y)| \le
\begin{cases}
\displaystyle \frac{C_R}{|y-x|^{N-(2l-2)}} &\text{if } N \ge 2l-1,\\
\displaystyle \log\frac{C_R}{|y-x|} &\text{if } N = 2l-2,\\
\displaystyle C_R &\text{if } 5 \le N \le 2l-3
\end{cases}
\]
for $y \in B_{2R}(0)$ and $l \ge 2$.

We select an integer $l_0 \ge \frac{N+3}{2}$ so that $|\bar u_l(y)| + |\bar v_l(y)| \le C_R$ for $y \in B_{2R}(0)$. Then the associated pair $(u_{l_0},v_{l_0})$ satisfies
\[\begin{cases}
-\Delta u_{l_0} - q U_{0, \mu_0}^{q-1} v_{l_0} = \delta_x+\bar f_1& \text{in } B_{2R}(0),\\
-\Delta v_{l_0} - p V_{0, \mu_0}^{p-1} u_{l_0} = \bar f_2 & \text{in } B_{2R}(0)
\end{cases}\]
for some $(\bar f_1, \bar f_2)$ such that $\|\bar f_1\|_{L^{\infty}(B_{2R}(0))} + \|\bar f_2\|_{L^{\infty}(B_{2R}(0))} \le C_R$.

We take a radially symmetric cut-off function $\chi \in C^\infty_0(B_{2R}(0))$ such that $0\le \chi\le 1$ in $B_{2R}(0)$ and $\chi=1$ in $B_{\frac{3R}2}(0)$.
Also, let $(u_{l_0}^*,v_{l_0}^*)$ be the symmetrization of $(u_{l_0},v_{l_0})$ defined by \eqref{wstar}.
If we write $ \tilde u=\chi u_{l_0}^*$ and $\tilde v=\chi v_{l_0}^*$, it holds that $(\tilde u, \tilde v) \in \mathbf{L}_s$ and
\[
L^* (\tilde u, \tilde v) = (\delta^*_{x}, 0)+(f_1,f_2) \quad \text{in } \R^N
\]
for some $(f_1,f_2) \in \mathbf{L}_s$ such that $\|f_1\|_{L^{\infty}(\R^N)} + \|f_2\|_{L^{\infty}(\R^N)} \le C_R$ and $f_1=f_2=0$ in $\R^N\setminus B_{2R}(0)$.

Now we consider
\begin{equation}\label{1-1-6-21}
L^* (u, v) = \(f_1-c(x)q U_{0,\mu_0}^{q-1}Y_0, f_2-c(x)pV_{0,\mu_0}^{p-1}Z_0\) \quad \text{in } \R^N
\end{equation}
with $c(x) \in \R$ satisfying
\begin{equation}\label{1-27-1}
\int_{\R^N} \left[\(f_1-c(x)q U_{0,\mu_0}^{q-1}Y_0\)Y_0+
\(f_2-c(x)pV_{0,\mu_0}^{p-1}Z_0\)Z_0\right]=0.
\end{equation}
By \eqref{kerL*}, system \eqref{1-1-6-21}--\eqref{1-27-1} has a unique solution $(w_1,w_2) \in \mathbf{L}_s\cap (\dot{W}^{2,\frac{q+1}{q}}(\R^N) \times \dot{W}^{2,\frac{p+1}{p}}(\R^N))$ such that
\[p\int_{\R^N} w_1 V_{0,\mu_0}^{p-1}Z_0 + q\int_{\R^N} w_2 U_{0,\mu_0}^{q-1}Y_0 = 0.\]
As a result, the pair $(G_{1,k}(\cdot, x), G_{2,k}(\cdot, x)) := (\tilde u- w_1, \tilde v -w_2) \in \mathbf{L}_s$ solves \eqref{2-25-10n}.
\end{proof}

We turn to deriving a pointwise estimate of $(G_{1,k}(\cdot,x), G_{2,k}(\cdot,x))$.
\begin{proposition}\label{p1-25-10n2}
Under the setting of Proposition~\ref{p1-25-10n}, there exists a constant $C_R>0$ depending only on $N$, $p$, $\mu_{00}$, and $R$ such that
 for all $y \in B_{2R}(0)$ and $x \in B_R(0)$, it holds
\begin{equation}\label{G1k}
|G_{1,k}(y, x)| \le \frac{C_R}{k} \sum_{h=2}^N\sum_{j=1}^k \(\frac1{|y-\Phi_j x|^{N-2}} + \frac1{|y-\Psi_h\Phi_j x|^{N-2}}\)
\end{equation}
and
\begin{equation}\label{G2k1}
|G_{2,k}(y, x)| \le \displaystyle \frac{C_R}{k} \sum_{h=2}^N\sum_{j=1}^k \(\frac{1}{|y-\Phi_j x|^{N-4}} + \frac{1}{|y-\Psi_h\Phi_j x|^{N-4}}\).
\end{equation}
Moreover, for all $y \in \R^N \setminus B_{2R}(0)$ and $x \in B_R(0)$, we have
\begin{equation}\label{G1k2}
|G_{1,k}(y, x)| \le \frac{C}{|y|^{N-2}}
\end{equation}
and
\begin{equation}\label{G2k2}
|G_{2,k}(y, x)| \le \begin{cases}
\displaystyle \frac{C_R}{|y|^{N-2}} &\text{if } p \in (\frac{N}{N-2},\frac{N+2}{N-2}],\\
\displaystyle \frac{C_R \log|y|}{|y|^{N-2}} &\text{if } p = \frac{N}{N-2},\\
\displaystyle \frac{C_R}{|y|^{p(N-2)-2}} &\text{if } p \in (1,\frac{N}{N-2}).
\end{cases}
\end{equation}

\end{proposition}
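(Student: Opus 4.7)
The plan is to exploit the explicit decomposition $(G_{1,k}(\cdot,x), G_{2,k}(\cdot,x)) = (\tilde u - w_1, \tilde v - w_2)$ produced in the proof of Proposition~\ref{p1-25-10n}, and to analyze the singular part $(\tilde u, \tilde v)$ and the smooth correction $(w_1, w_2)$ separately. Recall that $\tilde u = \chi u_{l_0}^*$ and $\tilde v = \chi v_{l_0}^*$ are the symmetrizations (cut off by $\chi$) of the iterates $(u_l, v_l)$ built by successive Newton-potential corrections, while $(w_1, w_2)\in \mathbf{L}_s$ solves an elliptic system whose data $(f_1, f_2)$ is in $L^\infty(\R^N)$ and supported in $B_{2R}(0)$.

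For the local bounds \eqref{G1k}--\eqref{G2k1}, I first track the singularity of the unsymmetrized iterates $(u_l, v_l)$ at $y=x$ inductively: the leading singularity of $u_{l_0}$ is $\ga_N/|y-x|^{N-2}$, with all subsequent corrections contributing only lower-order singularities that become $O_R(1)$ once $l_0 \ge (N+3)/2$; analogously the leading singularity of $v_{l_0}$ is $O_R(|y-x|^{-(N-4)})$. Applying the symmetrization $w \mapsto w^*$ from \eqref{wstar} automatically produces the prefactor $\tfrac{1}{2(N-1)k}$ together with a sum over the $2(N-1)k$ orbit points $\Phi_j x$ and $\Psi_h \Phi_j x$ of the original pole, and multiplying by $\chi$, which equals $1$ on $B_{3R/2}(0) \supset B_R(0) \ni x$, does not affect the leading singularity. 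The correction $(w_1, w_2)$ is bounded on $B_{2R}(0)$ by standard elliptic regularity since its right-hand side belongs to $L^{\infty}(\R^N)$ and is compactly supported (uniform boundedness of $c(x)$ in $x \in B_R(0)$ follows from \eqref{1-27-1}, whose denominator is a fixed positive constant and whose numerator is controlled by $\|f_1\|_{L^{\infty}} + \|f_2\|_{L^{\infty}} \le C_R$); this $O_R(1)$ contribution is absorbable into the stated bounds.

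For the far-field bounds \eqref{G1k2}--\eqref{G2k2}, since $\chi \equiv 0$ outside $B_{2R}(0)$ we have $G_{i,k} = -w_i$ there, and since $\delta_x^*$ is supported inside $B_R(0)$, the pair satisfies
\begin{align*}
-\Delta G_{1,k} &= q U_{0,\mu_0}^{q-1}\bigl(G_{2,k} + c(x) Y_0\bigr), \\
-\Delta G_{2,k} &= p V_{0,\mu_0}^{p-1}\bigl(G_{1,k} + c(x) Z_0\bigr)
\end{align*}
on $\R^N \setminus B_{R}(0)$. I then bootstrap the pointwise decay using the Newton-potential representation and the decay of $U_{0,\mu_0}^{q-1}$, $V_{0,\mu_0}^{p-1}$, $Y_0$ and $Z_0$ recorded in Lemma~\ref{lem:HV} and \eqref{V10est}--\eqref{U10est}. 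Starting from the baseline integrability $(w_1, w_2) \in L^{q+1}(\R^N) \times L^{p+1}(\R^N)$ obtained from $(w_1,w_2) \in \dot{W}^{2,\frac{q+1}{q}}\times\dot{W}^{2,\frac{p+1}{p}}$, after finitely many iterations $G_{1,k}$ decays like $|y|^{-(N-2)}$ because the associated source is integrable with decay at least $|z|^{-q(N-2)}$ (recall $q(N-2) > N$); the decay of $G_{2,k}$ is governed by the term $V_{0,\mu_0}^{p-1}(G_{1,k}+c(x)Z_0) \sim |z|^{-p(N-2)}$, whose Newton potential yields exactly the three cases of \eqref{G2k2} via the elementary estimate
\[
\int_{\R^N} \frac{dz}{(1+|z|)^{p(N-2)}\,|y-z|^{N-2}} \le
\begin{cases} C|y|^{-(N-2)} & \text{if } p(N-2) > N, \\ C|y|^{-(N-2)}\log|y| & \text{if } p(N-2) = N, \\ C|y|^{-(p(N-2)-2)} & \text{if } p(N-2) < N, \end{cases}
\]
for $|y|$ sufficiently large.

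The main obstacle will be organizing the bootstrap so that the decay estimates for $G_{1,k}$ and $G_{2,k}$ reinforce each other through the coupled system in an orderly way, and so that all constants remain uniform in $x \in B_R(0)$. A secondary subtlety is handling the annulus $\{R < |y| \le 2R\}$ where $\chi$ transitions from $1$ to $0$; since $\chi$ is fixed and independent of $k$, the corresponding commutator terms are bounded perturbations of the above analysis and contribute only constants depending on $R$, which is harmless.
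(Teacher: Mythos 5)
Your proposal is correct and follows essentially the same two-stage strategy as the paper's proof: bound the singular symmetrized iterate $(\tilde u,\tilde v)$ directly to obtain \eqref{G1k}--\eqref{G2k1}, show $(w_1,w_2)$ is locally bounded via elliptic regularity, and then bootstrap the pointwise decay of $(w_1,w_2)$ through the coupled Newton-potential representation using the decay of $U_{0,\mu_0}^{q-1}$ and $V_{0,\mu_0}^{p-1}$, with the final case split for $G_{2,k}$ coming from the elementary estimate of $\int (1+|z|)^{-p(N-2)}|y-z|^{-(N-2)}\,dz$. One small imprecision worth noting: the right-hand side $\bigl(f_1-c(x)qU_{0,\mu_0}^{q-1}Y_0,\,f_2-c(x)pV_{0,\mu_0}^{p-1}Z_0\bigr)$ of \eqref{1-1-6-21} is uniformly bounded but \emph{not} compactly supported (the $U_{0,\mu_0}^{q-1}Y_0$ and $V_{0,\mu_0}^{p-1}Z_0$ pieces are global); this does not affect your argument since $L^\infty$ together with the fast decay of those potentials is exactly what the bootstrap uses.
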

\begin{proof}
Here, we keep using the notations from the proof of Proposition~\ref{p1-25-10n}.
Firstly, we note that the functions $\tilde u$ and $\tilde v$ satisfy \eqref{G1k}
and \eqref{G2k1} respectively. Thus, to prove this proposition,
we just need to show that for any $x\in B_R(0)$, the solution $(w_1,w_2)$ of \eqref{1-1-6-21}
is bounded in $B_{2R}(0)$. This follows directly from the $L^p$-estimates for the
elliptic equations, since the functions on the right hand side of \eqref{1-1-6-21}
is uniformly bounded.

\medskip
Next, we proceed to prove the inequalities
\begin{equation}\label{w1w2decay}
|w_1(y)| \le \frac{C_R}{|y|^{N-2}}
\quad \text{and} \quad
|w_2(y)| \le \begin{cases}
\displaystyle \frac{C_R}{|y|^{N-2}} &\text{if } p \in (\frac{N}{N-2},\frac{N+2}{N-2}],\\
\displaystyle \frac{C_R \log|y|}{|y|^{N-2}} &\text{if } p = \frac{N}{N-2},\\
\displaystyle \frac{C_R}{|y|^{p(N-2)-2}} &\text{if } p \in (1,\frac{N}{N-2})
\end{cases}
\end{equation}
for $|y| \ge 2R$.

For the moment, we assume that $p \in (1,\frac{N}{N-2}) \cup (\frac{N}{N-2},\frac{N+2}{N-2}]$.
Suppose that there exists $\theta_0 \ge 0$ such that $|w_2(y)| \le \frac{C_R}{1+|y|^{\theta_0}}$ for $y \in \R^N$. By \eqref{1-1-6-21}--\eqref{1-27-1},
\begin{align*}
|w_1(y)| &\le C_R\int_{\R^N} \frac1{|y-z|^{N-2}} \(U_{0,\mu_0}^{q-1}|w_2| + |f_1| + U_{0,\mu_0}^{q}\)(z)\, dz \\
&\le C_R\int_{\R^N} \frac1{|y-z|^{N-2}} \left[\frac{U_{0,\mu_0}^{q-1}(z)}{1+|z|^{\theta_0}} + U_{0,\mu_0}^{q}(z)\right] dz \\
&\le \dfrac{C_R}{1+|y|^{\theta_1}}
\end{align*}
for $y \in \R^N$, where
\[\theta_1 := \begin{cases}
\min\{(q-1)(N-2)-2+\theta_0,N-2\} &\text{if } p \in (\frac{N}{N-2},\frac{N+2}{N-2}], \\
\min\{(q-1)(p(N-2)-2)-2+\theta_0,N-2\} &\text{if } p \in (1,\frac{N}{N-2}).
\end{cases}\]
In both cases, $\theta_1 \ge \min\{2+\theta_0,N-2\}$. Thus we observe
\begin{align*}
|w_2(y)| &\le C_R \left[\int_{\R^N} \frac1{|y-z|^{N-2}} \frac{V_{0,\mu_0}^{p-1}(z)}{1+|z|^{\theta_1}} dz
+ \int_{\R^N} \frac1{|y-z|^{N-2}} \(|f_2|+V_{0,\mu_0}^{p}\)(z)\, dz\right] \\
&\le \frac{C_R}{1+|y|^{\theta_2}}
\end{align*}
for $y \in \R^N$, where
\begin{align*}
\theta_2 &:= \min\{(p-1)(N-2)-2+\theta_1,N-2,p(N-2)-2\} \\
&\ge \min\{(p-1)(N-2)+\theta_0,N-2,p(N-2)-2\},
\end{align*}
which improves the decay of $w_2$. Therefore, reminding that $w_2 \in L^{\infty}(\R^N)$ (more precisely, $\|w_2\|_{L^{\infty}(\R^N)} \le C_R$),
we can iterate the above process finitely many times to establish \eqref{w1w2decay}.

The case $p = \frac{N}{N-2}$ can be treated in a similar way, which we omit.

\medskip
Now, from the construction of $(u_{l_0},v_{l_0})$ in the proof of Proposition~\ref{p1-25-10n} and \eqref{w1w2decay}, we infer that \eqref{G1k}--\eqref{G2k2} is true. This completes the proof.
\end{proof}

 Similar to Propositions~\ref{p1-25-10n} and \ref{p1-25-10n2}, we have the following results for the  solution of \eqref{nn2-25-10n}.
\begin{proposition}
The solution $(G_{3,k}(\cdot, x), G_{4,k}(\cdot, x)) \in \mathbf{L}_s$ of \eqref{nn2-25-10n} exists.
Furthermore, given a fixed large number $R>1$, there exists a constant $C_R>0$ depending only on $N$, $p$, $\mu_{00}$ and $R$ such that
 for all $y \in B_{2R}(0)$ and $x \in B_R(0)$, it holds
\begin{equation}\label{G3k1}
|G_{3,k}(y, x)| \le \frac{C_R}{k} \sum_{h=2}^N\sum_{j=1}^k \(\frac{1}{|y-\Phi_j x|^{N-4}} + \frac{1}{|y-\Psi_h\Phi_j x|^{N-4}}\)
\end{equation}
and
\begin{equation}\label{G4k1}
|G_{4,k}(y, x)|\le \frac{C_R}{k} \sum_{h=2}^N\sum_{j=1}^k \(\frac1{|y-\Phi_j x|^{N-2}} + \frac1{|y-\Psi_h\Phi_j x|^{N-2}}\).
\end{equation}
Moreover, for all $y \in \R^N \setminus B_{2R}(0)$ and $x \in B_R(0)$, we have
\begin{equation}\label{G3k2}
|G_{3,k}(y, x)| \le \frac{C}{|y|^{N-2}}
\end{equation}
and
\begin{equation}\label{G4k2}
|G_{4,k}(y, x)| \le \begin{cases}
\displaystyle \frac{C_R}{|y|^{N-2}} &\text{if } p \in (\frac{N}{N-2},\frac{N+2}{N-2}],\\
\displaystyle \frac{C_R \log|y|}{|y|^{N-2}} &\text{if } p = \frac{N}{N-2},\\
\displaystyle \frac{C_R}{|y|^{p(N-2)-2}} &\text{if } p \in (1,\frac{N}{N-2}).
\end{cases}
\end{equation}
\end{proposition}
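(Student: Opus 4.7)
The plan is to mirror the proofs of Propositions~\ref{p1-25-10n}--\ref{p1-25-10n2} with the two components of the system interchanged. As an initial guess I would take $v_1 := \gamma_N/|\cdot-x|^{N-2}$ and $u_1 := 0$, so that the second equation of \eqref{nn2-25-10n} is satisfied (as a Laplacian equation) in $B_{2R}(0)$, while the first equation carries the smooth error $-qU_{0,\mu_0}^{q-1}v_1$. This error is removed by adding the Dirichlet solution $\bar u_2$ of $-\Delta u = qU_{0,\mu_0}^{q-1}\gamma_N/|\cdot-x|^{N-2}$ on $B_{2R}(0)$, which satisfies $|\bar u_2(y)| \le C_R/|y-x|^{N-4}$ for $N \ge 5$. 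Plugging $\bar u_2$ back into the second equation generates the new error $-pV_{0,\mu_0}^{p-1}\bar u_2$, which in turn is cancelled by a Dirichlet solution $\bar v_3$ obeying $|\bar v_3(y)| \le C_R/|y-x|^{N-6}$ (with the appropriate logarithmic or bounded variant when $N = 6$ or $5$). Iterating this two-sided correction, the decay improves by two units in the exponent per step, and after $l_0 \ge (N+3)/2$ rounds both residuals are uniformly bounded on $B_{2R}(0)$. The resulting pair already displays the singular behavior asserted in \eqref{G3k1}--\eqref{G4k1}.

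Second, I would apply the symmetrization \eqref{wstar} to $(u_{l_0},v_{l_0})$ and multiply by the cutoff $\chi \in C_0^\infty(B_{2R}(0))$ used in the proof of Proposition~\ref{p1-25-10n}, producing $(\tilde u, \tilde v) \in \mathbf{L}_s$ with
\[
L^*(\tilde u, \tilde v) = (f_1,\ \delta_x^* + f_2) \quad \text{in } \R^N,
\]
where $f_1, f_2 \in \mathbf{L}_s$ are bounded and supported in $B_{2R}(0)$. The non-degeneracy \eqref{kerL*} combined with the Fredholm alternative then furnishes a unique symmetric $(w_1,w_2)$ solving
\[
L^*(w_1,w_2) = \bigl(f_1 - \tilde c(x)\, qU_{0,\mu_0}^{q-1}Y_0,\ f_2 - \tilde c(x)\, pV_{0,\mu_0}^{p-1}Z_0\bigr) \quad \text{in } \R^N,
\]
with $\tilde c(x) \in \R$ chosen so that the right-hand side is $L^2$-orthogonal to $(Y_0,Z_0)$. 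Setting $(G_{3,k}(\cdot,x), G_{4,k}(\cdot,x)) := (\tilde u - w_1,\, \tilde v - w_2)$ yields a symmetric solution of \eqref{nn2-25-10n}, and the $L^\infty$-bound $\|w_1\|_{L^\infty(B_{2R}(0))} + \|w_2\|_{L^\infty(B_{2R}(0))} \le C_R$ follows from standard elliptic $L^p$-estimates applied to the bounded, compactly supported right-hand side. Combined with the earlier pointwise information on $(\tilde u, \tilde v)$ this completes \eqref{G3k1}--\eqref{G4k1}.

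For the far-field estimates \eqref{G3k2}--\eqref{G4k2} I would run a bootstrap parallel to the one used to prove \eqref{w1w2decay}. Since $\tilde u$ and $\tilde v$ vanish outside $B_{2R}(0)$, for $|y| \ge 2R$ one has $G_{3,k}(y,x) = -w_1(y)$ and $G_{4,k}(y,x) = -w_2(y)$, so it suffices to bootstrap the decay of $(w_1, w_2)$. Starting from a preliminary bound $|w_1(y)| \le C_R(1+|y|)^{-\theta_0}$, the Newtonian representation of $w_2$ together with Lemma~\ref{lem:HV} upgrades the decay to $|w_2(y)| \le C_R(1+|y|)^{-\theta_1}$ with $\theta_1 \ge \min\{2+\theta_0,\, N-2,\, p(N-2)-2\}$, the last term coming from the forcing $V_{0,\mu_0}^p$ convolved with $|y-z|^{-(N-2)}$. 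Feeding this back into the representation of $w_1$ and using $q(N-2) \ge N+2$, the decay of $w_1$ saturates at $|y|^{-(N-2)}$ after finitely many iterations, while the limiting decay of $w_2$ is dictated by the convolution $\int |y-z|^{-(N-2)} V_{0,\mu_0}^p(z)\, dz$, giving $|y|^{-(N-2)}$ for $p > N/(N-2)$, a logarithmic enhancement for $p = N/(N-2)$, and $|y|^{-(p(N-2)-2)}$ for $p \in (1,N/(N-2))$, in exact accordance with \eqref{G4k2}.

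The main technical obstacle is to verify that the saturation of the bootstrap matches \eqref{G4k2} in all three regimes of $p$. The borderline case $p = N/(N-2)$ is the delicate one: the convolution $\int |y-z|^{-(N-2)} V_{0,\mu_0}^p(z)\, dz$ sits precisely on the threshold between the two decay regimes, and a careful splitting of the integrand according to $|z| \lessgtr |y|/2$ is needed to extract the logarithmic factor. The remaining bookkeeping — symmetry of the construction, continuity of $(G_{3,k}, G_{4,k})$ in $y$ away from $\mathrm{supp}\,\delta_x^*$, and uniformity of $C_R$ with respect to $x \in B_R(0)$ — proceeds exactly as in Propositions~\ref{p1-25-10n}--\ref{p1-25-10n2}.
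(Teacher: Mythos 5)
The proposal follows exactly the route the paper intends (the paper itself only says ``Similar to Propositions~\ref{p1-25-10n} and \ref{p1-25-10n2}''). You correctly swap the roles of the two components: begin with $(u_1,v_1)=(0,\gamma_N/|\cdot-x|^{N-2})$ so that the Dirac mass lands in the second equation, iterate Dirichlet corrections that raise the local regularity by two units per sweep, symmetrize and cut off, and solve for the bounded remainder $(w_1,w_2)$ by the Fredholm alternative with the solvability condition fixing $\tilde c(x)$; this gives \eqref{G3k1}--\eqref{G4k1}. You also make the key observation that for $|y|\ge 2R$ one has $(G_{3,k},G_{4,k})=(-w_1,-w_2)$ and the PDE satisfied by $(w_1,w_2)$ is structurally identical to the one in the proof of Proposition~\ref{p1-25-10n2}, so the far-field decays \eqref{G3k2}--\eqref{G4k2} coincide with \eqref{G1k2}--\eqref{G2k2}.

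One small inaccuracy: in the bootstrap you claim that passing from a bound $|w_1|\lesssim(1+|y|)^{-\theta_0}$ to one on $w_2$ yields $\theta_1\ge\min\{2+\theta_0,N-2,p(N-2)-2\}$. The half-step $w_1\to w_2$ uses the kernel $V_{0,\mu_0}^{p-1}$, so the true gain is $(p-1)(N-2)-2$, which is negative for $p<\frac{N+2}{N-2}$; the claimed ``$+2$'' improvement belongs to the $w_2\to w_1$ half-step, where the kernel $U_{0,\mu_0}^{q-1}$ supplies $(q-1)(N-2)-2\ge 2$. The paper therefore iterates in the order $w_2\to w_1\to w_2$, for which the round-trip gain is $(p-1)(N-2)>0$; your order $w_1\to w_2\to w_1$ also gains $(p+q-2)(N-2)-4>0$ over a full cycle. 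The ultimate saturation levels you extract — $|w_1|\lesssim|y|^{-(N-2)}$ and $|w_2|\lesssim|y|^{-\min\{N-2,\,p(N-2)-2\}}$ with a logarithm at the threshold $p=\frac{N}{N-2}$ — are correct and match \eqref{G3k2}--\eqref{G4k2}. So the argument goes through, but the intermediate quantitative claim should be stated for the correct half-step, or phrased as a two-step gain.
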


Finally, recalling the function $(Y_0,Z_0)$ given in \eqref{Y012}--\eqref{Z012}, let
\begin{multline*}
\mathbf{E}_0 := \left\{(u,v) \in \mathbf{L}_s \cap \left[\dot{W}^{2,\frac{p+1}{p}}(\R^N) \times \dot{W}^{2,\frac{q+1}{q}}(\R^N)\right]: \right.\\
\left. p\int_{\R^N} v V_{0,\mu_0}^{p-1}Z_0 + q\int_{\R^N} u U_{0,\mu_0}^{q-1}Y_0 = 0 \right\}.
\end{multline*}
We can derive a representation formula for any $(u,v) \in \mathbf{E}_0$ satisfying $L(u, v)= (f,g)$.
\begin{proposition}
If $(u,v) \in \mathbf{E}_0$ satisfies $L(u, v)= (f,g)$, it holds that
\begin{equation}\label{10-27-1}
u(x)=\int_{\R^N} G_{1,k}(y, x) f(y)\,dy+  \int_{\R^N} G_{2,k}(y, x) g(y)\,dy
\end{equation}
and
\begin{equation}\label{11-27-1}
v(x)=\int_{\R^N} G_{3,k}(y, x) f(y)\,dy+  \int_{\R^N} G_{4,k}(y, x) g(y)\,dy
\end{equation}
for all $x \in B_R(0)$. Here, the functions $G_{1,k}$, $G_{2,k}$, $G_{3,k}$   and $ G_{4,k}$ satisfy \eqref{G1k}--\eqref{G2k2} and \eqref{G3k1}--\eqref{G4k2}.
\end{proposition}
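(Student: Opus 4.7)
The plan is to derive \eqref{10-27-1}--\eqref{11-27-1} by a standard Green's identity argument that pairs $L$ against $L^*$, using the Green's functions $(G_{1,k},G_{2,k})$ and $(G_{3,k},G_{4,k})$ constructed in Propositions~\ref{p1-25-10n}--\ref{p1-25-10n2}. Fix $x \in B_R(0)$ and consider the formal computation
\[
\int_{\R^N}\!\! \left[G_{1,k}(y,x) L_{1}(u,v)(y) + G_{2,k}(y,x) L_{2}(u,v)(y)\right] dy
= \int_{\R^N}\!\! \left[u(y) L^*_1(G_{1,k},G_{2,k})(y) + v(y) L^*_2(G_{1,k},G_{2,k})(y)\right] dy,
\]
which follows, at least formally, from two integration by parts in the Laplacian terms. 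The left-hand side equals $\int_{\R^N} [G_{1,k}(y,x) f(y) + G_{2,k}(y,x) g(y)] dy$, while by \eqref{2-25-10n} the right-hand side equals
\[
\int_{\R^N} u\, d\delta_x^*(y) + c(x)\left[q\int_{\R^N} u U_{0,\mu_0}^{q-1}Y_0 + p\int_{\R^N} v V_{0,\mu_0}^{p-1}Z_0\right].
\]
The bracketed quantity vanishes because $(u,v)\in \mathbf E_0$, while the first integral equals $u^*(x) = u(x)$ since $u \in \mathbf{L}_s$ is invariant under each $\Phi_j$ and $\Psi_h$, and $\delta_x^*$ averages delta masses at the orbit of $x$. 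This yields \eqref{10-27-1}; formula \eqref{11-27-1} is obtained in the same way, now pairing $(f,g)=L(u,v)$ against $(G_{3,k}(\cdot,x),G_{4,k}(\cdot,x))$ and invoking \eqref{nn2-25-10n} together with the $\mathbf E_0$-orthogonality.

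The only real point to check is the rigorous justification of the integration by parts, since the Green's functions carry pointwise singularities of the Newtonian type at the orbit of $x$, and both $(u,v)$ and $(G_{i,k}(\cdot,x))$ have only integrable (not compactly supported) decay at infinity. I would handle this in two steps: first excise small balls $B_\epsilon(\Phi_j x)$ and $B_\epsilon(\Psi_h\Phi_j x)$ around each singular point and a large ball exterior $\R^N\setminus B_\rho(0)$; then apply Green's second identity on the resulting domain and pass to the limits $\epsilon \downarrow 0$ and $\rho\uparrow +\infty$. The near-singularity boundary terms vanish because the estimates \eqref{G1k}--\eqref{G2k1} and \eqref{G3k1}--\eqref{G4k1} give exactly the Newtonian $|y-\Phi_jx|^{-(N-2)}$ (or $|y-\Phi_jx|^{-(N-4)}$) behaviour, which is the standard rate that produces the delta function under integration by parts, while $(u,v)$ is continuous (indeed $\dot W^{2,\cdot}$) near these points. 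The boundary terms at $\partial B_\rho(0)$ vanish by combining the decay of $(u,v)$ coming from $(u,v) \in \dot{W}^{2,\frac{p+1}{p}}(\R^N) \times \dot{W}^{2,\frac{q+1}{q}}(\R^N)$ with $L(u,v)=(f,g)$ (standard elliptic decay) and the far-field estimates \eqref{G1k2}--\eqref{G2k2} and \eqref{G3k2}--\eqref{G4k2}; one checks separately in the three regimes of $p$ distinguished by Lemma~\ref{lem:HV}, but in each case the exponents of $|y|$ add up to strictly more than $N-1$, so the boundary integrals on $\partial B_\rho$ vanish as $\rho\to +\infty$.

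The main obstacle, and the only substantial piece of the argument, is precisely this decay bookkeeping in the regime $p \in (1,\frac{N}{N-2}]$, where $G_{2,k}$ and $G_{4,k}$ degrade to $|y|^{-(p(N-2)-2)}$ (or $|y|^{-(N-2)}\log|y|$ at $p=\frac{N}{N-2}$). In this range the bound $|u(y)|+|v(y)| \le C(1+|y|)^{-(N-2)}$ still holds for the $u$-component by standard bootstrapping from $-\Delta u = p V_{0,\mu_0}^{p-1} v + f$ and the fact that $V_{0,\mu_0}^{p-1}$ and $f$ decay sufficiently fast on the support relevant to the calculation (using the $**$-norm decay of $(f,g)$), while for $v$ one has the weaker rate $|v(y)|\lesssim (1+|y|)^{-(p(N-2)-2)}$ dictated by the same Newtonian-type integral. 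Combining these with the paired decays of the $G_{i,k}$ shows that the product $u\,\nabla G_{1,k}+G_{1,k}\nabla u$ (and its $v$-analogue) integrated over $\partial B_\rho(0)$ is $O(\rho^{-\sigma})$ for some $\sigma>0$, so the limit argument closes. Once the limit procedure is validated, the stated representation formulas follow immediately from the computation above.
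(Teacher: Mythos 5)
Your proposal matches the paper's own proof: the paper simply writes out the Green's identity $\int G_{1,k} L_1(u,v) + G_{2,k} L_2(u,v) = u(x) + c(x)\big[q\int u U_{0,\mu_0}^{q-1} Y_0 + p\int v V_{0,\mu_0}^{p-1} Z_0\big]$ and kills the bracket using $(u,v)\in \mathbf E_0$, exactly as you do, treating the integration by parts as formal. Your extra discussion of the boundary-term bookkeeping goes beyond the paper, but note that you have the roles of $u$ and $v$ reversed in the regime $p<\tfrac{N}{N-2}$: from $-\Delta u = pV_{0,\mu_0}^{p-1}v + f$ with $V_{0,\mu_0}^{p-1}(y)\sim |y|^{-(p-1)(N-2)}$ and $v\sim |y|^{-(N-2)}$, it is $u$ that inherits the degraded rate $|y|^{-(p(N-2)-2)}$ (this mirrors $G_{2,k}$, $G_{4,k}$ degrading in \eqref{G2k2}, \eqref{G4k2} while $G_{1,k}$, $G_{3,k}$ retain the Newtonian rate in \eqref{G1k2}, \eqref{G3k2}), whereas $v$ keeps $|y|^{-(N-2)}$; with the correct pairing the boundary terms on $\partial B_\rho$ behave like $\rho^{2-p(N-2)}$ and vanish for all $p>\tfrac{2}{N-2}$, which is weaker than the condition $p>\tfrac{N+2}{2(N-2)}$ that your swapped rates would impose.
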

\begin{proof}
For any $(u,v) \in \mathbf{L}_s$, we have
\begin{align*}
&\quad u(x)+ c(x)\int_{\R^N} \(q u U_{0,\mu_0}^{q-1}Y_0+p v V_{0,\mu_0}^{p-1}Z_0\) \\
&= \int_{\R^N} \left[
\(-\Delta u - p V_{0,\mu_0}^{p-1}v\)(y) G_{1,k}(y,x) +
\(-\Delta v - q U_{0,\mu_0}^{q-1}u\)(y) G_{2,k}(y,x) \right] dy\\
&= \int_{\R^N} G_{1,k}(y, x) f(y)\,dy+  \int_{\R^N} G_{2,k}(y, x) g(y)\,dy.
\end{align*}
Since $(u, v)\in \mathbf{E}_0$, we get \eqref{10-27-1}. The proof of \eqref{11-27-1} is similar.
\end{proof}

\section{Technical computations}\label{sec:tech}
In this appendix, we compile several technical estimates used throughout the paper.
We remind the definition of the sets $\Omega_i$ for $i = 1,\ldots,k$ and $S = B_{\frac{\pi}{2}r_0^{-1}k^{-1}}(x_1) \subset \Omega_1$ from \eqref{Omegaj} and \eqref{eq:S}.

\medskip
To begin with, we prove a lemma that plays a key role in deducing \eqref{5-30-7}.
\begin{lemma}\label{l5-30-7}
Assume that $N \ge 5$ and $p \in [\frac{N}{N-2},\frac{N+2}{N-2}]$. Then
for $y \in \Omega_1$, it holds
\begin{multline}\label{l5-30-71}
\sum_{i=2}^k \int_{\Omega_i} \frac{1}{|y-z|^{N-2}} \left[\frac{1}{|z-kx_i|^{(p-1)(N-2)}} \sum_{j \ne i} \frac{1}{|z-kx_j|^{N-2}} + \(\sum_{j \ne i} \frac{1}{|z-kx_j|^{N-2}}\)^p\right] dz \\
\le C.
\end{multline}
\end{lemma}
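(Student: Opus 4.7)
The plan is based on three geometric facts: (a) the points $\{kx_j\}$ form a regular $k$-gon on the circle $C_{kr}\subset \R^2\times\{0\}$ with consecutive vertices separated by $\simeq r$; (b) by the symmetry of this polygon about the origin, the perpendicular bisectors between adjacent vertices pass through the origin, so $\{\Omega_i\}$ are precisely the Voronoi cells of $\{kx_i\}$, giving $|z-kx_j|\ge |z-kx_i|$ for $z\in\Omega_i$ and $j\ne i$, and more quantitatively $|z-kx_j|\ge c\bigl(r(|i-j|\wedge(k-|i-j|))+|z-kx_i|\bigr)$; (c) the law of cosines applied to the angular separation gives $|y-kx_i|\ge \pi r$ for every $y\in\Omega_1$ and $i\ne 1$. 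I will combine these with pointwise integrand estimates and sum over $i=2,\ldots,k$.

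Step~1 (Pointwise bounds). From (b), for $z\in \Omega_i$ and $N\ge 5$,
\[
\sum_{j\ne i}\frac{1}{|z-kx_j|^{N-2}}\le C\sum_{m=1}^{k/2}\frac{1}{(rm+|z-kx_i|)^{N-2}}\le \frac{C}{r^{N-3}(r+|z-kx_i|)},
\]
which decays away from $kx_i$ and is bounded by $Ck/|z|^{N-2}$ as $|z|\to\infty$. Combined with $(p-1)(N-2)\le 4<N$ (from $N\ge 5$ and $p\le (N+2)/(N-2)$), this makes the singular part $|z-kx_i|^{-(p-1)(N-2)}$ locally integrable, while $(p+1)(N-2)>N$ yields integrability of the whole integrand at infinity.

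Step~2 (Integration). Split each $\Omega_i$ into a near zone $B_{r/2}(kx_i)\cap\Omega_i$ and its complement. On the near zone, $A_i(z)\le C|z-kx_i|^{-(p-1)(N-2)}+C$ by Step~1, and the standard convolution estimate gives
\[
\int \frac{A_i(z)}{|y-z|^{N-2}}\,dz\le C+\frac{C\,r^{N-(p-1)(N-2)}}{|y-kx_i|^{N-2}},
\]
where any potential singularity of type $|y-kx_i|^{-(p(N-2)-N)}$ is controlled by (c). Summing over $i$ uses the $k$-gon counting estimate $\#\{i:|y-kx_i|\le R\}\lesssim 1+R/r$ and $N-2\ge 3$ to conclude $\sum_{i=2}^k|y-kx_i|^{-(N-2)}\le C$. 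On the complement of the near zone, Step~1 combined with the fact that $\Omega_i$ has 2D angular width $2\pi/k$ yields an $O(1/k)$ contribution per $i$, summing to $O(1)$. The same splitting and estimates apply to the second summand $(\sum_{j\ne i}|z-kx_j|^{-(N-2)})^p$, since $p>1$ and the integrability threshold $(p+1)(N-2)>N$ are both preserved when Step~1 is raised to the $p$-th power.

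The main obstacle will be the intermediate part of $\Omega_i$ (roughly $r\le |z-kx_i|\le kr/4$) for the second summand: there $z$ can be comparable in distance to several lattice points simultaneously, and a naive H\"older bound on $(\sum_{j\ne i})^p$ would cost a factor of $k$. The fix is to keep the Voronoi-based pointwise bound of Step~1 before taking the $p$-th power, which preserves summability in $j$ and maintains an effective exponent on $|z-kx_i|$ strictly larger than $2$; the angular factor $1/k$ from integrating over $\Omega_i$ then produces the final $O(1)$ bound after summation in $i$.
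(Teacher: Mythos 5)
Your overall strategy---a Voronoi-based pointwise bound for $\sum_{j\ne i}|z-kx_j|^{-(N-2)}$ on $\Omega_i$, followed by a convolution estimate and a sum over $i$---mirrors the paper's, but the pointwise bound stated in Step~1 is incorrect, and with the bound as stated the far-zone integrals in Step~2 diverge over most of the range of $p$.

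You write $\sum_{m=1}^{k/2}(rm+d)^{-(N-2)}\le Cr^{-(N-3)}(r+d)^{-1}$, with $d=|z-kx_i|$, but the exponents on $r$ and on $(r+d)$ have been interchanged: comparison with $r^{-1}\int_0^\infty(t+r+d)^{-(N-2)}\,dt$ yields the correct bound $Cr^{-1}(r+d)^{-(N-3)}$. Your version decays only like $d^{-1}$ as $d\to\infty$, rather than $d^{-(N-3)}$, and that is fatal on the unbounded wedge $\Omega_i$, whose intersection with $B_R(0)$ has volume $\simeq R^N/k$. For the first summand, with your stated bound the integrand behaves like $|z|^{-(p(N-2)-1)}$ at infinity, so the far-zone integral over $\Omega_i$ is comparable to $k^{-1}\int^\infty R^{N-p(N-2)}\,dR$, which diverges whenever $p\le\tfrac{N+1}{N-2}$. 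For the second summand, the $p$-th power of your bound has only $|z-kx_i|^{-p}$ decay, giving $k^{-1}\int^\infty R^{1-p}\,dR$, divergent whenever $p\le 2$, i.e.\ the entire admissible range when $N\ge 6$. So the asserted ``$O(1/k)$ contribution per $i$'' is not merely unproved---it is false with the bound as written. The paper avoids the near/far decomposition entirely: the rescaled \eqref{e4} gives $\sum_{j\ne i}|z-kx_j|^{-(N-2)}\le C|z-kx_i|^{-(N-3-\theta)}$ on all of $\Omega_i$, and $\theta$ is chosen so that the combined convolution exponent, namely $(p-1)(N-2)+N-3-\theta$ for the first summand and $p(N-3-\theta)$ for the second, lies in $(3,N)$; a single standard convolution estimate followed by a sum over $i$ (using that the resulting exponent exceeds $1$) then closes the argument with no case analysis.
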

\begin{proof}
Fix $i = 2,\ldots,k$. By applying \eqref{e4}, we obtain
\begin{align*}
&\quad \int_{\Omega_i} \frac{1}{|y-z|^{N-2}} \frac{1}{|z-kx_i|^{(p-1)(N-2)}} \sum_{j \ne i} \frac{1}{|z-kx_j|^{N-2}} dz \\
&\le \int_{\Omega_i} \frac{1}{|y-z|^{N-2}} \frac{1}{|z-kx_i|^{(p-1)(N-2)+N-3-\theta}} dz  \\
&\le \frac{C}{|y-kx_i|^{(p-1)(N-2)+N-5-\theta}}
\end{align*}
for $y \in \Omega_1$, where we choose $\theta \in (0,N-3]$ such that $(p-1)(N-2)+N-5-\theta\in (1, N-2)$. Thus,
\begin{multline}\label{l5-30-72}
\sum_{i=2}^k \int_{\Omega_i} \frac{1}{|y-z|^{N-2}} \frac{1}{|z-kx_i|^{(p-1)(N-2)}} \sum_{j \ne i} \frac{1}{|z-kx_j|^{N-2}} dz \\
\le
\displaystyle \sum_{i=2}^k \frac{C}{(k|x_1-x_i|)^{(p-1)(N-2)+N-5-\theta}} \le C.
\end{multline}

Similarly, for $y \in \Omega_1$, choosing $\theta \in (0,N-3]$ such that $(N-3-\theta)p-2\in (1, N-2)$,
we can compute
\begin{equation}\label{l5-30-73}
\begin{aligned}
&\quad \sum_{i=2}^k \int_{\Omega_i} \frac{1}{|y-z|^{N-2}} \(\sum_{j \ne i} \frac{1}{|z-kx_j|^{N-2}}\)^p dz\\
&\le \sum_{i=2}^k \int_{\Omega_i} \frac{1}{|y-z|^{N-2}} \frac{C}{|z-kx_i|^{(N-3-\theta)p}} dz\le \sum_{i=2}^k\frac{C}{|y-kx_i|^{(N-3-\theta)p-2}}
\le C.
\end{aligned}
\end{equation}
 Inequality \eqref{l5-30-71} now follows from \eqref{l5-30-72} and \eqref{l5-30-73}.
\end{proof}

Let $U$ be the unique solution of \eqref{Udef}. In the proof of Lemma~\ref{l1-28-12}, we need a pointwise estimate of $U$.
\begin{lemma}\label{lemma:U}
Assume that $N \ge 5$ and $p \in (\frac{N}{N-2},\frac{N+2}{N-2}]$. Then it holds that
\[
U(y) \le C \sum_{i=1}^k \frac{\mu^{\frac{N}{q+1}}}{(1+\mu|y-x_i|)^{N-2}} + \frac{C}{\mu^{\frac{pN}{q+1}}} \sum_{i=1}^k \frac{k^{p(N-2)-2}}{(1+k|y-x_i|)^{\min\{N-2,p(N-3-\theta)-2\}}}
\]
for $y \in \R^N$, where $\theta \in (0,1)$ is small.
\end{lemma}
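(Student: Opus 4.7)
The starting point will be the representation
\[
U(y) = \int_{\R^N} \frac{\gamma_N}{|y-z|^{N-2}}\,V^p(z)\,dz, \qquad \gamma_N := \frac{1}{(N-2)|\Sn^{N-1}|},
\]
so the task reduces to controlling this integral against the single-bubble profiles appearing on the right-hand side. My plan is a two-region split of $\R^N = \bigcup_i \Omega_i$ (the sectors from \eqref{Omegaj}), each sector further cut at the intermediate scale $r_0 \simeq k^{-1}$: an inner piece $A_i := \Omega_i \cap B_{r_0}(x_i)$ on which the bubble $V_i$ dominates the others, and an outer piece $B_i := \Omega_i \setminus B_{r_0}(x_i)$ on which one must track the collective tail of all the $V_j$'s.

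On the inner piece I will show that $V(z) \le CV_i(z)$; the crucial balance is that at $|z-x_i| = r_0$ both $V_i(z)$ and $\sum_{j \ne i} V_j(z)$ have size of order $\mu^{N/(p+1) - N/(q+1)}$. The lower bound on $V_i$ comes from its explicit form, and the upper bound on $\sum_{j \ne i}V_j$ follows from the point-separation estimate \eqref{xjx1sum} together with the identity $k^{N-2} \mu^{-N/(q+1)} \simeq \mu^{N/(p+1)-N/(q+1)}$, which itself is a consequence of $k \simeq \mu^{\tau}$ with $\tau = N/[(p+1)(N-2)]$. Using $-\Delta U_i = V_i^p$ and the decay estimate \eqref{U10est} (valid for $p \in (N/(N-2),(N+2)/(N-2)]$), this piece contributes at most
\[
CU_i(y) \le \frac{C\mu^{N/(q+1)}}{(1+\mu|y-x_i|)^{N-2}},
\]
accounting for the first sum in the claim after summing over $i$.

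For the outer piece $B_i$ the one-bubble bound fails, and I will combine the far-field decay $V_j(z) \le C\mu^{-N/(q+1)}|z-x_j|^{-(N-2)}$ with the summation inequality \eqref{e4} to produce
\[
V(z) \le \frac{Ck^{1+\theta}}{\mu^{N/(q+1)}\,|z-x_i|^{N-3-\theta}}, \qquad z \in B_i,
\]
for any small $\theta > 0$. Raising to the $p$-th power and applying a standard two-point convolution estimate gives a bound on this piece of the form
\[
\frac{Ck^{(1+\theta)p + \alpha}}{\mu^{pN/(q+1)}\,(1+k|y-x_i|)^{\alpha}}, \qquad \alpha := \min\{N-2,\,p(N-3-\theta)-2\};
\]
the integrability condition $p(N-3-\theta) > 2$ is secured for $N \ge 5$, $p > N/(N-2)$, and $\theta$ sufficiently small.

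The main arithmetic check — and what I expect to be the sole genuine obstacle — is the verification that the $k$-exponent collapses to $p(N-2) - 2$. When $\alpha = p(N-3-\theta)-2$, the algebraic identity $(1+\theta) + (N-3-\theta) = N-2$ yields $(1+\theta)p + \alpha = p(N-2) - 2$ exactly. In the complementary regime $\alpha = N-2$, which occurs precisely when $p(N-3-\theta) \ge N$, the exponent becomes $(1+\theta)p + N-2$, and a direct rearrangement shows this is $\le p(N-2) - 2$ under the very condition $p(N-3-\theta) \ge N$ that placed us in that regime. Summing the inner and outer contributions over $i = 1, \ldots, k$ then delivers the asserted pointwise estimate.
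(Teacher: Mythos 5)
Your overall strategy — decompose $\R^N=\cup_i\Omega_i$ and, within each sector, separate a dominant-bubble contribution from a collective tail — is close in spirit to the paper's. The paper uses the algebraic split $V^p\le C(V_i^p+(\sum_{j\ne i}V_j)^p)$ on all of $\Omega_i$, which keeps the tail free of a singularity at $x_i$; you instead use a radial split $A_i\cup B_i$ and bound the \emph{full} $V$ on the outer annulus by the singular profile $k^{1+\theta}\mu^{-N/(q+1)}|z-x_i|^{-(N-3-\theta)}$. Both are workable, but your choice forces you to keep track of the cutoff at scale $k^{-1}$ in the convolution, and that is exactly where the argument goes wrong.

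The gap is in the ``standard two-point convolution estimate'' step. Setting $\gamma:=p(N-3-\theta)$, the outer contribution is
\[
\frac{Ck^{(1+\theta)p}}{\mu^{pN/(q+1)}}\int_{B_i}\frac{dz}{|y-z|^{N-2}\,|z-x_i|^{\gamma}},
\qquad B_i=\Omega_i\setminus B_{ck^{-1}}(x_i).
\]
Rescaling $z=x_i+w/k$ shows that this truncated convolution satisfies
\[
\int_{B_i}\frac{dz}{|y-z|^{N-2}\,|z-x_i|^{\gamma}}
\;\simeq\;\frac{k^{\gamma-2}}{\bigl(1+k|y-x_i|\bigr)^{\min\{\gamma-2,\,N-2\}}},
\]
so the $k$-power that appears is always $k^{\gamma-2}=k^{p(N-3-\theta)-2}$, \emph{not} $k^{\alpha}$ with $\alpha=\min\{N-2,\gamma-2\}$. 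Combined with the $k^{(1+\theta)p}$ from $V^p$ this gives $(1+\theta)p+\gamma-2=p(N-2)-2$ in \emph{both} regimes. Your claimed bound $Ck^{(1+\theta)p+\alpha}\mu^{-pN/(q+1)}(1+k|y-x_i|)^{-\alpha}$, however, pairs the $k$-power with $\alpha$. When $\gamma\ge N$ (which does occur, e.g.\ for $N\ge6$ and $p$ near $\tfrac{N+2}{N-2}$) this gives $k^{(1+\theta)p+N-2}$, which is \emph{strictly smaller} than the true value $k^{p(N-2)-2}$ of the integral: your ``verification'' $(1+\theta)p+N-2\le p(N-2)-2$ does not rescue the argument, it merely confirms that your asserted bound is an underestimate of the integral and therefore not an upper bound at all. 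So the proof as written is incorrect in that case. The cleanest repair within your framework is to replace the singular tail estimate by the equivalent bounded form $V(z)\le Ck^{N-2}\mu^{-N/(q+1)}(1+k|z-x_i|)^{-(N-3-\theta)}$, integrate over all of $\R^N$, and apply the rescaled convolution bound; then the factor $k^{p(N-2)-2}$ emerges cleanly and uniformly, matching the lemma's statement. Alternatively, following the paper's split into $V_i^p$ and $(\sum_{j\ne i}V_j)^p$ avoids the singularity at $x_i$ altogether, since the tail sum excludes $j=i$.
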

\begin{proof}
The representation formula for $U$ yields
\begin{align*}
0 < U(y) &= \int_{\R^N}\frac{\ga_N}{|y-z|^{N-2}} \bigg(\sum_{j=1}^k V_j\bigg)^p(z)\, dz \\
&=  \sum_{i=1}^k \left[\int_{\Omega_i} \frac{\ga_N}{|y-z|^{N-2}} \bigg(\sum_{j=1}^k V_j\bigg)^p(z)\, dz\right], \quad y \in \R^N.
\end{align*}

For $i=1,\cdots, k$, we have
\[
\int_{\Omega_i} \frac{1}{|y-z|^{N-2}} V_i^p(z)\, dz \le C
U_i(y) \le \frac{C\mu^{\frac{N}{q+1}}}{(1+\mu|y-x_i|)^{N-2}}, \quad y \in \R^N.
\]

On the other hand, there exists $c>0$ such that for $j\ne i$,
\[
|z-k x_j|\ge c (1+ |z-k x_i|),\quad z\in\Omega_i.
\]
Thus,  we find
\begin{align*}
&\quad \int_{\Omega_i} \frac{\ga_N}{|y-z|^{N-2}} \bigg(\sum_{j \ne i} V_j\bigg)^p(z)\, dz \\
&\le \frac{Ck^{p(N-2)-2}}{\mu^{\frac{pN}{q+1}}} \int_{\Omega_i} \frac{1}{|ky-z|^{N-2}} \(\sum_{j \ne i} \frac{1}{|z-kx_j|^{N-2}}\)^p dz \\
&\le \frac{Ck^{p(N-2)-2}}{\mu^{\frac{pN}{q+1}}} \int_{\Omega_i} \frac{1}{|ky-z|^{N-2}}\frac{1}{(1+|z-kx_i|)^{p(N-3-\theta)}} \(\sum_{j \ne i} \frac{1}{|k x_i-kx_j|^{1+\theta}}\)^p dz \\
&\le \frac{Ck^{p(N-2)-2}}{\mu^{\frac{pN}{q+1}}} \int_{\Omega_i } \frac{1}{|ky-z|^{N-2}} \frac{dz}{(1+|z-kx_i|)^{p(N-3-\theta)}} \\
&\le \frac{C}{\mu^{\frac{pN}{q+1}}}  \frac{k^{p(N-2)-2}}{(1+k|y-x_i|)^{\min[N-2,p(N-3-\theta)-2]}}
\end{align*}
for $q$ satisfying \eqref{cri} and $\theta \in (0,1)$ small. Thus the result follows.
\end{proof}

Lastly, we present lemmas employed in Sections \ref{sec:lin} and \ref{sec:red}.
\begin{lemma}\label{l10-18-2}
Suppose that $p > 1$ and \eqref{cri} holds. Let $\tau > 0$ be a number such that $k \simeq \mu^{\tau}$. If $\tau' \ge \tau$, then we have that for any $y \in \R^N$,
\begin{equation}\label{0-6-2}
\left[\sum_{j=1}^k \frac{1}{(1+\mu|y-x_j|)^{\frac{N}{p+1}+\tau'}}\right]^{p}
\le C\sum_{j=1}^k \frac{ 1 }{ (1+\mu |y-x_j| )^{  \frac{N}{q+1}+2+\tau' } }
\end{equation}
and
\[\left[\sum_{j=1}^k \frac{1}{(1+\mu|y-x_j|)^{\frac{N}{q+1}+\tau'}}\right]^{q}
\le C\sum_{j=1}^k \frac{ 1 }{ (1+\mu |y-x_j| )^{  \frac{N}{p+1}+2+\tau' } }.\]
\end{lemma}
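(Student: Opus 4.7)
\textbf{Proof proposal for Lemma \ref{l10-18-2}.} The plan is to reduce both inequalities to the single uniform estimate
\begin{equation}\label{plan-key}
\sum_{j=1}^{k} \frac{1}{(1+\mu|y-x_j|)^{\tau'}} \le C \quad \text{for all } y \in \R^N,
\end{equation}
valid whenever $\tau' \ge \tau$ and $k \simeq \mu^{\tau}$, and then to invoke H\"older's inequality together with the criticality identities $\frac{pN}{p+1} = \frac{N}{q+1}+2$ and $\frac{qN}{q+1} = \frac{N}{p+1}+2$.

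First I would establish \eqref{plan-key}. By the rotational symmetry of the configuration $\{x_j\}$ one may assume $y \in \Omega_1$, in which case $|y - x_j| \ge \tfrac12 |x_1 - x_j|$ for every $j \ne 1$. Since $|x_1 - x_j| = 2r \sin(\pi(j-1)/k) \simeq rk^{-1}|j-1|$ for $j = 2,\ldots,\lfloor k/2\rfloor+1$, this gives $\mu|y - x_j| \gtrsim \mu^{1-\tau}|j-1|$, and a symmetric bound holds for $j > \lfloor k/2\rfloor+1$. Hence
\[
\sum_{j=2}^{k} \frac{1}{(1+\mu|y-x_j|)^{\tau'}} \le C\sum_{j=1}^{\lfloor k/2\rfloor} \frac{1}{(1+\mu^{1-\tau} j)^{\tau'}}.
\]
A direct case analysis on whether $\tau' > 1$, $\tau' = 1$ or $\tau < \tau' < 1$ (or $\tau' = \tau$) shows that the right-hand side is bounded by a constant; the critical case $\tau' = \tau < 1$ yields a bound $C\mu^{\tau - \tau'} = C$. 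Adding the $j=1$ term (which is $\le 1$) produces \eqref{plan-key}.

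Next, to prove \eqref{0-6-2} I set $A_j := (1+\mu|y-x_j|)^{-1}$, $\alpha := \frac{N}{p+1}+\tau'$, and $\beta := \frac{N}{q+1}+2+\tau'$. Writing
\[
\sum_{j=1}^{k} A_j^{\alpha} = \sum_{j=1}^{k} A_j^{\alpha - \tau'/p'} \cdot A_j^{\tau'/p'},
\quad p' := \frac{p}{p-1},
\]
and applying H\"older's inequality,
\[
\sum_{j=1}^{k} A_j^{\alpha}
\le \left[\sum_{j=1}^{k} A_j^{(\alpha - \tau'/p')p}\right]^{1/p}
\left[\sum_{j=1}^{k} A_j^{\tau'}\right]^{1/p'}.
\]
The exponent in the first bracket equals $\alpha p - (p-1)\tau' = \tfrac{pN}{p+1} + \tau' = \tfrac{N}{q+1} + 2 + \tau' = \beta$, by the criticality relation \eqref{cri}, while the second bracket is $O(1)$ by \eqref{plan-key}. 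Raising to the $p$-th power yields \eqref{0-6-2}. The second inequality follows by the symmetric argument with $p,q$ and exponents swapped, using $\tfrac{qN}{q+1} = \tfrac{N}{p+1}+2$.

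The only delicate point is \eqref{plan-key}: one must handle uniformly the borderline regime $\tau' \to \tau^{+}$ with $\tau' < 1$, where the series $\sum_j j^{-\tau'}$ diverges like $k^{1-\tau'}$ and is compensated exactly by the prefactor $\mu^{-(1-\tau)\tau'}$ thanks to the identification $k \simeq \mu^{\tau}$. Once this balance is verified, the remainder of the argument is a one-line application of H\"older's inequality.
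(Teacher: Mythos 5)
Your proof is correct and takes essentially the same route as the paper's: both hinge on H\"older's inequality with exponents $(p,p')$ applied to the decomposition of the exponent $\tfrac{N}{p+1}+\tau'$, and indeed your split $\alpha=(\alpha-\tau'/p')+\tau'/p'$ is algebraically identical to the paper's $\alpha=\tfrac{\beta}{p}+(\alpha-\tfrac{\beta}{p})$ since $\alpha-\tau'/p'=\beta/p$. The only difference is that you explicitly justify the uniform bound $\sum_{j}(1+\mu|y-x_j|)^{-\tau'}\le C$ (via the spacing $|x_1-x_j|\simeq r|j-1|/k$ and the identification $k\simeq\mu^{\tau}$), which the paper invokes without proof; this added detail is sound and harmless.
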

\begin{proof}
We just prove \eqref{0-6-2}. By H\"older's inequality and \eqref{cri},
\begin{align*}
\left[\sum_{j=1}^k\frac{1}{(1+\mu|y-x_j|)^{\frac{N}{p+1}+\tau'}}\right]^{p}
&= \left[\sum_{j=1}^k \frac{1}{(1+\mu |y-x_j|)^{(\frac{N}{q+1}+2+\tau')\frac{1}{p}+\frac{N}{p+1}+\tau' - (\frac{N}{q+1}+2+\tau') \frac{1}{p}}}  \right]^{p} \\
&\le C \sum_{j=1}^k \frac{1}{(1+\mu |y-x_j|)^{\frac{N}{q+1}+2+\tau'}}
\left[\sum_{j=1}^k \frac{1}{(1+\mu |y-x_j|)^{\tau'}}\right]^{p-1}  \\
&\le C \sum_{j=1}^k \frac{1}{(1+\mu |y-x_j|)^{\frac{N}{q+1}+2+\tau'}},
\end{align*}
where we used $\tau' \ge \tau$ to get $\sum_{j=1}^k (1+\mu |y-x_j|)^{-\tau'} \le C$.
\end{proof}

\begin{lemma}\label{l1-23-4}
Suppose that $N \ge 5$, $p \in (\frac{N}{N-2},\frac{N+2}{N-2}]$ and \eqref{cri} hold. If $N = 5$, we also assume that $p \in [\frac{35}{18},\frac73]$.
Let $\tau = \frac{N}{(p+1)(N-2)} \in (0,1)$. For any $y \in \R^N$, we have
\begin{equation}\label{10-23-4}
 U(y)\le C\sum_{j=1}^k \frac{ \mu^{\frac{N}{q+1}}  }{ (1+\mu |y-x_j|)^{  \frac{N}{q+1}+\tau } }.
\end{equation}
\end{lemma}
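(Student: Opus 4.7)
The plan is to deduce the estimate directly from Lemma~\ref{lemma:U}, which already provides a pointwise decomposition of $U(y)$ into the sum of two explicit contributions, and then to bound each one by the right-hand side of \eqref{10-23-4}. Write $U(y) \le T_1(y) + T_2(y)$ with
$$T_1(y) := C\sum_{i=1}^k \frac{\mu^{N/(q+1)}}{(1+\mu|y-x_i|)^{N-2}}.$$
The prefactor of $T_1$ is already the correct one, so the only remaining task is to raise the decay exponent from $N-2$ up to $\tfrac{N}{q+1}+\tau$. This is legitimate because the critical relation \eqref{cri} yields
$$\tfrac{N}{q+1}+\tau = (N-2)-\tfrac{N(N-3)}{(p+1)(N-2)}<N-2,$$
so $T_1$ is absorbed termwise into the right-hand side of \eqref{10-23-4}. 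This step does not require any further restriction on $p$ beyond $p>\tfrac{N}{N-2}$.

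The substantive work lies in
$$T_2(y) := \frac{C}{\mu^{pN/(q+1)}} \sum_{i=1}^k \frac{k^{p(N-2)-2}}{(1+k|y-x_i|)^\alpha},\qquad \alpha := \min\{N-2,\ p(N-3-\theta)-2\},$$
for a small auxiliary parameter $\theta>0$. I would first normalise the prefactor using $k\simeq \mu^\tau$ together with the identity $\tfrac{pN}{p+1}-2 = \tfrac{N}{q+1}$ (equivalent to \eqref{cri}), from which one reads off $\tau(p(N-2)-2) = \tfrac{N}{q+1}+2-2\tau$ and hence
$$\frac{k^{p(N-2)-2}}{\mu^{pN/(q+1)}} \simeq \mu^{(1-p)\tfrac{N}{q+1}+2-2\tau}.$$
I would then verify the termwise inequality
$$\mu^{(1-p)\tfrac{N}{q+1}+2-2\tau}(1+k|y-x_i|)^{-\alpha}\;\le\; C\mu^{N/(q+1)}(1+\mu|y-x_i|)^{-\tfrac{N}{q+1}-\tau}$$
by splitting into the inner region $k|y-x_i|\le 1$, where both bracketed quantities are uniformly bounded and the estimate reduces to a plain algebraic comparison of powers of $\mu$, and the outer region $k|y-x_i|>1$, where after writing $(1+k|y-x_i|)^{-\alpha}\simeq (\mu|y-x_i|)^{-\alpha}(\mu/k)^\alpha$ the comparison reduces to the decay condition $\alpha \ge \tfrac{N}{q+1}+\tau$ coupled with a matching balance of $\mu$-powers.

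The main obstacle is the outer-region decay inequality $\alpha \ge \tfrac{N}{q+1}+\tau$ in the regime $p(N-3)\le N$, where $\alpha = p(N-3-\theta)-2$. A direct manipulation using \eqref{cri} then reduces this to
$$(N-3)\Bigl[p+\tfrac{N}{(p+1)(N-2)}\Bigr]\ge N+O(\theta),$$
which is automatic for all $N\ge 6$ and $p\in(\tfrac{N}{N-2},\tfrac{N+2}{N-2}]$, but in dimension $N=5$ yields a quadratic condition on $p$ whose admissible range is precisely the additional hypothesis $p\in[\tfrac{35}{18},\tfrac73]$ (up to a conservative rounding of the irrational root $(9+\sqrt{201})/12$ together with the slack afforded by $\theta>0$). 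Once these termwise inequalities are in place, summing in $i$ and combining with the bound for $T_1$ gives \eqref{10-23-4}.
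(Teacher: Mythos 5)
Your proposal starts, like the paper's proof, from the pointwise bound of Lemma~\ref{lemma:U} and reduces to termwise comparisons in $i$, but you organize the verification more uniformly: you check the termwise inequality over all of $\R^N$ at once, splitting by $k|y-x_i|\le 1$ versus $k|y-x_i|>1$, whereas the paper treats $y\in S$ separately (via $\varphi\le C$), then on $\Omega_1\setminus S$ distinguishes $p(N-3)>N$ (falling back on $U\le C\sum_j U_j$ from Lemma~\ref{lem2.5}) from $p(N-3)\le N$, with a further split $|y|\ge R_0$ or $|y|<R_0$. Both routes reduce to the same two ingredients: the decay condition $\min\{N-2,\,p(N-3-\theta)-2\}>\tfrac{N}{q+1}+\tau$, which you correctly identify as the source of the extra hypothesis $p\in[\tfrac{35}{18},\tfrac73]$ at $N=5$, and a power-of-$\mu$ balance.

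Two things in your plan are imprecise and, as written, leave a gap. First, in the inner region $k|y-x_i|\le 1$ it is not true that "both bracketed quantities are uniformly bounded": $1+k|y-x_i|$ is, but $1+\mu|y-x_i|$ ranges up to $\simeq\mu/k=\mu^{1-\tau}$. Taken literally, the "plain algebraic comparison of powers of $\mu$" you would read off from uniform boundedness is only $\frac{pN}{q+1}\ge 2(1-\tau)$, which is not the condition actually required; the binding point is $|y-x_i|\simeq 1/k$, which lies on the common boundary of your two regions. Second, the "matching balance of $\mu$-powers" for the outer region is asserted but not checked. Evaluating both sides of your termwise inequality at $|y-x_i|\simeq 1/k$ and using $\tau\bigl(p(N-2)-2\bigr)=\tfrac{N}{q+1}+2-2\tau$ gives precisely
\[
(1-\tau)\Bigl(\tfrac{pN}{q+1}-\tau\Bigr)\ge 0,
\qquad\text{equivalently}\qquad
\tilde p^2-2\tilde p-N\ge 0\ \ \text{for }\tilde p:=p(N-2),
\]
which holds because $\tilde p>N\ge 5>1+\sqrt{N+1}$. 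This is the same verification the paper records through \eqref{13-23-4} and \eqref{14-23-4}; once you supply it, your argument is complete and in fact somewhat tidier than the paper's case-by-case treatment.
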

\begin{proof}
By symmetry, it suffices to verify \eqref{10-23-4} for $y \in \Omega_1$.

\medskip
First, if $y \in S$, then $0\le \varphi(y) \le C$, and
\begin{equation}\label{11-23-4}
U_j(y)\le \frac{C \mu^{\frac{N}{q+1}}  }{ (1+\mu |y-x_j|)^{  \frac{N}{q+1}+\tau } }.
\end{equation}
Therefore, \eqref{10-23-4} holds.

\medskip
Suppose that $y\in \Omega_1\setminus S$.  If $p(N-3)>N$, then the inequality $U\le C\sum_{j=1}^k U_j$ in $\R^N$ and \eqref{11-23-4} give the result.

If $p(N-3)\le N$, from Lemma~\ref{lemma:U} and \eqref{11-23-4}, we
just need to  prove that
\begin{equation}\label{12-23-4}
 \frac{1}{\mu^{\frac{pN}{q+1}}} \frac{k^{p(N-2)-2}}{(1+k|y-x_j|)^{p(N-3-\theta)-2}}\le
 \frac{C \mu^{\frac{N}{q+1}}  }{ (1+\mu |y-x_j|)^{  \frac{N}{q+1}+\tau } }.
\end{equation}
Let $R_0 > 1$ be a large number satisfying $|x_i| \le \frac {R_0}2$ for all $i = 1,\ldots,k$. We also note
\[\min\{N-2, p(N-3)-2\}>\frac N{q+1}+\tau\]
for either all $N \ge 6$ and $p \in (\frac N{N-2},\frac{N+2}{N-2}]$, or $N = 5$ and $p \in [\frac{35}{18},\frac73]$.
Thus, if $|y|\ge R_0$, then \eqref{12-23-4} holds provided
\begin{equation}\label{14-23-4}
\frac{pN}{q+1}-(p+1)\tau =p\( N-2 -\frac{N}{p+1}\)-\frac N{N-2}>0,
\end{equation}
which is true for all $N \ge 5$ and $p>\frac N{N-2}$. If $|y| < R_0$, then \eqref{12-23-4} holds if
\begin{equation}\label{13-23-4}
\frac{1}{\mu^{\frac{pN}{q+1}}} \frac{k^{p(N-2)-2}}{(k|y-x_j|)^{p(N-3-\theta)-2}}\le
\frac{C k^{\frac{N}{q+1}+\tau}  }{\mu^\tau (k |y-x_j|)^{  \frac{N}{q+1}+\tau } }.
\end{equation}
However, since $p(N-2)-2 = \frac{(p+1)N}{q+1}$, we have
\[\frac{pN}{q+1}-\tau -\tau\left[p(N-2)-2- \frac{N}{q+1}-\tau  \right] = (1-\tau)\(\frac{pN}{q+1}-\tau\)\]
whose right-hand side is positive thanks to $\tau \in (0,1)$ and \eqref{14-23-4}. Consequently, \eqref{13-23-4} is valid.
\end{proof}

\begin{lemma}\label{l10-18-4}
Suppose that $N \ge 5$, $p \in (\frac{N}{N-2},\frac{N+2}{N-2}]$ and \eqref{cri} hold. Let $\tau = \frac{N}{(p+1)(N-2)} \in (0,1)$. For any $y \in \R^N$, we have
\begin{equation}\label{0-6-4}
\frac{1}{(1+|y|)^{q(N-2)}} \le C\sum_{j=1}^k \frac{ \mu^{\frac{qN}{q+1}}  }{ (1+\mu |y-x_j|)^{  \frac{N}{p+1}+2+\tau } }
\end{equation}
and
\begin{equation}\label{0-6-41}
\frac{1}{(1+|y|)^{p(N-2)}} \le C\sum_{j=1}^k \frac{ \mu^{\frac{pN}{p+1}}  }{ (1+\mu |y-x_j|)^{  \frac{N}{q+1}+2+\tau } }.
\end{equation}
\end{lemma}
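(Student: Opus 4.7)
The two inequalities share a common structure. Using the criticality condition \eqref{cri}, we have $\frac{N}{p+1}+2 = \frac{qN}{q+1}$ and $\frac{N}{q+1}+2 = \frac{pN}{p+1}$, so both \eqref{0-6-4} and \eqref{0-6-41} can be recast as the single inequality
\[
\frac{1}{(1+|y|)^{s(N-2)}} \le C\sum_{j=1}^k \frac{\mu^{\frac{sN}{s+1}}}{(1+\mu|y-x_j|)^{\frac{sN}{s+1}+\tau}},
\]
with $s = q$ in \eqref{0-6-4} and $s = p$ in \eqref{0-6-41}. The plan is to prove this generic inequality, recalling that $k \simeq \mu^\tau$ and $|x_j| = r \in (r_0^{-1},r_0)$ for every $j$. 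The argument splits according to whether $|y| \le R$ or $|y| \ge R$, where $R := 2r_0 + 1$.

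For $|y| \le R$, each $|y-x_j| \le 3r_0 + 1 \le C$, so $(1+\mu|y-x_j|)^{\frac{sN}{s+1}+\tau} \le C\mu^{\frac{sN}{s+1}+\tau}$, and therefore
\[
\frac{\mu^{\frac{sN}{s+1}}}{(1+\mu|y-x_j|)^{\frac{sN}{s+1}+\tau}} \ge \frac{c}{\mu^\tau}.
\]
Summing $k$ such terms and using $k \simeq \mu^\tau$, the right-hand side is bounded below by a positive constant independent of $\mu$, while the left-hand side is at most $1$. For $|y| \ge R$, one has $|y|/2 \le |y-x_j| \le 2|y|$ for all $j$, so $1+\mu|y-x_j| \simeq \mu|y|$ (noting $\mu|y| \ge \mu R \ge 1$). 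Consequently
\[
\sum_{j=1}^k \frac{\mu^{\frac{sN}{s+1}}}{(1+\mu|y-x_j|)^{\frac{sN}{s+1}+\tau}} \simeq \frac{k}{\mu^\tau |y|^{\frac{sN}{s+1}+\tau}} \simeq \frac{1}{|y|^{\frac{sN}{s+1}+\tau}},
\]
and the desired inequality reduces to $|y|^{s(N-2) - \frac{sN}{s+1} - \tau} \ge c_R$ for $|y| \ge R$. This is true as long as the exponent is positive.

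It remains to verify the exponent inequality $s(N-2) > \frac{sN}{s+1} + \tau$ for $s = q$ and $s = p$. A direct computation gives
\[
s(N-2) - \frac{sN}{s+1} = \frac{s[(N-2)s - 2]}{s+1}.
\]
For $s = q \ge \frac{N+2}{N-2}$, one has $(N-2)q - 2 \ge N$, so this quantity is at least $\frac{qN}{q+1} \ge \frac{N+2}{2} > 1 > \tau$. For $s = p > \frac{N}{N-2}$, the quantity is minimized in the closure of the admissible range at $p = \frac{N}{N-2}$, where it equals $\frac{N(N-2)}{2N-2}$, and $\tau = \frac{N}{2N-2}$; the difference $\frac{N(N-3)}{2N-2}$ is positive for $N \ge 5$, and the gap persists throughout $p \in (\frac{N}{N-2}, \frac{N+2}{N-2}]$ by monotonicity.

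The proof is elementary; the only technical point, which is the sole place where both hypotheses on $p$ (and hence $q$) enter, is the verification of the exponent inequality above for $s = p$ near the borderline $N/(N-2)$. The use of $k \simeq \mu^\tau$ is critical, since it is exactly the cancellation $k/\mu^\tau \simeq 1$ that makes the summed right-hand side have the correct order both inside and outside $B_R(0)$.
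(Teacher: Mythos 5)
Your proof is correct and follows essentially the same route as the paper: reduce via the criticality identity to a single generic inequality, split by whether $|y|$ is bounded (where each of the $k\simeq\mu^\tau$ summands contributes $\gtrsim\mu^{-\tau}$, giving an $O(1)$ lower bound on the right) or $|y|\ge R$ (where the problem reduces to the exponent inequality $s(N-2)\ge \frac{sN}{s+1}+\tau$). Your two-case split consolidates the paper's three cases ($y\in S$, $y\in\Omega_1\setminus S$ with $|y|\le R_0$, $|y|\ge R_0$) and your verification of the exponent inequality is by monotonicity rather than the paper's equivalent polynomial reformulation, but these are presentational simplifications rather than a genuinely different argument.
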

\begin{proof}
Let us prove \eqref{0-6-4} for $y \in \Omega_1$.

First, if $y \in S$, then the left-hand side of \eqref{0-6-4} is bounded by $C > 0$, while the right-hand side satisfies
\[\sum_{j=1}^k \frac{ \mu^{\frac{qN}{q+1}}  }{ (1+\mu |y-x_j|)^{  \frac{N}{p+1}+2+\tau } } \ge \frac{ \mu^{\frac{qN}{q+1}}  }{ (1+\mu |y-x_1|)^{  \frac{N}{p+1}+2+\tau } } \ge \frac{ck^{\frac{N}{p+1}+2+\tau}}{\mu^{\tau}} \simeq k^{\frac{N}{p+1}+1+\tau} \gg 1\]
for some $c > 0$, since $\frac{N}{p+1}+2 = \frac{qN}{q+1}$. Hence, \eqref{0-6-4} is valid for $y \in S$.

Second, if $y \in \Omega_1$ and $|y| \ge R_0$ (see the proof of Lemma~\ref{l1-23-4}), then \eqref{0-6-4} reads
\[\frac{1}{|y|^{q(N-2)}} \le C\sum_{j=1}^k \frac{\mu^{-\tau}}{|y-x_j|^{\frac{N}{p+1}+2+\tau}} \simeq \frac{1}{|y|^{\frac{N}{p+1}+2+\tau}}.\]
Furthermore, $q(N-2) \ge \frac{N}{p+1}+2+\tau$ is equivalent to $N^2-2+p(N-2) > 0$, which clearly holds true, thereby confirming the validity of \eqref{0-6-4} in this case.

Finally, if $y \in \Omega_1 \setminus S$ and $|y| \le R_0$, the left-hand side of \eqref{0-6-4} is bounded by $C > 0$, while the right-hand side is bounded from below by
\[c\sum_{j=1}^k \frac{\mu^{-\tau}}{|y-x_j|^{\frac{N}{p+1}+2+\tau}} \ge c \sum_{j=1}^k \frac{\mu^{-\tau}}{R_0^{\frac{N}{p+1}+2+\tau}} \ge c > 0.\]

\medskip
To derive \eqref{0-6-41}, we argue as above, by applying the inequality $p(N-2) \ge \frac{N}{q+1}+2+\tau$, which holds since it is equivalent to $h^2 + 2(N-1)h + N(N-3) \ge 0$ for $h = p(N-2)-2 \in [0,2]$. We omit the detail.
\end{proof}

\medskip \small {\noindent \textbf{Acknowledgement.}
Guo is supported by NSFC (No. 12271283), (No. 12031235);
Kim is supported by Basic Science Research Program through the National Research Foundation of Korea (NRF) funded by the Ministry of Science and ICT (2020R1C1C1A0101013314, RS-2025-00558417).
He also expresses gratitude to Korea Institute for Advanced Study (KIAS) for its support through the visiting faculty program during his sabbatical year;
Pistoia is partially supported by the MUR-PRIN-20227HX33Z ``Pattern formation in nonlinear phenomena" and INDAM-GNAMPA project ``Problemi di doppia
curvatura su variet\`a a bordo e legami con le EDP di tipo ellittico";
Yan is supported by NSFC (No. 12171184).}

\end{document}